\newtheorem{Lemma}{Lemma}[section]
\newtheorem{Theorem}[Lemma]{Theorem}
\newtheorem{Definition}[Lemma]{Definition}
\newtheorem{Corollary}[Lemma]{Corollary}
\newtheorem{Example}[Lemma]{Example}
\newtheorem{Remark}[Lemma]{Remark}
\newtheorem{Proposition}[Lemma]{Proposition}
\newtheorem{Conjecture}[Lemma]{Conjecture}
\newlength{\dhatheight}
\newcommand{\doublehat}[1]{%
    \settoheight{\dhatheight}{\ensuremath{\widehat{#1}}}%
    \addtolength{\dhatheight}{-0.35ex}%
    \widehat{\vphantom{\rule{1pt}{\dhatheight}}%
    \smash{\widehat{#1}}}}
\let\bs\boldsymbol
\def\llambda{{\bs \lambda}}
\def\sym{\text{Sym}}
\newcommand{\bL}{\reflectbox{$\mathsf{L}$}}
\newcommand{\bA}{\mathsf{A}}
\newcommand{\bT}{\mathsf{T}}
\newcommand{\qm}{\textnormal{\textsf{QM}}}
\newcommand{\lt}{\Gamma_{\llambda}}
\newcommand{\trwt}{\mathsf{W}}
\newcommand{\trwts}{\textsf{\textbf{W}}}
\newcommand{\tr}{\textsf{t}}
\newcommand{\trs}{\textsf{\textbf{t}}}
\newcommand{\dv}{\mathsf{v}}
\newcommand{\dw}{\mathsf{w}}
\newcommand{\ver}{\mathsf{V}}
\newcommand{\vrs}{\widehat{\mathcal{O}}_{\text{vir}}}
\newcommand{\ns}{\mathrm{ns}}
\newcommand{\rel}{\mathrm{rel}}
\newcommand{\const}{\mathscr{C}}
\newcommand{\ahat}{\hat{a}}
\newcommand{\eha}{\mathscr{U}_{\hbar}(\doublehat{\mathfrak{gl}}_1)}
\newcommand{\hopf}{\mathscr{U}_{\hbar}(\hat{\mathfrak{g}}_{Q})}
\newcommand{\hopfwall}{\mathscr{U}_{\hbar}(\mathfrak{g}_{w})}
\newcommand{\rprod}{\mathop{\overrightarrow{\prod}}}
\newcommand{\lprod}{\mathop{\overleftarrow{\prod}}}
\newcommand{\lotimes}{\mathop{\overleftarrow{\otimes}}}
\newcommand{\Lie}{\mathrm{Lie}}
\newcommand{\Wall}{\mathsf{Wall}}
\newcommand{\Pic}{\mathrm{Pic}}
\newcommand{\stab}{\mathrm{Stab}}
\newcommand{\Bw}{\textbf{B}}
\title{Exotic Quantum Difference Equations and Integral Solutions}
\author{Hunter Dinkins}
\date{Date}
\abstract{

One of the fundamental objects in the $K$-theoretic enumerative geometry of Nakajima quiver varieties is known as the the capping operator. It is uniquely determined as the fundamental solution to a system of $q$-difference equations. Such difference equations involve shifts of two sets of variables, the variables arising as equivariant parameters for a torus that acts on the variety and an additional set of variables known as K\"ahler parameters. The difference equations in the former variables were identified with the qKZ equations in \cite{pcmilect}. The difference equations in the latter variables were identified representation theoretically in \cite{OS} using an analog of the quantum dynamical Weyl group. 

Once this representation theoretic description is known, there is an obvious generalization of these equations, which we refer to as exotic quantum difference equations. They depend on a choice of alcove in a certain hyperplane arrangement in $\Pic(X)\otimes \mathbb{R}$, with the usual difference equations corresponding to the alcove containing small anti-ample line bundles. As our main result, we relate the fundamental solution of these equations back to quasimap counts using the so-called vertex with descendants. The particular descendants of interest are given in terms of $K$-theoretic stable envelopes. 

In the case of the Hilbert scheme of points in $\mathbb{C}^2$, we write our exotic quantum difference equations explicitly using the algebra $\eha$. We use the results of \cite{dinkinselliptic} to obtain formulas for the $K$-theoretic stable envelopes of arbitrary slope. Using this, we are able to write explicit formulas for the solutions of the exotic difference equations. These formulas can be written as contour integrals. As a partially conjectural application of our results, we apply the saddlepoint approximation to these integrals to diagonalize the Bethe subalgebras of $\eha$ for arbitrary slope.
}
\newacronym{unc}{UNC}{The University of North Carolina at Chapel Hill}
\begin{document}
\RaggedRight

\ifpdf
	\DeclareGraphicsExtensions{.pdf,.jpg,.png}
\else
	\DeclareGraphicsExtensions{.eps}
\fi

\pagestyle{plain}


\frontmatter

\maketitle

\chapter*{Acknowledgements}
First of all, I would like to thank my advisor, Andrey Smirnov. Virtually none of this work would have been completed without his patience, guidance, and insight. Andrey has taught me an immense amount about learning math and conducting research. His emphasis on concrete computations and explicit examples helped make understanding abstract mathematics possible. I always walk away from meetings with Andrey feeling encouraged, inspired, and hopeful. I could not have asked for a better advisor, nor a kinder and more supportive one.

I am also grateful to the other members of my committee, Jiuzu Hong, Rich\'ard Rim\'anyi, Lev Rozansky, and Alexander Varchenko, whose varied knowledge and interests have enriched my time as a graduate student. On the administrative side of the UNC mathematics department, Sara and Laurie deserve thanks for their essential support. I have also learned much from conversations with Josh Kiers, Marc Besson, Sam Jeralds, Paul Kruse, Jay Havaldar, Collin Kofroth, Sam Crew, and Dan Zhang.

I owe a great debt of gratitude to my undergraduate professors, Robert Brabenec and Stephen Lovett, who first helped me to see the beauty of math and inspired me to pursue a PhD. Stephen's Calculus II course was the first mathematics course I took in college, and was instrumental in my decision to major in math. The many courses I took from him later played the same role in my decision to go to graduate school. Robert's holistic approach to math and his rich insights bridging math, life, and faith are things that I hope to carry with me the rest of my life.

I was privileged to have a very strong community supporting me during my time as a graduate student, including my friends Logan, Sara, Ryker, Julia, and the community of Harvest Church, as well as my siblings, Taylor, Juliet, and Marshall. I am grateful to my mom for her love, selflessness, and belief in me. I also thank my grandparents, who spent alot of hours with me as a child instilling strong study habits, a value of education, and good grammar. 

Finally, I would like to thank my wife Erica. Pursuing PhDs in mathematics at the same time has not come without its challenges, but it has also been a time of rich blessing and growth. She endured many practice talks and mathematical monologues over the last five years, which served to spare others from the same fate. It is no exaggeration to say that without her support and companionship, particularly during the isolation of the COVID pandemic, this thesis would not have been completed. For her patience, encouragement, and love I am truly grateful. 

%

\newlength{\oldbaselineskip}
\setlength{\oldbaselineskip}{\the\baselineskip}
\newlength{\oldparskip}
\setlength{\oldparskip}{\the\parskip}

\setlength{\baselineskip}{0.5\oldbaselineskip}
\setlength{\parskip}{0.5\oldbaselineskip}

\tableofcontents

\listoffigures

\setlength{\baselineskip}{1.0\oldbaselineskip}
\setlength{\parskip}{1.0\oldparskip}

\listofabbreviations

\mainmatter


\pagestyle{plain} 

\chapter{Introduction}\label{Intro}
The purpose of this thesis is to explore one aspect of the rich interplay between representation theory and algebraic geometry. Our study centers on Nakajima quiver varieties, which from their inception have bridged these two fields \cite{NakALE,NakQv}.

We first explain the algebro-geometric objects relevant for this thesis. Nakajima quiver varieties arise as the Higgs branch of the moduli space of vacua in certain quantum field theories. Let $X$ be a Nakajima quiver variety associated to a quiver $Q$ with vertex set $I$. The partition functions of the associated theory can be studied by considering counts of rational curves inside $X$. The latter can be formulated in a mathematically precise way using the moduli space of stable quasimaps from an algebraic curve to $X$ constructed in \cite{qm}. Degeneration and gluing arguments, see Section 6 of \cite{pcmilect}, allow one to reduce arbitrary quasimap counts to quasimap counts where the domain curve is $\mathbb{P}^1$. In turn, there are relatively few possibilities for quasimap counts with domain $\mathbb{P}^1$. One of these possibilities is known as the capping operator, written $\Psi(z)$. It is defined by a pushforward from the moduli space of quasimaps with relative condition at $0$ and nonsingular condition at $\infty$:
\begin{equation}\label{capintro}
\Psi(z)=\sum_{d}\left(\widehat{\text{ev}}_{0}\times\text{ev}_{\infty}\right)_{*}\left(\qm^{d}_{\substack{\ns \, \infty \\ \rel \, 0}}, \vrs^{d} \right) z^{d} \in K_{\bT\times\mathbb{C}^{\times}_{q}}(X)_{loc}^{\otimes 2}[[z]]
\end{equation}
In the above formula, we have:
\begin{itemize}
    \item $\vrs^{d}$ is the symmetrized virtual structure sheaf on the moduli space of quasimaps. Setting up quasimap counts requires of choice of polarization of $X$, which is a class $T^{1/2}\in K_{\bT}(X)$ explained in Section \ref{stabdef}. We assume a polarization is fixed.
    \item The subscript $loc$ stands for localized equivariant $K$-theory.
    \item The sum is taken over the cone in $\mathbb{Z}^I$ consisting of all possible degrees $d$ of quasimaps, and $z^{d}=\prod_{i \in I} z_i^{d_i}$.
    \item The maps $\widehat{\text{ev}}_{0}$ and $\text{ev}_{\infty}$ are the maps which evaluate quasimaps at $0$ and $\infty$.
    \item The torus $\bT$ is a maximal torus of $\text{Aut}(X)$, and $\mathbb{C}^{\times}_{q}$ is the natural torus acting on $\mathbb{P}^1$.
\end{itemize}  
\begin{sloppypar}
Using the natural bilinear pairing on equivariant $K$-theory, see (\ref{pair}), we view $\Psi(z)$ as an element of $\text{End}(K_{\bT\times\mathbb{C}^{\times}_{q}}(X)_{loc})[[z]]$.
\end{sloppypar}

As we will review in Section \ref{qdedef}, the power series $\Psi(z)$ satisfies the quantum difference equation, which is a system of equations of the form
\begin{equation}\label{qdeintro}
\Psi(z q^{\mathscr{L}})\mathscr{L}=\textbf{M}_{\mathscr{L}}(z) \Psi(z), \quad \Psi(z) \in \text{End}(K_{\bT\times\mathbb{C}^{\times}_{q}}(X)_{loc})[[z]]
\end{equation} 
for each $\mathscr{L} \in \Pic(X)$, where $\textbf{M}_{\mathscr{L}}(z)$ is some operator. Here, we abuse notation and write $\mathscr{L}$ for the operator of tensor multiplication by $\mathscr{L}$. In addition, $\Psi(z)$ is the fundamental solution of the above system of equations with the initial condition $\Psi(0)=1$. The equation (\ref{qdeintro}) is an easy consequence of the degeneration and gluing formulas for quasimaps counts, see Section 8.1 of \cite{pcmilect}. However, the operator $\textbf{M}_{\mathscr{L}}(z)$ is, like $\Psi(z)$, defined using quasimap counts, and so it is not immediately clear that the above equation provides any new information about $\Psi(z)$. The generating function $\Psi(z)$ also satisfies equations involving shifts of the equivariant parameters, which are similarly described using quasimap counts.

For (\ref{qdeintro}) to provide useful information about $\Psi(z)$, it is necessary to have a description of $\textbf{M}_{\mathscr{L}}(z)$ that does not involve quasimap counts. Such a description exists, and involves the representation theory of a certain algebra. The papers \cite{pcmilect} and \cite{OS}, building on the work of \cite{MO}, define a certain Hopf algebra $\hopf$ associated to a quiver $Q$, which acts on the equivariant $K$-theory of all quiver varieties built from $Q$. The difference equations for $\Psi(z)$ in the equivariant parameters were identified in Section 10 of \cite{pcmilect} with the quantum Knizhnik-Zamolodchikov equations for $\hopf$. The main result of \cite{OS} gives a similar representation theoretic identification of the quantum difference equation for quiver varieties. We will now explain this identification.

There is a periodic locally finite hyperplane arrangement inside of $\Pic(X)\otimes \mathbb{R}$, as in, for example, Figure \ref{hyp}. To each wall $w$ separating two alcoves of this hyperplane arrangement, \cite{OS} associates an element 
$$
\textbf{B}_{w}(z) \in \hopf[[z]]
$$
called the wall-crossing operator for the wall $w$.

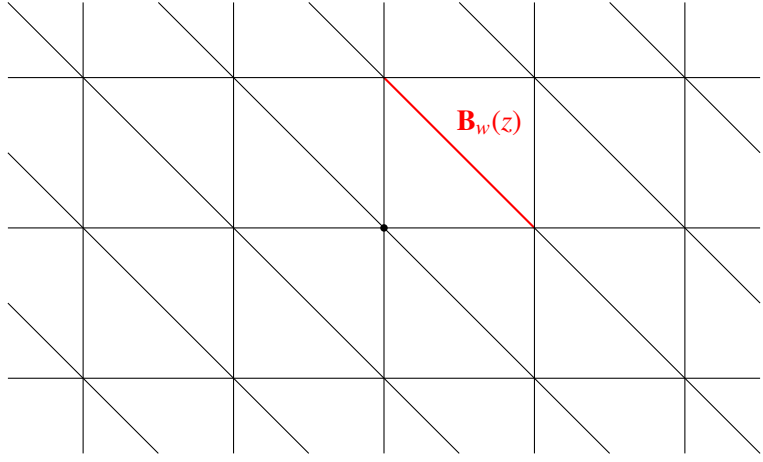
\begin{figure}[htbp]
    \centering
   \begin{tikzpicture}[scale=2,roundnode/.style={circle,fill,inner sep=1pt},roundnode2/.style={circle,fill,inner sep=0.8pt}]
     
    \node[roundnode] at (0,0){};

\draw (-1.5,1.5)--(1.5,-1.5);
\draw (-0.5,1.5)--(0,1);
\draw[red, thick] (0,1)--(1,0);
\draw (1,0)--(2.5,-1.5);
\draw (0.5,1.5)--(2.5,-0.5);
\draw (1.5,1.5)--(2.5,0.5);
\draw (-2.5,1.5)--(0.5,-1.5);
\draw(-2.5,0.5)--(-0.5,-1.5);
\draw (-2.5,-0.5)--(-1.5,-1.5);

\node[red] at (0.7,0.7){ $\textbf{B}_{w}(z)$};

\draw(-2.5,0)--(2.5,0);
\draw(-2.5,1)--(2.5,1);
\draw(-2.5,-1)--(2.5,-1);

\draw(0,-1.5)--(0,1.5);
\draw(1,-1.5)--(1,1.5);
\draw(2,-1.5)--(2,1.5);
\draw (-1,-1.5)--(-1,1.5);
\draw(-2,-1.5)--(-2,1.5);

\end{tikzpicture}  
    \caption{Elements of $\hopf(z)$ are associated to walls in a certain hyperplane arrangement in $\Pic(X)\otimes \mathbb{R}$.}
    \label{hyp}
\end{figure}

Let $\nabla_{0}$ be the alcove containing small real multiples of ample line bundles. For a line bundle $\mathscr{L}\in \Pic(X)$, choose a path between the alcoves $-\nabla_{0}$ and $-\nabla_{0}-\mathscr{L}$. Let $w_1,\ldots, w_n$ be the ordered sequence of walls crossed on this path. The walls can be crossed either positively or negatively with positive direction determined by the ample line bundle $\theta$ given by the GIT stability condition used in the construction of $X$. Let $m_i$ be the negative of the sign of the crossing of the wall $w_i$. Let
$$
\textbf{B}^{-\nabla_{0}}_{\mathscr{L}}(z)=\textbf{B}_{w_n}(z)^{m_n}\ldots \textbf{B}_{w_1}(z)^{m_1}
$$
Then the main result of \cite{OS} is that the quantum difference operator takes the form
$$
\textbf{M}_{\mathscr{L}}(z)=\const_{X} \mathscr{L} \textbf{B}^{-\nabla_{0}}_{\mathscr{L}}(z)
$$
where $\const_{X}$ is a constant. 
Once one has the above identification, it is natural to allow for different choices of the alcove. More precisely, we consider the following system of difference equations.
\begin{Definition}\label{exoticqdeintro}
Let $\nabla \subset \Pic(X)\otimes \mathbb{R}$ be an alcove. For each line bundle $\mathscr{L}\in \Pic(X)$, choose a path in $\Pic(X)\otimes \mathbb{R}$ from $\nabla$ to $\nabla-\mathscr{L}$. Let $w_1,\ldots,w_n$ be the ordered sequence of walls crossed, and let $m_i$ be the negative sign of the crossing of the wall $w_i$. Let
$$
\textbf{B}^{\nabla}_{\mathscr{L}}(z)=\textbf{B}_{w_n}(z)^{m_n}\ldots \textbf{B}_{w_1}(z)^{m_1}
$$
We define the exotic quantum difference equations for the alcove $\nabla$ to be the system of equations
$$
\Psi^{\nabla}(z q^{\mathscr{L}}) \mathscr{L}=\const_{X} \mathscr{L} \textbf{B}^{\nabla}_{\mathscr{L}}(z) \Psi^{\nabla}(z), \quad \Psi^{\nabla}(z) \in \text{End}(K_{\bT\times\mathbb{C}^{\times}_{q}}(X)_{loc})[[z]]
$$
with the initial condition $\Psi^{\nabla}(0)=1$. The constant $\const_{X}$ is the same constant that appears in the usual quantum difference equations.
\end{Definition}

This system of equations is the main object of study of this thesis. For an illustration of the construction such equations, see Figure \ref{hyp2}. Our main result is an identification of the solution $\Psi^{\nabla}(z)$ with an object arising from the enumerative geometry of quasimaps from $\mathbb{P}^1$ to $X$. 

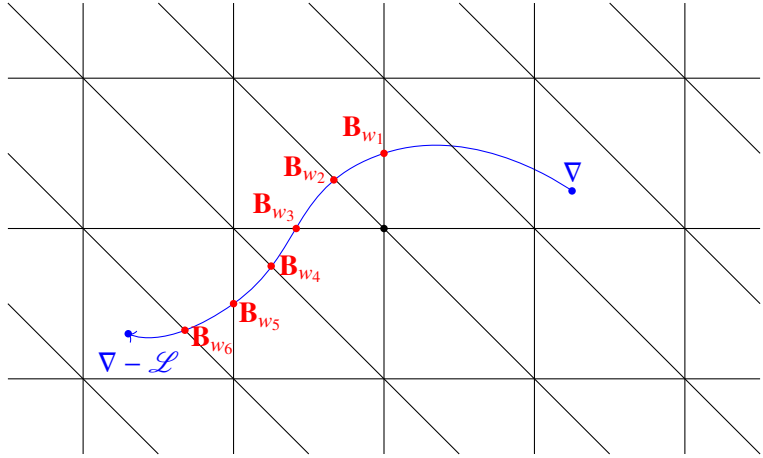
\begin{figure}[htbp]
    \centering
    \begin{tikzpicture}[scale=2,roundnode/.style={circle,fill,inner sep=1pt},roundnode2/.style={circle,fill,inner sep=1pt}]
     
    \node[roundnode] at (0,0){};
    \node[roundnode2, color=blue](a1) at (1.25,0.25){};
    \node[blue]  at ([shift={(95:0.1)}]a1.60) { $\nabla$};
      \node[roundnode2, color=blue](a2) at (-1.5-0.2,-0.5-0.2){};
          \node[blue]  at ([shift={(290:0.2)}]a2) { $\nabla-\mathscr{L}$};
         \draw [blue,->] plot [smooth, tension=1] coordinates {(a1) (0,1/2) (-1,-1/2) (a2)};
      
\draw (-1.5,1.5)--(1.5,-1.5);
\draw (-0.5,1.5)--(2.5,-1.5);
\draw (0.5,1.5)--(2.5,-0.5);
\draw (1.5,1.5)--(2.5,0.5);
\draw (-2.5,1.5)--(0.5,-1.5);
\draw(-2.5,0.5)--(-0.5,-1.5);
\draw (-2.5,-0.5)--(-1.5,-1.5);

\draw(-2.5,0)--(2.5,0);
\draw(-2.5,1)--(2.5,1);
\draw(-2.5,-1)--(2.5,-1);

\draw(0,-1.5)--(0,1.5);
\draw(1,-1.5)--(1,1.5);
\draw(2,-1.5)--(2,1.5);
\draw(-1,-1.5)--(-1,1.5);
\draw(-2,-1.5)--(-2,1.5);

\node[roundnode2, color=red] (n1) at (0,1/2){};
\node[roundnode2, color=red](n2) at(-1/3,1/3-0.01){};
\node[roundnode2, color=red](n3) at(-7/12,0){};
\node[roundnode2, color=red](n4) at(-9/12,-1/4){};
\node[roundnode2, color=red](n5) at(-1,-1/2){};
\node[roundnode2, color=red](n6) at(-1-4/12+1/100,-8/12-1/100){};

 \node[red]  at ([shift={(130:0.2)}]n1) { $\textbf{B}_{w_1}$};
 \node[red]  at ([shift={(160:0.2)}]n2) { $\textbf{B}_{w_2}$};
 \node[red]  at ([shift={(135:0.2)}]n3) { $\textbf{B}_{w_3}$};
  \node[red]  at ([shift={(-5:0.2)}]n4) { $\textbf{B}_{w_4}$};
   \node[red]  at ([shift={(-25:0.2)}]n5) { $\textbf{B}_{w_5}$};
    \node[red]  at ([shift={(-20:0.2)}]n6) { $\textbf{B}_{w_6}$};

\end{tikzpicture}  
    \caption{Exotic quantum difference equations for an alcove $\nabla$ are defined by choosing paths between $\nabla$ and $\nabla-\mathscr{L}$ for each line bundle $\mathscr{L}$. The ordinary quantum difference equation corresponds to the alcove containing small anti-ample line bundles.}
    \label{hyp2}
\end{figure}

To summarize what we have described so far, the usual quantum difference equations are defined using quasimaps and are identified using the algebra $\hopf$. The study of the exotic quantum difference equations follows the reverse logic. Namely, we define them using the algebra $\hopf$, and propose to relate them to quasimap counts.

We now provide a summary of our results. The main ideas are inspired by the work of Aganagic and Okounkov \cite{OkBethe}, which provides an alternative description of the capping operator $\Psi(z)$ from the one provided in (\ref{capintro}). Recall that in (\ref{capintro}), the operator $\Psi(z)$ is defined using the moduli space of quasimaps with relative condition of $0$ and nonsingular condition at $\infty$. The main result of \cite{OkBethe} allows one to replace a quasimap count with a relative insertion by a quasimap count with a descendant insertion. As we will review in Section \ref{stabdesc}, \cite{OkBethe} defines a map
\begin{align}\nonumber
   \textbf{f}^{\nabla}: K_{\bT}(X)& \to K_{\bT\times G_{\dv}}(pt)_{loc} \\ \label{descintro}
   \alpha &\mapsto \textbf{f}^{\nabla}_{\alpha}
\end{align}
where $G_{\dv}$ is the gauge group used in the definition of the quiver variety. The definition of the map uses stable envelopes, which are reviewed in Section \ref{stabdef}. Stable envelopes depend on a choice of chamber, polarization, and slope, each of which are explained in Section \ref{stabdef}. The map (\ref{descintro}) is built from a specific choice of chamber explained in Section \ref{stabdes}. Then \cite{OkBethe} proves that $\Psi(z)$ is the adjoint of the operator
\begin{align*}
\Omega: K_{\bT}(X) &\to  K_{\bT \times\mathbb{C}^{\times}_{q}}(X)_{loc}[[z]] \\
\alpha &\mapsto \sum_{d} \text{ev}_{\infty,*}\left(\qm_{\ns \, \infty}, \text{ev}_{0}^{*}(\textbf{f}^{\nabla_{0}}_{\text{opp},\alpha}) \otimes \vrs^{d} \right) z^d
\end{align*}
with respect to the pairing on equivariant $K$-theory. Here, $\textbf{f}^{\nabla_{0}}_{\text{opp},\alpha}$ refers to a map defined in the same way as $\textbf{f}^{\nabla_{0}}_{\alpha}$, but with the opposite choice of polarization and slope. Our main result is the following description of the solutions $\Psi^{\nabla}(z)$ of the exotic $q$-difference equations.

\begin{Theorem}[Theorem \ref{mainthm1}]\label{mainthmintro}
Let $\nabla \subset \Pic(X)\otimes \mathbb{R}$ be an alcove. Let $\Omega^{\nabla}$ be the operator 
\begin{align*}
\Omega^{\nabla}: K_{\bT}(X) &\to  K_{\bT \times\mathbb{C}^{\times}_{q}}(X)_{loc}[[z]] \\
\alpha &\mapsto \sum_{d} \text{ev}_{\infty,*}\left(\qm_{\ns \, \infty}, \text{ev}_{0}^{*}(\textbf{f}^{\nabla}_{\text{opp},\alpha}) \otimes \vrs^{d} \right) z^d
\end{align*}
We assume the polarization used to twist the virtual structure sheaf into the symmetrized virtual structure sheaf is $T^{1/2}_{\text{opp}}$. Let $(\Omega^{\nabla})^{\chi}$ be the adjoint of $\Omega^{\nabla}$ with respect to the bilinear pairing on equivariant $K$-theory. Then as elements of $\text{End}(K_{\bT\times\mathbb{C}^{\times}_{q}}(X)_{loc})[[z]]$, we have
$$
\left(\Omega^{\nabla}\right)^{\chi}=\Psi^{\nabla}
$$
\end{Theorem}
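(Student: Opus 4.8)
The plan is to deduce the exotic statement from the known case $\nabla = -\nabla_0$ (which is the result of \cite{OkBethe} recalled just above) by an induction on the number of walls separating $\nabla$ from $-\nabla_0$, the inductive step amounting to a single wall-crossing. So first I would fix two alcoves $\nabla$ and $\nabla'$ sharing a wall $w$, assume the theorem holds for $\nabla$, and prove it for $\nabla'$. The first task is purely algebraic: by Definition \ref{exoticqdeintro}, the operators $\textbf{B}^{\nabla}_{\mathscr{L}}(z)$ and $\textbf{B}^{\nabla'}_{\mathscr{L}}(z)$ differ by conjugation-type insertions of the single wall-crossing operator $\Bw_w(z)$, so the two systems of $q$-difference equations are related by $\Psi^{\nabla'}(z) = \Psi^{\nabla}(z)\, C_w(z)$ for an explicit $z$-dependent operator built from $\Bw_w(z)$ and its $q$-shifts (this is the standard ``gauge transformation between fundamental solutions'' argument: check that the right-hand side solves the $\nabla'$-system and has the correct value at $z=0$, using that solutions of such systems are unique given the initial condition). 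This reduces everything to identifying, on the geometric side, the effect of a single wall-crossing on the descendant map $\textbf{f}^{\nabla}_{\text{opp},\alpha}$ built from stable envelopes.

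The second, and main, task is therefore to understand how $\textbf{f}^{\nabla}_{\text{opp}}$ changes under a wall-crossing of slope, i.e. to relate $\textbf{f}^{\nabla'}_{\text{opp}}$ to $\textbf{f}^{\nabla}_{\text{opp}}$. Here I would invoke the wall-crossing formula for $K$-theoretic stable envelopes: stable envelopes of adjacent slopes differ by the action of the wall subalgebra, and on the level of the descendant map (\ref{descintro}) — which is assembled by pairing stable envelopes against fixed-point classes — this translates into precisely the operator $\Bw_w$ entering $\Omega^{\nabla}$. Concretely, I expect an identity of the form $\Omega^{\nabla'} = \Omega^{\nabla} \circ (\text{operator encoding } \Bw_w)$ at the level of the $z$-series, obtained by substituting the stable-envelope wall-crossing formula into the definition of $\Omega^{\nabla'}$ and recognizing the resulting quasimap count with a modified descendant as a composition with the wall-crossing operator. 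Taking adjoints with respect to the bilinear pairing (\ref{pair}) and using that the pairing is compatible with the Hopf structure — so that the adjoint of the wall-crossing insertion is exactly the $C_w(z)$ from the algebraic step — one gets $(\Omega^{\nabla'})^{\chi} = C_w(z)^{\chi,\,\text{appropriate side}}\,(\Omega^{\nabla})^{\chi} = \Psi^{\nabla}(z)\,C_w(z) = \Psi^{\nabla'}(z)$, closing the induction; the base case $\nabla=-\nabla_0$ is \cite{OkBethe}, and the assumption that the polarization is $T^{1/2}_{\text{opp}}$ is what makes the ``opp'' decorations match up throughout.

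The hard part will be the second task, and within it two technical points. First, bookkeeping the three pieces of data in the stable envelope — chamber, polarization, slope — through the wall-crossing, since the map $\textbf{f}^{\nabla}$ already uses a specific chamber and the \emph{opposite} polarization/slope, so one must be careful that the wall-crossing formula is applied with the correct opposite-side conventions and that the signs $m_i$ (negative signs of crossings) in Definition \ref{exoticqdeintro} match the direction in which the stable-envelope slope is being moved. Second, and more substantively, one must justify that the stable-envelope wall-crossing identity, which is an identity in localized $K$-theory of the fixed locus, survives being fed through the quasimap pushforward $\text{ev}_{\infty,*}$ and continues to make sense as an identity of power series in $z$ — i.e. that the wall-crossing operator $\Bw_w(z)$ that appears is genuinely the same $z$-dependent element of $\hopf[[z]]$ used in Definition \ref{exoticqdeintro}, not merely its leading term. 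I expect this to require a degeneration/gluing argument for the quasimap moduli spaces in the spirit of Section 8 of \cite{pcmilect}, analogous to how the ordinary quantum difference equation (\ref{qdeintro}) is proved, now with the descendant insertion replacing the relative condition.
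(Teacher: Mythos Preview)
Your approach differs substantially from the paper's, and the difficulty you flag in the second task is a real gap, not a technicality.

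The paper does not induct on walls; it argues directly for each alcove via rigidity in $q$. Degeneration and gluing factor $\Omega^{\nabla}$ through the capping operator, the glue matrix, and a relative count, which after taking adjoints shows that $(\Omega^{\nabla})^{\chi} \circ \ver^{\nabla}$ lands in integral $K$-theory and is hence a Laurent polynomial in $q$ termwise in $z$. After the shift $z \to z q^{\mathscr{L}_w}$ for a wall $w$ bounding $\nabla$, the Newton-polytope bound on the descendant (Proposition \ref{window}) together with the structure of the localization contributions (Lemmas \ref{locterms}, \ref{locterms2}) force both limits $q^{\pm 1} \to 0$ to exist; a Laurent polynomial with both limits finite is constant in $q$, and the $q \to \infty$ limit is directly seen to be $1$. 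Combined with the companion identity $\Psi^{\nabla} \circ \ver^{\nabla} = 1$ (Theorem \ref{mainthm1}, proved by the same method), this yields the result.

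Your algebraic step is correct (it is Proposition \ref{adjacent}). The gap is in the geometric step. The wall-crossing of stable envelopes relates $\textbf{f}^{\nabla}_{\text{opp}}$ and $\textbf{f}^{\nabla'}_{\text{opp}}$ through the wall $R$-matrix $R_w^{\pm}$ of (\ref{wallR}), which is $z$-\emph{independent}, whereas the relation you need between $(\Omega^{\nabla})^{\chi}$ and $(\Omega^{\nabla'})^{\chi}$ is via $\textbf{B}_w(z)$, a $z$-\emph{dependent} element built from $R_w^{-}$ only after solving the ABRR equation (\ref{abrr}) for $J_w(z)$ and applying the antipode as in (\ref{wallcross}). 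Inserting a $z$-independently modified descendant at the single marked point $0$ cannot by itself manufacture this dynamical $z$-dependence upon summing over quasimap degrees. The passage from $R_w$ to $\textbf{B}_w(z)$ on the quasimap side is exactly the content of the main theorem of \cite{OS} (Theorem \ref{qdeOSthm} here); your proposed ``degeneration/gluing with descendant replacing relative condition'' would have to reprove and extend that theorem to the descendant setting, which is at least as hard as what you are trying to establish. This is why the paper works alcove-by-alcove via rigidity rather than by wall induction.
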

The proof of the above theorem uses rigidity arguments similar to those of \cite{OkBethe}, results about the interactions between various quasimap counts defined in \cite{pcmilect}, and properties of $\textbf{B}_{w}(z)$.

As an easy consequence of the definition of the exotic $q$-difference equations, we have
$$
\Psi^{\nabla}(z)=\textbf{B}_{w}(z) \Psi^{\nabla'}(z)
$$
if $\nabla$ and $\nabla'$ are two alcoves separated by the wall $w$, with $\nabla'$ on the positive side of the wall. Hence, at least as formal power series in $z$, the wall crossing operators possess a direct geometric interpretation as the ratio of two solutions of exotic quantum difference equations. 

This theorem can be made very explicit for type $A$ quiver varieties. In this case, formulas for elliptic stable envelopes (and, by degeneration, $K$-theoretic stable envelopes for all slopes) were developed in \cite{dinkinselliptic}. These formulas generalize the formula of Smirnov in \cite{SmirnovElliptic} for the Hilbert scheme of points in the plane, which in turn generalizes various abelianization formulas going back to Shenfeld in \cite{Shenfeld}. This allows us to write explicit expressions for $\textbf{f}^{\nabla}_{\text{opp},\alpha}$ in the $K$-theoretic fixed point basis. One can also use equivariant localization on the moduli space of quasimaps to write explicit formulas for the operator $\Omega^{\nabla}$. Putting these together provides an explicit description of $\Psi^{\nabla}$. 

While this holds for an arbitrary type $A$ quiver variety, we have chosen to write this out explicitly in this thesis only for the Hilbert scheme of points in $\mathbb{C}^2$, which is the Nakajima quiver variety arising from the quiver with one vertex and one loop. We specialize to the case of this quiver because, to the best of our knowledge, this is the only affine type $A$ case for which the alcove structure in $\Pic(X)\otimes \mathbb{R}$ is well-understood, see \cite{OS}. Explicit formulas for the wall-crossing operators also exist for the cotangent bundle of the Grassmannian, see for example Section 7 of \cite{OS}. In this case, however, all alcoves are conjugate under the action of $\Pic(X)$, and the exotic solutions turn out to be easily determined by the capping operator. For an arbitrary (finite or affine) type $A$ quiver variety, the wall-crossing operators each coincide with the wall-crossing operators of $\text{Hilb}^{n}(\mathbb{C}^2)$ or of the cotangent bundle of the Grassmannian. However, the alcove structure in $\Pic(X)\otimes \mathbb{R}$ can be complicated, which poses a challenge with making these results completely explicit in this generality.

For the Jordan quiver case and the variety $X=\text{Hilb}^{n}(\mathbb{C}^2)$, the algebra $\hopf$ is expected to be isomorphic to the quantum toroidal $\mathfrak{gl}_1$, which we write as $\eha$. Its definition will be reviewed in Section \ref{qta}. Here, $\Pic(X)\cong \mathbb{Z}$ is generated by the tautological line bundle, and the walls are given by rational numbers. The walls for $X$ are given by rational numbers with denominator at most $n$, which we denote by $\Wall_n$. For a fixed wall $w \in \mathbb{Q}$, there are elements $\alpha^{w}_k\in \eha$ for $k \in \mathbb{Z}$, which generated a Heisenberg subalgebra. The wall-crossing operator for $w$ is given in terms of these generators as
$$
\textbf{B}_{w}(z)= :\exp\left(\sum_{k=1}^{\infty} \frac{n_k \hbar^{d(w)k/2}}{1-z^{-d(w)k} q^{n(w)k} \hbar^{d(w)k/2}} \alpha^{w}_{-k} \alpha^{w}_{k}\right):
$$
where $::$ means to apply all $\alpha^{w}_k$ for $k> 0$ before $\alpha^{w}_{k}$ for $k<0$. For an alcove $\nabla \subset \mathbb{R}\setminus \Wall_n$, the exotic quantum difference equations of Definition \ref{exoticqdeintro} are given by
$$
\Psi^{\nabla}(z q^{\mathscr{L}}) \mathscr{L}= \left(\mathscr{L}\rprod_{w \in [s-\mathscr{L},s)\cap \Wall_{n}} \textbf{B}_{w}(z) \right)\Psi^{\nabla}(z)
$$
where $s \in \nabla$ and the product is taken in increasing order from left to right. 

We will give formulas for the solutions $\Psi^{\nabla}$ in the $K$-theoretic fixed point basis. We refer to the main body of this thesis, in particular Chapter \ref{CH3} and Chapter \ref{CH2}, for precise descriptions of the notations. The torus fixed points of $\text{Hilb}^{n}(\mathbb{C}^{2})$ are given by partitions $\lambda$ of $n$. The descendant insertions are given by the following.
\begin{Proposition}[Proposition \ref{hilbdesc}]
In the $K$-theoretic fixed point basis, the descendant insertions of (\ref{desc}) are given by\footnote{In the main body of the text, what is called $\textbf{f}^{\nabla}_{\text{opp},\lambda}$ here is denoted by $\textbf{g}^{\nabla}_{\text{opp},\lambda}$, and $\textbf{f}^{\nabla}_{\text{opp}}$ is reserved for a map $K_{\bT}(X) \to K_{\bT \times G_{\dv}}(T^* \text{Rep}_{Q}(\dv,\dw))_{loc}$. The two are related by postcomposing with the restriction to $0 \in T^* \text{Rep}_{Q}(\dv,\dw)$.}
$$
\textbf{f}^{\nabla}_{\text{opp},\lambda}=\sum_{\mu} \Lambda^{\bullet}\left(T_{\mu}X\right)^{-1} \textbf{S}^{\nabla}_{\lambda,\mu} (\det T^{1/2})^{1/2} \sym\left(S^{K}_{\lambda} \sum_{\tr \in \lt} \trwt_{\tr}^{K,\nabla} \right) \in K_{\bT \times G_{\dv}}(pt)_{loc}
$$
where  
$$
\textbf{S}^{\nabla}_{\lambda,\mu}=\stab_{-\mathfrak{C},T^{1/2}_{\text{opp}},-\nabla}(\lambda)|_{\mu}
$$
and
\begin{align*}
    S_{\lambda}^{K}&=\frac{\prod_{\substack{a,b, \in \lambda  \\ \rho_a+1 < \rho_b}} \ahat\left( \frac{ x_a}{t_{1} x_b}\right) \prod_{\substack{a,b, \in \lambda \\ \rho_b < \rho_a +1}} \ahat\left( \frac{ x_b}{t_{2} x_a}\right)  \prod_{\substack{a \in \lambda \\ \rho_a \leq \rho_{r} }} \ahat(x_a) \prod_{\substack{a \in \lambda \\  \rho_{r} < \rho_a }} \ahat\left(\frac{1}{t_1 t_2 x_a}\right) }{ \prod_{\substack{a,b\in\lambda \\ \rho_a<\rho_b}} \ahat\left(\frac{x_a}{x_b}\right) \ahat\left(\frac{x_a}{t_1 t_2 x_b}\right)}\\
    \trwt^{K,\nabla}_{\tr}&= (-1)^{\kappa(\tr)}\left(\frac{x_r}{\varphi^{\lambda}_{r}}\right)^{\lfloor |\lambda |s \rfloor +1/2}\ahat\left(\frac{x_r}{\varphi^{\lambda}_{r}}\right)^{-1} \prod_{e \in \tr}\ahat\left(\frac{x_{h(e)}  \varphi^{\lambda}_{t(e)}}{x_{t(e)}\varphi^{\lambda}_{h(e)}}\right)^{-1}  \left( \frac{x_{h(e)} \varphi^{\lambda}_{t(e)}}{x_{t(e)}\varphi^{\lambda}_{h(e)}}\right)^{\lfloor l_{e} s \rfloor +1/2}
    \end{align*} 
    for $s \in \nabla$.
\end{Proposition}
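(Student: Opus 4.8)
The plan is to trace the descendant map $\textbf{f}^{\nabla}_{\text{opp}}$ back through its definition in terms of $K$-theoretic stable envelopes and then to substitute the explicit abelianization formula for those envelopes. First I would unwind the definition from Section~\ref{stabdes}: following \cite{OkBethe}, the class $\textbf{g}^{\nabla}_{\text{opp},\lambda}$ is obtained from the stable envelope $\stab_{-\mathfrak{C},T^{1/2}_{\text{opp}},-\nabla}(\lambda)\in K_{\bT}(X)$ essentially by choosing an abelianized representative, namely a symmetric Laurent expression in the Chern roots $x_{a}$ of the tautological bundle, and restricting it to $0\in T^{*}\text{Rep}_{Q}(\dv,\dw)$; the symmetrization convention for the virtual structure sheaf attached to the polarization $T^{1/2}_{\text{opp}}$ produces the twist $(\det T^{1/2})^{1/2}$. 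Expanding the same stable envelope in the fixed-point basis of $X$ introduces exactly the matrix coefficients $\textbf{S}^{\nabla}_{\lambda,\mu}=\stab_{-\mathfrak{C},T^{1/2}_{\text{opp}},-\nabla}(\lambda)|_{\mu}$ together with the localization denominators $\Lambda^{\bullet}(T_{\mu}X)^{-1}$. Thus the proposition reduces to identifying the off-shell representative of the stable envelope with $\sym\big(S^{K}_{\lambda}\sum_{\tr\in\lt}\trwt^{K,\nabla}_{\tr}\big)$.

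For the latter I would invoke the explicit elliptic stable envelope formula for $\text{Hilb}^{n}(\mathbb{C}^{2})$ of \cite{SmirnovElliptic}, extended to arbitrary slope in \cite{dinkinselliptic}, together with its $K$-theoretic degeneration worked out in Chapter~\ref{CH3}. That formula presents the elliptic envelope as a symmetrization over $S_{n}$ of a sum over the tree structures $\tr\in\lt$ on the boxes of $\lambda$, each summand a product of theta functions in ratios of the $x_{a}$, the equivariant parameters $t_{1},t_{2}$, and the K\"ahler parameter, the last entering to the fractional powers needed for the expression to be an honest section over the elliptic curve. In the degeneration each theta function becomes the symmetrized factor $\ahat$, with the accompanying signs collected into $(-1)^{\kappa(\tr)}$ (coming from the oddness of $\vartheta$ and the tree combinatorics); the K\"ahler-independent part becomes the abelianization kernel $S^{K}_{\lambda}$, including the universal denominator $\prod_{\rho_{a}<\rho_{b}}\ahat(x_{a}/x_{b})\,\ahat(x_{a}/(t_{1}t_{2}x_{b}))$ which makes $\sym$ behave correctly; and the K\"ahler-dependent part becomes the tree weights $\trwt^{K,\nabla}_{\tr}$. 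The slope enters exactly here: specializing the K\"ahler elliptic parameter along the direction attached to $\nabla$ and passing to the $K$-theoretic limit converts the fractional powers of the K\"ahler parameter into monomial twists, producing the integer exponents $\lfloor |\lambda|s\rfloor+1/2$ and $\lfloor l_{e}s\rfloor+1/2$ for $s\in\nabla$ on the monomials $x_{r}/\varphi^{\lambda}_{r}$ and $x_{h(e)}\varphi^{\lambda}_{t(e)}/(x_{t(e)}\varphi^{\lambda}_{h(e)})$, where $\varphi^{\lambda}_{r}$ is the $\bT$-weight of box $r$ at the fixed point $\lambda$ about which the envelope is organized. Since these floors are constant on alcoves the expression is independent of the choice of $s\in\nabla$, and restricting it to a fixed point $\mu$ returns $\textbf{S}^{\nabla}_{\lambda,\mu}$; assembling the two computations yields the claimed identity.

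The hard part is not conceptual but careful bookkeeping. One must align the chamber $-\mathfrak{C}$, polarization $T^{1/2}_{\text{opp}}$, and slope $-\nabla$ used to define $\textbf{f}^{\nabla}_{\text{opp}}$ with the conventions in which \cite{dinkinselliptic} states its formula; track the half-integral twist $(\det T^{1/2})^{1/2}$ and the signs $(-1)^{\kappa(\tr)}$ consistently through the degeneration; and, most delicately, verify that the degeneration of the K\"ahler-parameter dependence reproduces the floor functions with precisely the linear functionals $|\lambda|s$ and $l_{e}s$ in the statement, with the correct behavior at the walls in $\Wall_{n}$ and the correct $+1/2$ shifts inherited from the self-dual normalization of the stable envelope.
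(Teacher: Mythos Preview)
Your proposal has a genuine conceptual gap. You treat $\textbf{g}^{\nabla}_{\text{opp},\lambda}$ as if it were simply the abelianized (``off-shell'') representative of the stable envelope $\stab_{-\mathfrak{C},T^{1/2}_{\text{opp}},-\nabla}(\lambda)$ on $X=\mathcal{M}(n,1)$, with the sum over $\mu$ arising from a fixed-point expansion of that class. But by the definition in Section~\ref{stabdes}, the descendant is built from the stable envelope of the \emph{enlarged} variety $\mathcal{M}(n,1+n)$ for the one-dimensional torus $\mathsf{U}$, pulled back along $\iota$ and divided by $\Delta_{\hbar}$. Nothing in your outline explains why that object should coincide with a Chern-root lift of a stable envelope on $X$, nor why the opposite-convention matrix $\textbf{S}^{\nabla}_{\lambda,\mu}=\stab_{-\mathfrak{C},T^{1/2}_{\text{opp}},-\nabla}(\lambda)|_{\mu}$ should appear: restricting the tree formula for $\lambda$ to $\mu$ gives the matrix of $\stab_{\mathfrak{C},T^{1/2},\nabla}$, not its opposite.

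The paper's argument supplies exactly the missing mechanism. Two applications of the triangle lemma factor $\stab_{\mathsf{U},\nabla}=\stab_{\bT\times\bA',\nabla}\circ\stab_{\bT,\nabla}^{-1}$, so that restriction to $\star$ gives $\sum_{\mu}\stab_{\bT}^{-1}([\lambda])|_{\mu}\cdot\stab_{\bT\times\bA'}([\mu])|_{\star}$. The \emph{inverse} in the first factor is what, via the duality of stable envelopes (Section~9.1.17 of \cite{pcmilect}), produces the opposite-chamber, opposite-polarization, opposite-slope coefficients $\Lambda^{\bullet}(T_{\mu}X)^{-1}\textbf{S}^{\nabla}_{\lambda,\mu}$. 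The second factor is a stable envelope on $\mathcal{M}(n,1+n)$ restricted to the special point $\star$; under the substitution $u_{i+1}=x_i$ the extra framings contribute an additional factor, and the paper checks by direct comparison of $S_1,S_2,S_3$ and the polarization terms that this equals $\Delta_{\hbar}$ times the tree formula for $\mathcal{M}(n,1)$ with the Chern roots left as variables. The $\Delta_{\hbar}$ then cancels against the $\Delta_{\hbar}^{-1}$ in the definition of $\textbf{f}^{\nabla}$. Your discussion of the elliptic-to-$K$-theory degeneration and the appearance of the floor exponents is correct and relevant (it is exactly Theorem~\ref{Ktreeformula}), but it only identifies the \emph{second} factor; the first factor, and the reason a sum over $\mu$ with those particular coefficients appears at all, requires the triangle-lemma/duality argument you omit.
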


The solutions of the exotic $q$-difference equations are given as follows.

\begin{Theorem}[Theorem \ref{hilbsolution}]
In the $K$-theoretic fixed point basis, the matrix given by
$$
\Psi^{\nabla}(\mu)|_{\lambda}=\sum_{d \in C_{\mu}} \left(\sqrt{t_1 t_2} z\right)^{|d|} \textbf{f}^{\nabla}_{\text{opp},\lambda}(x)|_{x_i=\varphi_i^{\mu} q^{-d_i}} \prod_{i,j=1}^{n} \frac{\left(\frac{\varphi^{\mu}_j}{\varphi^{\mu}_i t_1}\right)_{d_i-d_j}}{\left(q t_2 \frac{ \varphi^{\mu}_j}{ \varphi^{\mu}_i}\right)_{d_i-d_j}} \frac{\left(q \frac{\varphi^{\mu}_j}{\varphi^{\mu}_i}\right)_{d_i-d_j}}{\left(\frac{\varphi^{\mu}_j}{ \varphi^{\mu}_i  t_1 t_2}\right)_{d_i-d_j}} \prod_{i=1}^{n}\frac{\left( \frac{1}{\varphi^{\mu}_i t_1 t_2}\right)_{d_i}}{\left(\frac{q}{ \varphi^{\mu}_i}\right)_{d_i}} 
$$
is a solution of the equation
$$
\Psi^{\nabla}(zq) \mathscr{L} =\mathscr{L} \textbf{B}^{\nabla}_{\mathscr{L}}(z) \Psi^{\nabla}(z)
$$
where $\mathscr{L}$ is the dual of the tautological line bundle, $\textbf{f}^{\nabla}_{\text{opp},\lambda}$ is given explicitly by (\ref{gdesc}), $\{\varphi^{\mu}_{i}\}_{i=1}^{n}$ are the weights of the fiber over $\mu$ of the tautological vector bundle on $X$, $(x)_{d}$ denotes the $q$-Pochammer symbol, and $C_{\mu}$ is the cone in $\mathbb{Z}^{n}$ consisting of the possible quasimap degrees.
\end{Theorem}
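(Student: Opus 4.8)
The plan is to combine the two explicit formulas that precede this theorem with the operator-theoretic identification furnished by Theorem~\ref{mainthmintro}. By that theorem, $\Psi^{\nabla}=(\Omega^{\nabla})^{\chi}$, so it suffices to compute the matrix of the adjoint of $\Omega^{\nabla}$ in the $K$-theoretic fixed point basis and check it equals the claimed expression. First I would unwind the definition of $\Omega^{\nabla}$: it is a sum over quasimap degrees $d$ of pushforwards $\mathrm{ev}_{\infty,*}$ from $\qm_{\ns\,\infty}$ of the descendant insertion $\mathrm{ev}_0^*(\textbf{f}^{\nabla}_{\text{opp},\alpha})$ twisted by the symmetrized virtual structure sheaf. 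The moduli space of quasimaps from $\mathbb{P}^1$ to $\mathrm{Hilb}^n(\mathbb{C}^2)$ that are nonsingular at $\infty$ admits a well-known description (as in \cite{pcmilect}), and the $\bT\times\mathbb{C}^{\times}_q$-fixed loci are indexed by the fixed point $\mu$ at $\infty$ together with the degree vector $d\in C_\mu$; equivariant localization then turns $\mathrm{ev}_{\infty,*}$ into a sum over these fixed loci with the standard ratio-of-Pochhammer weights coming from the virtual normal bundle. That localization computation is exactly what produces the products
\[
\prod_{i,j=1}^{n} \frac{\left(\frac{\varphi^{\mu}_j}{\varphi^{\mu}_i t_1}\right)_{d_i-d_j}}{\left(q t_2 \frac{ \varphi^{\mu}_j}{ \varphi^{\mu}_i}\right)_{d_i-d_j}} \frac{\left(q \frac{\varphi^{\mu}_j}{\varphi^{\mu}_i}\right)_{d_i-d_j}}{\left(\frac{\varphi^{\mu}_j}{ \varphi^{\mu}_i  t_1 t_2}\right)_{d_i-d_j}} \prod_{i=1}^{n}\frac{\left( \frac{1}{\varphi^{\mu}_i t_1 t_2}\right)_{d_i}}{\left(\frac{q}{ \varphi^{\mu}_i}\right)_{d_i}},
\]
together with the degree-counting monomial $(\sqrt{t_1t_2}\,z)^{|d|}$, the $\sqrt{t_1t_2}$ accounting for the symmetrization of the virtual structure sheaf with polarization $T^{1/2}_{\text{opp}}$.

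Next I would handle the descendant term. The insertion $\mathrm{ev}_0^*(\textbf{f}^{\nabla}_{\text{opp},\alpha})$ restricted to a fixed locus is obtained by substituting the tautological Chern roots $x_i$ by their values along that quasimap component; on the component of degree $d$ over $\mu$ these roots are precisely $x_i=\varphi^{\mu}_i q^{-d_i}$, which is where the substitution $x_i=\varphi_i^{\mu}q^{-d_i}$ in the stated formula comes from. Plugging into Proposition~\ref{hilbdesc} gives $\textbf{f}^{\nabla}_{\text{opp},\lambda}(x)|_{x_i=\varphi_i^{\mu}q^{-d_i}}$ as the coefficient extracting the $\lambda$-component; here one must be careful that the adjoint with respect to the bilinear $K$-theoretic pairing exchanges the roles of $\lambda$ and $\mu$ correctly, so that $\Psi^{\nabla}(\mu)|_{\lambda}$ pairs the descendant labelled by $\lambda$ against the fixed locus at $\mu$. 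Assembling the localization weights, the degree monomial, and the substituted descendant yields exactly the matrix in the statement; what remains is then purely formal, namely verifying that this matrix satisfies the exotic $q$-difference equation
\[
\Psi^{\nabla}(zq)\,\mathscr{L}=\mathscr{L}\,\textbf{B}^{\nabla}_{\mathscr{L}}(z)\,\Psi^{\nabla}(z),
\]
but this is automatic once the identification with $(\Omega^{\nabla})^{\chi}$ is in place, since $\Psi^{\nabla}$ is by definition the fundamental solution of that system.

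The main obstacle I anticipate is bookkeeping rather than conceptual: correctly tracking all the square-root and sign conventions (the $(\det T^{1/2})^{1/2}$ factor, the $\sqrt{t_1t_2}$ in the degree monomial, the $(-1)^{\kappa(\tr)}$ signs, and the half-integer exponents $\lfloor |\lambda|s\rfloor+1/2$) through equivariant localization and through the adjoint operation, and making sure the polarization $T^{1/2}_{\text{opp}}$ is used consistently everywhere it appears. A secondary technical point is justifying that the pushforward $\mathrm{ev}_{\infty,*}$ can be computed by localization on the (generally singular, but virtually smooth) quasimap moduli space and that the virtual normal bundle contributions factor into the displayed products; this is standard for $\mathrm{Hilb}^n(\mathbb{C}^2)$ and follows from the analysis in \cite{pcmilect}, so I would cite it rather than redo it. Once these conventions are pinned down, the proof is a direct assembly: localization for $\Omega^{\nabla}$, substitution of tautological roots for the descendant, and transposition for the adjoint, with the $q$-difference equation inherited for free from Theorem~\ref{mainthmintro}.
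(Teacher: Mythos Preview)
Your proposal is correct and follows essentially the same route as the paper: compute $\Omega^{\nabla}$ by $\bT\times\mathbb{C}^{\times}_q$-localization on $\qm_{\ns\,\infty}$ (which the paper does in the Proposition immediately preceding the theorem, obtaining exactly the Pochhammer products and the substitution $x_i=\varphi_i^{\mu}q^{-d_i}$), then observe that in the fixed-point basis the $\chi$-adjoint is simply the transpose, so $\Psi^{\nabla}_{\lambda,\mu}=\Omega^{\nabla}([\lambda])|_{\mu}$, and finally invoke Theorem~\ref{mainthm2} so that the exotic $q$-difference equation is inherited automatically. The paper's proof is terser only because the localization step is packaged into the preceding Proposition; your anticipation that the adjoint is just a transpose and that the bookkeeping of polarizations and signs is the main care point matches exactly how the argument goes.
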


The above formula can be rewritten as a contour integral. Integral solutions to differential and difference equations were originally studied by Tarasov and Varchenko in \cite{TV6,TV3, TV2, TV5,TV4}. Integral formulas for solutions can be used to find eigenvectors of the operator $\left(\mathscr{L} \textbf{B}^{\nabla}_{\mathscr{L}}(z)\right)\big|_{q=1}$, as in \cite{TV4}. This operator is expected to be an element of the Bethe subalgebra of $\eha$ corresponding to the $R$-matrix $\mathscr{R}^{\nabla}$ for the alcove $\nabla$. The cohomological version of this statement is one of the main results of \cite{MO}. The eigenvectors can be obtained by the so-called saddle point approximation. This was explored in \cite{Pushk1} for the cotangent bundle of the Grassmannian, see also the introduction to \cite{OkBethe}. Although there are certain technical issues that we leave as conjectures, we show that, modulo these conjectures, the functions $\textbf{f}^{\nabla}_{\text{opp},\lambda}$, when evaluated at solutions of the Bethe equations, provide an eigenbasis for the Bethe subalgebra corresponding to the alcove $\nabla$. These eigenvectors provide a 1-parameter deformation of the modified Macdonald polynomials dependent on the choice of alcove.

We end this introduction by summarizing the contents of each chapter. In Chapter 2, we review various foundational constructions that we will need, including Nakajima quiver varieties, stable envelopes in equivariant $K$-theory, and the enumerative geometry of quasimaps. In chapter 3, we begin by reviewing the definition of the algebra $\hopf$, the construction of the wall-crossing operators $\textbf{B}_{w}(z)$, various properties of $\textbf{B}_{w}(z)$, and the definition of $\textbf{f}^{\nabla}_{\alpha}$. We then define the exotic $q$-difference equations and study their solutions. The main result, Theorem \ref{mainthmintro}, is stated precisely and proven. In Chapter 4, we begin the task of making Theorem \ref{mainthmintro} explicit for $\text{Hilb}^{n}(\mathbb{C}^{2})$ using the formulas for stable envelopes obtained in \cite{dinkinselliptic}. In Chapter 5, we specialize to $\text{Hilb}^{n}(\mathbb{C}^2)$ and provide explicit formulas for various quasimap counts, including the solutions of the exotic $q$-difference equations. We also review the definition of the algebra $\eha$ and write explicit formulas for the exotic $q$-difference equations. In Chapter 6, we apply our results to find the eigenvectors of the Bethe subalgebras of $\eha$ for arbitrary choice of slope.

\chapter{Quiver Varieties, Stable Envelopes, and Quasimap Counts}\label{CH0}
Before we begin our investigation of $q$-difference equations, we need to review some preliminaries.

\section{Nakajima quiver varieties}

Nakajima quiver varieties were first introduced in \cite{NakALE} and \cite{NakQv} to provide a geometric construction of representations of various algebras. We review their definition here. For references, see \cite{GinzburgLectures, MO, NakALE, NakQv}.

\subsection{Definition}\label{nakdef}

Let $Q$ be a quiver with vertex set $I$, which we understand to be a collection of vertices and arrows between them. If $i,j \in I$ and there is an arrow from $i$ to $j$, we write $i\to j$. Let $\dv,\dw \in \mathbb{Z}_{\geq 0}^{I}$ and for each $i \in I$ let $V_i$ and $W_i$ be complex vector spaces such that $\dim_{\mathbb{C}} V_i = \dv_i$ and $\dim_{\mathbb{C}} W_i=\dw_i$. The vectors $\dv$ and $\dw$ are called the dimension and framing dimension vectors, respectively. Let 
$$
\text{Rep}_{Q}(\dv,\dw)= \bigoplus_{i \to j} \text{Hom}(V_i, V_j) \oplus \bigoplus_{i \in I} \text{Hom}(W_i,V_i)
$$
The group $G_{\dv}:=\prod_{i \in I} GL(V_i)$ acts on $\text{Rep}(\dv,\dw)$ by change of basis. Explicitly, if $A_{i,j} \in \text{Hom}(V_i,V_j)$ is a component of a point in $\text{Rep}_{Q}(\dv,\dw)$, then an element $(g_i,g_j) \in GL(V_i)\times GL(V_j) \subset G_{\dv}$ acts via
$$
A_{i,j} \mapsto g_{j} A_{i,j} g_{i}^{-1}
$$
and the other components of $G_{\dv}$ act trivially. Additionally, if $I_i \in \text{Hom}(W_i,V_i)$, then $g_i \in GL(V_i)$ acts via
$$
I_i \mapsto g_i I_i
$$
and the remaining components act trivially. This action of $G_{\dv}$ on $\text{Rep}_{Q}(\dv,\dw)$ induces a Hamiltonian action of $G_{\dv}$ on $T^*\text{Rep}_{Q}(\dv,\dw)$. We denote the associated moment map by
$$
\mu: T^*\text{Rep}_{Q}(\dv,\dw)\to  \mathfrak{g}_{\dv}^*, \quad \mathfrak{g}_{\dv}:=\Lie(G_{\dv})
$$
and let $\mu^{-1}(0)=\mathscr{Z}(\dv,\dw)$. 

\begin{Definition}
Let $Q$, $\dv$, and $\dw$ be as above. Let $\theta: G_{\dv} \to \mathbb{C}^{\times}$ be a character of $G_{\dv}$. The Nakajima quiver variety associated to this data is
$$
\mathcal{M}_{Q,\theta}(\dv,\dw)=\mathscr{Z}(\dv,\dw) /\!\!/_{\theta} G_{\mathsf{v}}
$$
Here, the notation $\mathscr{Z}(\dv,\dw)/\!\!/_{\theta} G_{\dv}$ denotes the Geometric Invariant Theory quotient with stability condition $\theta$.
\end{Definition}

We unravel this definition to write it more explicitly. Using the perfect pairing
\begin{align*}
\text{Hom}(V,V') \times \text{Hom}(V',V)\to \mathbb{C}, \quad
   (f,f') \mapsto  \text{Tr}_{V}(f'f)
\end{align*}
we can identify
$$
\text{Hom}(V_i,V_j)^* \cong \text{Hom}(V_j,V_i)
$$
for each $i\to j$. Hence 
$$
T^*\text{Rep}_{Q}(\dv,\dw) \cong \bigoplus_{i \to j} \text{Hom}(V_i, V_j) \oplus \bigoplus_{i \to j} \text{Hom}(V_j, V_i) \oplus \bigoplus_{i \in I} \text{Hom}(W_i,V_i)   \oplus \bigoplus_{i \in I} \text{Hom}(V_i,W_i)
$$
So a point in $T^*\text{Rep}_{Q}(\dv,\dw)$ can be written as a tuple 
\begin{align*}
&\left(\{A_{i,j}\}_{i\to j}, \{B_{j,i}\}_{i\to j}, \{I_i\}_{i \in I},\{J_{i}\}_{i \in I}\right) \quad \text{where} \\
&A_{i,j} \in \text{Hom}(V_i,V_j), \quad B_{j,i} \in \text{Hom}(V_j,V_i),\quad  I_i \in \text{Hom}(W_i,V_i), \quad J_i \in \text{Hom}(V_i,W_i)
\end{align*}
We abbreviate this as $(A,B,I,J)$. Then the moment map can be explicitly written as
$$
\mu(A,B,I,J)=\left(\sum_{j\to i}A_{j,i}B_{i,j}-\sum_{i\to j}B_{j,i} A_{i,j} + I_i J_i \right)_{i \in I} \in \bigoplus_{i \in I} \mathfrak{gl}_{\dv_i}= \mathfrak{g}_{\dv} \cong \mathfrak{g}_{\dv}^*
$$
where the isomorphism $\mathfrak{g}_{\dv} \cong \mathfrak{gl}_{\dv_i}^*$ is given by the trace pairing.

The stability condition $\theta$ provides a set of $\theta$-stable points $\mathscr{Z}(\dv,\dw)^{s}\subset \mathscr{Z}(\dv,\dw)$, and set theoretically
$$
\mathcal{M}_{Q,\theta}(\dv,\dw)=\mathscr{Z}(\dv,\dw)^{s}/G_{\dv}
$$
An explicit description of the $\theta$-stable points can be found in \cite{GinzburgLectures} Proposition 5.1.5.

If the choice of $Q$ and $\theta$ has been made, we will often suppress the notation and just write $\mathcal{M}(\dv,\dw)$.

\subsection{Basic properties}\label{nakprop}
For generic choices of the stability condition $\theta$, the variety $\mathcal{M}_{Q,\theta}(\dv,\dw)$ is a smooth connected variety equipped with a symplectic form $\omega$, which is induced by the standard symplectic form on $T^*\text{Rep}_{Q}(\dv,\dw)$. Explicitly, the symplectic form is given by
\begin{equation}\label{symp}
\omega((A,B,I,J),(A',B',I',J'))=\sum_{i\to j} \text{Tr}\left(A_{i,j} B'_{j,i}-A'_{i,j} B_{j,i}\right)+\sum_{i \in I} \text{Tr}\left(I_iJ_i'-I_i'J_i \right)
\end{equation}
By the general theory of GIT quotients, there is a natural proper map
$$
\mathcal{M}_{Q,\theta}(\dv,\dw) \to \mathcal{M}_{Q,0}(\dv,\dw)
$$
to an affine variety, which makes $\mathcal{M}_{Q,\theta}(\dv,\dw)$ into an equivariant symplectic resolution.

The group $\prod_{i \in I} GL(W_i)$ acts naturally on the framing vector spaces $W_i$. This induces an action on $T^*\text{Rep}_{Q}(\dv,\dw)$. This action commutes with the action of $G_{\dv}$ and hence descends to an action on $\mathcal{M}_{Q,\theta}(\dv,\dw)$. The diagonal torus $\bA_{\dw} \subset \prod_{i \in I} GL(W_i)$ is called the framing torus. From (\ref{symp}), it is easy to see that it preserves the symplectic form.

There is also an action of $\mathbb{C}^{\times}$ on $T^*\text{Rep}(\dv,\dw)$ given by scaling of the cotangent fibers, which likewise descends to an action on $\mathcal{M}_{Q,\theta}(\dv,\dw)$. From (\ref{symp}), it is clear that this action scales the symplectic form. We denote by $\hbar$ the character of the symplectic form under this action. We denote this torus by $\mathbb{C}^{\times}_{\hbar}$.

Further torus actions can be obtained by scaling the linear maps corresponding to edges as in Section 2.1.3 of \cite{MO}.

A crucial property of quiver varieties is that fixed points of the framing torus are the disjoint union of products of quiver varieties. To write this precisely, choose a splitting $W\cong W'\oplus W''$ of the total framing space $W=\bigoplus_{i} W_i$. Let $\dw'$ and $\dw''$ be the dimension vectors of $W'$ and $W''$, respectively. Consider the subtorus $\bA:=\mathbb{C}^{\times} \subset \bA_{\dw}$ which acts with weight $1$ on $W'$ and acts trivially on $W''$. Then
\begin{equation}\label{tensor}
\mathcal{M}_{Q,\theta}(\dv,\dw)^{\bA}\cong \bigsqcup_{\substack{\dv',\dv''\\ \dv'+\dv''=\dv}} \mathcal{M}_{Q,\theta}(\dv',\dw') \times \mathcal{M}_{Q,\theta}(\dv'',\dw'')
\end{equation}
We abbreviate this by saying that the torus $\bA$ splits the framing as $\dw=u \dw'+\dw''$, where $u$ is the coordinate on $\bA$.

The vector spaces $V_i$ descend to a collection of tautological bundles $\mathscr{V}_i$ on $\mathcal{M}_{Q,\theta}(\dv,\dw)$. A crucial result of \cite{kirv} states that the Schur functors of these bundles generate the $K$-theory of $\mathcal{M}_{Q,\theta}(\dv,\dw)$.

\subsection{Equivariant $K$-theory}\label{ktheory}

Let $X$ be a variety, and let $\bT\cong (\mathbb{C}^{\times})^{n} \subset \text{Aut}(X)$ be a torus acting on $X$. The $\bT$-equivariant $K$-theory is a contravariant functor on $\bT$-spaces, and makes $K_{\bT}(X)$ a module over $K_{\bT}(pt)=\mathbb{Z}[a_1^{\pm 1},\ldots,a_n^{\pm 1}]$. The generator $a_i$ is viewed as the character of $\bT$ picking out the $i$th coordinate. We define maps $\Lambda^{\bullet}$ and $\ahat$ on $K_{\bT}(pt)$ by
\begin{align}\nonumber
\Lambda^{\bullet}(x_1+\ldots+x_k-y_1-\ldots-y_l)&=\frac{\prod_{i=1}^{k}(1-x_i)}{\prod_{j=1}^{l}(1-y_j)} \\ \label{ahatdef}
\ahat(x_1+\ldots+x_k-y_1-\ldots-y_l)&=\frac{\prod_{j=1}^{l}(y^{1/2}-y^{-1/2})}{\prod_{i=1}^{k}(x^{1/2}-x^{-1/2})}
\end{align}
where $x_i$ and $y_j$ are monomials in $a_1,\ldots,a_n$.

The fraction field $\text{Frac} \left(K_{\bT}(pt)\right)$ is the field of rational functions $\mathbb{Q}(a_1,\ldots,a_n)$. We define the localized $\bT$-equivariant $K$-theory of $X$ to be 
$$
K_{\bT}(X)_{loc}=K_{\bT}(X) \otimes_{K_{\bT}(pt)} \text{Frac}\left(K_{\bT}(pt)\right)
$$ 

Let $\iota_{X}: X^{\bT} \to X$ be the inclusion of the fixed locus. Let $X^{\bT}=\bigsqcup_{k} F_{k}$ be the decomposition into connected components. Then
$$
K_{\bT}(X^{\bT})\cong\bigoplus_{k} K_{\bT}(F_k) \cong \bigoplus_{k} K(F_k)\otimes K_{\bT}(pt)
$$
The inclusion $\iota_{X}$ induces pushforward and pullback (or restriction) maps
\begin{align*}
    \iota_{X,*}&: K_{\bT}(X^{\bT}) \to K_{\bT}(X) \\
    \iota_{X}^{*}&: K_{\bT}(X) \to K_{\bT}(X^{\bT})
\end{align*}
and similarly for the inclusion $\iota_{F_k}$ of a fixed component $F_k$. We have $\iota_{X,*}=\sum_{k} \iota_{F_k,*}$ and $\iota_{X}^{*}=\bigoplus_{k} \iota_{F_k}^{*}$. For a class $\alpha \in K_{\bT}(X)$, we will also write $\alpha|_{F_k}:= \iota_{F_k}^{*}(\alpha)$. The localization theorem in equivariant $K$-theory states that $\iota_{X,*}: K_{\bT}(X^{\bT})_{loc} \to K_{\bT}(X)_{loc}$ is an isomorphism. Its inverse is given explicitly by
$$
\iota_{X,*}^{-1}=\bigoplus_{k}\frac{1}{\Lambda^{\bullet}(N_{F_k}^{*})} \iota_{F_k}^*
$$
where $N_{F_k}$ is the normal bundle of $F_k$ in $X$, and $N_{F_k}^*$ denotes the dual bundle.

Given another $\bT$-variety $Y$ and a proper $\bT$-equivariant map $f: X\to Y$, there exists a pushforward
$$
f_{*}: K_{\bT}(X) \to K_{\bT}(Y)
$$
Let $f^{\bT}$ be the restriction of $f$ to $X^{\bT}$. The commutativity of the diagram
\begin{equation*}
\begin{tikzcd}
X^{\bT} \arrow[labels= above, "\iota_{X}", r]  \arrow[labels=left,"f^{\bT}",d] & X \arrow["f",d] \\
Y^{\bT} \arrow[r,"\iota_{Y}"] & Y^{}
\end{tikzcd}
\end{equation*}
implies that $f_{*}\circ \iota_{X,*}=\iota_{Y,*}\circ f^{\bT}_{*}$. Hence \begin{equation}\label{locpush}
    f_{*}= \iota_{Y,*} \circ f^{\bT}_{*} \circ \iota_{X,*}^{-1}
\end{equation}
If $f$ is not proper, but $f^{\bT}$ is, then we define the pushforward $f_{*}:K_{\bT}(X) \to K_{\bT}(Y)_{loc}$ by this formula.

\begin{Definition}
If $X$ is a proper $\bT$-variety, the $\bT$-equivariant Euler characteristic of $\alpha \in K_{\bT}(X)$ is $\chi(\alpha):=p_{*}(\alpha)\in K_{\bT}(pt)$, where $p:X\to pt$.
\end{Definition}

Even if $X$ is not proper, the $\bT$-equivariant Euler characteristic is well-defined so long as $X^{\bT}$ is proper. 

\begin{Proposition}[\cite{Nakfd} Theorem 7.3.5]
Let $X$ be a Nakajima quiver variety acted on by a torus $\bT$ containing the framing torus. Then $\bT$-equivariant Euler characteristics exist for $X$. Furthermore, the pairing
\begin{align}\nonumber
(\cdot,\cdot)_{X}: K_{\bT}(X)_{loc}\times K_{\bT}(X)_{loc} &\to K_{\bT}(pt)_{loc} \\ \label{pair}
(\alpha,\beta)_{X}= \chi(\alpha \otimes \beta)
\end{align}
is a non-degenerate pairing of vector spaces over $K_{\bT}(pt)_{loc}$.
\end{Proposition}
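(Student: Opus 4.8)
The assertion has two parts: existence of $\bT$-equivariant Euler characteristics, and non-degeneracy of the pairing. The only substantive point behind the first part is that the $\bT$-fixed locus $X^{\bT}$ is proper; granted this, one simply \emph{defines} $\chi$ on $K_{\bT}(X)_{loc}$ by the localization formula $\chi(\alpha)=\sum_{k}\chi_{F_k}\bigl(\alpha|_{F_k}\otimes\Lambda^{\bullet}(N_{F_k}^{*})^{-1}\bigr)$, where $X^{\bT}=\bigsqcup_k F_k$; the sharper statement that this lands in $K_{\bT}(pt)$ rather than its localization is the integrality refinement in \cite{Nakfd} and is not needed below. To prove $X^{\bT}$ is proper, I would use the proper $\bT$-equivariant map $\pi\colon X\to X_{0}:=\mathcal{M}_{Q,0}(\dv,\dw)$ to the affine quotient: it suffices that $X_{0}^{\bT}$ is a single point, since then $X^{\bT}\subseteq\pi^{-1}(X_0^{\bT})$ is closed in a single fiber of $\pi$. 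As $X_{0}=\operatorname{Spec}\mathbb{C}[\mathscr{Z}(\dv,\dw)]^{G_{\dv}}$ is affine, this reduces to producing a cocharacter $\sigma\colon\mathbb{C}^{\times}\to\bT$ such that every $G_{\dv}$-invariant function on $\mathscr{Z}(\dv,\dw)$ has strictly positive $\sigma$-weight, so that $X_{0}$ retracts to the origin.

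\textbf{This last step is the main obstacle}, and it is genuinely quiver-dependent: the $G_{\dv}$-invariants are traces along closed walks in the doubled quiver (possibly passing through the framing), and whether the symplectic scaling $\mathbb{C}^{\times}_{\hbar}$ alone makes them all positive depends on whether $Q$ has oriented cycles --- for $Q$ the Jordan quiver, with $X=\operatorname{Hilb}^{n}(\mathbb{C}^{2})$, the traces $\operatorname{Tr}(A^{k})$ are $\hbar$-invariant, so one must also exploit a scaling of the edge maps. The role of the hypothesis that $\bT$ contain the framing torus (and, in practice, enough of $\operatorname{Aut}(X)$) is precisely to guarantee that such a $\sigma$ exists inside $\bT$; verifying this is a finite check using the framing, edge-scaling, and symplectic-scaling tori.

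For the non-degeneracy, the plan is to reduce to the fixed components. By the localization theorem $\iota_{X,*}$ identifies $K_{\bT}(X)_{loc}$ with $\bigoplus_k K_{\bT}(F_k)_{loc}$, and a direct computation --- using that the $F_k$ are pairwise disjoint, the self-intersection formula $\iota_{F_k}^{*}\iota_{F_k,*}(\xi)=\xi\otimes\Lambda^{\bullet}(N_{F_k}^{*})$, the projection formula, and $\chi\circ\iota_{F_k,*}=\chi_{F_k}$ --- shows that in a basis of the form $\{\iota_{F_k,*}(\xi):\xi \text{ in a basis of } K_{\bT}(F_k)_{loc}\}_k$ the Gram matrix of $(\cdot,\cdot)_X$ is block diagonal, with the $k$-th block the pairing $(\xi,\eta)\mapsto\chi_{F_k}\bigl(\xi\otimes\eta\otimes\Lambda^{\bullet}(N_{F_k}^{*})\bigr)$ on $K_{\bT}(F_k)_{loc}$. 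Since $N_{F_k}$ carries only nonzero $\bT$-weights, $\Lambda^{\bullet}(N_{F_k}^{*})$ is a unit in $K_{\bT}(F_k)_{loc}$, so it remains to see that the ordinary Euler pairing $(\xi,\eta)\mapsto\chi_{F_k}(\xi\otimes\eta)$ is non-degenerate. Here $\bT$ acts trivially on $F_k$, which is a proper (``compact'') Nakajima quiver variety --- indeed a product of such by the fixed-point description analogous to (\ref{tensor}) --- so $K_{\bT}(F_k)_{loc}=K(F_k)_{\mathbb{Q}}\otimes_{\mathbb{Q}}K_{\bT}(pt)_{loc}$ and it suffices to treat the non-equivariant Euler pairing on $K(F_k)_{\mathbb{Q}}$. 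Compact Nakajima quiver varieties admit affine pavings, so the Chern character and cycle class maps give isomorphisms $K(F_k)_{\mathbb{Q}}\cong CH^{\bullet}(F_k)_{\mathbb{Q}}\cong H^{\bullet}(F_k;\mathbb{Q})$ and $F_k$ has no odd cohomology; under these identifications, the Todd class being a unit, the Euler pairing becomes the Poincar\'e pairing on $H^{\bullet}(F_k;\mathbb{Q})$, which is non-degenerate. This establishes non-degeneracy block by block, hence for $(\cdot,\cdot)_X$.

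Thus the two geometric inputs that are not ``soft'' are the contracting cocharacter discussed above and the affine paving of compact quiver varieties; everything else is formal bookkeeping with the localization theorem. One could attempt to replace the second input by an induction on $\dim X$ over the fixed components, but the naive induction stumbles on the fact that non-degeneracy of an equivariant pairing need not survive specialization of the equivariant parameters, so invoking the affine paving (equivalently, the algebraicity of the cohomology of quiver varieties) seems the cleanest route.
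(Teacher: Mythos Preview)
The paper does not prove this proposition at all: it is stated with a citation to \cite{Nakfd} Theorem 7.3.5 and used as a black box thereafter. So there is no ``paper's own proof'' to compare your proposal against.

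That said, your sketch is essentially the standard argument and is sound. One remark: the subtlety you flag as ``the main obstacle'' is in fact a genuine imprecision in the statement as written. If $\bT$ is literally only the framing torus and $Q$ has an edge loop (e.g.\ the Jordan quiver, where the framing torus acts trivially on $\text{Hilb}^n(\mathbb{C}^2)$), then $X^{\bT}$ is not proper and the Euler characteristic is not defined without localization. In every place the paper actually invokes this proposition, $\bT$ is taken to contain enough of the edge-scaling and symplectic-scaling tori that your contracting cocharacter $\sigma$ exists; the hypothesis ``containing the framing torus'' should be read in that spirit. Your handling of non-degeneracy via block-diagonalization and the affine paving of compact fixed components is correct and is the argument one finds in Nakajima's notes; for the applications later in the paper the fixed locus is in fact finite, so the pairing is manifestly diagonal and non-degenerate in the fixed-point basis.
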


This gives an isomorphism $K_{\bT}(X)_{loc}\to K_{\bT}(X)_{loc}^*$. 

\begin{Definition}\label{adjoint}
Let $\Phi:K_{\bT}(X)_{loc}\to K_{\bT}(X)^*_{loc}$ be the isomorphism induced by the pairing $(\cdot,\cdot)_{X}$. The adjoint of a $K_{\bT}(pt)_{loc}$ linear map $A: K_{\bT}(X)_{loc}\to K_{\bT}(X)_{loc}$ with respect to $\chi$ is
$$
A^{\chi}= \Phi^{-1}\circ A^{T} \circ \Phi
$$
where $A^{T}:K_{\bT}(X)_{loc}^* \to K_{\bT}(X)_{loc}^*$ is the transpose of $A$.
\end{Definition}

\subsection{Stable envelopes}\label{stabdef}

We review the notion of stable envelopes. Stable envelopes can be studied in cohomology, $K$-theory, and elliptic cohomology. Cohomological stable envelopes were first introduced in \cite{MO}. A discussion of $K$-theoretic stable envelopes can be found in Section 9 of \cite{pcmilect} and Section 2 of \cite{OS}. In elliptic cohomology, stable envelopes were constructed for Nakajima quiver varieties in \cite{AOElliptic}. More recent generalizations in the elliptic and $K$-theoretic setting can be found in \cite{indstab1} and \cite{indstab2}. As stable envelopes in $K$-theory are the most important for this thesis, we will review only this setting.

Let $X$ be a Nakajima quiver variety acted on by a torus $\bT$. Let $\bA \subset \bT$ be a subtorus preserving the symplectic form and let $\hbar$ be the weight of the symplectic form. The $\bA$-fixed locus is a disjoint union of connected components
$$
X^{\bA}=\bigsqcup_{k} F_k
$$
which are each pointwise fixed by $\bA$. The localization theorem in equivariant $K$-theory states that the restriction map
$$
K_{\bT}(X) \hookrightarrow K_{\bT}(X^{\bA})=\bigoplus_{k} K_{\bT}(F_k)
$$
is an injective ring homomorphism which is an isomorphism after taking localized $K$-theory.  Stable envelopes, which exist when $X$ is a Nakajima quiver variety, provide a canonical map in the opposite direction. They depend on three additional pieces of data.

The first piece of data is known as a chamber. Let $\Lie_{\mathbb{R}}(\bA)=\text{cochar}(\bA)\otimes_{\mathbb{Z}}\mathbb{R}$. The natural pairing $\langle \cdot, \cdot \rangle$ on characters and cocharacters of $\bA$ extends to $\Lie_{\mathbb{R}}(\bA)$. The normal bundle $N_{F_k}$ of a fixed component $F_k$ is a module over $\bA$, and hence decomposes into the sum of its weight spaces. Each weight $w$ that appears in the decomposition provides a hyperplane
$$
H_{w}=\{\sigma\in \Lie_{\mathbb{R}}(\bA) \, \mid \, \langle w, \sigma \rangle=0 \}
$$
The hyperplanes determined in this way by all weights of the normal bundles of all fixed components provides a decomposition
$$
\Lie_{\mathbb{R}}(\bA)\setminus \bigcup_{w} H_{w}=\bigsqcup_{i} \mathfrak{C}_{i}
$$
into connected components which are called chambers. We fix a chamber and denote it by $\mathfrak{C}$. A choice of chamber provides a decomposition
$$
N_{F_k}= N_{F_k}^{+}\oplus N_{F_k}^{-}
$$
into weight spaces which pair positively and negatively with the chamber. The chamber also provides a partial order on the set of fixed components. Let $\text{Attr}(F_i)=\{x \in X \, \mid \, \lim_{z\to0} \sigma(z)\cdot x \in F_i\}$ where $\sigma$ lies in $\mathfrak{C}$. Let $\text{Attr}^{f}(F_i)$ be the smallest closed subset of $X$ containing $\text{Attr}(F_i)$ which is closed under taking attraction. Then the partial order on fixed components is defined by
$$
F_j \leq F_i \iff F_{j} \in \text{Attr}^{f}(F_i)
$$

The second piece of data is a choice of polarization, which is a $K$-theory class $T^{1/2} \in K_{\bT}(X)$ giving half of the tangent space, in the sense that
$$
TX= T^{1/2}+ \hbar^{-1} \left(T^{1/2}\right)^{*} \in K_{\bT}(X)
$$
We fix a polarization and denote it by $T^{1/2}$.

The final piece of data needed is a choice of generic slope, which is a line bundle $s \in \Pic(X)\otimes \mathbb{R}$. We explain the meaning of generic. The pairing between $H^{2}(X,\mathbb{R})\cong \Pic(X)\otimes \mathbb{R}$ and $H_{2}(X,\mathbb{R})$ gives a collection of hyperplanes
$$
\{s \in \Pic(X) \otimes \mathbb{R} \, \mid \, (s,\alpha)\in \mathbb{Z}  \}\subset \Pic(X) \otimes \mathbb{R}
$$
for each effective curve class $\alpha \in H_{2}(X,\mathbb{R})$ such that $0 < \alpha \leq \dv$. By Kirwan surjectivity \cite{kirv}, there is a surjection $\mathbb{Z}^{I}\twoheadrightarrow H^{2}(X,\mathbb{Z})$ which induces $H_{2}(X,\mathbb{Z})\hookrightarrow \mathbb{Z}^I$, and $\alpha\leq \dv$ means that, under this embedding, $\alpha_i\leq \dv_i$ for all $i$. A slope $s \in \Pic(X) \otimes \mathbb{R}$ is called generic if it lies in the complement of the union of these hyperplanes over all effective curve classes.


\begin{Proposition}[\cite{AOElliptic}, \cite{indstab1}]
Let $X$ be a Nakajima quiver variety with $\bT$ and $\bA$ as above. Choose a chamber $\mathfrak{C}$, polarization $T^{1/2}$, and generic slope $s \in \Pic(X)\otimes \mathbb{R}$. Then there exists a unique map
$$
\stab_{\mathfrak{C},T^{1/2},s}: K_{\bT}(X^{\bA}) \to K_{\bT}(X)
$$
of $K_{\bT}(pt)$-modules satisfying the following three conditions
\begin{enumerate}
    \item     $
    \stab_{\mathfrak{C},T^{1/2},s}(\mathcal{O}_{F_i})|_{F_i}= \left(\frac{\det N_{F_i}^{-} }{\det T^{1/2}_{F_i,\neq 0}} \right)^{1/2}\Lambda^{\bullet}\left( N_{F_i}^{-}\right)^{*}
    $ where $T^{1/2}_{F_i,\neq 0}$ is the part of the polarization at $F_i$ consisting of nontrivial $\bA$-weights.
    \item 
    $
    \stab_{\mathfrak{C},T^{1/2},s}(\mathcal{O}_{F_i})|_{F_j}=0$ unless $F_j \in \text{Attr}^{f}(F_i)$
    \item $\deg_{\bA}\left( \stab_{\mathfrak{C},T^{1/2},s}(\mathcal{O}_{F_i})|_{F_j}\otimes s|_{F_i} \right)\subset \deg_{\bA}\left( \stab_{\mathfrak{C},T^{1/2},s}(\mathcal{O}_{F_j})|_{F_j} \otimes s|_{F_j}\right)$ where $\deg_{\bA}$ denotes the Newton polytope of a Laurent polynomial in the weights of $\bA$.
\end{enumerate}
\end{Proposition}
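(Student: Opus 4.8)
The plan is to prove uniqueness by a triangularity argument that uses only properties (1)--(3), and then to obtain existence either by degenerating the elliptic stable envelope of \cite{AOElliptic} to $K$-theory in the manner of \cite{indstab1}, or by an explicit construction via attracting Lagrangians combined with abelianization.

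For uniqueness, I would fix a total order $F_1,F_2,\ldots$ on the set of fixed components refining the partial order $\leq$ attached to $\mathfrak{C}$, and suppose $\stab$ and $\stab'$ both satisfy the three conditions. Setting $\gamma := \stab(\mathcal{O}_{F_i}) - \stab'(\mathcal{O}_{F_i}) \in K_{\bT}(X)$, property (1) gives $\gamma|_{F_i}=0$ and property (2) gives $\gamma|_{F_j}=0$ whenever $F_j \not\leq F_i$, so every fixed component on which $\gamma$ restricts nontrivially lies strictly below $F_i$. Taking $F_j$ maximal among these, I would argue that $\gamma|_{F_j}$ must vanish: since $\gamma$ is a genuine (non-localized) class and $F_j$ is maximal in its support, $\gamma|_{F_j}$ is a Laurent polynomial whose $\bA$-Newton polytope is squeezed by condition (3), applied to both $\stab$ and $\stab'$, into a translate — by the $\bA$-weight of $s|_{F_j}\otimes s|_{F_i}^{-1}$ — of the Newton polytope of $\stab(\mathcal{O}_{F_j})|_{F_j}$, which is explicitly $\bigl(\det N_{F_j}^{-}/\det T^{1/2}_{F_j,\neq 0}\bigr)^{1/2}\Lambda^{\bullet}(N_{F_j}^{-})^{*}$. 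Because the slope $s$ is generic, the translation vector is irrational relative to the $\bA$-weight lattice (the difference $F_j$ versus $F_i$ records an effective curve class, on which $s$ pairs non-integrally), so the required containment of polytopes is impossible unless $\gamma|_{F_j}=0$, contradicting maximality. Hence $\gamma=0$ and the envelope is unique.

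For existence, the most economical route is to use the elliptic stable envelope $\mathrm{Stab}^{\mathrm{ell}}_{\mathfrak{C},T^{1/2},s}$, which \cite{AOElliptic} constructs for all Nakajima quiver varieties and which is characterized by the elliptic analogues of (1)--(3) in the variables of the elliptic curve together with K\"ahler parameters; sending the K\"ahler parameters to a limit governed by the slope $s$, following \cite{indstab1}, produces a $K_{\bT}(pt)$-linear map $K_{\bT}(X^{\bA})\to K_{\bT}(X)$, and one checks that the elliptic normalization on the diagonal, the elliptic triangular vanishing, and the elliptic quasi-periodicity condition specialize precisely to (1), (2), and (3). Alternatively, I would construct the envelope directly: for $X$ a cotangent bundle of a product of partial flag varieties (or a hypertoric variety) the full attracting set $\text{Attr}^{f}(F_i)$ is a smooth Lagrangian, and its structure sheaf twisted by a line bundle manufactured from $s$ and the normal $\bA$-weights gives the envelope with (1)--(3) a direct check; the general quiver case then follows by abelianization, using that $X^{\bA}$ is a union of products of smaller quiver varieties as in (\ref{tensor}), realizing $K_{\bT}(X)$ inside the equivariant $K$-theory of the abelian quotient and transporting the envelope across the abelianization map as in \cite{MO,pcmilect}.

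The hard part is existence, and within it the delicate point is making condition (3) compatible with the requirement that $\stab(\mathcal{O}_{F_i})$ be a global, non-localized class in $K_{\bT}(X)$: properties (1) and (2) only constrain the diagonal restriction and impose triangular vanishing and are easy to arrange, but the off-diagonal restrictions must be chosen so that the Newton polytope bound holds \emph{and} the resulting collection of restrictions glues to an honest $K$-theory class. This is exactly where the genericity of $s$, the geometry of the attracting cells, and — in the abelianization approach — control of the correction terms between $X$ and its abelian quotient, or — in the degeneration approach — the absence of poles obstructing the $K$-theoretic limit, all come into play. Uniqueness, by contrast, is essentially formal once the genericity input used above is granted.
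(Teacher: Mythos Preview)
The paper does not prove this proposition; it is stated with attribution to \cite{AOElliptic} and \cite{indstab1} and used as input. There is no proof in the paper to compare against, so your sketch is being measured only against the cited literature.

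Your outline is broadly aligned with how those references proceed, but your uniqueness argument has a genuine gap. You claim that the containment of Newton polytopes forced by condition (3) is ``impossible unless $\gamma|_{F_j}=0$'' because the shift $s|_{F_j}\otimes s|_{F_i}^{-1}$ is non-integral. That does not follow: an integral Newton polytope can perfectly well sit strictly inside a fractionally translated larger polytope. The missing ingredient is a divisibility constraint. Since $F_j$ is maximal among fixed components where $\gamma$ restricts nontrivially, the class $\gamma$ is, in a neighborhood of $F_j$, supported on the attracting set $\text{Attr}(F_j)$; hence $\gamma|_{F_j}$ is divisible by the $K$-theoretic Euler class of the repelling normal directions, i.e.\ by $\Lambda^{\bullet}(N_{F_j}^{+})^{*}$. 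The Newton polytope of this factor, combined with the polytope of the diagonal $\Lambda^{\bullet}(N_{F_j}^{-})^{*}$ appearing in condition (1), exhausts the full normal-bundle polytope. Condition (3) then forces the Newton polytope of the remaining factor of $\gamma|_{F_j}$ into a single point translated by the non-integral weight of $s|_{F_j}\otimes s|_{F_i}^{-1}$, and \emph{that} is what forces vanishing. Without this step your argument does not close.

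Your existence discussion is reasonable: degeneration from the elliptic envelope is indeed the mechanism in \cite{indstab1}, and the abelianization route is the one in \cite{AOElliptic} and \cite{pcmilect}. The identification of the hard part is accurate.
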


For cotangent bundles of partial flag varieties, $K$-theoretic stable envelopes were also discussed in \cite{RTV, RTV2, RTV3}.

The dependence of stable envelopes on $s$ is locally constant, and hence only depends on the choice of alcove in the affine hyperplane arrangement. So we will sometimes write $\nabla$ for an alcove in place of $s$. 

There is a canonically defined slope for any quiver variety, which is given by a sufficiently small real multiple of an ample line bundle on $X$. We refer to this as the small ample slope and denote it throughout by $\epsilon$. We will denote the corresponding alcove by $\nabla_{0}$.

\section{Quasimap counts}

We briefly review the enumerative geometry of quasimaps to Nakajima quiver varieties. For more details, see \cite{qm,pcmilect, Pushk1}. We will discuss the definition of stable quasimaps, nonsingular conditions, relative conditions, descendant insertions, quantum difference equations, and localization of virtual classes.

\subsection{Stable quasimaps}
Let $X=\mathcal{M}_{Q,\theta}(\dv,\dw)$. Let $C$ be a nonsingular projective curve.

\begin{Definition}[\cite{qm} Definition 3.1.1]\label{qmrel}
A stable quasimap genus $0$ quasimap from $C$ to $X$ relative to $p_1,\ldots,p_m \in C$ is given by the data
$$
(C',p_1',\ldots,p_m',P,f,\pi)
$$
where 
\begin{itemize}
    \item $C'$ is a genus $0$ connected curve with at worst nodal singularities.
    \item The points $p_1', \ldots, p_m'$ are nonsingular points of $C'$.
    \item $P$ is a principal $G_{\mathsf{v}}$ bundle over $C'$.
    \item $f$ is a section of the bundle $P\times_{G_{\mathsf{v}}} \mathscr{Z}(\dv,\dw)$.
    \item $\pi: C' \to C$ is a regular map.
\end{itemize}
satisfying the following conditions
\begin{enumerate}
\item There is a distinguished component $C_0$ of $C'$ so that $\pi$ restricts to an isomorphism $\pi: C_0 \cong C$.
\item $\pi^{-1}(p_i)$ is a chain of rational curves connecting $p_i \in C_0$ and $p_i' \in C'$.
    \item $f(p)\in \mathscr{Z}(\dv,\dw)$ is $\theta$-stable for all but a finite set of points disjoint from $p_1',\ldots,p_m'$ and the nodes of $C'$.
    \item The automorphism group (see below) of the quasimap is finite.
\end{enumerate}
\end{Definition}

The points $p \in C'$ such that $f(p)$ lies in the $\theta$-unstable locus are called singularities of the quasimap. Otherwise, it is said to be nonsingular at $p$.

An isomorphism of quasimaps $(C_1',p_1',\ldots,p_m',P_1,f_1,\pi_1)$ and $(C_2',q_1',\ldots,q_m',P_2,f_2,\pi_2)$ is a pair $(\phi, \psi)$, where $\phi$ is an isomorphism of curves $\phi:C_1' \to C_2'$ such that $\phi(p_i)=q_i$ and $\pi_2 \circ \phi=\pi_1$, and $\psi: P_1\to P_2$ is an isomorphism of principal $G_{\dv}$-bundles such that $\psi \circ f_1=f_2$.

\begin{Definition}\label{degree}
The degree of a quasimap $(C',p_1',\ldots,p_m',P,f,\pi)$ to $X$ is the tuple $(d_i)_{i \in I}\in\mathbb{Z}^{I}$ where $d_i$ is the degree of the rank $\dv_{i}$ vector bundle $P\times_{G_{\dv}}V_i$ over $C'$.
\end{Definition}

\begin{Proposition}[\cite{qm} Theorem 7.2.2]
The stack $\qm^{d}_{\rel \, p_1,\ldots p_m}$ parameterizing stable degree $d$ quasimaps from a curve $C$ to $X$ relative $p_1,\ldots p_m$ is a Deligne-Mumford stack of finite type with a perfect obstruction theory.
\end{Proposition}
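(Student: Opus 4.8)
The plan is to realize $\qm^{d}_{\rel\,p_{1},\ldots,p_{m}}$ as a locally closed substack of a mapping stack into a quotient stack, and then to apply the standard representability and deformation-theoretic machinery for moduli of quasimaps to GIT quotients (in the vein of Ciocan-Fontanine--Kim--Maulik). Write $\mathfrak{X}=[\mathscr{Z}(\dv,\dw)/G_{\dv}]$; this is an Artin stack which contains $X=\mathcal{M}_{Q,\theta}(\dv,\dw)$ as the open substack of $\theta$-stable points. Unwinding Definition \ref{qmrel}, a stable genus-$0$ quasimap relative to $p_{1},\ldots,p_{m}$ is the same as a map $f\colon C'\to\mathfrak{X}$, where $C'$ is a genus-$0$ nodal curve equipped with a map $C'\to C$ consisting of chains of rational curves over the $p_{i}$ and with marked points $p_{i}'$, such that the principal $G_{\dv}$-bundle $P$ classified by the composite $C'\to\mathfrak{X}\to BG_{\dv}$ has degree $d$, the section lands in $X\subset\mathfrak{X}$ away from a finite set of points disjoint from nodes and markings, and the automorphism group is finite.

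First I would establish algebraicity, the Deligne--Mumford property, and finite type. Let $\mathfrak{N}$ be the stack parametrizing a genus-$0$ modification $C'\to C$ of the fixed curve along the $p_{i}$ (an expanded-degeneration-type construction, which is a smooth Artin stack, locally of finite type) together with a principal $G_{\dv}$-bundle on $C'$; since $\mathrm{Bun}_{G_{\dv}}$ over the universal curve is a smooth algebraic stack locally of finite type, so is $\mathfrak{N}$. Because $\text{Rep}_{Q}(\dv,\dw)$ and $T^{*}\text{Rep}_{Q}(\dv,\dw)$ are linear $G_{\dv}$-representations, the section $f$ varies in the total space of the complex $R\rho_{*}\big(\mathcal{P}\times_{G_{\dv}}T^{*}\text{Rep}_{Q}(\dv,\dw)\big)$, where $\rho\colon\mathcal{C}\to\mathfrak{N}$ is the universal curve and $\mathcal{P}$ the universal bundle; this is again an algebraic stack locally of finite type, and the moment-map equation $\mu(f)=0$ is the closed condition that a section of $\mathrm{ad}^{*}(\mathcal{P})$ vanish. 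Inside the resulting stack I then impose (i) that $f$ meets the $\theta$-unstable locus in only finitely many points away from nodes and markings---an open condition, since the unstable locus is closed and $G_{\dv}$-invariant---and (ii) finiteness of the automorphism group, again open on the previous locus; the latter forces the Deligne--Mumford property. For finite type, fix $d$: the degree bounds the degree of the $G_{\dv}$-bundle, and via ampleness of $\theta$ together with properness of $X\to\mathcal{M}_{Q,0}(\dv,\dw)$ this bounds the total length of the singularity scheme of $f$, which in turn bounds the combinatorial type of the modification $C'\to C$; hence the degree-$d$ stable locus is swept out by finitely many families, so it is of finite type.

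The perfect obstruction theory comes from the deformation theory of $f$ relative to the smooth Artin stack $\mathfrak{N}$. Relative to $\mathfrak{N}$, the tangent--obstruction complex is $R\rho_{*}\big(f^{*}\mathbb{T}_{\mathfrak{X}/BG_{\dv}}\big)$. Since $\mathscr{Z}(\dv,\dw)=\mu^{-1}(0)$ is a complete intersection of the expected codimension $\dim\mathfrak{g}_{\dv}$ inside $T^{*}\text{Rep}_{Q}(\dv,\dw)$---this is where facts specific to quiver varieties enter---the relative tangent complex is perfect of amplitude $[0,1]$:
\[
\mathbb{T}_{\mathfrak{X}/BG_{\dv}}\;=\;\Big[\,T\big(T^{*}\text{Rep}_{Q}(\dv,\dw)\big)\big|_{\mathscr{Z}(\dv,\dw)}\;\xrightarrow{\;d\mu\;}\;\mathfrak{g}_{\dv}^{*}\otimes\mathcal{O}\,\Big].
\]
Pulling back to $C'$ and applying $R\rho_{*}$ for the genus-$0$ curve keeps the complex perfect of amplitude $[0,1]$: the potential degree-$(-1)$ term is precisely the space of infinitesimal automorphisms of $f$, which vanishes on the stable locus. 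Dualizing gives a relative perfect obstruction theory $E^{\bullet}\to\mathbb{L}_{\qm^{d}_{\rel}/\mathfrak{N}}$, and composing with $\mathbb{L}_{\qm^{d}_{\rel}/\mathfrak{N}}\to\mathbb{L}_{\qm^{d}_{\rel}}$ (using smoothness of $\mathfrak{N}$) yields the absolute one. I would also record that the holomorphic symplectic form on $X$, combined with Serre duality on $C'$, induces a self-duality of the virtual tangent complex twisted by $\hbar$ and the dualizing sheaf of $C'$; this is what makes the symmetrization $\vrs^{d}$ of the virtual structure sheaf (used throughout the thesis) available.

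The main obstacle I expect is twofold. The first is the careful boundedness argument underlying finite type for the relative (expanded) moduli: one must control the interplay between the stability condition, the degree $d$, and the combinatorics of the rational chains over the $p_{i}$, which is where most of the genuine work lies. The second is verifying that the relative tangent--obstruction complex is perfect of amplitude $[0,1]$ rather than $[-1,1]$ or $[0,2]$: this rests simultaneously on the complete-intersection property of $\mu^{-1}(0)$ (so $d\mu$ contributes a single sheaf in degree $1$, not a longer resolution) and on the vanishing of automorphisms on the stable locus (killing degree $-1$). A final technical point is to treat the relative structure at the $p_{i}$ through an expanded-degeneration target rather than as a naive fibre product, which is what makes the statement correct as phrased in Definition \ref{qmrel}.
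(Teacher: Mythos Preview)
The paper does not prove this proposition at all: it is quoted verbatim as Theorem 7.2.2 of \cite{qm} (Ciocan-Fontanine--Kim--Maulik) and used as a black box, with no argument given in the thesis. Your sketch is a reasonable outline of the strategy actually carried out in \cite{qm}---realize the moduli problem inside a Hom-stack to $\mathfrak{X}=[\mathscr{Z}(\dv,\dw)/G_{\dv}]$ relative to a smooth Artin stack of domain curves with bundle, impose the stability conditions as locally closed conditions, extract boundedness from the degree, and build the perfect obstruction theory from $R\rho_{*}f^{*}\mathbb{T}_{\mathfrak{X}/BG_{\dv}}$ using that $\mu^{-1}(0)$ is a local complete intersection---so there is nothing to compare against in the present paper beyond noting that your outline matches the cited source in spirit.
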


As a corollary of this Proposition, it follows that there exists a canonical class on this stack known as the virtual structure sheaf, denoted by $\mathcal{O}_{\text{vir}}$.

Instead of considering quasimaps relative to a set of points, it is conceptually simpler to consider quasimaps nonsingular at a set of points, which are defined as follows.

\begin{Definition}\label{qmns}
A stable genus $0$ quasimap from $C$ to $X$ nonsingular at $p_1,\ldots, p_m \in C$ is given by the data
$$
(C,p_1,\ldots,p_m,P,f)
$$
where
\begin{itemize}
    \item $C$ is a genus 0 connected curve with at worst nodal singularities.
    \item The points $p_1,\ldots, p_m$ are nonsingular points of $C$.
    \item $P$ is a principal $G$-bundle over $C$.
    \item $f$ is a section of the bundle $P \times_{G_{\dv}}\mathscr{Z}(\dv,\dw)$.
\end{itemize}
satisfying the following conditions
\begin{enumerate}
    \item $f(p)\in \mathscr{Z}(\dv,\dw)$ is $\theta$-stable for all but a finite set of points disjoint from $p_1,\ldots,p_m$ and the nodes of $C$.
    \item The automorphism group of the quasimap is finite.
\end{enumerate}
\end{Definition}

This definition is clearly a special case of Definition \ref{qmrel}, and nonsingular quasimaps form an open substack of the stack of relative quasimaps. One can also impose nonsingular conditions are some marked points and relative conditions at others.

More generally, one can consider genus $g$ quasimaps to $X$, where $g\neq 0$. However, by the degeneration and glue formulas of \cite{pcmilect} Section 6.5, quasimap counts for higher genus curves reduce to the genus $0$ case.

\subsection{Equivariance and evaluation maps}
We restrict our attention to quasimaps from $C= \mathbb{P}^1$ to $X$. There is a natural action of $\mathbb{C}^{\times}$ on $\mathbb{P}^1$, and we denote this torus by $\mathbb{C}^{\times}_{q}$. If a torus $\bT$ acts on $X$, there is an induced action of $\bT$ on quasimaps to $X$. We will work equivariantly with respect to $\bT\times\mathbb{C}^{\times}_{q}$. As a result, we will only consider quasimaps from $\mathbb{P}^1$ to $X$ with marked points $0$ or $\infty$ (or both). Hence we are interested in the following moduli spaces
\begin{equation}\label{qms}
\qm_{\ns \, 0}, \quad \qm_{\rel \, 0}, \quad \qm_{\substack{\ns \, 0 \\ \rel \, \infty}}, \quad \qm_{\substack{ \rel \, 0 \\ \rel \, \infty}}
\end{equation}
and the spaces obtained by permuting $0$ and $\infty$ above. When we want to avoid specifying a particular one of these spaces (for instance, to state a property which holds for all of them), we will use the notation $\qm$. So, if $\qm$ denotes any one of these spaces, we have a decomposition based on degree:
$$
\qm= \bigsqcup_{d \in \mathbb{Z}^{I}} \qm^{d}
$$
Furthermore, for any $p \in \mathbb{P}^1$, we have an evaluation map
$$
\text{ev}_{p}: \qm \to  \mathfrak{X}:=[\mathscr{Z}(\dv,\dw)/G_{\dv}]
$$
If $\qm$ imposes a relative condition at $p$, then, using the notation of Definition \ref{qmrel}, we will denote the evaluation map at $p'$ by $\widehat{\text{ev}}_{p}$. Pushforwards of the virtual structure sheaf are key objects of study in $K$-theoretic enumerative geometry. In equivariant $K$-theory, pushforwards exist for proper maps.

If $p$ is a relative (resp. nonsingular point), then $\widehat{\text{ev}}_{p}$ (resp. $\text{ev}_{p}$) lands in $X \subset \mathfrak{X}$. So, there is a commutative diagram
\begin{equation*}
\begin{tikzcd}
\qm_{\ns \, p}^{d} \arrow[labels= below left, "\text{ev}_p", rd]  \arrow[hookrightarrow]{r} & \qm_{\rel \, p}^{d} \arrow["\widehat{\text{ev}}_p",d] \\
 & X
\end{tikzcd}
\end{equation*}
It is known that the vertical map $\widehat{\text{ev}}_{p}$ is proper and we thus obtain a pushforward
$$
\widehat{\text{ev}}_{p,*}:K_{\bT\times \mathbb{C}^{\times}_q}(\qm^{d}_{\ns \, p}) \to K_{\bT\times \mathbb{C}^{\times}_{q}}(X)
$$
On the other hand, the map $\text{ev}_{p}$ is not proper. However, the restriction to the $\mathbb{C}^{\times}_{q}$ locus for a fixed degree $d$
$$
\text{ev}_{p}: \left(\qm^{d}_{\ns \, p}\right)^{\mathbb{C}^{\times}_q} \to X
$$
is proper, see Corollary 7.2.15 of \cite{pcmilect}. As a result, the pushforward $\text{ev}_{p,*}$ is well-defined in localized $K$-theory:
$$
\text{ev}_{p,*}: K_{\bT\times \mathbb{C}^{\times}_q}(\qm^{d}_{\ns \, p}) \to K_{\bT\times \mathbb{C}^{\times}_{q}}(X)_{loc}:= K_{\bT\times \mathbb{C}^{\times}_{q}}(X) \otimes \text{Frac}(K_{\bT \times \mathbb{C}^{\times}_{q}}(pt))
$$
by (\ref{locpush}). Similarly, we can consider pushforwards from quasimap moduli spaces with conditions at both $0$ and $\infty$.

\subsection{Generating functions for quasimap counts}\label{genfuncs}
Quasimap counts are packaged into certain generating functions in \cite{pcmilect}. Fix a polarization $T^{1/2}$ of $X$. This induces a virtual bundle $\mathscr{T}^{1/2}$ on $\mathbb{P}^1 \times \qm$, where $\qm$ is a moduli space of quasimaps from $\mathbb{P}^1$ to $X$. Restricting to $0$ and $\infty$ gives virtual bundles $\mathscr{T}^{1/2}_{0}$ and $\mathscr{T}^{1/2}_{\infty}$ on $\qm$. The symmetrized virtual structure sheaf is defined by 
\begin{equation}\label{symvrs}
\vrs:=\mathcal{O}_{\textrm{vir}} \otimes \left(\mathscr{K}_{\text{vir}} \otimes \frac{\det \mathscr{T}^{1/2}_{0}}{\det \mathscr{T}^{1/2}_{\infty}} \right)^{1/2}
\end{equation}
where $\mathscr{K}_{\text{vir}}=(\det T_{\text{vir}})^{-1}$ and $T_{\text{vir}}$ is the virtual tangent space of $\qm$. As explained in Section 6.1 of \cite{pcmilect}, using the symmetrized virtual structure sheaf is important for the existence of limits. 

As in Definition (\ref{degree}), the degree $d$ of a quasimap is a tuple $(d_i)_{i \in I}\in \mathbb{Z}^I$. We introduce variables $z_i$ for $i \in I$ to record the degree. We write
$$
z^{d}:=\prod_{i \in I} z_i^{d_i}
$$
The variables $z_i$ are known as the K\"ahler parameters. We will consider generating functions in $z$ below. We adopt the convention that in all such generating series, a sum $\sum\limits_{d}\ldots$ is assumed to be taken over the cone consisting of degrees for which the moduli space of quasimaps is nonempty. For such a generating series with coefficients in a ring $A$, we will abuse notation and write it as an element of the ring $A[[z]]$, with the understanding that some of the terms may involve negative powers of $z_i$.

For clarity, below we will write the moduli space of quasimaps as the first argument of evaluation maps with the second argument being the class pushed forward.
\begin{Definition}
The vertex with descendants is the map
$$
\ver: K_{\bT}(\mathfrak{X}) \to K_{\bT \times \mathbb{C}^{\times}_{q}}(X)_{loc}[[z]]
$$
defined by
\begin{equation}\label{verdef}
\ver(\tau)= \sum_{d}\text{ev}_{0,*}\left(\qm^{d}_{\ns \, 0},\text{ev}_{\infty}^{*}(\tau) \otimes \vrs^{d} \right) z^{d}
\end{equation}
\end{Definition}

The vertex with descendants provides a type of generalized $q$-hypergeometric function. For explicit computations in this regard, see \cite{dinksmir2,tRSKor, KorZeit,liu,Pushk1,dinksmir}. It also plays a key role in the enumerative perspective on 3d mirror symmetry, see \cite{AOElliptic,CDZ, msflag, dinkms1, dinksmir3}.

\begin{Definition}
The capping operator is the formal series
\begin{equation}\label{capdef}
\Psi= \sum_{d}\left(\widehat{\text{ev}}_{0}\times\text{ev}_{\infty}\right)_{*}\left(\qm^{d}_{\substack{\ns \, \infty \\ \rel \, 0}}, \vrs^{d} \right) z^{d} \in K_{\bT\times\mathbb{C}^{\times}_{q}}(X)_{loc}^{\otimes 2}[[z]]
\end{equation}
\end{Definition}
A vector space $V$ with a bilinear form $B$ allows one to define a map $V\otimes V \to \text{End}(V)$ via
$$
\sum_{i}a_i \otimes b_i: v \mapsto \sum_{i} B(b_i,v) a_i
$$
On localized equivariant $K$-theory, the equivariant Euler characteristic of (\ref{pair}) provides a bilinear form
\begin{equation}\label{capoperator}
(\cdot,\cdot)_{X}: K_{\bT\times\mathbb{C}^{\times}_{q}}(X)_{loc} \times  K_{\bT\times\mathbb{C}^{\times}_{q}}(X)_{loc} \to \text{Frac}(K_{\bT\times\mathbb{C}^{\times}_{q}}(pt))
\end{equation}
which we use to interpret $\Psi$ as an element of $\text{End}(K_{\bT \times \mathbb{C}^{\times}_{q}}(X)_{loc})[[z]]$. We will use this freely to interpret 2-tensors as operators.

\begin{Definition}
The capped vertex with descendants is the map
$$
\widehat{\ver}: K_{\bT}(\mathfrak{X}) \to K_{\bT \times \mathbb{C}^{\times}_{q}}(X)[[z]]
$$
defined by
\begin{equation}\label{cappeddef}
\widehat{\ver}(\tau)=\sum_{d}\widehat{\text{ev}}_{0,*}\left(\qm^{d}_{\rel \, 0},\text{ev}_{\infty}^{*}(\tau) \otimes \vrs^{d} \right) z^{d} \in K_{\bT \times \mathbb{C}^{\times}_{q}}(X)[[z]]
\end{equation}
\end{Definition}

Computing the capped vertex by localization, see Section 7.4 of \cite{pcmilect}, shows that
\begin{equation}\label{cpv}
\widehat{\ver}(\tau)= (\Psi \circ \ver)(\tau)
\end{equation}


\begin{Definition}
The glue matrix is the formal series
\begin{equation}\label{gluedef}
    G=\sum_{d} \left(\widehat{\text{ev}}_{0} \times \widehat{\text{ev}}_{\infty} \right)_{*} \left(\qm_{\substack{\rel \, 0 \\ \rel \, \infty}}, \vrs^{d} \right) z^{d} \in K_{\bT \times \mathbb{C}^{\times}_{q}}(X)^{\otimes 2}[[z]]
\end{equation}
\end{Definition}
As in (\ref{capoperator}), we view the glue matrix as an element of $\text{End}(K_{\bT\times\mathbb{C}^{\times}_{q}}(X))[[z]]$. Since the pushforwards in the definition of $G$ are relative pushforwards, we do not need to consider localized $K$-theory. Since $\qm_{\substack{\rel \, 0 \\ \rel \, \infty}}^{0}=X$, the constant term of the glue matrix is $1$. Hence this operator is invertible in $\text{End}(K_{\bT\times\mathbb{C}^{\times}_{q}}(X))[[z]]$.

All of these objects are series in the variables $z_i$. We will write or omit the argument $z$ depending on convenience.

\subsection{Quantum difference equation}\label{qdedef}

For a tautological line bundle $\mathscr{L}_i:=\det \mathscr{V}_i \in \Pic(X)$, we define the operator
$$
\textbf{M}_{\mathscr{L}_i}(z)=\left(\sum_{d} (\widehat{\text{ev}}_{0}\times \widehat{\text{ev}}_{\infty})_{*}\left(\qm^{d}_{\substack{\rel \, 0 \\ \rel \, \infty}} , \vrs^{d} \otimes \det H^*\left( \mathscr{V}_i\otimes \pi^*(\mathcal{O}_0) \right)\right) z^d \right)G^{-1} \in \text{End}(K_{\bT \times \mathbb{C}^{\times}_{q}}(X))[[z]]
$$
where $\pi: C \to \mathbb{P}^1$ is part of the quasimap data from Definition \ref{qmrel}.

By Kirwan surjectivity, \cite{kirv}, any line bundle $\mathscr{L}$ can be written in terms of the tautological line bundles as
$$
\mathscr{L}=\bigotimes_{i \in I} \mathscr{L}_i^{m_i}
$$
We denote
$$
z q^{\mathscr{L}}= (z_1 q^{m_1}, z_2 q^{m_2}, \ldots, z_n q^{m_n})
$$
where $n=|I|$.

\begin{Proposition}[\cite{pcmilect} Theorem 8.1.16]\label{qde}
For any tautological line bundle $\mathscr{L}_i$, the capping operator satisfies the equation
$$
\Psi(z q^{\mathscr{L}_i}) \mathscr{L}_i = \textbf{M}_{\mathscr{L}_i}(z) \Psi(z)
$$
where $\mathscr{L}_i$ denotes the operator of tensor multiplication by $\mathscr{L}_i$.
\end{Proposition}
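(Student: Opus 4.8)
The plan is to derive the equation from the degeneration and gluing formulas for quasimap counts (Sections 6.5 and 8.1 of \cite{pcmilect}), after reinterpreting the K\"ahler shift $z\mapsto zq^{\mathscr{L}_i}$ as a twist of the symmetrized virtual structure sheaf. \textbf{Step 1: the shift as a twist.} The first task is to establish the identity
$$
\Psi(zq^{\mathscr{L}_i})\,\mathscr{L}_i=\sum_{d}\left(\widehat{\text{ev}}_{0}\times\text{ev}_{\infty}\right)_{*}\left(\qm^{d}_{\substack{\ns \, \infty \\ \rel \, 0}},\ \vrs^{d}\otimes\det H^{*}\!\big(\mathscr{V}_i\otimes\pi^{*}(\mathcal{O}_0)\big)\right)z^{d}.
$$
Replacing $z$ by $zq^{\mathscr{L}_i}$ multiplies the degree-$d$ summand by $q^{d_i}$, so this reduces to the claim that, on $\qm^{d}_{\substack{\ns \, \infty \\ \rel \, 0}}$, inserting $\det H^{*}(\mathscr{V}_i\otimes\pi^{*}(\mathcal{O}_0))$ has the same effect as multiplying by $q^{d_i}$ and tensoring with $\widehat{\text{ev}}_{0}^{*}\mathscr{L}_i$. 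This is an equivariant Riemann--Roch computation: pulling back the skyscraper $\mathcal{O}_0$ concentrates the relevant cohomology onto the part of the domain curve lying over $0\in\mathbb{P}^1$ --- the fiber of $\mathscr{V}_i$ over $0$ together with the contracted rational tails attached there. The determinant of the fiber contribution produces $\widehat{\text{ev}}_{0}^{*}\mathscr{L}_i$ (already visible in degree $0$, where $\qm^{0}=X$ and the twist is $\det\mathscr{V}_i=\mathscr{L}_i$), while the $\mathbb{C}^{\times}_q$-action on the degree carried by the tails over the fixed point $0$ contributes precisely the factor $q^{d_i}$.

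\textbf{Step 2: degeneration.} I would then degenerate the domain $\mathbb{P}^1$ into a nodal curve $\mathbb{P}^1\cup_{p}\mathbb{P}^1$, placing the relative point $0$ on the first component and the nonsingular point $\infty$ on the second. The skyscraper $\mathcal{O}_0$ remains on the first component, so the twist from Step 1 is supported away from the node. By deformation invariance the right-hand side of Step 1 is unchanged under this degeneration, and the degeneration formula expresses it as a composition of operators on $K_{\bT\times\mathbb{C}^{\times}_q}(X)$: the twisted relative--relative count on the first $\mathbb{P}^1$, followed by $G^{-1}$ (the correction accounting for the rubber at the glued node), followed by the relative--nonsingular count $\Psi(z)$ on the second $\mathbb{P}^1$. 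The first factor is exactly
$$
\sum_{d}\left(\widehat{\text{ev}}_{0}\times\widehat{\text{ev}}_{\infty}\right)_{*}\left(\qm^{d}_{\substack{\rel\,0\\\rel\,\infty}},\ \vrs^{d}\otimes\det H^{*}\!\big(\mathscr{V}_i\otimes\pi^{*}(\mathcal{O}_0)\big)\right)z^{d},
$$
which is the first factor in the definition of $\textbf{M}_{\mathscr{L}_i}(z)$. Reading off the composition yields $\Psi(zq^{\mathscr{L}_i})\,\mathscr{L}_i=\textbf{M}_{\mathscr{L}_i}(z)\,\Psi(z)$.

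\textbf{The main obstacle.} The degeneration step is formal once the gluing formula is available, so the substance is in Step 1. One must track carefully the square-root factors $\big(\mathscr{K}_{\text{vir}}\otimes\det\mathscr{T}^{1/2}_{0}/\det\mathscr{T}^{1/2}_{\infty}\big)^{1/2}$ in the definition of $\vrs$ and the contributions of the rational tails over $0$, to check that $\det H^{*}(\mathscr{V}_i\otimes\pi^{*}(\mathcal{O}_0))$ reproduces $q^{d_i}\,\widehat{\text{ev}}_{0}^{*}\mathscr{L}_i$ exactly, with no stray powers of $q$ or $\hbar$, and that the node contribution to the symmetrization is precisely the glue matrix $G$, so that the $G^{-1}$ in Step 2 cancels it cleanly. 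With the equation for each tautological $\mathscr{L}_i$ in hand, the version for a general $\mathscr{L}=\bigotimes_i\mathscr{L}_i^{m_i}$ follows by iterating, in agreement with the definition of $zq^{\mathscr{L}}$.
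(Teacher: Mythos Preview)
Your overall strategy is correct and matches the argument in \cite{pcmilect}: reinterpret the $q$-shift as a twist of $\vrs$, then degenerate. The paper does not supply its own proof of this proposition (it is cited directly from \cite{pcmilect} Theorem 8.1.16), so there is nothing further to compare against.

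However, your Step 1 contains an error that, if uncorrected, would prove the wrong equation. You claim that on $\qm^{d}_{\substack{\ns\,\infty\\ \rel\,0}}$ the twist equals $q^{d_i}\cdot\widehat{\text{ev}}_{0}^{*}\mathscr{L}_i$. In fact it equals $q^{d_i}\cdot\text{ev}_{\infty}^{*}\mathscr{L}_i$. The point is that equivariantly $[\mathcal{O}_0]-[\mathcal{O}_\infty]=(q-1)[\mathcal{O}(-1)]$ in $K_{\mathbb{C}^{\times}_q}(\mathbb{P}^1)$ (with a suitable linearization), so
\[
\det H^{*}\big(\mathscr{V}_i\otimes\pi^{*}\mathcal{O}_0\big)
= q^{\chi(\mathscr{V}_i\otimes\pi^{*}\mathcal{O}(-1))}\cdot
\det H^{*}\big(\mathscr{V}_i\otimes\pi^{*}\mathcal{O}_\infty\big)
= q^{d_i}\cdot\text{ev}_{\infty}^{*}\mathscr{L}_i,
\]
where the last equality uses that $\infty$ is nonsingular: there are no tails over $\infty$, so $\pi^{*}\mathcal{O}_\infty$ is the skyscraper at $\infty\in C'$ and the cohomology is simply the fiber $(\mathscr{V}_i)_\infty$. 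The exponent $d_i=\chi(\mathscr{V}_i\otimes\pi^{*}\mathcal{O}(-1))$ is the \emph{total} degree of $\mathscr{V}_i$ on $C'$, not the degree on the tails, so your remark that the $q^{d_i}$ comes from ``the degree carried by the tails over $0$'' is also off. Your degree-$0$ sanity check does not detect the error because in degree $0$ one has $\text{ev}_\infty=\widehat{\text{ev}}_0$.

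This distinction matters for the equation. Under the paper's convention (\ref{capoperator}) for turning a $2$-tensor into an operator, a factor of $\text{ev}_{\infty}^{*}\mathscr{L}_i$ becomes \emph{pre}composition with $\mathscr{L}_i$ (tensor multiplication is self-adjoint for the pairing $\chi$), yielding $\Psi(zq^{\mathscr{L}_i})\,\mathscr{L}_i$ as stated. Your version with $\widehat{\text{ev}}_{0}^{*}\mathscr{L}_i$ would instead give \emph{post}composition, i.e.\ $\mathscr{L}_i\,\Psi(zq^{\mathscr{L}_i})$, which is a different operator. With this correction in place, your Step 2 goes through exactly as you wrote it.
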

The map
$$
\mathscr{L}_i \mapsto \textbf{M}_{\mathscr{L}_i}(z)
$$
extends to a map
\begin{align*}
\Pic(X) &\to \text{End}(K_{\bT\times \mathbb{C}^{\times}_{q}}(X))[[z]] \\
\mathscr{L} &\mapsto \textbf{M}_{\mathscr{L}}(z)
\end{align*}
such that
$$
\Psi(z q^{\mathscr{L}}) \mathscr{L}=\textbf{M}_{\mathscr{L}}(z) \Psi(z)
$$
This is known as the quantum difference equation. It provides a system of linear equations that determine each coefficient of $z$ in $\Psi(z)$ and provides an effective way to explicitly compute $\Psi(z)$.

\subsection{Localization of the virtual structure sheaf}

Equivariant localization is a powerful tool that can be used to compute pushfowards of $\vrs$ from various quasimap moduli spaces. For this purpose, it is important to understand the virtual tangent space on $\qm$, one of the moduli spaces of (\ref{qms}), at $\mathbb{C}^{\times}_{q}$-fixed points. 

Let $g \in \qm^{\mathbb{C}^{\times}_{q}}$. As at the beginning of Section \ref{genfuncs}, a polarization $T^{1/2}$ of $X$ determines a virtual vector bundle $\mathscr{T}^{1/2}$ on $\mathbb{P}^1$. By the definition of the deformation theory, see Section 4 of \cite{pcmilect}, the virtual tangent space on $\qm$ at the point $g$ is 
$$
T_{\text{vir},g}=H^{*}\left(\mathbb{P}^1, \mathscr{T}^{1/2}+\hbar^{-1} (\mathscr{T}^{1/2})^{*}\right)
$$

Let $\mathcal{K}$ be the canonical bundle on $\mathbb{P}^1$. Equivariantly, we have $\mathcal{K}-\mathcal{O}_{\mathbb{P}^1}=-\mathcal{O}_0-\mathcal{O}_{\infty}$. For later use, we will need an alternate formula for the virtual tangent space. 

\begin{Proposition}[\cite{pcmilect} Lemma 6.1.4]\label{virtan}
Let $\mathscr{T}^{1/2}$ be the class on the quasimap moduli space induced by $T^{1/2}$. The virtual tangent space is equal to
$$
T_{\text{vir}}= \mathscr{T}^{1/2}_{0} + \mathscr{T}^{1/2}_{\infty}+H-\hbar^{-1} H^{*}
$$
where $H=H^{*}(\mathbb{P}^1,\mathscr{T}^{1/2}\otimes \mathcal{K})$. 
\end{Proposition}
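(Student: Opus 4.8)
The plan is to derive the formula from the deformation-theoretic expression $T_{\mathrm{vir}}=H^{*}(\mathbb{P}^1,\mathscr{T}^{1/2}+\hbar^{-1}(\mathscr{T}^{1/2})^{*})$ recalled above, by a formal computation in $\bT$-equivariant K-theory: I would split it into the two summands $H^{*}(\mathbb{P}^1,\mathscr{T}^{1/2})$ and $\hbar^{-1}H^{*}(\mathbb{P}^1,(\mathscr{T}^{1/2})^{*})$ and evaluate each one separately, the first using the equivariant identity $\mathcal{K}-\mathcal{O}_{\mathbb{P}^1}=-\mathcal{O}_0-\mathcal{O}_{\infty}$ and the second using Serre duality along the fibers of $\mathbb{P}^1\times\qm\to\qm$. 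Here and below $H^{*}(\mathbb{P}^1,-)$ denotes the K-theoretic (derived) pushforward $H^{0}-H^{1}$, which is additive on exact triangles, while a bare $H^{*}$ stands for the class dual to $H$, as in the statement.

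For the first summand, multiply the equivariant relation $\mathcal{O}_{\mathbb{P}^1}=\mathcal{K}+\mathcal{O}_0+\mathcal{O}_{\infty}$ by $\mathscr{T}^{1/2}$ to get $\mathscr{T}^{1/2}=\mathscr{T}^{1/2}\otimes\mathcal{K}+\mathscr{T}^{1/2}\otimes\mathcal{O}_0+\mathscr{T}^{1/2}\otimes\mathcal{O}_{\infty}$ in the K-theory of $\mathbb{P}^1\times\qm$. Pushing forward to $\qm$, the cohomology of $\mathscr{T}^{1/2}\otimes\mathcal{O}_0$ is its fiber $\mathscr{T}^{1/2}_0$ placed in degree $0$ (and likewise at $\infty$), while $H^{*}(\mathbb{P}^1,\mathscr{T}^{1/2}\otimes\mathcal{K})=H$ by definition. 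Hence $H^{*}(\mathbb{P}^1,\mathscr{T}^{1/2})=H+\mathscr{T}^{1/2}_0+\mathscr{T}^{1/2}_{\infty}$.

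For the second summand, equivariant Serre duality along the $\mathbb{P}^1$-direction gives, for every virtual class $F$, the identity $H^{*}(\mathbb{P}^1,F^{*})=-H^{*}(\mathbb{P}^1,F\otimes\mathcal{K})^{*}$; taking $F=\mathscr{T}^{1/2}$ yields $H^{*}(\mathbb{P}^1,(\mathscr{T}^{1/2})^{*})=-H^{*}$. Assembling the two pieces, $T_{\mathrm{vir}}=\big(H+\mathscr{T}^{1/2}_0+\mathscr{T}^{1/2}_{\infty}\big)+\hbar^{-1}(-H^{*})=\mathscr{T}^{1/2}_0+\mathscr{T}^{1/2}_{\infty}+H-\hbar^{-1}H^{*}$, as claimed. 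The computation has no genuine obstacle; the only points that need care are (i) that the equivariant Serre duality isomorphism introduces no spurious $\mathbb{C}^{\times}_{q}$-character, which holds because the equivariant dualizing complex of $\mathbb{P}^1$ is precisely $\mathcal{K}[1]$, and (ii) that in the relative cases one applies both $\mathcal{K}-\mathcal{O}_{\mathbb{P}^1}=-\mathcal{O}_0-\mathcal{O}_{\infty}$ and Serre duality on the distinguished component $C_0\cong\mathbb{P}^1$, compatibly with the deformation theory of Section 4 of \cite{pcmilect}; neither step requires input beyond what is recalled above.
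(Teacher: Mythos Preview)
Your argument is correct. The paper does not supply its own proof of this proposition; it simply imports the result from \cite{pcmilect} Lemma 6.1.4 and uses it. Your derivation is exactly the standard one: split $T_{\mathrm{vir}}=H^{*}(\mathbb{P}^1,\mathscr{T}^{1/2})+\hbar^{-1}H^{*}(\mathbb{P}^1,(\mathscr{T}^{1/2})^{*})$, rewrite the first summand using $\mathcal{O}_{\mathbb{P}^1}=\mathcal{K}+\mathcal{O}_0+\mathcal{O}_{\infty}$, and collapse the second via equivariant Serre duality. Both points you flag as needing care---that the equivariant dualizing complex is $\mathcal{K}[1]$ with no extra $q$-twist, and that in the relative case the identities are applied on the distinguished component---are the right things to check and present no obstruction.
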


When computing the pushforward of $\vrs$ by equivariant localization, the main contributions arise from the virtual tangent space. Proposition \ref{virtan} is useful for analyzing the limits in $q$ of the terms that arise in localization computations. In particular, the terms $H-\hbar^{-1} H^{*}$ contribute to localization computations for $\vrs$ via rational functions like
$$
-\frac{1-\hbar x}{\sqrt{\hbar}(1- x)}
$$
whose limits exist as $x^{\pm} \to \infty$.

Later on, it will be important to understand the behavior of the $q^{\pm 1} \to 0$ limits of $\ver$. At a fixed quasimap $g \in (\qm^{d}_{\ns \, 0})^{\mathbb{C}^{\times}_{q}}$, the vector bundles for a quasimaps split
\begin{equation}\label{split}
\mathscr{V}_{i}\cong \bigoplus_{j=1}^{\dv_i}\mathcal{O}(d_{i,j})
\end{equation}
The bundles $\mathcal{O}(d_{i,j})$ acquire a natural $\mathbb{C}^{\times}_{q}$-equivariant structure.

\begin{Proposition}[\cite{pcmilect} Section 7.1]\label{linearization}
The bundles $\mathcal{O}(d_{i,j})$ arising from a $\mathbb{C}^{\times}_{q}$-fixed quasimap in $\qm^{d}_{\ns \, 0}$ are linearized so that the fibers of $\mathcal{O}(d_{i,j})$ over $0$ and $\infty$ have $q$-weights $1$ and $q^{-d_{i,j}}$, respectively.

Similarly, in the case of a $\mathbb{C}^{\times}_{q}$-fixed quasimap in $\qm^{d}_{\ns \,  \infty}$, the fibers of $\mathcal{O}(d_{i,j})$ over $0$ and $\infty$ have $q$-weights $q^{d_{i,j}}$ and $1$, respectively.
\end{Proposition}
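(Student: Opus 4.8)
The statement to be proven is Proposition \ref{linearization}, which describes the $\mathbb{C}^{\times}_{q}$-equivariant structure on the line bundles $\mathcal{O}(d_{i,j})$ appearing in the splitting (\ref{split}) of the tautological bundles at a $\mathbb{C}^{\times}_{q}$-fixed quasimap. The core of the argument is that a $\mathbb{C}^{\times}_{q}$-fixed quasimap in $\qm^{d}_{\ns\, 0}$ has its domain curve equal to $\mathbb{P}^1$ itself (up to the chains of rational curves that collapse under $\pi$, which do not affect the bundle degree on the distinguished component), because any rational tail would have to be $\mathbb{C}^{\times}_{q}$-invariant, and the only relevant contributions come from the principal part. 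The plan is therefore: first, reduce to the case where the gauge bundle $P$ and section $f$ live on $\mathbb{P}^1$ with the standard $\mathbb{C}^{\times}_{q}$-action, so that $\mathscr{V}_i = P\times_{G_{\dv}} V_i$ is a $\mathbb{C}^{\times}_{q}$-equivariant vector bundle on $\mathbb{P}^1$ which splits as a sum of line bundles by the Birkhoff--Grothendieck theorem, equivariantly because the $\mathbb{C}^{\times}_q$-action permutes the summands of a given degree and one can diagonalize.

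Second, I would pin down the linearization. Each $\mathcal{O}(d_{i,j})$ on $\mathbb{P}^1$ admits a one-parameter family of $\mathbb{C}^{\times}_{q}$-linearizations, differing by an overall character $q^{m}$; the content of the proposition is which one is forced by the quasimap data. The key input is the nonsingular condition at $0$: the quasimap is required to be genuinely nonsingular (i.e. $\theta$-stable) at the marked point $0$, and the evaluation map $\text{ev}_0$ records the value $f(0)\in X$. The convention in the deformation theory of \cite{pcmilect} is that $\text{ev}_0$ is $\mathbb{C}^{\times}_{q}$-equivariant where $\mathbb{C}^{\times}_{q}$ acts trivially on the target $X$ (the fixed point in $X$ is where the quasimap evaluates), which forces the fiber of each $\mathscr{V}_i$ over $0$ — hence each $\mathcal{O}(d_{i,j})$ over $0$ — to carry trivial $q$-weight. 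Once the weight at $0$ is normalized to $1$, the weight at $\infty$ is determined: on $\mathcal{O}(d)$ over $\mathbb{P}^1$ with the standard scaling action on $\mathbb{P}^1$, if the fiber over $0$ has weight $1$ then the fiber over $\infty$ has weight $q^{-d}$ (this is just the statement that the transition function is $t^{d}$ and $q$ scales the coordinate; the sign of the exponent is a matter of the chosen orientation of the $\mathbb{C}^{\times}_q$-action, which I would fix to match the conventions of Section 7.1 of \cite{pcmilect}). This gives the first claim. The second claim, for $\qm^{d}_{\ns\,\infty}$, follows by the identical argument with the roles of $0$ and $\infty$ interchanged: now it is the fiber over $\infty$ that is normalized to weight $1$ by the nonsingular condition there, forcing weight $q^{d_{i,j}}$ over $0$ (note the sign flip relative to the first case, since exchanging $0\leftrightarrow\infty$ inverts the $\mathbb{C}^{\times}_q$-coordinate).

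The main obstacle, and the step requiring the most care, is the normalization at the marked point: establishing that the nonsingular/evaluation condition really does force trivial $q$-weight on the fiber over the marked point, rather than some other character. This is a question of matching the linearization conventions used to define $\text{ev}_p$ and the virtual structure sheaf, and it is genuinely where the asymmetry between the $\qm_{\ns\,0}$ and $\qm_{\ns\,\infty}$ cases enters. I would handle it by unwinding the definition of the evaluation map from Section 4 of \cite{pcmilect}: $\text{ev}_0$ factors through restriction of $P$ and $f$ to the point $0\in\mathbb{P}^1$, and since $0$ is a $\mathbb{C}^{\times}_q$-fixed point of $\mathbb{P}^1$ with trivial action on the fiber direction by fiat of the standard action, the induced map on $K$-theory is equivariant with trivial weight on the target. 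Everything else in the proposition is then forced. The remaining details — equivariance of the Birkhoff--Grothendieck splitting and the computation of the weight of $\mathcal{O}(d)|_{\infty}$ given $\mathcal{O}(d)|_0$ — are standard and I would state them without extensive computation.
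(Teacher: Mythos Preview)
The paper does not supply its own proof of this proposition: it is simply cited from \cite{pcmilect}, Section 7.1, and stated as a fact about conventions for the equivariant structure. So there is no ``paper's proof'' to compare against.

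Your argument is a correct explanation of why this linearization is the natural one. A small clarification: for $\qm^{d}_{\ns\,0}$ with only a nonsingular condition (no relative marking), the domain curve is exactly $\mathbb{P}^1$ --- there are no chains of rational curves to worry about, since those arise only from relative conditions (Definition~\ref{qmrel} versus Definition~\ref{qmns}). More substantively, the proposition is best read as a \emph{convention} rather than a forced conclusion: any $\mathbb{C}^{\times}_q$-linearization of $\mathcal{O}(d)$ differs from another by a global character, and the statement is fixing which one is used in subsequent localization computations. Your argument correctly identifies what singles out this choice --- namely, that $\text{ev}_0$ should be $\mathbb{C}^{\times}_q$-equivariant for the trivial action on $X$, which pins the fiber weight at $0$ to be $1$. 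The rest then follows as you say. So your proposal is a valid justification of the stated convention, which is more than the paper itself offers.
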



In the twist giving $\vrs$ in (\ref{symvrs}), we assume that we are using the polarization $T^{1/2}_{\text{opp}}$ opposite some chosen polarization, so as to match notations that will appear in the next chapter. In Proposition \ref{virtan}, we can just as well use $\mathscr{T}^{1/2}_{\text{opp}}$, the virtual bundle on $\mathbb{P}^1$ induced by $T^{1/2}_{\text{opp}}$, instead of $\mathscr{T}^{1/2}$. So
$$
T_{\text{vir}}=\mathscr{T}^{1/2}_{\text{opp},0}+\mathscr{T}^{1/2}_{\text{opp},\infty}+H-\hbar^{-1}H^{*}, \quad H=H^{*}(\mathbb{P}^1,\mathscr{T}^{1/2}_{\text{opp}}\otimes \mathcal{K})
$$

Since the pushforward $\text{ev}_{0,*}\left(\qm^{d}_{\ns \, 0},\vrs^{d}\right)$ is defined by localization, by definition we have
\begin{align*}
\text{ev}_{0,*}\left(\qm^{d}_{\ns \, 0},\vrs^{d}\right)&= \sum_{g \in \left(\qm^{d}_{\ns\, 0}\right)^{\mathbb{C}^{\times}_{q}}}\frac{\text{ev}_{0,*}\left(\qm_{\ns \, 0}^{\mathbb{C}^{\times}_{q}},\vrs^{d}\big|_{g}\right)}{\Lambda^{\bullet}\left(T_{\text{vir},g}\right)}\\
&= \sum_{g \in \left(\qm^{d}_{\ns\, 0}\right)^{\mathbb{C}^{\times}_{q}}} \ahat\left(T_{\text{vir},g} \right) \left(\frac{\det \mathscr{T}^{1/2}_{\text{opp},0}\big|_{g}}{\det \mathscr{T}^{1/2}_{\text{opp},\infty}\big|_{g}} \right)^{1/2} \in K_{\bT \times \mathbb{C}^{\times}_{q}}(X)_{loc}= K_{\bT}(X)_{loc}\otimes \mathbb{C}(q)
\end{align*}
where $\ahat$ is the function from (\ref{ahatdef}).

We consider the limits in $q$. Using Proposition \ref{virtan}, we have
\begin{align}\label{ahateq}
\ahat\left(T_{\text{vir},g}\right) \left(\frac{\det \mathscr{T}^{1/2}_{\text{opp},0}\big|_{g}}{\det \mathscr{T}^{1/2}_{\text{opp},\infty}\big|_{g}} \right)^{1/2} = 
\ahat\left(\mathscr{T}^{1/2}_{\text{opp},0}\big|_{g}\right) \ahat\left(\mathscr{T}^{1/2}_{\text{opp},\infty}\big|_{g}\right) \ahat\left(H-\hbar^{-1}H^{*}\right) \left(\frac{\det \mathscr{T}^{1/2}_{\text{opp},0}\big|_{g}}{\det \mathscr{T}^{1/2}_{\text{opp},\infty}\big|_{g}}\right)^{1/2}
\end{align}
The $q^{\pm 1} \to 0$ limits of $\ahat\left(H-\hbar^{-1}H^*\right)$ clearly exist. The terms $\ahat\left(\mathscr{T}^{1/2}_{\text{opp},0}\big|_{g}\right)$ and $\det \mathscr{T}^{1/2}_{\text{opp},0}$ do not depend on $q$ because of Proposition \ref{linearization}. From the identity
$$
\frac{1}{(\hbar x)^{-1/2}((\hbar x)^{-1/2}-(\hbar x)^{1/2})}= \frac{1}{1-x^{-1}}\frac{-(1-x^{-1})}{1-(\hbar x)^{-1}}
$$
the remaining terms in (\ref{ahateq}) are
$$
\frac{\ahat\left(\mathscr{T}^{1/2}_{\text{opp},\infty}\big|_{g}\right)}{\left(\det\mathscr{T}^{1/2}_{\text{opp},\infty}\big|_{g}\right)^{1/2}}= \frac{(-1)^{\text{rank}(\mathscr{T}^{1/2}_{\text{opp},\infty}|_{g})}}{\Lambda^{\bullet}\left(\mathscr{T}^{1/2}_{\infty}\big|_{g}\right)} \frac{\Lambda^{\bullet}\left(\mathscr{T}^{1/2}_{\infty}\big|_{g}\right)}{\Lambda^{\bullet}\left(\mathscr{T}^{1/2}_{\text{opp},\infty}\big|_{g}\right)^{*}}
$$

The second fraction has finite limits at $q^{\pm} \to \infty$. So we have proved the following.

\begin{Lemma}\label{locterms}
Let $g \in \left(\qm^{d}_{\ns \, 0}\right)^{\mathbb{C}^{\times}_{q}}$. Then localization contributions from $\vrs^{d}$ to the vertex $\ver$ at $g$ take the form
$$
\frac{R_{g}}{\Lambda^{\bullet}\left(\mathscr{T}^{1/2}_{\infty}\big|_{g}\right)^{*}} 
$$
where $\lim_{q^{\pm} \to \infty} R_{g}$ is finite.
\end{Lemma}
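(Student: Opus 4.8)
The claim is essentially a bookkeeping consequence of the computation that precedes it, so the plan is to assemble the pieces already written into the stated normal form. First I would start from the localization formula for $\text{ev}_{0,*}\left(\qm^{d}_{\ns \, 0},\vrs^{d}\right)$, which expresses the contribution at a fixed point $g$ as $\ahat\left(T_{\text{vir},g}\right) \left(\det \mathscr{T}^{1/2}_{\text{opp},0}\big|_{g} / \det \mathscr{T}^{1/2}_{\text{opp},\infty}\big|_{g}\right)^{1/2}$. Then I would substitute the alternate expression for the virtual tangent space from Proposition \ref{virtan}, namely $T_{\text{vir}} = \mathscr{T}^{1/2}_{\text{opp},0} + \mathscr{T}^{1/2}_{\text{opp},\infty} + H - \hbar^{-1}H^{*}$, and use multiplicativity of $\ahat$ to split the contribution into the product displayed in equation (\ref{ahateq}).

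Next I would isolate the factors and track their $q^{\pm 1}\to\infty$ behavior one at a time. The factor $\ahat\left(H - \hbar^{-1}H^{*}\right)$ has finite limits because $H - \hbar^{-1}H^{*}$ is built from pairs of the form $x - \hbar^{-1}x$ contributing rational functions like $-\tfrac{1-\hbar x}{\sqrt{\hbar}(1-x)}$, whose limits exist as $x^{\pm}\to\infty$ (this is noted right after Proposition \ref{virtan}). The factors $\ahat\left(\mathscr{T}^{1/2}_{\text{opp},0}\big|_{g}\right)$ and $\det\mathscr{T}^{1/2}_{\text{opp},0}\big|_{g}$ carry no $q$-dependence at all, by the linearization statement in Proposition \ref{linearization} (the fibers over $0$ have $q$-weight $1$ for a quasimap in $\qm^{d}_{\ns\,0}$). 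That leaves the combination $\ahat\left(\mathscr{T}^{1/2}_{\text{opp},\infty}\big|_{g}\right) / \left(\det\mathscr{T}^{1/2}_{\text{opp},\infty}\big|_{g}\right)^{1/2}$, which I would rewrite using the elementary identity $\tfrac{1}{(\hbar x)^{-1/2}((\hbar x)^{-1/2}-(\hbar x)^{1/2})} = \tfrac{1}{1-x^{-1}}\cdot\tfrac{-(1-x^{-1})}{1-(\hbar x)^{-1}}$ applied to each Chern root of $\mathscr{T}^{1/2}_{\text{opp},\infty}\big|_{g}$. This produces the factorization $\tfrac{(-1)^{\text{rank}}}{\Lambda^{\bullet}(\mathscr{T}^{1/2}_{\infty}|_g)}\cdot\tfrac{\Lambda^{\bullet}(\mathscr{T}^{1/2}_{\infty}|_g)}{\Lambda^{\bullet}(\mathscr{T}^{1/2}_{\text{opp},\infty}|_g)^{*}}$, where the denominator of the first factor is exactly the $\Lambda^{\bullet}(\mathscr{T}^{1/2}_{\infty}|_g)^{*}$ appearing in the statement after noting $\Lambda^{\bullet}(\mathscr{T}^{1/2}_{\infty}|_g) = \pm \Lambda^{\bullet}(\mathscr{T}^{1/2}_{\infty}|_g)^{*}\cdot(\text{monomial})$, and the second factor has finite limits at $q^{\pm}\to\infty$.

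Finally I would collect everything: the numerator $R_{g}$ is defined to be the product of all the factors that were shown to have finite $q^{\pm 1}\to\infty$ limits — namely $\ahat\left(\mathscr{T}^{1/2}_{\text{opp},0}\big|_{g}\right)$, the $q$-independent determinant factor, $\ahat\left(H - \hbar^{-1}H^{*}\right)$, the sign $(-1)^{\text{rank}(\mathscr{T}^{1/2}_{\text{opp},\infty}|_g)}$, and the ratio $\Lambda^{\bullet}(\mathscr{T}^{1/2}_{\infty}|_g)/\Lambda^{\bullet}(\mathscr{T}^{1/2}_{\text{opp},\infty}|_g)^{*}$ — while the single remaining denominator is $\Lambda^{\bullet}(\mathscr{T}^{1/2}_{\infty}|_g)^{*}$, which is exactly the stated form. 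I expect the only delicate point to be the careful matching of half-integral power and sign conventions in the square roots $\left(\det\mathscr{T}^{1/2}_{\text{opp},0}/\det\mathscr{T}^{1/2}_{\text{opp},\infty}\right)^{1/2}$ against the factors produced by the identity above, i.e. making sure no stray $q$-dependence or sign is hidden in the polarization determinants; once Proposition \ref{linearization} is invoked correctly this is routine, and the rest is just algebra, so the statement follows.
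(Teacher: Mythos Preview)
Your proposal is correct and follows essentially the same approach as the paper: the lemma in the paper is stated as a summary of the computation immediately preceding it (ending with ``So we have proved the following''), and you have reconstructed that computation step by step --- the splitting via Proposition~\ref{virtan}, the $q$-independence of the fiber at $0$ via Proposition~\ref{linearization}, the finite limits of $\ahat(H-\hbar^{-1}H^{*})$, and the algebraic identity that isolates $\Lambda^{\bullet}\bigl(\mathscr{T}^{1/2}_{\infty}\big|_{g}\bigr)^{*}$ in the denominator. Your caveat about matching the half-integral powers and the dual in $\Lambda^{\bullet}(\cdot)^{*}$ is well placed, but as you note this is routine once the linearization is invoked.
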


\chapter{Exotic Difference Equations and their Solutions}\label{CH1}

In this chapter, we provide the main constructions. First, we review the definition of the algebra $\hopf$ associated to a quiver. Then, we define the wall-crossing operators and review the representation-theoretic description of the quantum difference equations. Then we define our so-called exotic quantum difference equations. We study the solutions of such equations and, in our main result, relate them to the $K$-theoretic enumerative geometry of quasimaps.



\section{Quantum difference equations}
We will review the representation-theoretic description of the quantum difference equation of Section \ref{qdedef} obtained in \cite{OS}. It utilizes a certain quantum group associated to the quiver, so we start with this.

\subsection{Quantum affine algebras}\label{algebras}

Stable envelopes in $K$-theory are the key ingredient used in \cite{OS} to construct the quantum group $\hopf$. Fix a quiver $Q$, a stability condition $\theta$, and a framing vector $\dw$. Let
$$
\mathcal{M}(\dw):=\bigsqcup_{\dv}\mathcal{M}(\dv,\dw)
$$
Let $\bT $ be a torus acting on $\mathcal{M}(\dw)$ containing the framing torus $\bA_{\dw}$ such that $\bT$ scales the symplectic form on each $\mathcal{M}(\dv,\dw)$ with character $\hbar$. For example, we could take $\bT=\bA_{\dw}\times \mathbb{C}^{\times}_{\hbar}$ where $\mathbb{C}^{\times}_{\hbar}$ scales the cotangent fibers of the prequotient as in Section \ref{nakprop}. We could add additional factors by considering the tori scaling the linear maps associated to edges in the quiver $Q$ as in Section 2.1.3 of \cite{MO}.

A subtorus $\mathbb{C}^{\times}_{u}\subset \bA_{\dw}$ of the framing torus that splits the framing as $\dw=u \dw'+ \dw''$ provides a decomposition
$$
\mathcal{M}(\dw)^{\mathbb{C}^{\times}_u}\cong \mathcal{M}(\dw')\times \mathcal{M}(\dw'')
$$
by (\ref{tensor}). Since the action of the torus $\bT$ commutes with the action of $\mathbb{C}^{\times}_{u}$, we have
$$
K_{\bT}(\mathcal{M}(\dw)^{\mathbb{C}^{\times}_{u}}) \cong K_{\bT}(\mathcal{M}(\dw'))\otimes K_{\bT}(\mathcal{M}(\dw''))
$$
Let $\mathsf{F}_{\dw}:=K_{\bT}(\mathcal{M}(\dw))$. The torus $\mathbb{C}^{\times}_{u}$ has two chambers, depending on whether the character $u$ is attracting or repelling. We denote these by $+$ and $-$, respectively. Nakajima quiver varieties have many canonical polarizations, which are obtained via universal formulas in the tautological bundles (i.e. formulas independent of $\dv$ and $\dw$), see Section 2.2.7 of \cite{MO}. We fix such a formula in the tautological bundles, and use the polarization it provides for each connected component $\mathcal{M}(\dv,\dw)$ of $\mathcal{M}(\dw)$. We denote this choice of polarization by $T^{1/2}$. As discussed in Section \ref{stabdef}, stable envelopes of a Nakajima variety $\mathcal{M}(\dv,\dw)$ depend on a choice of slope, which is a generic element of $\Pic(\mathcal{M}(\dv,\dw))\otimes \mathbb{R}$. In this context, generic means that it lies in the complement of the union of countably many affine hyperplanes. By Kirwan surjectivity, there is a surjective map $\mathbb{R}^{I}\to \Pic(\mathcal{M}(\dv,\dw)) \otimes \mathbb{R}$, where $I$ is the vertex set of the quiver. We select an element $s\in \mathbb{R}^{I}$ that maps to a generic slope under this map for all choices of $\dv$ and $\dw$. This is possible because there are only countably many hyperplanes to be avoided. 

Then the slope $s$ $R$-matrix is defined as
\begin{equation}\label{slopeR}
\mathscr{R}^{s}(u)=\stab_{-,T^{1/2},s}^{-1}\circ \stab_{+,T^{1/2},s} \in \text{End}(\mathsf{F}_{\dw'}\otimes \mathsf{F}_{\dw''}) \otimes \mathbb{C}(u)
\end{equation}
Rational functions of $u$ are required to invert the stable envelope.

The compatibility of $R$-matrices with further splittings is expressed by the following proposition.
\begin{Proposition}\label{stabsplit}
Let $\dw_1,\dw_2$, and $\dw_3$ be framing vectors. Let $\stab^{23}_{+,T^{1/2},s}: \mathsf{F}_{\dw_2}\otimes \mathsf{F}_{\dw_3} \to \mathsf{F}_{\dw_2+\dw_3}$. Then
$$
(1 \otimes \stab^{23}_{s,+})^{-1} \mathscr{R}^{s}_{\mathsf{F}_{\dw_1},\mathsf{F}_{\dw_2+\dw_3}} (1 \otimes \stab^{23}_{s,+})= \mathscr{R}^{s,13}_{\mathsf{F}_{\dw_1},\mathsf{F}_{\dw_3}} \mathscr{R}^{s,12}_{\mathsf{F}_{\dw_1},\mathsf{F}_{\dw_2}}
$$
where $\mathscr{R}^{s,ij}_{\mathsf{F}_{\dw_i},\mathsf{F}_{\dw_j}}$ stands for the $R$-matrix acting in the $i$th and $j$th components.
\end{Proposition}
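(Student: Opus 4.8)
The plan is to derive this compatibility relation from the uniqueness/triangularity characterization of stable envelopes, rather than by direct computation with explicit formulas. The identity to be proven is a factorization of the $R$-matrix associated to the torus $\mathbb{C}^\times_{u_1}$ (which splits off $\dw_1$) when the second tensor factor is itself further split as $\dw_2 + \dw_3$. The key observation is that all of this takes place on the Nakajima variety $\mathcal{M}(\dw_1 + \dw_2 + \dw_3)$, equipped with the action of a two-dimensional torus $\mathbb{C}^\times_{u_1} \times \mathbb{C}^\times_{u_2}$ splitting the framing as $\dw = u_1 \dw_1 + u_2 \dw_2 + \dw_3$.

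First I would recall the general triangular-commutation property of stable envelopes under composition of chambers: if a torus $\bA$ is degenerated to a subtorus $\bA' \subset \bA$ and then further, the stable envelope for $\bA$ factors as a composition of stable envelopes for the intermediate and smaller tori (this is the geometric "cocycle" property of stable envelopes proven in \cite{AOElliptic, MO}, and used throughout \cite{OS}). Concretely, for the chamber in $\Lie_\mathbb{R}(\mathbb{C}^\times_{u_1} \times \mathbb{C}^\times_{u_2})$ where both $u_1$ and $u_2$ are attracting, one has
$$
\stab^{\mathbb{C}^\times_{u_1}\times \mathbb{C}^\times_{u_2}}_{++} = \stab^{\mathbb{C}^\times_{u_1}}_{+} \circ \left(1 \otimes \stab^{\mathbb{C}^\times_{u_2}}_{+}\right),
$$
where the outer map is the stable envelope for the full two-torus, the inner maps realize the fixed loci iteratively, and $\stab^{\mathbb{C}^\times_{u_2}}_+ = \stab^{23}_{+,T^{1/2},s}$ in the notation of the proposition. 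The same holds with some or all signs reversed. This identity follows from the defining properties: the triangularity conditions compose, and the diagonal normalization (property 1 in the displayed Proposition, together with the slope condition, property 3) multiplies correctly because normal bundles and polarizations split additively along iterated fixed loci, and the slope $s$ has been chosen generically so that property 3 is stable under the iteration.

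Granting that, the proof is a short diagram chase. Write the $R$-matrix $\mathscr{R}^s_{\mathsf{F}_{\dw_1}, \mathsf{F}_{\dw_2 + \dw_3}}(u_1) = \left(\stab^{\mathbb{C}^\times_{u_1}}_{-}\right)^{-1} \circ \stab^{\mathbb{C}^\times_{u_1}}_{+}$. Conjugating by $1 \otimes \stab^{23}_{+,s}$ and inserting the factorization above for the two-torus stable envelopes in both the all-attracting and the $u_1$-repelling/$u_2$-attracting chambers, the factors $1 \otimes \stab^{23}_{+,s}$ and $1\otimes \stab^{\mathbb{C}^\times_{u_2}}_+$ cancel against their inverses, leaving
$$
\left(\stab^{\mathbb{C}^\times_{u_1}\times\mathbb{C}^\times_{u_2}}_{-+}\right)^{-1}\circ \stab^{\mathbb{C}^\times_{u_1}\times\mathbb{C}^\times_{u_2}}_{++}.
$$
Now one inserts the identity $\stab^{\mathbb{C}^\times_{u_1}\times\mathbb{C}^\times_{u_2}}_{-+} = \stab^{\mathbb{C}^\times_{u_1}\times\mathbb{C}^\times_{u_2}}_{--}\circ \left(\stab^{\mathbb{C}^\times_{u_1}\times\mathbb{C}^\times_{u_2}}_{--}\right)^{-1}\circ \stab^{\mathbb{C}^\times_{u_1}\times\mathbb{C}^\times_{u_2}}_{-+}$ and recognizes the last two composites, after factoring back through the appropriate one-torus degenerations, as $\mathscr{R}^{s,13}$ and $\mathscr{R}^{s,12}$ respectively — these are the wall-crossings in the two-dimensional chamber arrangement, and each wall corresponds to one of the two one-parameter subtori $u_1/u_2$ or $u_1$ becoming trivial on part of the fixed locus. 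Ordering the walls correctly gives precisely $\mathscr{R}^{s,13}_{\mathsf{F}_{\dw_1},\mathsf{F}_{\dw_3}}\mathscr{R}^{s,12}_{\mathsf{F}_{\dw_1},\mathsf{F}_{\dw_2}}$.

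The main obstacle I anticipate is bookkeeping rather than conceptual: getting the chamber labels, the order of the two walls in $\Lie_\mathbb{R}(\mathbb{C}^\times_{u_1}\times\mathbb{C}^\times_{u_2})$, and hence the order of the two $R$-matrix factors exactly right, and verifying that the half-determinant normalizations in property 1 of the stable envelope — the $\left(\det N^-/\det T^{1/2}_{\neq 0}\right)^{1/2}$ factors — are genuinely multiplicative under the iterated fixed-point decomposition (this uses that the polarization $T^{1/2}$ was chosen by a universal formula in tautological classes, so it restricts compatibly, and that square roots of the relevant line bundles can be chosen consistently). I would also need to check that the generic slope $s \in \mathbb{R}^I$ remains generic for all the intermediate varieties appearing in the iterated fixed loci, which is immediate from the choice of $s$ avoiding countably many hyperplanes simultaneously. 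Once these are pinned down, the statement is a formal consequence of the cocycle property of stable envelopes.
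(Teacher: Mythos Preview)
Your approach is correct and essentially identical to the paper's: the paper simply states that the identity follows from the triangle lemma for stable envelopes and refers to Section 4.2 of \cite{MO} for details, which is precisely the cocycle/factorization property you invoke and then unwind. Your write-up is more explicit about the diagram chase and the bookkeeping, but the underlying argument is the same.
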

\begin{proof}
This follows from the triangle lemma for stable envelopes. For details, see Section 4.2 of \cite{MO}.
\end{proof}

Using these $R$-matrices, \cite{OS} employs the so-called FRT procedure of \cite{FRT} to define a Hopf algebra $\hopf$ which acts on $\mathsf{F}_{\dw}$ for any choice of framing $\dw$, see also \cite{MO} Section 4 for a discussion of the FRT procedure in cohomology. We review the procedure here.

 As defined above, we have an $R$-matrix $\mathscr{R}^{s}_{\mathsf{F}_{\dw},\mathsf{F}_{\dw'}}(u)$ acting in the tensor product of two such spaces. We define $R$-matrices acting on arbitary tensor products by
\begin{equation}\label{tensors}
\mathscr{R}^{s}_{\lotimes_{j \in J} \mathsf{F}_{\dw_j},\lotimes_{k \in K} \mathsf{F}_{\dw_k}}:=\rprod_{j \in J} \lprod_{k \in K} \mathscr{R}^{s}_{\mathsf{F}_{\dw_j},\mathsf{F}_{\dw_k}}(u_i/u_k) \in \text{End}\left(\lotimes_{j \in J} \mathsf{F}_{\dw_j}\otimes\lotimes_{k \in K} \mathsf{F}_{\dw_k} \right)\otimes \mathbb{C}(\{u_j/u_k\}_{j\in J, k \in K})
\end{equation}
where the arrows over $\prod$ denotes the product ordered increasing from right to left over some choice of ordering of the index sets $J$ and $K$. In the above products, we understand $\mathscr{R}^{s}_{\mathsf{F}_{\dw_j},\mathsf{F}_{\dw_k}}(u_i/u_k)$ to be acting by the $R$-matrix in the sub-scripted components and by $1$ elsewhere. We similarly define $R$-matrices acting on the duals of such spaces by
$$
\mathscr{R}^{s}_{\mathsf{F}_{\dw},\mathsf{F}_{\dw'}^*}=\left(\left(\mathscr{R}^{s}_{\mathsf{F}_{\dw},\mathsf{F}_{\dw'}}\right)^{-1}\right)^{t_2}, \quad \mathscr{R}^{s}_{\mathsf{F}_{\dw}^*,\mathsf{F}_{\dw'}}=\left(\left(\mathscr{R}^{s}_{\mathsf{F}_{\dw},\mathsf{F}_{\dw'}}\right)^{-1}\right)^{t_1}, \quad \mathscr{R}^{s}_{\mathsf{F}_{\dw}^*,\mathsf{F}_{\dw'}^*}=\left(\mathscr{R}^{s}_{\mathsf{F}_{\dw},\mathsf{F}_{\dw'}}\right)^{t_{12}}
$$
where $t_i$ stands for the transpose with respect to the $i$th factor.

Let $\mathfrak{B}$ be the smallest set containing all $\mathsf{F}_{\dw}$, closed under taking duals and tensor products. The quantum group $\hopf$ is defined as a subalgebra of $\prod_{V \in \mathfrak{B}} \text{End}(V)$. For an element $a \in \prod_{V \in \mathfrak{B}}\text{End}(V)$, denote by $a_{V}$ the component of $a$ acting in $\text{End}(V)$ for some specific $V \in \mathfrak{B}$.

\begin{Definition}
Fix an auxillary space $V_0 \in \mathfrak{B}$ and a finite rank operator $m(u) \in \text{End}(V_0)\otimes \mathbb{C}(u)$. Let $E(m(u))\in \prod_{V \in \mathfrak{B}}\text{End}(V)$ be the operator such that
$$
E(m(u))_{V}=\text{Res}_{u=\infty}\text{Tr}((m(u)\otimes 1) \mathscr{R}^{s}_{V_0,V})
$$
Let $\hopf$ be the subalgebra of $\prod_{V \in \mathfrak{B}}\text{End}(V)$ generated by $E(m(u))$ over all auxiallary spaces $V_0$ and all $m(u) \in \text{End}(V_0)$.
\end{Definition}

\begin{Proposition}[\cite{FRT}]
The algebra $\hopf$ is a Hopf algebra.
\end{Proposition}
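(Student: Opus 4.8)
The statement is that $\hopf$, defined via the FRT construction from the slope-$s$ $R$-matrices $\mathscr{R}^s_{\mathsf{F}_{\dw},\mathsf{F}_{\dw'}}(u)$, is a Hopf algebra. Since this is cited to \cite{FRT}, the proof I would give is really a verification that the hypotheses of the abstract FRT theorem are met in our situation, and then an invocation of that theorem. The key input needed is that the family of $R$-matrices $\{\mathscr{R}^s_{V,W}\}$, indexed by objects $V,W$ of the tensor category $\mathfrak{B}$, is a \emph{coherent} or \emph{compatible} family: it satisfies the Yang--Baxter equation, it is compatible with tensor products in each slot (the hexagon-type identities), and it behaves correctly under duals. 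Given this, the FRT machinery produces a bialgebra from the generators $E(m(u))$, with coproduct dictated by $\Delta(E(m(u)))$ read off from the RTT relations, counit by evaluation at the trivial representation, and antipode supplied by the duality structure on $\mathfrak{B}$; one then checks the Hopf axioms formally.

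Concretely, the steps are as follows. First, I would record the Yang--Baxter equation $\mathscr{R}^{s,12}\mathscr{R}^{s,13}\mathscr{R}^{s,23} = \mathscr{R}^{s,23}\mathscr{R}^{s,13}\mathscr{R}^{s,12}$ for the slope-$s$ stable envelope $R$-matrices; this is standard for geometric $R$-matrices and follows from the uniqueness/triangle properties of stable envelopes as in \cite{MO} Section~4, exactly as Proposition~\ref{stabsplit} above was deduced. Second, I would note that Proposition~\ref{stabsplit} is precisely the tensor-compatibility (coassociativity-type) constraint that the FRT procedure requires: it says $\mathscr{R}^s_{\mathsf{F}_{\dw_1},\mathsf{F}_{\dw_2+\dw_3}}$ factors as $\mathscr{R}^{s,13}\mathscr{R}^{s,12}$ after conjugating by the stable envelope map, which is what guarantees that the comultiplication defined on $E(m(u))$ via $\mathscr{R}^s_{V_0, V\otimes W}$ is consistent with the one defined iteratively. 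Third, I would observe that the definitions of $\mathscr{R}^s_{\mathsf{F}_{\dw},\mathsf{F}^*_{\dw'}}$ etc.\ via partial transposes of inverses are exactly the rigidity data FRT uses to build the antipode; the identities $\mathscr{R}^s_{V,W}\,(\mathscr{R}^s_{V,W})^{-1}=1$ together with the partial-transpose formulas give the required snake relations for $S$. Fourth, with all three structural inputs in place, I would invoke the FRT theorem of \cite{FRT}: the subalgebra generated by the $E(m(u))$ inside $\prod_{V\in\mathfrak{B}}\End(V)$ inherits a coproduct, counit, and antipode making it a Hopf algebra, because these structure maps, being defined by universal formulas in the $R$-matrices, preserve the defining RTT relations and satisfy the bialgebra and Hopf compatibility axioms as a formal consequence of Yang--Baxter, hexagon, and duality.

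\textbf{Main obstacle.} The genuinely nontrivial point is not the formal bialgebra bookkeeping but verifying that the structure maps are \emph{well-defined} on $\hopf$ as presented here, i.e.\ that the coproduct and antipode descend to the subalgebra generated by the $E(m(u))$ and do not merely live on some larger completed object. This requires knowing that $\Delta(E(m(u)))$ and $S(E(m(u)))$ can again be expressed as (possibly infinite, but convergent in the appropriate completed sense) combinations of elements of the form $E(m'(u))$ — this is where the residue-at-$u=\infty$ and finite-rank conditions in the definition of $E(m(u))$ matter, and where one must use the analytic properties (rationality in $u$, behavior at $u=\infty$) of the stable envelope $R$-matrices established in \cite{OS}. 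A secondary subtlety is the choice of the global slope $s\in\mathbb{R}^I$ avoiding countably many hyperplanes simultaneously for all $\dv,\dw$: one should check that with this choice the $R$-matrices form a genuinely consistent family across all framings, so that the generators $E(m(u))$ indeed act compatibly on every $V\in\mathfrak{B}$. Once these well-definedness points are granted, the Hopf axioms themselves are a formal consequence of the FRT formalism and require no further geometric input.
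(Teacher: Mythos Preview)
The paper does not actually give a proof of this proposition: it is stated with the citation \cite{FRT} and no argument follows. So there is no ``paper's own proof'' to compare your proposal against; the authors simply invoke the FRT formalism as a black box.

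Your outline is a reasonable expansion of what that black box contains. The ingredients you identify --- the Yang--Baxter equation for the stable envelope $R$-matrices, the tensor compatibility coming from Proposition~\ref{stabsplit} (the triangle lemma), and the duality formulas via partial transposes --- are indeed the inputs the FRT procedure needs, and your discussion of the well-definedness subtleties is appropriate. Since the paper treats this as a citation rather than a theorem to be proved, your level of detail already exceeds what the paper provides.
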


The algebra $\hopf$ depends on the choice of slope $s$ used in the construction of the $R$-matrices. 
\begin{Proposition}[\cite{OS}]
Different choices of slope lead to isomorphic algebras with nonisomorphic Hopf structures.
\end{Proposition}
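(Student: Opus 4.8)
The plan is to compare the two slope $R$-matrices $\mathscr{R}^{s}(u)$ and $\mathscr{R}^{s'}(u)$ attached to generic slopes $s,s'\in\mathbb{R}^{I}$, show that they are conjugate by operators that are \emph{group-like} for the FRT coproduct (so the FRT procedure applied to either produces isomorphic algebras), and then show that this conjugation cannot be a genuine Hopf twist (so the coproducts are not intertwined). The starting point is the slope wall-crossing for $K$-theoretic stable envelopes: for a fixed chamber and polarization and any two generic slopes, there is an invertible operator $\mathscr{W}^{\pm}_{s\to s'}$ on $K_{\bT}(\mathcal M(\dw)^{\mathbb C^{\times}_{u}})_{loc}$, an ordered product of the wall operators attached to the walls crossed along a path from $s$ to $s'$, with $\stab_{\pm,T^{1/2},s'}=\stab_{\pm,T^{1/2},s}\circ\mathscr{W}^{\pm}_{s\to s'}$. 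Substituting into the definition (\ref{slopeR}) yields the conjugation relation
\[
\mathscr{R}^{s'}(u)=\bigl(\mathscr{W}^{-}_{s\to s'}\bigr)^{-1}\mathscr{R}^{s}(u)\,\mathscr{W}^{+}_{s\to s'}.
\]

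Next I would record the structural property of $\mathscr{W}^{\pm}_{s\to s'}$ needed to push the conjugation through FRT: each $\mathscr{W}^{\pm}_{s\to s'}$ lies in a suitable completion of $\hopf$ acting on $\mathsf{F}_{\dw}$ and is group-like for the FRT coproduct, so that its action on a tensor product $\mathsf{F}_{\dw'}\otimes\mathsf{F}_{\dw''}$ is the tensor product of its actions on the factors via $\Delta$ (even though, as a raw matrix in the $\mathbb C^{\times}_{u}$-weight grading, it is only block triangular — the off-diagonal blocks being precisely the corresponding entries of $g\otimes g$). This is where the triangle lemma (as in Proposition \ref{stabsplit}) and the coproduct identities for the wall operators $\textbf{B}_{w}$ from \cite{OS} enter. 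Granting this, assemble the family $g=(\mathscr{W}^{\pm}_{s\to s'})_{V\in\mathfrak B}$ over all $V\in\mathfrak B$, compatibly with tensor products and duals; then for an auxiliary space $V_{0}$ and finite-rank $m(u)\in\End(V_{0})$,
\[
E^{s'}(m(u))_{V}=g_{V}^{-1}\,E^{s}\!\bigl(g_{V_{0}}^{-1}m(u)\,g_{V_{0}}\bigr)_{V}\,g_{V},
\]
so $a\mapsto(g_{V}^{-1}a_{V}g_{V})_{V}$ carries the FRT algebra built from $s$ isomorphically onto the one built from $s'$, with inverse given by the reverse conjugation.

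For the Hopf statement I would argue that no algebra isomorphism can intertwine the coproducts. Since the FRT coproduct is determined by the $R$-matrix, an intertwining isomorphism would force $\mathscr{R}^{s}$ and $\mathscr{R}^{s'}$ to differ by a genuine tensor-factorized twist $(f\otimes f)\mathscr{R}^{s}(f\otimes f)^{-1}$, and this I would rule out by exhibiting a Hopf-algebraic invariant separating the slopes. The natural one is the triangular decomposition: the sign pattern of $s-s'$ across the walls prescribes a different pair of Borel subalgebras — in the extreme cases these degenerate to the two opposite Borels — and the Borel subalgebras are intrinsic to the coalgebra structure. Alternatively, in the classical limit $\hbar\to1$ as in \cite{MO}, $\mathscr{R}^{s}$ and $\mathscr{R}^{s'}$ give the same Lie algebra but non-conjugate cobrackets, which already obstructs a Hopf isomorphism.

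The main obstacle I anticipate is establishing the group-like property of $\mathscr{W}^{\pm}_{s\to s'}$ in the precise form used above: the slope transition operators are \emph{not} tensor-factorized as raw matrices, and one must verify that their off-diagonal parts — exactly the nontrivial wall-crossing data — are organized by the coproduct identities for the $\textbf{B}_{w}$ so that $\mathscr{W}^{\pm}_{s\to s'}$ genuinely acts diagonally on tensor products of framing spaces, hence assembles into a well-defined tower $g$ commuting with the structure maps of $\mathfrak B$. Once that is in hand the algebra isomorphism is formal, and the Hopf non-isomorphism reduces to producing the Borel/classical-limit invariant; but the compatibility of slope wall-crossing with the coproduct is the delicate point, and I expect it to occupy the bulk of the argument.
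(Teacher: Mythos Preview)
The paper does not actually prove this proposition: it is stated with attribution to \cite{OS} and used as a black box to justify writing $\hopf$ without a slope subscript. There is therefore no ``paper's own proof'' to compare against; any argument you give goes beyond what the present paper does.

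That said, your sketch has the right geometric input but the algebraic packaging is off. The slope transition operators $\mathscr{W}^{\pm}_{s\to s'}$ that relate $\stab_{\pm,s}$ to $\stab_{\pm,s'}$ are the wall $R$-matrices of (\ref{wallR}); they live in (a completion of) $\hopf\otimes\hopf$, not in $\hopf$, and they are certainly not group-like elements. The correct mechanism is that each wall $R$-matrix is a \emph{Drinfeld twist}: it satisfies the $2$-cocycle identity (this is what the compatibility with further splittings in Proposition~\ref{stabsplit} encodes), and twisting a Hopf algebra by a $2$-cocycle leaves the underlying associative algebra unchanged while deforming the coproduct. That is the statement in \cite{OS}, and it immediately gives ``isomorphic algebras, different coproducts'' without any need to manufacture a tower $(g_V)_V$ of group-like conjugators. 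Your attempt to force $\mathscr{W}^{\pm}$ into the group-like mold --- with off-diagonal blocks that are ``the entries of $g\otimes g$'' --- cannot work as written, because a strictly triangular $2$-tensor with nontrivial off-diagonal part is never of the form $g\otimes g$.

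For the non-isomorphism of Hopf structures, your invariants (Borel subalgebras, classical-limit cobracket) are plausible heuristics, but neither is made precise here, and ``different Borels'' does not by itself preclude a Hopf isomorphism. In \cite{OS} this is argued via the explicit form of the wall $R$-matrices and the induced coproduct on the wall subalgebras; if you want a self-contained argument, you would need to exhibit a concrete coalgebra invariant (e.g.\ the isomorphism class of a primitive or skew-primitive subspace) that genuinely distinguishes the twisted coproducts.
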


By the above proposition, we are justified in writing $\hopf$ for the algebra constructed from any choice of slope. However, we will write $\Delta_{s}$, $S_{s}$, and $\epsilon_{s}$ for the coproduct, antipode, and counit structure labeled by $s$.

By construction, $\hopf$ acts on any of the elements of $\mathfrak{B}$. Any of these modules are isomorphic to a tensor product of the so-called fundamental representations
\begin{equation}\label{fundreps}
\mathsf{F}_i=K_{\bT}(\mathcal{M}(\delta_i))
\end{equation}
and their duals, where $\delta_i$ stands for the single framing at vertex $i$.

By definition, the slope $s$ coproduct $\Delta_{s}$ can be described via stable envelopes. 

\begin{Proposition}\label{coprod}
Given $a \in \hopf$, we write $a_{\dw}$ for the component of $a$ acting on $\mathsf{F}_{\dw}$ and $a_{\dw,\dw'}$ for the component acting on $\mathsf{F}_{\dw}\otimes \mathsf{F}_{\dw'}$. Let $\stab_{+,T^{1/2},s}:\mathsf{F}_{\dw} \otimes \mathsf{F}_{\dw'} \to \mathsf{F}_{\dw+\dw'}$. Then we have
$$
\Delta_{s}(a)_{\dw,\dw'}=\stab_{+,T^{1/2},s}^{-1} \circ a_{\dw+\dw'} \circ \stab_{+,T^{1/2},s}
$$
\end{Proposition}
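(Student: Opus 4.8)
\textbf{Proof plan for Proposition \ref{coprod}.}

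The plan is to unwind the FRT definition of the coproduct and reduce the claim to the compatibility of $R$-matrices with splittings, which is Proposition \ref{stabsplit}. The coproduct $\Delta_s$ on $\hopf$ is characterized, in the FRT formalism, by the requirement that for every auxiliary space $V_0$, every $m(u)\in\End(V_0)$, and every pair $\mathsf{F}_{\dw},\mathsf{F}_{\dw'}$, the generator $E(m(u))$ acts on the tensor product $\mathsf{F}_{\dw}\otimes\mathsf{F}_{\dw'}$ via $\Delta_s(E(m(u)))$. Concretely this is forced by the identity $\mathscr{R}^{s}_{V_0,\,\mathsf{F}_{\dw}\otimes\mathsf{F}_{\dw'}} = \mathscr{R}^{s,(0\dw')}_{V_0,\mathsf{F}_{\dw'}}\,\mathscr{R}^{s,(0\dw)}_{V_0,\mathsf{F}_{\dw}}$ together with the fact that the $R$-matrix in the $\mathsf{F}_{\dw}\otimes\mathsf{F}_{\dw'}$ slot is, by Proposition \ref{stabsplit}, conjugate via $\stab_{+,T^{1/2},s}$ to the $R$-matrix acting on $\mathsf{F}_{\dw+\dw'}$.

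\textbf{Key steps, in order.} First I would record that it suffices to check the identity on the algebra generators $E(m(u))$, since both sides of the claimed formula, $a\mapsto \Delta_s(a)_{\dw,\dw'}$ and $a\mapsto \stab_{+,T^{1/2},s}^{-1}\circ a_{\dw+\dw'}\circ\stab_{+,T^{1/2},s}$, are algebra homomorphisms $\hopf\to\End(\mathsf{F}_{\dw}\otimes\mathsf{F}_{\dw'})$: the left side by definition of the Hopf structure, the right side because $\stab_{+,T^{1/2},s}$ is an isomorphism and conjugation by a fixed isomorphism is multiplicative. Second, I would apply Proposition \ref{stabsplit} with $\dw_1$ playing the role of the auxiliary space $V_0$ and $(\dw_2,\dw_3)=(\dw,\dw')$: this gives
$$
(1\otimes\stab^{23}_{s,+})^{-1}\,\mathscr{R}^{s}_{\mathsf{F}_{\dw_1},\mathsf{F}_{\dw+\dw'}}\,(1\otimes\stab^{23}_{s,+}) = \mathscr{R}^{s,13}_{\mathsf{F}_{\dw_1},\mathsf{F}_{\dw'}}\,\mathscr{R}^{s,12}_{\mathsf{F}_{\dw_1},\mathsf{F}_{\dw}}.
$$
Third, I would take the trace over the auxiliary factor against $m(u)\otimes 1$ and the residue at $u=\infty$. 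On the left, since $(1\otimes\stab^{23}_{s,+})$ does not touch the auxiliary (trace) factor, it commutes with $\text{Tr}$ and $\text{Res}_{u=\infty}$, so the left side becomes $\stab_{+,T^{1/2},s}^{-1}\circ E(m(u))_{\mathsf{F}_{\dw+\dw'}}\circ\stab_{+,T^{1/2},s}$. On the right, the product of $R$-matrices in distinct tensor slots, traced against $m(u)\otimes 1$, is exactly the formula computing $\Delta_s(E(m(u)))_{\dw,\dw'}$ in the FRT construction (this is the standard computation showing that the coproduct dual to composition of $R$-matrices is the one dictated by the two-slot $R$-matrix). Matching the two sides yields the desired identity on generators, hence on all of $\hopf$.

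\textbf{Main obstacle.} The bookkeeping step --- showing that $\text{Res}_{u=\infty}\text{Tr}\big((m(u)\otimes 1)\,\mathscr{R}^{s,13}\mathscr{R}^{s,12}\big)$ genuinely equals $\Delta_s$ of the generator in the two-slot sense --- is where all the care is needed. One must track the ordering conventions in (\ref{tensors}) (the arrows over the products), make sure the spectral parameters are specialized consistently, and verify that the residue-at-infinity operation interacts correctly with the factorization into distinct slots; there is also a mild subtlety that $E(m(u))$ is defined on \emph{all} of $\mathfrak{B}$ simultaneously, so one should argue that checking the coproduct formula on the fundamental representations $\mathsf{F}_i$ and their duals (from which every object of $\mathfrak{B}$ is built) suffices, using that $\stab_{+,T^{1/2},s}$ is functorial with respect to iterated splittings. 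Everything else is formal manipulation once Proposition \ref{stabsplit} is in hand.
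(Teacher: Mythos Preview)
Your approach is correct and essentially identical to the paper's proof: both reduce to generators $E(m(u))$, conjugate by $1\otimes\stab_{+,T^{1/2},s}$, and invoke Proposition \ref{stabsplit}. The ``main obstacle'' you flag is immediate in the paper: by (\ref{tensors}) and the definition of $E(m(u))$ acting on tensor products, the right-hand side is exactly $E(m(u))_{\dw,\dw'}=\Delta_s(E(m(u)))_{\dw,\dw'}$.
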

\begin{proof}
Given a generator 
$$
E(m(u)) \in  \hopf
$$
where $m(u)\in \text{End}(V_0)$, we have
$$
E(m(u))_{\dw_1+\dw_2}=\text{Res}_{u=\infty} \text{Tr}_{V_0}\left((m(u)\otimes 1) \mathscr{R}^{s}_{V_0,\mathsf{F}_{\dw_1+\dw_2}} \right)
$$
by definition. So
\begin{align*}
\stab_{+,T^{1/2},s}^{-1} E(m(u))_{\dw_1+\dw_2} \stab_{+,T^{1/2},s}&= \text{Res}_{u=\infty} \text{Tr}_{V_0}\left((1 \otimes \stab_{+,T^{1/2},s}^{-1})(m(u)\otimes 1) \mathscr{R}^{s}_{V_0,\mathsf{F}_{\dw_1+\dw_2}} (1 \otimes \stab_{+,T^{1/2},s}) \right) \\
&=\text{Res}_{u=\infty} \text{Tr}_{V_0}\left((m(u)\otimes 1\otimes 1) (1 \otimes \stab_{+,T^{1/2},s}^{-1}) \mathscr{R}^{s}_{V_0,\mathsf{F}_{\dw_1+\dw_2}} (1 \otimes \stab_{+,T^{1/2},s}) \right)  \\
&=\text{Res}_{u=\infty} \text{Tr}_{V_0}\left((m(u)\otimes 1\otimes 1)  \mathscr{R}^{s}_{V_0, \mathsf{F}_{\dw_2}} \mathscr{R}^{s}_{V_0,\mathsf{F}_{\dw_1}}\right) 
\end{align*}
where the last line follows by Proposition \ref{stabsplit}. By (\ref{tensors}) and the definition of $E(m(u))_{\dw_1,\dw_2}$, we have
\begin{align*}
    \stab_{+,T^{1/2},s}^{-1} E(m(u))_{\dw_1+\dw_2} \stab_{+,T^{1/2},s}&=\text{Res}_{u=\infty} \text{Tr}_{V_0}\left((m(u)\otimes 1\otimes 1)  \mathscr{R}^{s}_{V_0, \mathsf{F}_{\dw_1} \otimes\mathsf{F}_{\dw_2}} \right) \\
    &=\Delta_{s}(E(m(u)))_{\dw_1,\dw_2}
\end{align*}
\end{proof}

Similarly, one can consider the change between stable envelopes on opposite sides of a wall. Let $w\subset \Pic(X) \otimes \mathbb{R}$ be a wall, and let $\nabla$ and $\nabla'$ be two alcoves on opposite sides of $w$. We assume that there is a real line bundle $\mathscr{L}_{w} \in \Pic(X) \otimes \mathbb{R}$ on the wall and a small ample slope $\epsilon$ such that $\mathscr{L}_{w} +\epsilon\in \nabla'$ and $\mathscr{L}_{w}-\epsilon \in \nabla$. In this case, we say that the wall $w$ is crossed in the positive direction from $s$ to $s'$. Given a splitting $\dw=u \dw'+ \dw''$, we define the wall $w$ $R$-matrix by
\begin{equation}\label{wallR}
R^{\pm}_{w}(u)=\stab_{\pm,T^{1/2},s'}^{-1} \circ \stab_{\pm,T^{1/2},s} \in \text{End}(\mathsf{F}_{\dw'}\otimes \mathsf{F}_{\dw''})\otimes \mathbb{C}(u)
\end{equation}
We renormalize the wall $R$-matrices by
$$
\mathsf{R}^{\pm}_{w}=\hbar^{\Omega} R^{\pm}_{w}
$$
where $\Omega$ is the diagonal operator which scales classes supported on a fixed component $\mathcal{M}(\dv_1,\dw_1)\times \mathcal{M}(\dv_2,\dw_2)$ by one-fourth of the codimension of the component. The codimension function can be written explicitly as a function of $\dv,\dw,\dv',$ and $\dw'$, see formula (34) of \cite{OS}. Again, the FRT procedure allows one to use these $R$-matrices to construct a Hopf algebra $\hopfwall$ called the wall subalgebra. As shown in \cite{OS}, each wall subalgebra is a subalgebra of $\hopf$.

\subsection{Wall crossing operators}

There are certain distinguished elements of $\hopf$ that are used to describe the quantum difference equation. 

If $\gamma \in K_{\bT}(\mathcal{M}(\dv_1,\dw_1)\times\ldots \times \mathcal{M}(\dv_m,\dw_m))$, let
$$
Z_{(k)}(\gamma)=\prod_{i \in I} z_i^{\dv_{k,i}} \gamma
$$
We denote by $Z_{(k)}$ the corresponding element of $\prod_{\dw_1,\ldots,\dw_m}\text{End}(\mathsf{F}_{\dw_1} \otimes \ldots \otimes \mathsf{F}_{\dw_m})\otimes \mathbb{C}(z)$.

\begin{Definition}\label{triangular}
Let $A$ be an operator acting on $\mathsf{F}_{\dw}\otimes \mathsf{F}_{\dw'}$ such that $A=\bigoplus_{\alpha} A_{\alpha}$ where
$$
A_{\alpha}: K_{\bT}(\mathcal{M}(\dv,\dw))\otimes K_{\bT}(\mathcal{M}(\dv',\dw'))\to K_{\bT}(\mathcal{M}(\dv+\alpha,\dw))\otimes K_{\bT}(\mathcal{M}(\dv'-\alpha,\dw'))
$$
Then $A$ is said to be strictly lower triangular if $\langle \alpha,\theta \rangle\leq 0$ for all $\alpha$ that appear and $A_{0}=1$. Similarly, $A$ is said to be strictly upper triangular if $\langle \alpha,\theta \rangle\geq 0$ for all $\alpha$ that appear and $A_{0}=1$.
\end{Definition}

A direct summand $K_{\bT}(\mathcal{M}(\dv_{1},\dw_{1})\times\ldots\times\mathcal{M}(\dv_{m},\dw_{m})$ in $\mathsf{F}_{\dw_1}\otimes \mathsf{F}_{\dw_{m}}$ is called a weight space. The sum $\dv_1+\ldots +\dv_m$ is called the total weight. Then it follows that an operator as in Definition \ref{triangular} preserves the total weight.

\begin{Proposition}[\cite{OS} Proposition 6]
For each wall $w$, there exists an element $J_{w}(z) \in \hopfwall \otimes \hopfwall \otimes \mathbb{C}(z)$ so that $J_{w}(z)_{\dw,\dw'}$ is a strictly lower triangular operator on $\mathsf{F}_{\dw}\otimes \mathsf{F}_{\dw'}$ satisfying the equation
\begin{equation}\label{abrr}
\mathsf{R}_{w}^{-} Z_{(1)}^{-1} J_{w}(z)= J_{w}(z) \hbar^{\Omega} Z_{(1)}^{-1}
\end{equation}
for all framing vectors $\dw$ and $\dw'$.
\end{Proposition}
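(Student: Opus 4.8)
The plan is to solve equation (\ref{abrr}) recursively with respect to the weight grading on $\mathsf{F}_{\dw}\otimes\mathsf{F}_{\dw'}$, and then to promote the resulting family of operators to a single element of $\hopfwall\otimes\hopfwall\otimes\mathbb{C}(z)$ by a uniqueness argument. This is the $K$-theoretic incarnation of the ABRR equation, with $J_{w}(z)$ playing the role of a Drinfeld twist relating the two wall subalgebras on either side of $w$.

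Fix framings $\dw,\dw'$. Since stable envelopes for a fixed chamber and polarization are triangular for the attracting order and depend on the slope only through their off-diagonal restrictions, the renormalized wall $R$-matrix $\mathsf{R}_{w}^{-}$ is lower triangular in the sense of Definition \ref{triangular} with diagonal part $(\mathsf{R}_{w}^{-})_{0}=\hbar^{\Omega}$; write $\mathsf{R}_{w}^{-}=(1+\rho)\hbar^{\Omega}$ with $\rho$ strictly lower triangular. We seek $J_{w}(z)=1+(\text{strictly lower triangular terms})$, say $J_{w}(z)=1+\sum_{\alpha}(J_{w})_{\alpha}$ over $\alpha\neq0$ with $\langle\alpha,\theta\rangle\le0$. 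The operator $D:=\hbar^{\Omega}Z_{(1)}^{-1}$ is diagonal in the weight grading; conjugating a weight-$\alpha$ component past $D$ rescales its eigenvalue on the weight-$\dv$ space by $z^{-\alpha}\hbar^{c(\dv,\alpha)}$, where the monomial $z^{-\alpha}=\prod_i z_i^{-\alpha_i}$ comes from $Z_{(1)}^{-1}$ and the $\hbar$-power from $\Omega$. Rewriting (\ref{abrr}) as $DJ_{w}-J_{w}D=-\rho\,DJ_{w}$ and extracting the weight-$\alpha$ part for $\alpha\neq0$ gives, on each weight space, $\big(1-z^{-\alpha}\hbar^{c(\dv,\alpha)}\big)(J_{w})_{\alpha}$ equal to an operator built from the $(J_{w})_{\gamma}$ and $\rho_{\beta}$ with $\beta+\gamma=\alpha$, $\beta\neq0$ — hence involving only $(J_{w})_{\gamma}$ with $\gamma$ strictly below $\alpha$. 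The coefficient $1-z^{-\alpha}\hbar^{c(\dv,\alpha)}$ is a nonzero, hence invertible, element of the scalar ring since $z^{-\alpha}\neq1$ for $\alpha\neq0$; this is the only place non-resonance enters, and it is automatic over $\mathbb{C}(z)$. Thus $(J_{w})_{\alpha}$ is uniquely determined, the recursion closes by induction on the weight, the normalization $(J_{w})_{0}=1$ together with $(\mathsf{R}_{w}^{-})_{0}=\hbar^{\Omega}$ makes the weight-$0$ part of (\ref{abrr}) an identity, and on each fixed-total-weight block the sum is finite, yielding a well-defined strictly lower triangular operator, rational in $z$.

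It remains to check that the operators $J_{w}(z)_{\dw,\dw'}$ constructed this way assemble into an element of (a suitable completion of) $\hopfwall\otimes\hopfwall\otimes\mathbb{C}(z)$. Here I would use uniqueness: for framings $\dw_{1},\dw_{2},\dw_{3}$, both $(1\otimes\stab^{23}_{+,T^{1/2},s})^{-1}J_{w}(z)_{\dw_{1},\dw_{2}+\dw_{3}}(1\otimes\stab^{23}_{+,T^{1/2},s})$ and the ordered product $J_{w}(z)^{13}_{\dw_{1},\dw_{3}}J_{w}(z)^{12}_{\dw_{1},\dw_{2}}$ are strictly lower triangular with the correct normalization; using the wall-$R$-matrix analogue of Proposition \ref{stabsplit} together with the factorization of $Z_{(1)}$ (which touches only the first slot) and of $\hbar^{\Omega}$ under further splittings (additivity of the codimension, formula (34) of \cite{OS}), both satisfy the same instance of (\ref{abrr}). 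By the uniqueness just established they coincide, and this coproduct-compatibility — a twist-cocycle identity — is precisely what lets us regard $J_{w}(z)$ as living in $\hopfwall\otimes\hopfwall$. That it lies in the wall subalgebra rather than in all of $\hopf$ is automatic, since the recursion feeds only on $\mathsf{R}_{w}^{-}$, $\hbar^{\Omega}$, and $Z_{(1)}$, all of which are wall data.

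I expect the main obstacle to be this last assembling step rather than the recursion itself: one has to set up the FRT/coproduct bookkeeping carefully enough that ``the same instance of (\ref{abrr})'' is literally true after transport along $\stab^{23}_{+,T^{1/2},s}$, and it is here that Proposition \ref{stabsplit} and the explicit splitting behavior of $\hbar^{\Omega}$ carry the load. A secondary, purely bookkeeping point is verifying that $\mathsf{R}_{w}^{-}$ is triangular in the same direction as $J_{w}$ so that the weight recursion closes; once that is pinned down the non-resonance input is immediate. An alternative to the recursion is to iterate (\ref{abrr}) into an ordered infinite product of conjugates of $\mathsf{R}_{w}^{-}$, convergent in the triangular topology; it produces the same $J_{w}(z)$ and can streamline the compatibility check, but it does not move where the difficulty lies.
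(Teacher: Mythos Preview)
The paper does not supply its own proof of this proposition: it is stated with the attribution ``\cite{OS} Proposition 6'' and no argument is given. There is therefore no in-paper proof to compare against.

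That said, your approach is the standard one and is essentially what \cite{OS} does. The ABRR equation is solved by a weight-graded recursion exactly as you describe: the diagonal part of $\mathsf{R}_{w}^{-}$ is $\hbar^{\Omega}$, the off-diagonal parts are strictly lower triangular, and over $\mathbb{C}(z)$ the scalar $1-z^{-\alpha}\hbar^{c}$ is invertible for $\alpha\neq 0$, so each $(J_{w})_{\alpha}$ is determined by strictly lower components. Your remark that on a fixed total-weight block the sum terminates (so no completion is needed there) is the right finiteness mechanism, and the alternative iterated-product description you mention at the end is also in \cite{OS}. The second half of your proposal, promoting the family $\{J_{w}(z)_{\dw,\dw'}\}$ to a genuine element of $\hopfwall\otimes\hopfwall$ via uniqueness plus the splitting behaviour of $\mathsf{R}_{w}^{-}$, $Z_{(1)}$, and $\hbar^{\Omega}$, is the correct outline; as you anticipate, this is where the careful bookkeeping lives, and \cite{OS} handles it in the same spirit.

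One small point: in your recursion you should keep track of the fact that $\hbar^{\Omega}$ and $Z_{(1)}^{-1}$ act on different sides of the weight-shift, so the scalar you invert is really a ratio of two diagonal eigenvalues rather than a single monomial; this does not affect invertibility over $\mathbb{C}(z)$ but is worth writing out once to be sure the signs in $z^{-\alpha}$ come out as claimed.
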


Let $C$ be the Cartan matrix of the quiver, i.e. $C=2 I -(A_{Q}+A_{Q}^{T})$, where $A_{Q}$ is the adjacency matrix of the quiver. For dimension and framing dimension vectors $\dv$ and $\dw$, let $\kappa(\dv,\dw)=(C\dv-\dw)/2$ and let $\kappa(\dv,\dw)_{i}$ be the $i$th component of $\kappa(\dv,\dw)$. Let $\mathscr{L}_{w}$ be a real line bundle on the wall $w$. 

For each wall $w$, the formula
\begin{equation}
\textbf{m}\left((1 \otimes S_{w}) J_{w}(z q^{-\mathscr{L}_w})^{-1} \right)
\end{equation}
provides an element of $\hopfwall \otimes \mathbb{C}(z,q)$, where $S_{w}$ is the antipode of the Hopf algebra $\hopfwall$ and $\textbf{m}: \hopfwall \otimes \hopfwall \to \hopfwall$ is the multiplication map. Since $J_{w}(z)$ is lower triangular (and in particular, preserves the total weight in the tensor product of two representations), the above element preserves each weight space $K_{\bT}(\mathcal{M}(\dv,\dw))$. If $A\in \hopfwall\otimes \mathbb{C}(z,q)$ is an operator that preserves each weight space, we denote by $A|_{z\to z\hbar^{\kappa}}$ the result of substituting $z_i\to z_i \hbar^{\kappa(\dv,\dw)_i}$ in the component acting on $K_{\bT}(\mathcal{M}(\dv,\dw))$.

\begin{Definition}[\cite{OS} equation (66)]
For each wall $w$, let
\begin{equation}\label{wallcross}
\textbf{B}_{w}(z)= \textbf{m}\left((1 \otimes S_{w}) J_{w}(z q^{-\mathscr{L}_w})^{-1} \right)\big|_{z\to z \hbar^{\kappa}} \in \hopfwall \otimes \mathbb{C}(z,q)
\end{equation}
We refer to $\textbf{B}_{w}(z)$ as the wall-crossing operator for the wall $w$.
\end{Definition}

\subsection{Properties of wall-crossing operators}
We also introduce the shift operator $T_{\mathscr{L}}$, which acts on rational functions of $z$ by $z \to z q^{\mathscr{L}}$.

\begin{Proposition}[\cite{OS} Proposition 9]\label{conjugateB}
$$
\mathscr{L} T_{\mathscr{L}}^{-1}\textbf{B}_{w}(z)= \textbf{B}_{w+\mathscr{L}}(z) \mathscr{L} T_{\mathscr{L}}^{-1}
$$
\end{Proposition}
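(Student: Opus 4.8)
The statement is a conjugation identity relating the wall-crossing operator for a wall $w$ and the operator for its translate $w+\mathscr{L}$ by a line bundle $\mathscr{L}$. The plan is to trace through the definition (\ref{wallcross}) of $\textbf{B}_{w}(z)$ and see how each of its three ingredients --- the factor $J_{w}(zq^{-\mathscr{L}_w})^{-1}$, the antipode-twisted multiplication $\textbf{m}\circ(1\otimes S_w)$, and the substitution $z\to z\hbar^{\kappa}$ --- transforms under replacing $w$ by $w+\mathscr{L}$. First I would recall that the wall $R$-matrices $\mathsf{R}^{\pm}_{w+\mathscr{L}}$ are related to $\mathsf{R}^{\pm}_{w}$ by the shift in slope, and that correspondingly the factor $J_{w+\mathscr{L}}(z)$ solving the equation (\ref{abrr}) for the wall $w+\mathscr{L}$ is obtained from $J_{w}(z)$ by a Kähler shift: concretely one expects $J_{w+\mathscr{L}}(z)=J_{w}(zq^{\mathscr{L}})$ up to the appropriate bookkeeping, since the defining equation (\ref{abrr}) is translation-covariant (the operator $\mathscr{L}_{w+\mathscr{L}}$ differs from $\mathscr{L}_{w}$ by $\mathscr{L}$, and $Z_{(1)}$ and $\hbar^{\Omega}$ are insensitive to this). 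This is the algebraic heart of the matter and is essentially recorded in \cite{OS}; I would cite it and then propagate the shift.

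The second step is bookkeeping with the Kähler variable. In (\ref{wallcross}) the argument of $J_{w}$ is $zq^{-\mathscr{L}_w}$, so replacing $w$ by $w+\mathscr{L}$ replaces $\mathscr{L}_w$ by $\mathscr{L}_w+\mathscr{L}$ and hence the argument by $zq^{-\mathscr{L}_w-\mathscr{L}}=(zq^{-\mathscr{L}})q^{-\mathscr{L}_w}$. Combined with the covariance of $J$ under wall translation from the first step, one finds that $\textbf{B}_{w+\mathscr{L}}(z)$ agrees, before the final $z\to z\hbar^{\kappa}$ substitution, with $\textbf{B}_{w}(z)$ evaluated at $zq^{-\mathscr{L}}$ --- that is, with $T_{\mathscr{L}}^{-1}$ applied. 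The antipode-twisted multiplication $\textbf{m}\circ(1\otimes S_w)$ is the same map for both walls (it is built from the Hopf structure of the common wall subalgebra, or more precisely of $\hopf$ restricted appropriately), so it does not interfere. The only subtlety is to check that the operator $\mathscr{L}$ of tensoring by the line bundle intertwines the $\hbar^{\kappa}$ substitutions: since $\kappa(\dv,\dw)_i$ depends linearly on $\dv$, and tensoring by $\mathscr{L}=\bigotimes_i \mathscr{L}_i^{m_i}$ does not change the weight space $K_{\bT}(\mathcal{M}(\dv,\dw))$ on which a given homogeneous component acts, the substitution $z\to z\hbar^{\kappa}$ commutes with $\mathscr{L}$, and the extra $z\to zq^{-\mathscr{L}}$ picked up above is exactly what turns the plain shift into $\mathscr{L}T_{\mathscr{L}}^{-1}$ on the left and $\mathscr{L}T_{\mathscr{L}}^{-1}$ on the right.

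Assembling these, the identity $\mathscr{L}T_{\mathscr{L}}^{-1}\textbf{B}_{w}(z)=\textbf{B}_{w+\mathscr{L}}(z)\,\mathscr{L}T_{\mathscr{L}}^{-1}$ follows by comparing the two sides term by term in the weight decomposition. I would present this as: (i) state the translation-covariance of $J_{w}$, citing \cite{OS}; (ii) substitute into (\ref{wallcross}) and collect the Kähler shifts; (iii) verify the commutation of $\mathscr{L}$ with the $\hbar^{\kappa}$-substitution using linearity of $\kappa$ in $\dv$; (iv) conclude.

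The main obstacle I anticipate is step (i): making precise exactly how $J_{w}$ transforms under $w\mapsto w+\mathscr{L}$, since $J_{w}$ is characterized only implicitly by the lower-triangularity together with equation (\ref{abrr}), and one must argue that the translated object still satisfies the translated equation and the same triangularity constraint (which pins it down uniquely, so that it must be $J_{w+\mathscr{L}}$). If the uniqueness of the solution to (\ref{abrr}) among strictly lower triangular operators is available --- it should follow from the triangularity forcing a recursive determination of $J_{w}$ weight space by weight space --- then this step is clean; otherwise it would need to be established first. Everything after that is routine substitution.
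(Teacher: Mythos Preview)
The paper does not prove this proposition; it simply records it as \cite{OS} Proposition 9 and moves on. So there is no ``paper's own proof'' to compare against --- you are supplying an argument where the paper gives only a citation.

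Your overall strategy is the natural one: unwind the definition (\ref{wallcross}), establish a translation-covariance statement for $J_{w}$, and propagate. But your step (i) misidentifies the form of that covariance. You write that one expects $J_{w+\mathscr{L}}(z)=J_{w}(zq^{\mathscr{L}})$; this cannot be right, because the defining equation (\ref{abrr}) contains no $q$ at all --- $Z_{(1)}$ acts by powers of $z$, and $\hbar^{\Omega}$, $\mathsf{R}^{-}_{w}$ are $q$-independent. The actual relation is conjugation: the shift property of $K$-theoretic stable envelopes under an integral slope translation gives $\mathsf{R}^{\pm}_{w+\mathscr{L}}=(\mathscr{L}\otimes\mathscr{L})\,\mathsf{R}^{\pm}_{w}\,(\mathscr{L}\otimes\mathscr{L})^{-1}$, and since $\mathscr{L}\otimes\mathscr{L}$ commutes with $Z_{(1)}$ and $\hbar^{\Omega}$ (all are diagonal on weight spaces), uniqueness of the strictly lower triangular solution of (\ref{abrr}) forces $J_{w+\mathscr{L}}(z)=(\mathscr{L}\otimes\mathscr{L})\,J_{w}(z)\,(\mathscr{L}\otimes\mathscr{L})^{-1}$. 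The $q$-shift in the final statement comes entirely from the explicit $zq^{-\mathscr{L}_{w}}$ in (\ref{wallcross}) together with $\mathscr{L}_{w+\mathscr{L}}=\mathscr{L}_{w}+\mathscr{L}$, which you do identify correctly in your second paragraph.

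Once step (i) is fixed to be a conjugation, the rest of your outline goes through: one must also check that the antipode $S_{w+\mathscr{L}}$ and the multiplication are intertwined with $S_{w}$ by conjugation by $\mathscr{L}$ (this follows because the wall subalgebras for $w$ and $w+\mathscr{L}$ are isomorphic via the same conjugation), and then the identity $\mathscr{L}\,\textbf{B}_{w}(zq^{-\mathscr{L}})=\textbf{B}_{w+\mathscr{L}}(z)\,\mathscr{L}$ falls out, which is equivalent to the stated form after commuting $T_{\mathscr{L}}^{-1}$ past $\textbf{B}_{w}(z)$.
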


Given a wall $w$, we denote by $\mathscr{L}_w \in \Pic(X) \otimes_{\mathbb{Z}} \mathbb{R}$ a real line bundle on this wall. While there is no canonical choice of $\mathscr{L}_w$, for our purposes, it does not matter which line bundle is chosen.

\begin{Proposition}\label{Bshift}
The operator
$$
\textbf{B}_{w}(z q^{\mathscr{L}_w})
$$
does not depend on $q$.
\end{Proposition}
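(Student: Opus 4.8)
The plan is to trace the $q$-dependence through the definition (\ref{wallcross}) of $\textbf{B}_{w}(z)$ and observe that replacing $z$ by $zq^{\mathscr{L}_w}$ exactly cancels all of it. Recall that
$$
\textbf{B}_{w}(z)= \textbf{m}\left((1 \otimes S_{w}) J_{w}(z q^{-\mathscr{L}_w})^{-1} \right)\big|_{z\to z \hbar^{\kappa}}.
$$
The only places $q$ can enter are through the explicit shift $z\mapsto zq^{-\mathscr{L}_w}$ in the argument of $J_w$ and through the coefficients of $J_w(z)$ itself. First I would argue that $J_{w}(z)$ has no intrinsic $q$-dependence: by the Proposition of \cite{OS} cited above, $J_{w}(z)$ is characterized as the strictly lower triangular solution of the equation (\ref{abrr}), namely $\mathsf{R}_{w}^{-} Z_{(1)}^{-1} J_{w}(z)= J_{w}(z) \hbar^{\Omega} Z_{(1)}^{-1}$. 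None of the data in this equation — the renormalized wall $R$-matrix $\mathsf{R}^{-}_{w}=\hbar^{\Omega}R^{-}_{w}$, which is built from $K$-theoretic stable envelopes and hence depends only on the equivariant parameters, $\hbar$, and the spectral variable $u$; the operator $\hbar^\Omega$; and the operator $Z_{(1)}$, which only multiplies by monomials in the K\"ahler variables $z_i$ — involves $q$. Since the solution is unique, $J_{w}(z)$ itself is independent of $q$. Consequently so are $S_w$ (the antipode of $\hopfwall$, built from the same $R$-matrices) and the multiplication map, so the operator $\textbf{m}\left((1 \otimes S_{w}) J_{w}(y)^{-1} \right)$ is a $q$-independent function of the formal variable $y$.

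Next I would substitute. Writing $G(y):=\textbf{m}\left((1 \otimes S_{w}) J_{w}(y)^{-1} \right)$, which is $q$-free, we have $\textbf{B}_{w}(z) = G(zq^{-\mathscr{L}_w})\big|_{z\to z\hbar^\kappa}$. The substitution $z\mapsto z\hbar^\kappa$ only rescales the K\"ahler variables by powers of $\hbar$ (depending on the weight space), so it commutes with the substitution $z\mapsto zq^{\mathscr{L}_w}$ and introduces no $q$. Therefore
$$
\textbf{B}_{w}(z q^{\mathscr{L}_w}) = G\big((zq^{\mathscr{L}_w})q^{-\mathscr{L}_w}\big)\big|_{z\to z\hbar^\kappa} = G(z)\big|_{z\to z\hbar^\kappa},
$$
which is manifestly independent of $q$, as claimed.

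**Main obstacle.** The one point requiring care is the interaction of the K\"ahler shift $z\mapsto z\hbar^\kappa$ with the $q$-shift when $\textbf{B}_w(z)$ acts on a tensor product of several representations rather than a single weight space: one must check that $\kappa(\dv,\dw)$ depends only on the total weight (which it does, since $\kappa(\dv,\dw)=(C\dv-\dw)/2$ and $J_w$ preserves total weight), so that the substitution is unambiguous and genuinely commutes with $z\mapsto zq^{\mathscr{L}_w}$. A secondary subtlety is to confirm that the shift $z\mapsto zq^{-\mathscr{L}_w}$ appearing inside $J_w$ in (\ref{wallcross}) is literally the only explicit occurrence of $q$ in the formula — i.e. that $\hbar^\Omega$, $\hbar^\kappa$, and the antipode $S_w$ contribute none — which is immediate from their definitions. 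Everything else is bookkeeping.
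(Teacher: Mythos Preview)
Your proof is correct and follows exactly the same approach as the paper: the paper's proof is simply the two-sentence observation that the only $q$-dependence in $\textbf{B}_w(z)$ comes from the explicit shift $z\to zq^{-\mathscr{L}_w}$ in the argument of $J_w$, so reversing it removes all $q$-dependence. You have merely unpacked this by verifying from the ABRR equation that $J_w(z)$ itself carries no $q$, which is implicit in the paper's argument.
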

\begin{proof}
The only $q$-dependence in $\textbf{B}_{w}(z)$ arises from the shift $z\to z q^{-\mathscr{L}_w}$. Hence reversing this shift removes any $q$-dependence.
\end{proof}

\begin{Lemma}\label{Jdegrees}
$J_{w}(z)$ can be expanded as a formal power series in $z$ as
$$
J_{w}(z)=1+\sum_{\substack{d \\ \langle d, \theta \rangle> 0}} J_{w,d} z^{d}, \quad J_{w,d} \in \hopfwall
$$
where $\theta$ is the stability condition.
\end{Lemma}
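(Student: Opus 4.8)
The plan is to read the statement off the recursive solution of the ABRR equation~(\ref{abrr}), essentially recovering part of the construction of $J_w(z)$ carried out in \cite{OS}. First I would fix framing vectors $\dw,\dw'$ and decompose both sides into weight-shift components in the sense of Definition~\ref{triangular}, writing $J_w(z)=\sum_{\alpha}J_{w,\alpha}(z)$ and $\mathsf{R}_w^- = \sum_{\gamma}(\mathsf{R}_w^-)_{\gamma}$. Strict lower triangularity of $J_w(z)$ records $J_{w,0}(z)=1$ and $J_{w,\alpha}(z)=0$ whenever $\langle\alpha,\theta\rangle\ge 0$ with $\alpha\ne 0$, while the fact that the diagonal restrictions $\stab(\mathcal{O}_{F_i})|_{F_i}$ of $K$-theoretic stable envelopes do not depend on the slope forces $(\mathsf{R}_w^-)_0=\hbar^{\Omega}$, the remaining shifts $\gamma$ being confined to the negative side of $\theta$ by the triangularity of $\stab$ for the chamber used at the wall. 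The key structural point is that $Z_{(1)}$ is diagonal for the weight decomposition --- it acts by $z^{\dv}$ on the summand whose first tensor factor has dimension vector $\dv$ --- so conjugating a weight-shift-$\alpha$ operator by $Z_{(1)}$ rescales it by $z^{\alpha}$; this is precisely what turns the weight grading in~(\ref{abrr}) into a grading in $z$.

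Next I would substitute these decompositions into~(\ref{abrr}), cancel the common factor $Z_{(1)}^{-1}$, and compare the components shifting the first-factor weight by a fixed $\delta$. This yields a recursion determining $J_{w,\delta}(z)$ from the lower components $J_{w,\delta-\gamma}(z)$, with $\gamma$ ranging over the nontrivial shifts of $\mathsf{R}_w^-$ and with the operator one must invert being a geometric-type expression in the single monomial $z^{-\delta}$. For $\delta=0$ the recursion is consistent with $J_{w,0}(z)=1$. For $\delta\ne 0$ one has $\langle\delta,\theta\rangle<0$, hence $\langle-\delta,\theta\rangle>0$, so --- expanding about $z=0$ --- this inverse ``propagator'' is a power series in $z$ all of whose exponents pair strictly positively with $\theta$; and since every $\gamma$ occurring also has $\langle\gamma,\theta\rangle<0$, we get $\langle\delta-\gamma,\theta\rangle>\langle\delta,\theta\rangle$, so the recursion is well founded when the shifts $\delta$ are ordered by $-\langle\delta,\theta\rangle$ (only finitely many occur for a given total weight). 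Running the induction on $-\langle\delta,\theta\rangle$, the inductive hypothesis on the $J_{w,\delta-\gamma}(z)$ together with the support of the propagator forces every monomial of $J_{w,\delta}(z)$ to pair strictly positively with $\theta$; summing over $\delta$ then gives the asserted expansion, with $1$ as the unique term of vanishing $\theta$-degree.

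The step I expect to be the real obstacle --- and the reason I would ultimately defer to the argument in \cite{OS} --- is the sign and direction bookkeeping needed to make the above run literally: one must pin down on which side of the wall $\theta$ the shifts of $\mathsf{R}_w^-$ and of $J_w(z)$ actually lie, check that the geometric expansion of the propagator is the one regular at $z=0$ rather than at $z=\infty$, and verify that the $u$-dependence of $\mathsf{R}_w^-$ drops out of~(\ref{abrr}) in the way the recursion requires. Once these conventions are fixed --- exactly as in the construction of $J_w(z)$ in \cite{OS} (Proposition 6 there) --- the claimed power-series form and its support follow mechanically from the recursion.
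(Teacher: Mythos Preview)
Your proposal is correct and takes essentially the same approach as the paper: the paper's proof simply states that the claim follows from the strict lower triangularity of $J_w(z)$ and $\mathsf{R}_w^-$ together with the ABRR equation~(\ref{abrr}), and your argument is a careful unpacking of exactly that sentence. The recursive mechanism you describe --- conjugation by $Z_{(1)}$ converting weight shifts into $z$-powers, then induction on $-\langle\delta,\theta\rangle$ --- is precisely what underlies the one-line proof in the paper.
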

\begin{proof}
This follows from the strictly lower triangularity of $J_{w}(z)$ and $\mathsf{R}_{w}^{-}$, along with the equation (\ref{abrr}).
\end{proof}

\begin{Proposition}\label{Bdegrees}
$\textbf{B}_{w}(z)$ can be expanded as a formal power series in $z$ as
$$
\textbf{B}_{w}(z)=1+\sum_{\substack{d \\ \langle d, \theta \rangle> 0}} \textbf{B}_{w}^{d} z^{d}, \quad \textbf{B}_{w}^{d} \in \hopfwall[q^{\pm 1}]
$$
where $\theta$ is the stability condition.
\end{Proposition}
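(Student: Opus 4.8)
The plan is to track how the three operations in the definition (\ref{wallcross}) of $\textbf{B}_{w}(z)$ transform the $z$-expansion of $J_{w}(z)$ recorded in Lemma \ref{Jdegrees}.

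First I would invert $J_{w}(z)$. Write $J_{w}(z)=1+\sum_{\langle d,\theta\rangle>0}J_{w,d}z^{d}$ with $J_{w,d}\in\hopfwall\otimes\hopfwall$ as in Lemma \ref{Jdegrees}, and let $S$ be the set of degrees $d$ with $\langle d,\theta\rangle>0$ that actually occur. Since $S$ lies in the strongly convex cone of effective degrees, on which $\langle\cdot,\theta\rangle$ is strictly positive, each fixed degree is a sum of elements of $S$ in only finitely many ways, so the geometric series $J_{w}(z)^{-1}=\sum_{n\ge 0}\big(1-J_{w}(z)\big)^{n}$ is well defined degree by degree. It yields $J_{w}(z)^{-1}=1+\sum_{d}\tilde J_{w,d}z^{d}$, where each $\tilde J_{w,d}\in\hopfwall\otimes\hopfwall$, the occurring $d$ lie in the submonoid generated by $S$ (so still $\langle d,\theta\rangle>0$), and — since the $J_{w,d}$ carry no dependence on $q$ — neither do the $\tilde J_{w,d}$.

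Next I would push this through $\textbf{m}\circ(1\otimes S_{w})$ and the two substitutions. Because $\hopfwall$ is a Hopf algebra, $S_{w}$ maps $\hopfwall$ into itself with $S_{w}(1)=1$, and $\textbf{m}(1\otimes 1)=1$; both maps are linear over $\mathbb{C}(z,q)$, hence preserve the $z$-grading and send $q$-free coefficients to $q$-free coefficients, so $\textbf{m}\big((1\otimes S_{w})J_{w}(z)^{-1}\big)=1+\sum_{\langle d,\theta\rangle>0}b_{d}z^{d}$ with $b_{d}\in\hopfwall$ free of $q$. The substitution $z\mapsto zq^{-\mathscr{L}_{w}}$ multiplies the $z^{d}$-coefficient by $q^{-\langle d,\mathscr{L}_{w}\rangle}$, and the substitution $z\mapsto z\hbar^{\kappa}$ multiplies it, on the summand acting on $K_{\bT}(\mathcal{M}(\dv,\dw))$, by the scalar $\hbar^{\langle d,\kappa(\dv,\dw)\rangle}$, which lies in the base ring of $\hopfwall$ and involves no $q$. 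Collecting these, $\textbf{B}_{w}(z)=1+\sum_{\langle d,\theta\rangle>0}\textbf{B}_{w}^{d}z^{d}$ with $\textbf{B}_{w}^{d}=q^{-\langle d,\mathscr{L}_{w}\rangle}\,c_{d}$ for some $q$-free $c_{d}\in\hopfwall$; in particular it is a power series in $z$ supported on $\langle d,\theta\rangle>0$, with constant term $1$.

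What remains — and this is the step I expect to be the actual content rather than bookkeeping — is to see that $q^{-\langle d,\mathscr{L}_{w}\rangle}$ is an honest Laurent monomial, i.e.\ that $\langle d,\mathscr{L}_{w}\rangle\in\mathbb{Z}$ for every $d$ that occurs. By (\ref{abrr}) the degrees in $J_{w}(z)$, and hence (by the submonoid remark above) those in $\textbf{B}_{w}(z)$, are controlled by the degrees of the renormalized wall $R$-matrix $\mathsf{R}_{w}^{-}$; and the change of $K$-theoretic stable envelope across a wall $w$ is supported on curve classes lying on $w$ — this is precisely the property that makes $w$ a wall, see \cite{OS}. Since $\mathscr{L}_{w}$ is a real point of the wall $w$, its pairing with any such curve class is an integer, so $q^{-\langle d,\mathscr{L}_{w}\rangle}\in\mathbb{Z}[q^{\pm 1}]$ and therefore $\textbf{B}_{w}^{d}\in\hopfwall[q^{\pm 1}]$; combined with $\langle d,\theta\rangle>0$ this is the assertion. (The $q$-independence of $\textbf{B}_{w}(zq^{\mathscr{L}_{w}})$ from Proposition \ref{Bshift} is then transparent from the formula $\textbf{B}_{w}^{d}=q^{-\langle d,\mathscr{L}_{w}\rangle}c_{d}$.)
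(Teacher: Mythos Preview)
Your proof is correct and follows the same route as the paper's: the paper simply says ``This follows from the definition of $\textbf{B}_{w}(z)$ and the previous lemma,'' and what you have done is unpack exactly that, tracking the $z$-expansion of Lemma~\ref{Jdegrees} through inversion, $\textbf{m}\circ(1\otimes S_{w})$, and the two substitutions. You have in fact been more careful than the paper, isolating the one nontrivial point the one-line proof elides --- that $\langle d,\mathscr{L}_{w}\rangle\in\mathbb{Z}$ for the degrees $d$ that actually occur --- and supplying the right reason for it (the weight shifts in $\mathsf{R}_{w}^{-}$, hence in $J_{w}$, are along curve classes for which $w$ is a wall, so that any real point $\mathscr{L}_{w}$ on $w$ pairs integrally with them).
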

\begin{proof}
This follows from the definition of $\textbf{B}_{w}(z)$ and the previous lemma.
\end{proof}

\begin{Proposition}\label{Blim}
$$
\lim_{q \to \infty} \textbf{B}_{w}(z q^{\mathscr{L}}) = 1
$$
if $\mathscr{L}_{w}-\mathscr{L}$ is ample.
\end{Proposition}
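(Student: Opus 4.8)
The plan is to isolate the unique source of $q$-dependence in $\textbf{B}_{w}(z)$ and then read off the limit from its expansion as a power series in $z$. As in the proof of Proposition \ref{Bshift}, the algebra $\hopfwall$, its antipode $S_{w}$, the multiplication $\textbf{m}$, and the element $J_{w}(z)$ are all independent of $q$ --- they are built from $K$-theoretic stable envelopes and the matrices $\mathsf{R}^{-}_{w}$, none of which involves $q$ --- and the rescaling $z \to z\hbar^{\kappa}$ appearing in (\ref{wallcross}) involves $\hbar$, not $q$. Hence $q$ enters $\textbf{B}_{w}(z)$ only through the shift $z \to z q^{-\mathscr{L}_{w}}$ in (\ref{wallcross}). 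Combining this observation with the expansions of Lemma \ref{Jdegrees} and Proposition \ref{Bdegrees}, I would write
$$ \textbf{B}_{w}(z) = 1 + \sum_{d} \textbf{B}_{w}^{d}\, z^{d}, \qquad \textbf{B}_{w}^{d} = F_{d}\, q^{-\langle d,\, \mathscr{L}_{w}\rangle}, \quad F_{d} \in \hopfwall, $$
where $\langle\,\cdot\,,\,\cdot\,\rangle$ denotes the pairing between $H_{2}(X)$ and $\Pic(X)\otimes\mathbb{R}$, each $F_{d}$ is independent of $q$, and the sum is over nonzero effective curve classes $d$ with $\langle d, \theta\rangle > 0$. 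The displayed form of $\textbf{B}_{w}^{d}$ is forced by Proposition \ref{Bshift}: $\textbf{B}_{w}(z q^{\mathscr{L}_{w}})$ is $q$-free, and the substitution $z \to z q^{\mathscr{L}_{w}}$ multiplies the coefficient of $z^{d}$ by $q^{\langle d, \mathscr{L}_{w}\rangle}$, so $\textbf{B}_{w}^{d}$ must equal $q^{-\langle d, \mathscr{L}_{w}\rangle}$ times a $q$-free operator.

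Granting this, the statement is immediate. Substituting $z \to z q^{\mathscr{L}}$ multiplies the coefficient of $z^{d}$ by $q^{\langle d, \mathscr{L}\rangle}$, so
$$ \textbf{B}_{w}(z q^{\mathscr{L}}) = 1 + \sum_{d} F_{d}\, q^{\langle d, \mathscr{L}\rangle - \langle d, \mathscr{L}_{w}\rangle}\, z^{d} = 1 + \sum_{d} F_{d}\, q^{-\langle d,\, \mathscr{L}_{w} - \mathscr{L}\rangle}\, z^{d}. $$
The limit $q \to \infty$ is computed coefficientwise in $z$, so it suffices to check that $\langle d, \mathscr{L}_{w} - \mathscr{L}\rangle > 0$ for every $d$ occurring in the sum. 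Each such $d$ is a nonzero effective curve class, and $\mathscr{L}_{w} - \mathscr{L}$ is ample by hypothesis, hence pairs strictly positively with every nonzero effective curve class; so $\langle d, \mathscr{L}_{w} - \mathscr{L}\rangle > 0$ and $q^{-\langle d,\, \mathscr{L}_{w} - \mathscr{L}\rangle} \to 0$. Every term with $d \neq 0$ therefore drops out, leaving $\lim_{q\to\infty}\textbf{B}_{w}(z q^{\mathscr{L}}) = 1$.

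The one step that genuinely needs justification --- and which I expect to be the main obstacle --- is the claim that the degrees $d$ occurring in the expansion of $\textbf{B}_{w}(z)$ are effective curve classes of $X$, not merely classes with $\langle d, \theta\rangle > 0$; this is exactly what allows the ampleness hypothesis to be invoked. This is part of the structure theory of $\textbf{B}_{w}(z)$ in \cite{OS}: the solution $J_{w}(z)$ of the ABRR-type equation (\ref{abrr}) for the strictly lower-triangular $\mathsf{R}^{-}_{w}$ has $z$-expansion supported on effective curve classes, and this property is inherited by $J_{w}(z)^{-1}$, by the products defining (\ref{wallcross}), and hence by $\textbf{B}_{w}(z)$. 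Everything else in the argument is routine bookkeeping of the substitutions $z \to z q^{\mathscr{L}_{w}}$ and $z \to z q^{\mathscr{L}}$.
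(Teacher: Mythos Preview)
Your proof is correct and follows essentially the same approach as the paper: both isolate the sole $q$-dependence as the shift $z\to zq^{-\mathscr{L}_{w}}$, so that after substituting $z\to zq^{\mathscr{L}}$ the coefficient of $z^{d}$ carries $q^{-\langle d,\mathscr{L}_{w}-\mathscr{L}\rangle}$, and ampleness of $\mathscr{L}_{w}-\mathscr{L}$ forces these exponents to be positive. The paper carries this out at the level of $J_{w}$ and then pushes through formula (\ref{wallcross}), while you work directly with $\textbf{B}_{w}$ via Proposition \ref{Bshift}; your explicit flagging of the effectiveness of the degrees $d$ is a point the paper leaves implicit.
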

\begin{proof}
Shifting $z\to z q^{\mathscr{L}-\mathscr{L}_w}$ in $J_{w}(z)$ gives
$$
J_{w}(z q^{\mathscr{L}-\mathscr{L}_{w}})=1+ \sum_{\substack{d \\ \langle d, \theta \rangle >0}}  J_{w,d} q^{\langle \mathscr{L}-\mathscr{L}_{w},d\rangle} z^{d}
$$
If $\mathscr{L}_{w}-\mathscr{L}$ is ample, then the powers of $q$ appearing above are all negative. Hence
$$
\lim_{q\to \infty} J_{w}(z q^{\mathscr{L}-\mathscr{L}_w})=1
$$
From (\ref{wallcross}), we have
$$
\textbf{B}_{w}(z q^{\mathscr{L}})=\textbf{m}\left((1 \otimes S_{w}) J(zq^{\mathscr{L}-\mathscr{L}_w})^{-1}\right)\big|_{z \to z \hbar^{\kappa}}
$$
So 
$$
\lim_{q\to \infty} \textbf{B}_{w}(z q^{\mathscr{L}})=1
$$
\end{proof}

\subsection{Representation-theoretic description}
Now we fix a particular quiver variety $X=\mathcal{M}(\dv,\dw)$. 

\begin{Definition}
Let $\nabla\subset \Pic(X)\otimes\mathbb{R}$ be an alcove and let $\mathscr{L} \in \Pic(X)$. Choose a path from $\nabla$ to $\nabla-\mathscr{L}$ in $\Pic(X)\otimes\mathbb{R}$. Let $w_1,\ldots, w_k$ be the ordered sequence of walls that this path crosses. If $w_i$ is crossed negatively, let $m_i=1$ and otherwise let $m_i=-1$. We say that $\{(w_i,m_i)\}_{i=1}^{k}$ is a sequence of oriented walls from $s$ to $s-\mathscr{L}$. Furthermore, we define
$$
\textbf{B}^{\nabla}_{\mathscr{L}}(z):=\textbf{B}_{w_n}(z)^{m_k} \ldots \textbf{B}_{w_1}(z)^{m_1}
$$
\end{Definition}

One of the main results of \cite{OS} is that this operator is well-defined. Namely, it does not depend on the choice of path. Recall from Proposition \ref{qde} that the capping operator $\Psi(z)$ of $X$ satisfies the equation
$$
\Psi(z q^{\mathscr{L}}) \mathscr{L}=\textbf{M}_{\mathscr{L}}(z) \Psi(z)
$$
for any $\mathscr{L} \in \Pic(X)$.

The main result of \cite{OS} is the following.
\begin{Theorem}[\cite{OS} Theorem 5]\label{qdeOSthm}
Let $\nabla_{0} \in \Pic(X)\otimes\mathbb{R}$ be the alcove containing small ample line bundles. For any $\mathscr{L} \in \Pic(X)$, 
\begin{equation}\label{qdeOS}
 \textbf{M}_{\mathscr{L}}(z)= \const_{X}  \mathscr{L} \textbf{B}^{-\nabla_{0}}_{\mathscr{L}}(z) \in \text{End}(K_{\bT \times \mathbb{C}^{\times}_{q}}(X))[[z]]
\end{equation}
where $\const_{X}\in K_{\bT \times \mathbb{C}^{\times}_{q}}(pt)[[z]]$ is a constant depending on $X$.
\end{Theorem}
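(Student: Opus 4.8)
The goal is to match an operator defined purely by quasimap counts with one assembled from the algebra $\hopf$, so the plan is to extract from the geometric definition of $\textbf{M}_{\mathscr{L}}(z)$ precisely the data that characterizes $\const_X\mathscr{L}\,\textbf{B}^{-\nabla_0}_{\mathscr{L}}(z)$: a normalization at $z=0$, a factorization over walls, and a commutation relation that pins down each single-wall factor. First I would treat the normalization. In degree zero the relative moduli space $\qm_{\substack{\rel\,0\\\rel\,\infty}}^{0}$ is $X$ itself, so the constant term in $z$ of the twisted count defining $\textbf{M}_{\mathscr{L}_i}(z)$ is computed on $X$, where $\det H^{*}(\mathbb{P}^1,\mathscr{V}_i\otimes\pi^{*}\mathcal{O}_0)$ degenerates to $\det\mathscr{V}_i=\mathscr{L}_i$ and the glue matrix is the identity; extending multiplicatively to all of $\Pic(X)$, the constant term of $\textbf{M}_{\mathscr{L}}(z)$ is a scalar times multiplication by $\mathscr{L}$, consistent with the right-hand side since the constant term of $\textbf{B}_{w}(z)$ is $1$ by Proposition~\ref{Bdegrees}. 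Matching the full scalar $\const_X$ is then bookkeeping with the normalization of $\vrs$.

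Next I would establish a factorization of $\textbf{M}_{\mathscr{L}}(z)$ over walls. The degeneration and gluing formulas yield the cocycle identity $\textbf{M}_{\mathscr{L}+\mathscr{L}'}(z)=\textbf{M}_{\mathscr{L}'}(zq^{\mathscr{L}})\textbf{M}_{\mathscr{L}}(z)$, reducing the problem to line bundles $\mathscr{L}$ whose defining path crosses a single wall $w$. For those, a one-parameter degeneration of the K\"ahler variable $z$ toward $w$ shows that the quasimap count acquires a correction governed by the wall quiver variety, so that $\const_X^{-1}\textbf{M}_{\mathscr{L}}(z)\mathscr{L}^{-1}$ is the action on $K_{\bT}(X)$ of an element of the wall subalgebra $\hopfwall$. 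Moreover, the compatibility of the $z$-difference equation with the qKZ equations of \cite{pcmilect}, whose operators are built from the geometric $R$-matrices, forces this single-wall factor to satisfy the same commutation relation with the renormalized wall $R$-matrix $\mathsf{R}^{-}_{w}$ and the degree-shift $Z_{(1)}$ as the element $J_{w}(z)$ does, i.e.\ the ABRR equation~(\ref{abrr}), together with strict lower-triangularity and the $z\to0$ normalization.

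The decisive step is a rigidity statement: an operator that is strictly lower-triangular, reduces to the identity at $z=0$, and solves the ABRR equation~(\ref{abrr}) is unique, so the single-wall factor must equal $\textbf{B}_{w}(z)^{\pm1}$, the sign determined by the orientation of the crossing relative to $\theta$. This uniqueness is the standard $q\to\infty$ rigidity: after an appropriate shift of $z$ the relevant operators have trivial limits as $q\to\infty$ (as in Proposition~\ref{Blim}), which removes the ambiguity order by order in $z$. Reassembling the single-wall factors along a path from $-\nabla_0$ to $-\nabla_0-\mathscr{L}$, with path-independence of the product automatic once equality with $\textbf{M}_{\mathscr{L}}(z)$ is known, gives $\textbf{M}_{\mathscr{L}}(z)=\const_X\mathscr{L}\,\textbf{B}^{-\nabla_0}_{\mathscr{L}}(z)$.

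The hardest part will be the single-wall analysis: showing, by a careful study of the quasimap moduli space as $z$ crosses a wall, that the correction to $\textbf{M}_{\mathscr{L}}(z)$ is supported on the wall subalgebra and obeys the ABRR equation. The qKZ compatibility and the $q\to\infty$ vanishing properties of the wall-crossing operators are the technical levers that make the rigidity argument work; the $z=0$ normalization and the reassembly of factors are comparatively formal.
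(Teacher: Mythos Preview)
This theorem is not proved in the present paper at all: it is quoted as Theorem~5 of \cite{OS}, and the paper provides no argument beyond the two remarks that follow the statement (the first noting the equivalence with the formulation in \cite{OS} via Proposition~\ref{coprod}, the second observing that the constant $\const_X$ is unknown). There is therefore no proof in the paper to compare your proposal against.

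As a sketch of the argument in \cite{OS} itself, your outline hits the right structural points---the cocycle identity, compatibility with qKZ, the ABRR equation~(\ref{abrr}) as the characterizing relation for the single-wall factor, and a uniqueness/rigidity step---and this is broadly the architecture of \cite{OS}. However, the passage ``a one-parameter degeneration of the K\"ahler variable $z$ toward $w$ shows that the quasimap count acquires a correction governed by the wall quiver variety'' is doing too much work without justification: the actual mechanism in \cite{OS} is not a degeneration of $z$ but rather a factorization of the fusion/shift operator through the Hopf-algebraic structure of $\hopf$ and the identification of the fusion operator as a product of stable envelopes, which then decomposes across walls via the wall $R$-matrices~(\ref{wallR}). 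Your ``single-wall analysis'' would need to be replaced by that argument, and the claim that the geometric factor lands in $\hopfwall$ is a nontrivial part of \cite{OS}, not a consequence of a limit in $z$. If you want to turn this into a genuine proof rather than a plausibility sketch, that is the step where the real content lies.
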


\begin{Remark}
This is equivalent to Theorem 5 of \cite{OS}. In \cite{OS}, however, this result is written slightly differently to emphasize the compatibility of $\mathscr{L} \textbf{B}^{-\nabla_0}_{\mathscr{L}}(z)$ with the qKZ equations. Theorem \ref{qdeOSthm} is equivalent to Theorem 5 of \cite{OS} by the relationship between stable envelopes and coproducts expressed in Proposition \ref{coprod}.
\end{Remark}

\begin{Remark}
The form of the constant $\const_{X}$ is unknown. It will show up as a simple prefactor in our formulas below.
\end{Remark}

For later use, we record the following lemma.

\begin{Lemma}\label{limconst}
The constant $\const_{X}$ satisfies
$$
\const_{X}=1+\sum_{\substack{d \\ \langle d, \theta \rangle >0}} \const_{X,d} z^{d}
$$
and
$$
\lim_{q\to 1} \const_{X}=1
$$
\end{Lemma}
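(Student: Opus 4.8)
The plan is to extract both claims directly from Theorem~\ref{qdeOSthm}, using the structure of the quantum difference operator $\textbf{M}_{\mathscr{L}}(z)$ on the left-hand side together with the expansions of the $\textbf{B}_w(z)$ established in Proposition~\ref{Bdegrees}. Fix a single tautological line bundle $\mathscr{L}_i$, for which we have the geometric formula
$$
\textbf{M}_{\mathscr{L}_i}(z)=\left(\sum_{d} (\widehat{\text{ev}}_{0}\times \widehat{\text{ev}}_{\infty})_{*}\left(\qm^{d}_{\substack{\rel \, 0 \\ \rel \, \infty}} , \vrs^{d} \otimes \det H^*\left( \mathscr{V}_i\otimes \pi^*(\mathcal{O}_0) \right)\right) z^d \right)G^{-1}.
$$
Since $\qm^{0}_{\substack{\rel \, 0 \\ \rel \, \infty}}=X$, the degree-$0$ term of the bracketed series is $\det H^*(\mathscr{V}_i|_0)=\mathscr{L}_i$ (the higher cohomology vanishes on the constant quasimap locus), while the degree-$0$ term of $G$ is $1$ as noted after the definition of the glue matrix; hence the constant term in $z$ of $\textbf{M}_{\mathscr{L}_i}(z)$ is exactly $\mathscr{L}_i$. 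All higher-degree terms are proportional to $z^d$ with $d$ an effective (hence $\langle d,\theta\rangle>0$) quasimap degree. On the other hand, by Proposition~\ref{Bdegrees} each $\textbf{B}_{w}(z)^{\pm1}$ has constant term $1$ and all higher terms supported on degrees with $\langle d,\theta\rangle>0$, so the same is true of the product $\textbf{B}^{-\nabla_0}_{\mathscr{L}_i}(z)$, and therefore of $\mathscr{L}_i\,\textbf{B}^{-\nabla_0}_{\mathscr{L}_i}(z)$, whose constant term is again $\mathscr{L}_i$. Comparing the two sides of $\textbf{M}_{\mathscr{L}_i}(z)=\const_X\,\mathscr{L}_i\,\textbf{B}^{-\nabla_0}_{\mathscr{L}_i}(z)$ and cancelling the invertible operator $\mathscr{L}_i$, the constant term of $\const_X$ must be $1$, which gives the claimed expansion $\const_X=1+\sum_{\langle d,\theta\rangle>0}\const_{X,d}z^d$.

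For the second claim, the plan is to analyze the $q\to 1$ limit of both sides of the same identity. On the geometric side, the operator $\textbf{M}_{\mathscr{L}_i}(z)$ is a ratio of two relative quasimap generating functions, and these have well-defined specializations at $q=1$; moreover at $q=1$ the shift operator $T_{\mathscr{L}}$ becomes trivial, and the difference equation $\Psi(zq^{\mathscr{L}})\mathscr{L}=\textbf{M}_{\mathscr{L}}(z)\Psi(z)$ degenerates, forcing $\textbf{M}_{\mathscr{L}}(z)|_{q=1}=\mathscr{L}$ as an operator identity (the capping operator $\Psi$ is invertible with constant term $1$, so it cancels). On the representation-theoretic side, Proposition~\ref{Blim} (or, more directly, the observation in the proof of Proposition~\ref{Bshift} that the only $q$-dependence of $\textbf{B}_w(z)$ comes from the shift $z\to zq^{-\mathscr{L}_w}$) shows that $\lim_{q\to 1}\textbf{B}_w(z)=\textbf{B}_w(z)|_{q=1}$ is the wall-crossing operator with the shift removed, and in particular setting $q=1$ in the identity $\mathsf{R}_w^{-}Z_{(1)}^{-1}J_w(z)=J_w(z)\hbar^{\Omega}Z_{(1)}^{-1}$ one checks that $J_w(z)|_{q=1}$ still satisfies a nontrivial relation, so $\textbf{B}^{-\nabla_0}_{\mathscr{L}_i}(z)|_{q=1}$ need not be $1$; however the \emph{product} $\mathscr{L}_i\,\textbf{B}^{-\nabla_0}_{\mathscr{L}_i}(z)$ at $q=1$ must equal $\textbf{M}_{\mathscr{L}_i}(z)|_{q=1}=\mathscr{L}_i$, which is only consistent with $\const_X|_{q=1}\cdot\mathscr{L}_i\,\textbf{B}^{-\nabla_0}_{\mathscr{L}_i}(z)|_{q=1}=\mathscr{L}_i$; taking the constant term in $z$ and using the first part gives $\const_X|_{q=1}=1$ coefficient-by-coefficient, i.e. $\lim_{q\to 1}\const_X=1$.

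The main obstacle I anticipate is making rigorous the claim that $\textbf{M}_{\mathscr{L}}(z)|_{q=1}=\mathscr{L}$. The cleanest route is probably not to argue degree-by-degree that the quasimap contributions vanish at $q=1$, but rather to invoke the compatibility of the quantum difference equation with the classical limit: at $q=1$ the $q$-difference operator $T_{\mathscr{L}}$ is the identity, so the $\mathscr{L}$-equation reads $\Psi(z)\mathscr{L}=\textbf{M}_{\mathscr{L}}(z)\Psi(z)$, i.e. $\textbf{M}_{\mathscr{L}}(z)=\Psi(z)\mathscr{L}\Psi(z)^{-1}$, and one must then show this conjugation is trivial, equivalently that $\Psi(z)|_{q=1}$ commutes with tensor multiplication by line bundles. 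This should follow because at $q=1$ the capping operator reduces to an operator of tensor multiplication by a single cohomology class (the genus-zero degeneration makes the relative insertions at $0$ and $\infty$ coincide in the appropriate sense), but pinning down the precise statement is where care is needed; alternatively one can sidestep this by citing the corresponding statement from \cite{OS} or \cite{pcmilect} about the classical limit of the quantum difference equation. Once that input is available, both displayed conclusions of the lemma are immediate from the comparison of constant terms in $z$.
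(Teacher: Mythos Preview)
Your argument for the first part (the expansion of $\const_X$ in $z$) is correct and essentially the same as the paper's: both compare the $z^0$-terms of the two sides of Theorem~\ref{qdeOSthm} using Proposition~\ref{Bdegrees} and the fact that the degree-zero quasimap moduli is $X$ itself.

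For the second part, however, you have been led astray by what is almost certainly a typo in the statement: the paper's own proof establishes $\lim_{q\to 0}\const_X=1$, not $q\to 1$. Once one reads it as $q\to 0$, the argument is clean and quite different from yours. The paper chooses $\mathscr{L}$ ample, so the path from $-\nabla_0$ to $-\nabla_0-\mathscr{L}$ crosses all walls negatively and each wall $w_i$ satisfies that $-\mathscr{L}_{w_i}$ is ample. Then (the proof of) Proposition~\ref{Blim} gives $\lim_{q\to 0}\textbf{B}_{w_i}(z)=1$ for every factor, hence $\lim_{q\to 0}\textbf{B}^{-\nabla_0}_{\mathscr{L}}(z)=1$. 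On the geometric side one invokes the known fact (Corollary~8.1.10 of \cite{pcmilect}) that $\lim_{q\to 0}\textbf{M}_{\mathscr{L}}(z)=\mathscr{L}$. Substituting both limits into $\textbf{M}_{\mathscr{L}}(z)=\const_X\,\mathscr{L}\,\textbf{B}^{-\nabla_0}_{\mathscr{L}}(z)$ yields $\lim_{q\to 0}\const_X=1$ immediately.

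Your $q\to 1$ strategy, by contrast, runs into the genuine obstacle you identify. At $q=1$ the recursion determining $\Psi_d$ degenerates (the operator $q^{\langle\mathscr{L},d\rangle}\mathscr{L}-\mathscr{L}$ is no longer invertible), so $\Psi(z)$ is not obviously well-defined there, and even granting that, there is no evident reason $\Psi(z)|_{q=1}$ should commute with tensoring by line bundles. The $q\to 0$ route avoids all of this by using limits that are already controlled by Proposition~\ref{Blim} and the cited result from \cite{pcmilect}.
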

\begin{proof}
By (\ref{qdeOS}) and Proposition \ref{Bdegrees}, the only degrees that can show up in $\const_{X}$ satisfy the condition of the lemma. This proves the first part.

Let $\mathscr{L}$ be an ample line bundle. Then
$$
\textbf{B}^{-\nabla_0}_{\mathscr{L}}(z)=\textbf{B}_{w_n}(z)\ldots \textbf{B}_{w_1}(z)
$$
where $w_1,\ldots, w_n$ is the ordered sequence of walls crossed on some path from $-\nabla_{0}$ to $-\nabla_{0}-\mathscr{L}$ that crosses all walls in the negative direction. The powers are all positive because the walls are crossed in the negative direction. By the proof of Proposition \ref{Blim},
$$
\lim_{q\to 0} \textbf{B}_{w}(z q^{\mathscr{L}})=1
$$
if $\mathscr{L}-\mathscr{L}_w$ is ample. Here, each $-\mathscr{L}_{w_i}$ is ample, and thus
$$
\lim_{q\to 0} \textbf{B}_{w_i}(z)=1
$$
Corollary 8.1.10 of \cite{pcmilect} reads $\lim_{q\to 0} \textbf{M}_{\mathscr{L}}(z)=\mathscr{L}$. So taking the $q\to 0$ limit in (\ref{qdeOS}) gives
$$
\lim_{q\to 0} \const_{X}=1
$$
\end{proof}

\section{Exotic quantum difference equations}

Now we turn to our exotic quantum difference equations. In this section, will define them and study their solutions.

\subsection{Definition and basic properties}

As we have set up our notation, there is an obvious generalization of (\ref{qdeOS}).

\begin{Definition}\label{exoticqde1}
Let $\nabla\subset \Pic(X)\otimes \mathbb{R}$ be an alcove. The exotic $q$-difference system for the alcove $\nabla$ is the system of equations
\begin{equation}\label{exoticqde2}
\Psi^{\nabla}(z q^{\mathscr{L}}) \mathscr{L} = \const_{X}  \mathscr{L} \textbf{B}^{\nabla}_{\mathscr{L}}(z)  \Psi^{\nabla}(z), \quad \Psi^{\nabla}(0)=1
\end{equation}
for $\mathscr{L} \in \Pic(X)$. We consider this as an equation for $\Psi^{\nabla}(z) \in \text{End}(K_{\bT\times\mathbb{C}^{\times}_{q}}(X)_{loc})[[z]]$.
\end{Definition}

We remark that the constant $\const_{X}$ is assumed to be the same constant present in (\ref{qdeOS}). Throughout, we will refer to $\Psi^{\nabla}(z)$ that solves (\ref{exoticqde2}) as an exotic solution for the alcove $\nabla$.

\begin{Proposition}
Solutions of (\ref{exoticqde2}) are unique.
\end{Proposition}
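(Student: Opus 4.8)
The plan is to show that the difference equations in the K\"ahler variables, together with the initial condition $\Psi^{\nabla}(0)=1$, recursively determine every coefficient of $z^d$ in the expansion $\Psi^{\nabla}(z)=\sum_d \Psi^{\nabla}_d z^d$. This is essentially the same argument that shows the ordinary quantum difference equation pins down the capping operator, so I would largely mimic the reasoning behind Proposition \ref{qde} and Theorem \ref{qdeOSthm}.

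First I would fix an ample line bundle $\mathscr{L}=\theta$, since the positivity of $\langle d,\theta\rangle$ on the degree cone is what makes the recursion triangular. Writing $\Psi^{\nabla}(z)=\sum_{d} \Psi^{\nabla}_{d} z^{d}$ and expanding the equation $\Psi^{\nabla}(zq^{\mathscr{L}})\mathscr{L}=\const_{X}\mathscr{L}\textbf{B}^{\nabla}_{\mathscr{L}}(z)\Psi^{\nabla}(z)$ in powers of $z$, the left-hand side contributes $q^{\langle d,\mathscr{L}\rangle}\Psi^{\nabla}_{d}\mathscr{L}$ to the coefficient of $z^{d}$, while by Proposition \ref{Bdegrees} and Lemma \ref{limconst} the operators $\const_{X}$ and $\textbf{B}^{\nabla}_{\mathscr{L}}(z)$ have the form $1+\sum_{\langle d,\theta\rangle>0}(\cdots)z^{d}$. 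Hence the coefficient of $z^{d}$ on the right-hand side is $\mathscr{L}\Psi^{\nabla}_{d}$ plus a sum of terms $\const_{X,d'}\mathscr{L}\textbf{B}^{\nabla,d''}_{\mathscr{L}}\Psi^{\nabla}_{d'''}$ with $d'+d''+d'''=d$ and $d'''\neq d$ (because at least one of $d',d''$ is a nonzero effective class with positive $\theta$-pairing). Rearranging gives
\begin{equation*}
\left(q^{\langle d,\mathscr{L}\rangle}-1\right)\Psi^{\nabla}_{d}\,\mathscr{L}=\left(\text{expression in }\Psi^{\nabla}_{d'''}\text{ with }\langle d''',\theta\rangle<\langle d,\theta\rangle\right).
\end{equation*}
Since $\mathscr{L}=\theta$ is ample and $d$ is a nonzero effective class, $\langle d,\mathscr{L}\rangle>0$, so $q^{\langle d,\mathscr{L}\rangle}-1$ is invertible in $\mathbb{C}(q)$ (equivalently, a nonzero element of $K_{\bT\times\mathbb{C}^{\times}_{q}}(pt)_{loc}$), and tensoring by the line bundle $\mathscr{L}$ is an invertible operator on $K_{\bT\times\mathbb{C}^{\times}_{q}}(X)_{loc}$. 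Therefore $\Psi^{\nabla}_{d}$ is uniquely determined by the $\Psi^{\nabla}_{d'''}$ with strictly smaller $\theta$-degree, and an induction on $\langle d,\theta\rangle$ (with base case $\Psi^{\nabla}_{0}=1$ from the initial condition) yields uniqueness.

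The main thing to be careful about is that the degree cone genuinely lies in the half-space $\langle d,\theta\rangle\geq 0$ with equality only for $d=0$, so that the induction is well-founded; this is exactly the setup under which $\textbf{B}^{\nabla}_{\mathscr{L}}(z)$ and $\const_{X}$ were shown to be strictly triangular power series in Proposition \ref{Bdegrees} and Lemma \ref{limconst}, so it is already in place. A minor subtlety is that one should use a single ample $\mathscr{L}$ for the recursion rather than an arbitrary line bundle; the remaining equations in Definition \ref{exoticqde1} are then automatically consistent by the well-definedness of $\textbf{B}^{\nabla}_{\mathscr{L}}(z)$, but they are not needed for uniqueness. No rigidity or geometric input is required here — uniqueness is purely formal.
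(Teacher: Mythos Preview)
Your approach matches the paper's: expand the equation for an ample $\mathscr{L}$, use Proposition \ref{Bdegrees} and Lemma \ref{limconst} for the strictly triangular form of $\const_{X}\mathscr{L}\textbf{B}^{\nabla}_{\mathscr{L}}(z)$, and induct on $\langle d,\theta\rangle$. One small correction: since left and right multiplication by $\mathscr{L}$ need not commute with $\Psi^{\nabla}_{d}$, the operator to invert is $X\mapsto q^{\langle d,\mathscr{L}\rangle}X\mathscr{L}-\mathscr{L}X$ rather than scalar multiplication by $(q^{\langle d,\mathscr{L}\rangle}-1)$, but this is still invertible in the fixed-point basis of localized $K$-theory (its eigenvalues $q^{\langle d,\mathscr{L}\rangle}\mathscr{L}|_{\mu}-\mathscr{L}|_{\lambda}$ are nonzero for $\langle d,\mathscr{L}\rangle\neq 0$), so the recursion goes through.
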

\begin{proof}
Choose a line bundle $\mathscr{L}$, and we write
$$
\const_{X} \mathscr{L} \textbf{B}^{\nabla}_{\mathscr{L}}(z)=\mathscr{L}+\sum_{\substack{d \\ \langle d, \theta \rangle>0}} A_{d} z^{d}
$$
where $\theta$ is the stability condition. The degrees of the expansion are as above because of Proposition \ref{Bdegrees} and Lemma \ref{limconst}. Along with (\ref{qdif}), this implies that
$$
   \Psi^{\nabla}(z)=1+\sum_{\substack{d \\ \langle d, \theta \rangle > 0}} \Psi^{\nabla}_{d} z^d 
$$

Then (\ref{exoticqde2}) is equivalent to the system of equations
$$
q^{\langle \mathscr{L},d\rangle} \Psi^{\nabla}_{d} \mathscr{L}=\Psi^{\nabla}_{d} + \sum_{\substack{e+f=d \\ f \neq d \\ \langle e ,\theta \rangle >0 \\ \langle f, \theta \rangle >0}} A_{e} \Psi^{\nabla}_{f}, \quad \Psi^{\nabla}_{0}=1
$$
for $ d \in \mathbb{Z}^{I}$ with $ \langle d, \theta \rangle >0$. By induction, we can assume that $\Psi^{\nabla}_{f}$ in the right hand side has been determined. Assuming now that $\mathscr{L}$ is ample, we have that $\langle \mathscr{L},d\rangle \neq 0$, and hence the operator $q^{\langle \mathscr{L},d\rangle} \mathscr{L}-1$ is invertible. So $\Psi^{\nabla}_{d}$, and hence $\Psi^{\nabla}(z)$, is uniquely determined. 
\end{proof}

\begin{Proposition}\label{adjacent}
Let $\nabla$ and $\nabla'$ be two alcoves connected by a wall $w$ such that moving from $\nabla'$ to $\nabla$ crosses $w$ negatively. Assume that there exists a solution $\Psi^{\nabla'}(z)$ of the exotic difference equation for the alcove $\nabla'$. Then there exists a solution of the exotic difference equation for alcove $\nabla$ and
$$
\Psi^{\nabla}(z)=\textbf{B}_{w}(z) \Psi^{\nabla'}(z)
$$
\end{Proposition}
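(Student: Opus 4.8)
The plan is to produce $\textbf{B}_w(z)\Psi^{\nabla'}(z)$ as an explicit solution of the exotic system for $\nabla$ and then appeal to uniqueness (established in the preceding proposition). The only substantial input is the path-independence of $\textbf{B}^{\nabla}_{\mathscr{L}}(z)$ proved in \cite{OS}, which allows the operator to be computed along a path adapted to the wall $w$.

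Fix $\mathscr{L}\in\Pic(X)$. I would assemble a path from $\nabla$ to $\nabla-\mathscr{L}$ out of three pieces: a short first piece crossing $w$ from $\nabla$ into $\nabla'$; an arbitrary path $\gamma'$ from $\nabla'$ to $\nabla'-\mathscr{L}$; and a short last piece crossing the translated wall $w-\mathscr{L}$ from $\nabla'-\mathscr{L}$ into $\nabla-\mathscr{L}$. Since moving from $\nabla'$ to $\nabla$ crosses $w$ negatively, the first piece crosses $w$ positively; and since translation by $-\mathscr{L}$ carries $w$, together with its positive side, onto $w-\mathscr{L}$, the last piece crosses $w-\mathscr{L}$ negatively. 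Reading off the ordered product that defines $\textbf{B}^{\nabla}_{\mathscr{L}}(z)$ along this path (the first crossing contributing the rightmost factor and the last crossing the leftmost) yields the factorization $\textbf{B}^{\nabla}_{\mathscr{L}}(z)=\textbf{B}_{w-\mathscr{L}}(z)\,\textbf{B}^{\nabla'}_{\mathscr{L}}(z)\,\textbf{B}_{w}(z)^{-1}$.

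Next I would substitute $\Psi^{\nabla}(z):=\textbf{B}_w(z)\Psi^{\nabla'}(z)$ into \eqref{exoticqde2} for $\nabla$. On the left, $\Psi^{\nabla}(zq^{\mathscr{L}})\mathscr{L}=\textbf{B}_w(zq^{\mathscr{L}})\bigl(\Psi^{\nabla'}(zq^{\mathscr{L}})\mathscr{L}\bigr)$, and the exotic equation for $\nabla'$ rewrites the bracket as $\const_X\,\mathscr{L}\,\textbf{B}^{\nabla'}_{\mathscr{L}}(z)\,\Psi^{\nabla'}(z)$, with the scalar series $\const_X$ commuting past $\textbf{B}_w$. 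On the right, the factorization above together with $\Psi^{\nabla}(z)=\textbf{B}_w(z)\Psi^{\nabla'}(z)$ cancels the pair $\textbf{B}_w(z)^{-1}\textbf{B}_w(z)$, leaving $\const_X\,\mathscr{L}\,\textbf{B}_{w-\mathscr{L}}(z)\,\textbf{B}^{\nabla'}_{\mathscr{L}}(z)\,\Psi^{\nabla'}(z)$. Comparing the two sides, equality reduces to $\textbf{B}_w(zq^{\mathscr{L}})\,\mathscr{L}=\mathscr{L}\,\textbf{B}_{w-\mathscr{L}}(z)$, which is exactly Proposition \ref{conjugateB} after replacing $\mathscr{L}$ by the dual line bundle and cancelling the shift operator $T_{\mathscr{L}}$. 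Finally $\Psi^{\nabla}(0)=\textbf{B}_w(0)\Psi^{\nabla'}(0)=1$ by Proposition \ref{Bdegrees} and $\Psi^{\nabla'}(0)=1$, so $\textbf{B}_w(z)\Psi^{\nabla'}(z)$ is a solution of the exotic system for $\nabla$ and, by uniqueness, equals $\Psi^{\nabla}(z)$.

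Once the adapted path is chosen, every step is routine bookkeeping, so I would not expect any serious obstacle. The point to be most careful about is the orientation of the crossing of the translated wall $w-\mathscr{L}$ — that is, pinning down the exponents $m_i$ in the ordered product defining $\textbf{B}^{\nabla}_{\mathscr{L}}(z)$ — since a sign slip there would invert one of the factors $\textbf{B}_{\bullet}(z)^{\pm1}$ and destroy the cancellation.
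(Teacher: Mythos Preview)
Your proof is correct and follows essentially the same approach as the paper: both arguments hinge on the factorization $\textbf{B}^{\nabla}_{\mathscr{L}}(z)=\textbf{B}_{w-\mathscr{L}}(z)\,\textbf{B}^{\nabla'}_{\mathscr{L}}(z)\,\textbf{B}_{w}(z)^{-1}$ (the paper obtains it by choosing a path for $\nabla'$ whose first crossing is $w$, you obtain it by choosing a path for $\nabla$ that detours through $\nabla'$), followed by Proposition~\ref{conjugateB} in the form $\textbf{B}_w(zq^{\mathscr{L}})\mathscr{L}=\mathscr{L}\,\textbf{B}_{w-\mathscr{L}}(z)$ and uniqueness. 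The only difference is presentational: you isolate the factorization first and then substitute, whereas the paper multiplies the $\nabla'$ equation by $\textbf{B}_w(zq^{\mathscr{L}})$ and recognizes the result.
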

\begin{proof}
By definition, $\Psi^{\nabla'}(z)$ satisfies
$$
\Psi^{\nabla'}(z q^{\mathscr{L}}) \mathscr{L}=\const_{X} \cdot \mathscr{L} \textbf{B}^{\nabla'}_{\mathscr{L}}(z) \Psi^{\nabla'}(z)
$$
Let $\{(w_i,m_i)\}_{i=1}^{n}$ be an oriented sequence of walls from $\nabla'$ to $\nabla'-\mathscr{L}$ such that $w_1=w$ and hence $m_1=1$. Then
$$
\textbf{B}^{\nabla'}_{\mathscr{L}}(z)= \textbf{B}_{w_n}(z)^{m_n} \ldots \textbf{B}_{w_2}(z)^{m_2} \textbf{B}_{w}(z)
$$
So
\begin{align*}
\textbf{B}_{w}(z q^{\mathscr{L}}) \Psi^{\nabla'}(z q^{\mathscr{L}})  \mathscr{L}
&=\const_{X}  \textbf{B}_{w}(z q^{\mathscr{L}}) \textbf{B}^{\nabla'}_{\mathscr{L}}(z) \Psi^{\nabla'}(z) \\
&=\const_{X}  \textbf{B}_{w}(z q^{\mathscr{L}}) \mathscr{L} \textbf{B}_{w_n}(z)^{m_n} \ldots \textbf{B}_{w}(z) \Psi^{\nabla'}(z) \\
&= \const_{X}  \mathscr{L}\textbf{B}_{w-\mathscr{L}}(z)\textbf{B}_{w_n}(z)^{m_n} \ldots \textbf{B}_{w_2}(z)^{m_2} \textbf{B}_{w}(z) \Psi^{\nabla'}(z)\\
&= \const_{X}  \mathscr{L} \textbf{B}^{\nabla}_{\mathscr{L}}(z) \textbf{B}_{w}(z) \Psi^{\nabla'}(z)
\end{align*}
where the first equality is from the $q$-difference equation, the third is from Proposition \ref{conjugateB}, and the second and fourth are from the definition of $\textbf{B}^{\nabla'}_{\mathscr{L}}(z)$ and $\textbf{B}^{\nabla'}_{\mathscr{L}}(z)$.

So $\textbf{B}_{w}(z) \Psi^{\nabla'}(z)$ satisfies the same $q$-difference equation as $\Psi^{\nabla}(z)$. By uniqueness, they must be equal.
\end{proof}

Repeatedly applying this proposition, we can relate any exotic solution to the solution of the usual $q$-difference equation.

\begin{Corollary}\label{prop3}
Let $\nabla$ be an alcove and let $\{(w_i,m_i)\}_{i=1}^{n}$ be an oriented sequence of walls from $-\nabla_{0}$ to $\nabla$. Then
$$
\Psi^{\nabla}(z)=\textbf{B}_{w_n}(z)^{m_n} \ldots \textbf{B}_{w_1}(z)^{m_1} \Psi(z)
$$
where $\Psi(z)$ is the capping operator.
\end{Corollary}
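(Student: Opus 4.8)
The plan is a straightforward induction on the length $n$ of the oriented wall sequence, with the inductive step being a repeated application of Proposition \ref{adjacent}. First I would dispose of the base case $n=0$, i.e.\ $\nabla=-\nabla_0$, by identifying $\Psi(z)$ with $\Psi^{-\nabla_0}(z)$: by Theorem \ref{qdeOSthm} the quantum difference equation $\Psi(zq^{\mathscr{L}})\mathscr{L}=\textbf{M}_{\mathscr{L}}(z)\Psi(z)$ reads $\Psi(zq^{\mathscr{L}})\mathscr{L}=\const_{X}\mathscr{L}\textbf{B}^{-\nabla_0}_{\mathscr{L}}(z)\Psi(z)$, which is exactly the exotic $q$-difference system of Definition \ref{exoticqde1} for the alcove $-\nabla_0$; since $\Psi(0)=1$ and exotic solutions are unique, $\Psi(z)=\Psi^{-\nabla_0}(z)$.

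For the inductive step, write the path underlying $\{(w_i,m_i)\}_{i=1}^{n}$ as $-\nabla_0=\nabla_{(0)},\nabla_{(1)},\dots,\nabla_{(n)}=\nabla$. The truncated data $\{(w_i,m_i)\}_{i=1}^{n-1}$ is an oriented sequence from $-\nabla_0$ to $\nabla_{(n-1)}$, so the inductive hypothesis provides the exotic solution $\Psi^{\nabla_{(n-1)}}(z)=\textbf{B}_{w_{n-1}}(z)^{m_{n-1}}\cdots\textbf{B}_{w_1}(z)^{m_1}\Psi(z)$. If $m_n=1$, then $w_n$ is crossed negatively from $\nabla_{(n-1)}$ to $\nabla$, and Proposition \ref{adjacent} applies verbatim to give $\Psi^{\nabla}(z)=\textbf{B}_{w_n}(z)\,\Psi^{\nabla_{(n-1)}}(z)$. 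If $m_n=-1$, then $w_n$ is crossed positively; here I would use the mirror image of Proposition \ref{adjacent}, whose proof is identical after replacing $\textbf{B}_{w}(z)$ by $\textbf{B}_{w}(z)^{-1}$ throughout — legitimate because $\textbf{B}_{w}(z)=1+O(z)$ is invertible by Proposition \ref{Bdegrees}, because Proposition \ref{conjugateB} yields $\textbf{B}_{w}(zq^{\mathscr{L}})^{-1}\mathscr{L}=\mathscr{L}\textbf{B}_{w-\mathscr{L}}(z)^{-1}$, and because an oriented sequence from $\nabla_{(n-1)}$ to $\nabla_{(n-1)}-\mathscr{L}$ that begins by crossing $w_n$ (now with sign $-1$) writes $\textbf{B}^{\nabla_{(n-1)}}_{\mathscr{L}}(z)$ with a rightmost factor $\textbf{B}_{w_n}(z)^{-1}$. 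In either case $\Psi^{\nabla}(z)=\textbf{B}_{w_n}(z)^{m_n}\,\Psi^{\nabla_{(n-1)}}(z)$, and composing with the inductive hypothesis and invoking uniqueness of exotic solutions completes the induction.

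The computational substance is already contained in Propositions \ref{conjugateB} and \ref{adjacent}, so the only genuinely delicate point is the bookkeeping for positively crossed walls — specifically, checking that $\textbf{B}^{\nabla_{(n-1)}}_{\mathscr{L}}(z)$, computed along a path starting across $w_n$, has the claimed leading factor; this in turn rests on the path-independence (well-definedness) of $\textbf{B}^{\nabla}_{\mathscr{L}}(z)$ proved in \cite{OS}. An equivalent but more symmetric route, avoiding the case split entirely, would be to verify directly that $\textbf{B}_{w_n}(z)^{m_n}\cdots\textbf{B}_{w_1}(z)^{m_1}\Psi(z)$ solves \eqref{exoticqde2}: conjugate the difference equation $\Psi(zq^{\mathscr{L}})\mathscr{L}=\const_X\mathscr{L}\textbf{B}^{-\nabla_0}_{\mathscr{L}}(z)\Psi(z)$ through the product using Proposition \ref{conjugateB}, observe that the resulting composition $\bigl(\mathscr{L}^{-1}\textbf{B}_{w_n}(zq^{\mathscr{L}})^{m_n}\cdots\mathscr{L}\bigr)\,\textbf{B}^{-\nabla_0}_{\mathscr{L}}(z)\,\bigl(\textbf{B}_{w_n}(z)^{m_n}\cdots\bigr)^{-1}$ is exactly $\textbf{B}^{\nabla}_{\mathscr{L}}(z)$ for the concatenated path from $\nabla$ to $\nabla-\mathscr{L}$ obtained by going $\nabla\to-\nabla_0\to-\nabla_0-\mathscr{L}\to\nabla-\mathscr{L}$, and conclude by uniqueness.
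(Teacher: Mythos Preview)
Your proof is correct and follows the same approach as the paper, which simply states that the corollary follows by ``repeatedly applying'' Proposition \ref{adjacent}. You are in fact more careful than the paper: you explicitly handle the base case $\nabla=-\nabla_0$ via uniqueness, and you note that the positively-crossed case $m_n=-1$ requires the (trivial) reverse of Proposition \ref{adjacent}, which the paper leaves implicit.
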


As a result, we deduce the following.
\begin{Corollary}
Solution of (\ref{exoticqde2}) exist for any alcove $\nabla$.
\end{Corollary}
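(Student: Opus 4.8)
The plan is to deduce existence from the machinery already assembled, with the capping operator $\Psi(z)$ serving as the seed solution. First I would observe that $\Psi(z)$ is itself an exotic solution for the alcove $\nabla=-\nabla_0$: Proposition \ref{qde} gives $\Psi(zq^{\mathscr{L}})\mathscr{L}=\textbf{M}_{\mathscr{L}}(z)\Psi(z)$ for every $\mathscr{L}\in\Pic(X)$, Theorem \ref{qdeOSthm} rewrites $\textbf{M}_{\mathscr{L}}(z)=\const_{X}\,\mathscr{L}\,\textbf{B}^{-\nabla_0}_{\mathscr{L}}(z)$, and $\Psi(0)=1$ by definition of the capping operator; these are exactly the conditions (\ref{exoticqde2}) for $\nabla=-\nabla_0$. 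So existence is known for this one alcove, and by the uniqueness established just above, $\Psi^{-\nabla_0}=\Psi$.

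Next I would connect an arbitrary alcove $\nabla$ to $-\nabla_0$ by a finite chain of pairwise adjacent alcoves. Pick generic points $s_0\in -\nabla_0$ and $s\in\nabla$ and take the straight segment from $s_0$ to $s$, perturbed if necessary so that it meets the wall arrangement transversally and misses all strata of codimension $\geq 2$. Since the arrangement in $\Pic(X)\otimes\mathbb{R}$ is locally finite (as recalled in the introduction) and the segment is compact, it crosses only finitely many walls $w_1,\dots,w_n$, in order, and the alcoves it passes through form a chain $-\nabla_0=\nabla^{(0)},\nabla^{(1)},\dots,\nabla^{(n)}=\nabla$ with $\nabla^{(i-1)}$ and $\nabla^{(i)}$ adjacent across $w_i$; set $m_i=1$ if $w_i$ is crossed negatively and $m_i=-1$ otherwise.

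Finally I would propagate the solution along this chain using Proposition \ref{adjacent}: invoked directly when $m_i=1$, and with the two alcoves interchanged (equivalently, multiplying by $\textbf{B}_{w_i}(z)^{-1}$ in place of $\textbf{B}_{w_i}(z)$) when $m_i=-1$. Each step produces an exotic solution for the next alcove, and after $n$ steps one obtains
$$
\Psi^{\nabla}(z)=\textbf{B}_{w_n}(z)^{m_n}\cdots\textbf{B}_{w_1}(z)^{m_1}\,\Psi(z),
$$
which is exactly Corollary \ref{prop3}. This is a legitimate element of $\text{End}(K_{\bT\times\mathbb{C}^{\times}_{q}}(X)_{loc})[[z]]$ with constant term $1$: by Proposition \ref{Bdegrees} each $\textbf{B}_{w_i}(z)$ equals $1$ plus terms of degree $d$ with $\langle d,\theta\rangle>0$, hence is invertible, and a finite composite of such operators applied to $\Psi(z)$ is a well-defined power series taking the value $1$ at $z=0$.

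The only genuinely delicate point is the geometric claim in the second step — that any alcove can be joined to $-\nabla_0$ by a finite chain of adjacent alcoves — which rests on local finiteness of the hyperplane arrangement, compactness of the segment, and a generic perturbation avoiding high-codimension strata. Everything else is a formal consequence of Proposition \ref{adjacent}, Corollary \ref{prop3}, and the uniqueness already proved; independence of $\Psi^{\nabla}(z)$ from the chosen chain is automatic from uniqueness, so no additional estimates are required.
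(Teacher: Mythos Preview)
Your proposal is correct and follows the same route as the paper: the paper states this corollary immediately after Corollary \ref{prop3} with the phrase ``As a result, we deduce the following,'' leaving the argument implicit, and your write-up simply makes that argument explicit by starting from $\Psi^{-\nabla_0}=\Psi$ and propagating via Proposition \ref{adjacent} along a finite chain of adjacent alcoves. The extra care you take over local finiteness and transversality of the path is more detail than the paper provides, but it fills in exactly the step the paper is taking for granted.
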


For later use, we record the following lemma.
\begin{Lemma}\label{psilem}
Let $\nabla$ be an alcove and let $w$ be a wall such that $\mathscr{L}_w+\epsilon \in \nabla$, for some small ample slope $\epsilon$. Then the limit
$$
\lim_{q\to 0} \Psi^{\nabla}(z q^{\mathscr{L}_w})
$$
exists
\end{Lemma}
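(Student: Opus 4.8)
The plan is to reduce the statement to a limiting property of the wall-crossing operators together with the known $q \to 0$ behavior of the capping operator $\Psi(z)$. First I would invoke Corollary \ref{prop3}: choosing an oriented sequence of walls $\{(w_i, m_i)\}_{i=1}^{n}$ from $-\nabla_0$ to $\nabla$, we have
$$
\Psi^{\nabla}(z) = \textbf{B}_{w_n}(z)^{m_n} \ldots \textbf{B}_{w_1}(z)^{m_1} \Psi(z).
$$
Substituting $z \to z q^{\mathscr{L}_w}$ gives
$$
\Psi^{\nabla}(z q^{\mathscr{L}_w}) = \textbf{B}_{w_n}(z q^{\mathscr{L}_w})^{m_n} \ldots \textbf{B}_{w_1}(z q^{\mathscr{L}_w})^{m_1} \Psi(z q^{\mathscr{L}_w}),
$$
so it suffices to show that each factor on the right has a $q \to 0$ limit.

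For the capping operator factor, I would use Corollary 8.1.10 of \cite{pcmilect} (cited in the proof of Lemma \ref{limconst}), which says $\lim_{q \to 0} \textbf{M}_{\mathscr{L}}(z) = \mathscr{L}$; combined with the quantum difference equation $\Psi(z q^{\mathscr{L}}) \mathscr{L} = \textbf{M}_{\mathscr{L}}(z) \Psi(z)$ and the existence of $\lim_{q\to 0}\Psi(z)$ — which is the statement that $\Psi(z)$ has a well-defined quasiclassical limit, as in \cite{pcmilect} — one obtains that $\lim_{q \to 0} \Psi(z q^{\mathscr{L}_w})$ exists. (Alternatively one can choose $\mathscr{L}_w$ so that this limit is manifestly the identity, but existence is all that is needed.) For the wall-crossing factors, I would appeal to Proposition \ref{Blim}: $\lim_{q\to\infty}\textbf{B}_{w_i}(z q^{\mathscr{L}}) = 1$ whenever $\mathscr{L}_{w_i} - \mathscr{L}$ is ample, and symmetrically (by the proof of Lemma \ref{limconst}, using $\lim_{q\to 0} J_w(z q^{\mathscr{L}-\mathscr{L}_w}) = 1$ when $\mathscr{L}_{w_i} - \mathscr{L}$ is anti-ample) one has $\lim_{q\to 0}\textbf{B}_{w_i}(z q^{\mathscr{L}}) = 1$ when $\mathscr{L} - \mathscr{L}_{w_i}$ is ample. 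So each wall-crossing factor $\textbf{B}_{w_i}(z q^{\mathscr{L}_w})^{m_i}$ will have a $q \to 0$ limit (in fact $1$) provided $\mathscr{L}_w - \mathscr{L}_{w_i}$ is ample for every wall $w_i$ crossed on the chosen path.

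The main obstacle is precisely arranging that ampleness condition: $\mathscr{L}_w - \mathscr{L}_{w_i}$ must be ample for all $i$, where $w$ is the distinguished wall of the hypothesis (satisfying $\mathscr{L}_w + \epsilon \in \nabla$) and $w_1, \ldots, w_n$ are the walls separating $-\nabla_0$ from $\nabla$. Here I would use the hypothesis that $\nabla$ lies on the ample side of $w$ — more precisely, that $\mathscr{L}_w + \epsilon \in \nabla$ for small ample $\epsilon$ — to choose the path from $-\nabla_0$ to $\nabla$ that approaches $\nabla$ through the region of line bundles more anti-ample than $\mathscr{L}_w$. Along such a path, every wall $w_i$ crossed satisfies $\mathscr{L}_{w_i} \leq \mathscr{L}_w$ in the appropriate sense, so $\mathscr{L}_w - \mathscr{L}_{w_i}$ is effective; one may need to perturb the $\mathscr{L}_{w_i}$ within their walls, using the freedom noted before Proposition \ref{Bshift}, and to argue that genericity of the slope $s$ lets us take these differences strictly ample. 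Since by Proposition \ref{conjugateB} the operator $\Psi^{\nabla}(z q^{\mathscr{L}_w})$ is independent of the path used, it is enough to verify the limit for this one well-chosen path, and the proof concludes by multiplying the individual limits.
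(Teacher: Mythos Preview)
Your factorization strategy has a genuine gap. The issue is in the wall-crossing factors: you need $\lim_{q\to 0}\textbf{B}_{w_i}(z q^{\mathscr{L}_w})$ to exist for every wall $w_i$ on a path from $-\nabla_0$ to $\nabla$, and for this you invoke the symmetric form of Proposition~\ref{Blim}, which requires $\mathscr{L}_w - \mathscr{L}_{w_i}$ to be ample. But this condition cannot always be arranged. Take the one-dimensional picture (Hilbert scheme): if $\nabla$ sits in the anti-ample cone, say $\nabla=(a,b)$ with $a<b<0$, then $\mathscr{L}_w=a$ is its lower wall, and \emph{every} wall $w_i$ separating $-\nabla_0\subset(-\tfrac{1}{n},0)$ from $\nabla$ satisfies $\mathscr{L}_{w_i}>a=\mathscr{L}_w$. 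There is no path avoiding this, and for such $w_i$ the coefficient of $z^d$ in $\textbf{B}_{w_i}(z q^{\mathscr{L}_w})$ carries a factor $q^{\langle \mathscr{L}_w-\mathscr{L}_{w_i},d\rangle}$, which diverges as $q\to 0$. So the individual factors in your product genuinely blow up coefficient-wise; the limit of the product exists only because of cancellations between factors, which your argument does not capture. The same objection applies to your treatment of $\Psi(z q^{\mathscr{L}_w})$: what is known from \cite{pcmilect} is that $\lim_{q\to\infty}\Psi(z)$ exists, not the $q\to 0$ limit, and the latter is essentially the case $\nabla=-\nabla_0$ of the lemma you are trying to prove.

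The paper avoids the factorization entirely and argues by induction on degree directly from the defining difference equation for $\Psi^{\nabla}$. Writing $\const_X\,\textbf{B}^{\nabla}_{\mathscr{L}}(z)=1+\sum_d B_d z^d$ and $\Psi^{\nabla}(z)=1+\sum_d \Psi^{\nabla}_d z^d$, the equation becomes
\[
q^{\langle \mathscr{L}_w,d\rangle}\bigl(\Psi^{\nabla}_d\,\mathscr{L}\,q^{\langle\mathscr{L},d\rangle}-\mathscr{L}\,\Psi^{\nabla}_d\bigr)=\sum_{\substack{e+f=d\\ f\neq d}} B_e\,q^{\langle\mathscr{L}_w,e\rangle}\,\Psi^{\nabla}_f\,q^{\langle\mathscr{L}_w,f\rangle}.
\]
The right-hand side has a $q\to 0$ limit by induction and by Proposition~\ref{Blim} applied to $\textbf{B}^{\nabla}_{\mathscr{L}}$ itself (here only walls between $\nabla$ and $\nabla-\mathscr{L}$ appear, all on the correct side of $\mathscr{L}_w$). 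On the left, restricting to $X^{\bT}$ makes $\mathscr{L}$ diagonal, so $\Psi^{\nabla}_d\mathscr{L}$ and $\mathscr{L}\Psi^{\nabla}_d$ have the same pole order at $q=0$; since $\langle\mathscr{L},d\rangle>0$, the two terms cannot cancel, and one concludes that $q^{\langle\mathscr{L}_w,d\rangle}\Psi^{\nabla}_d$ has a $q\to 0$ limit. This no-cancellation step is the essential idea your approach is missing.
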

\begin{proof}
This can be proven by considering the defining $q$-difference system for $\Psi^{\nabla}(z)$. Let $\mathscr{L}$ be an ample line bundle, and consider
\begin{equation}\label{qdif}
\Psi^{\nabla}(z q^{\mathscr{L}}) \mathscr{L}= \const_{X} \mathscr{L} \textbf{B}^{\nabla}_{\mathscr{L}}(z) \Psi^{\nabla}(z)
\end{equation}
Write
$$
   \const_{X} \textbf{B}^{\nabla}_{\mathscr{L}}(z)=1+\sum_{\substack{d \\ \langle d, \theta \rangle > 0}} B_{d} z^{d}
$$
The two sum runs over degrees pairing positively with the stability condition $\theta$ because of Proposition \ref{Bdegrees} and Lemma \ref{limconst}. By (\ref{qdif}), we have
$$
   \Psi^{\nabla}(z)=1+\sum_{\substack{d \\ \langle d, \theta \rangle > 0}} \Psi^{\nabla}_{d} z^d 
$$

Also, (\ref{qdif}) is equivalent to the system
\begin{equation}\label{sys}
q^{\langle \mathscr{L}_w,d\rangle}\left(\Psi^{\nabla}_{d} \mathscr{L} q^{\langle\mathscr{L},d\rangle}- \mathscr{L} \Psi^{\nabla}_{d}\right) =  \sum_{\substack{e+f=d \\ f\neq d \\  \langle e ,\theta \rangle >0 \\ \langle f, \theta \rangle >0}} B_{e} q^{\langle \mathscr{L}_w,e\rangle} \Psi^{\nabla}_{f} q^{\langle \mathscr{L}_w,f\rangle}
\end{equation}
Inductively, we can assume that $\lim_{q\to 0}\Psi^{\nabla}_{f} q^{\langle \mathscr{L}_w,f\rangle}$ exists, since it is obviously true for $\Psi^{\nabla}_{0}=1$. From Proposition \ref{Blim}, Lemma \ref{limconst}, and since $\mathscr{L}$ is assumed to be ample, we have that $\lim_{q\to 0} B_{d} q^{\langle \mathscr{L}_w,d\rangle}$ exists. So we deduce that the $q\to 0$ limit of the right hand side of (\ref{sys}) exists. Hence the same is true of the left hand side. 

Consider the restriction left hand side of (\ref{sys}) to the fixed locus $X^{\bT} \subset X$. 
Since $\mathscr{L}$ is diagonal after restriction to the $\bT$-fixed locus, it does not affect the the order of the pole at $q=0$. Hence $\Psi^{\nabla}_{d} \mathscr{L}$ and $ \mathscr{L} \Psi^{\nabla}_{d}$ have poles of the same order at $q=0$. Since $\langle\mathscr{L},d\rangle>0$ for $d\neq 0$, the terms $\Psi^{\nabla}_{d} \mathscr{L} q^{\langle\mathscr{L},d\rangle}$ and $ \mathscr{L} \Psi^{\nabla}_{d}$ have different orders of poles at $q=0$, and the latter has a strictly worse pole. Hence there is no possibility of cancellation between these two terms, and so the $q\to 0$ limit of the left hand side of (\ref{sys}) exists only if $\lim_{q\to 0} q^{\langle \mathscr{L}_w,d\rangle} \Psi^{\nabla}_{d}$ exists. Thus we conclude that $\lim_{q\to 0} \Psi^{\nabla}(z q^{\mathscr{L}_w})$ exists.

\end{proof}

\begin{Example}\label{hilbqdes}
Consider $X=\text{Hilb}^{3}(\mathbb{C}^2)$, see Chapter \ref{CH2}. In this case, $\Pic(X)\cong \mathbb{Z}$ is generated by the tautological bundle $\mathscr{L}$, and $\Pic(X) \otimes \mathbb{R} \cong \mathbb{R}$. The walls are given by rational numbers with denominator at most $3$. The usual $q$-difference equation corresponds to the alcove $(-1/3,0)\subset \Pic(X)\otimes \mathbb{R}$. The walls crossed between $(-1/3,0)$ and $(-1/3-1,-1)$ are (in order) $-1/3$, $-1/2$, $-2/3$, and $-1$. So the usual $q$-difference equation reads
$$
\Psi(z q) \mathscr{L}= \mathscr{L} \textbf{B}_{-1}(z) \textbf{B}_{-2/3}(z) \textbf{B}_{-1/2}(z) \textbf{B}_{-1/3}(z) \Psi(z)
$$
Similarly, the exotic $q$-difference equation for the alcove $(1/3,1/2)$ is given by
$$
\Psi(z q) \mathscr{L}= \mathscr{L}  \textbf{B}_{-1/2}(z) \textbf{B}_{-1/3}(z) \textbf{B}_{0}(z) \textbf{B}_{1/3}(z) \Psi(z)
$$
See Figure \ref{qdeex} for an illustration.
\end{Example}

\begin{figure}
\centering
     \begin{tikzpicture}[scale=1,roundnode/.style={circle,fill,inner sep=1pt},roundnode2/.style={circle,fill,inner sep=0.8pt}]
     \draw (-6.2,0)--(6.2,0);
     \draw (2,-1/8)--(2,1/8);
     \node at (2,-1/2){\small $\frac{1}{3}$};
        \draw (3,-1/8)--(3,1/8);
     \node at (3,-1/2){\small $\frac{1}{2}$};
      \draw (4,-1/8)--(4,1/8);
     \node at (4,-1/2){\small $\frac{2}{3}$};
      \draw (6,-1/8)--(6,1/8);
     \node at (6,-1/2){\small $1$};
      \draw (0,-1/8)--(0,1/8);
     \node at (0,-1/2){\small $0$};
        \draw (-2,-1/8)--(-2,1/8);
     \node at (-2,-1/2){\small $-\frac{1}{3}$};
        \draw (-3,-1/8)--(-3,1/8);
     \node at (-3,-1/2){\small $-\frac{1}{2}$};
        \draw (-4,-1/8)--(-4,1/8);
     \node at (-4,-1/2){\small $-\frac{2}{3}$};
        \draw (-6,-1/8)--(-6,1/8);
     \node at (-6,-1/2){\small $-1$};
     
     \node[roundnode,blue](n1) at (-1/2,0){};
     \node[roundnode,blue](n2) at (-6-1/2,0){};
         \draw[->,blue] (n1) to [bend right] node[above] {$\mathscr{L} \textbf{B}_{-1}(z) \textbf{B}_{-\frac{2}{3}}(z) \textbf{B}_{-\frac{1}{2}}(z) \textbf{B}_{-\frac{1}{3}}(z)$} (n2);
          
      \node[roundnode,orange](n7) at (3.5,0){};
     \node[roundnode,orange](n8) at (-6+3.5,0){};
     \draw[->,orange] (n7) to [bend left] node[below] {$\mathscr{L}  \textbf{B}_{-\frac{1}{3}}(z) \textbf{B}_{0}(z)\textbf{B}_{\frac{1}{3}}(z) \textbf{B}_{\frac{1}{2}}(z)$} (n8);
\end{tikzpicture}  
\label{qdeex}
\end{figure}

\subsection{Descendant insertions}\label{stabdes}
Our ultimate goal is to describe solutions to the exotic difference equations for a quiver variety $X=\mathcal{M}(\dv,\dw)$ using the quasimap counts of Section \ref{genfuncs}. Recall the notations $\mathfrak{X}$ and $\mathscr{Z}(\dv,\dw)$ from Section \ref{nakdef}. By precomposing the vertex with descendants $\ver$ with the pullback of the inclusion $\mathfrak{X} \subset \mathfrak{R}:=[T^*\text{Rep}(\dv,\dw)/G_{\dv}]$, we can view the vertex with descendants as an operator with domain $K_{\bT}(\mathfrak{R})$. We will define a map
\begin{align*}
    \textbf{f}^{\nabla}: K_{\bT}(X) &\to K_{\bT}(\mathfrak{R}) \\
    \alpha \mapsto \textbf{f}^{\nabla}_{\alpha}
\end{align*}
depending on an alcove $\nabla$. This construction is the same as the one given in \cite{OkBethe}, but we will use it for arbitrary alcoves, rather than just the small ample alcove. We repeat the construction here for completeness.

Fix a quiver $Q$ and a choice of stability condition $\theta$. For the remainder of this subsection, all quiver varieties correspond to $Q$ and $\theta$. Let $\bT \subset \text{Aut}(\mathcal{M}(\dv,\dw))$ be a torus containing the framing torus. We assume that $\bT$ scales the symplectic form with weight $\hbar$, and we write $\bA=\ker(\hbar)\subset\bT$ for the subtorus preserving the symplectic form.

As a GIT quotient, a quiver variety is naturally an open subset of the stack quotient:
$$
\mathcal{M}(\dv,\dw)=[\mathscr{Z}(\dv,\dw)^{\theta-ss}/G_{\dv}] \subset [\mathscr{Z}(\dv,\dw)/G_{\dv}] =: \mathfrak{X}
$$

We will consider quiver varieties constructed with framing dimension $\dw+\dv$. Let $V_{i}'$ be a vector space with $\dim_{\mathbb{C}} V_i'= \dv_i$ so that the framing spaces used in such a construction are of the form $W_{i} \oplus V_{i}'$. Although they are isomorphic as vector spaces, we distinguish $V_i$ and $V_i'$. Let $G'_{\dv}=\prod_{i \in I} GL(V_i')$. The group $\bT \times G_{\dv}'$ acts on $\mathcal{M}(\dv,\dw+\dv)$.

As shown in \cite{OkBethe}, there exists an embedding
\begin{equation}\label{embed}
\iota: \mathfrak{R} \hookrightarrow [(\mathscr{Z}(\dv,\dw+\dv)_{\text{iso}}/G')/G] = [\mathcal{M}(\dv,\dw+\dv)_{\text{iso}}/G'_{\dv}] \hookrightarrow [\mathcal{M}(\dv,\dw+\dv)/G'_{\dv}]
\end{equation}
where the subscript $\text{iso}$ denotes the locus of points where $V_i'\to V_i$ is an isomorphism.

Let $\mathsf{U} =\mathbb{C}^{\times}$ be the torus which acts on the vector spaces $V_i'$ with weight $u$. It induces an action on $\mathcal{M}(\dv,\dw+\dv)$. The disjoint union $\mathcal{M}(\dv,\dw) \bigsqcup \mathcal{M}(\dv,\dv)$ lies in the fixed locus $\mathcal{M}(\dv,\dw+\dv)^{\mathsf{U}}$. We will use the stable envelopes of the $\mathsf{U}$-action, which means that we must specify a chamber of $\mathsf{U}$, a polarization of $\mathcal{M}(\dv,\dw+\dv)$, and a slope $s \in \Pic(\mathcal{M}(\dv,\dw+\dv))\otimes_{\mathbb{Z}} \mathbb{R}$.

We choose the chamber $\mathfrak{C}$ of $\mathsf{U}$ such that $\mathcal{M}(\dv,\dv)$ lies in the full attracting set of $\mathcal{M}(\dv,\dw)$. This is equivalent to choosing $u$ to be an attracting weight.

Assume that a polarization $T^{1/2}_{\mathcal{M}(\dv,\dw)}$ of $\mathcal{M}(\dv,\dw)$ has been chosen. Then we choose the polarization of $\mathcal{M}(\dv,\dv+\dw)$ given by
$$
T^{1/2}_{\mathcal{M}(\dv,\dw+\dv)} = T^{1/2}_{\mathcal{M}(\dv,\dw)}+ \hbar \sum_{i \in I} \text{Hom}(V_i,V_i')
$$

Similarly, we assume a generic choice of slope $s \in \Pic(\mathcal{M}(\dv,\dw))\otimes_{\mathbb{Z}}\mathbb{R}$ has been made. We write $\nabla$ for the alcove containing $s$. By Kirwan surjectivity \cite{kirv}, we can assume that $s$ is a real multiple of a product of the tautological line bundles of $\mathcal{M}(\dv,\dw)$. This gives a perfectly good real line bundle on $\mathcal{M}(\dv,\dw+\dv)$, and we perturb $s$ slightly so that it is a generic slope for both varieties. This is possible since there are only countably many affine hyperplanes to be avoided.

Since the action of $\bT \times G_{\dv}'$ on $\mathcal{M}(\dv,\dw+\dv)$ commutes with the action of $\mathsf{U}$, the stable envelope corresponding to this choice of chamber, polarization, and slope is a map
\begin{equation}\label{stabu}
\stab_{\mathsf{U},\nabla}: K_{\bT \times G_{\dv}'}(\mathcal{M}(\dv,\dw+\dv)^{\mathsf{U}}) \to K_{\bT \times G_{\dv}'}(\mathcal{M}(\dv,\dw+\dv))
\end{equation}
We denote the restriction of this map to $K_{\bT \times G_{\dv}'}(\mathcal{M}(\dv,\dw))$ by the same notation.

Since
$$
K_{\bT \times G'_{\dv}}(\mathcal{M}(\dv,\dw+\dv)) \cong K_{\bT}([\mathcal{M}(\dv,\dw+\dv)/G'_{\dv}])
$$
we can use the embedding $\iota$ from (\ref{embed}) to pullback $\stab_{\mathsf{U},\nabla}(\alpha)$ to a class on $\mathfrak{R}$. Since $G_{\dv}'$ acts trivially on $\mathcal{M}(\dv,\dw)$, we have an inclusion 
$$
K_{\bT}(\mathcal{M}(\dv,\dw))\hookrightarrow K_{\bT}(\mathcal{M}(\dv,\dw))\otimes K_{G_{\dv}'}(pt)= K_{\bT \times G_{\dv}'}(\mathcal{M}(\dv,\dw))
$$
\begin{Definition}
For $\alpha \in K_{\bT}(\mathcal{M}(\dv,\dw))\subset K_{\bT\times G_{\dv}'}(\mathcal{M}(\dv,\dw))$, let
\begin{equation}\label{stabdesc}
\textbf{s}^{\nabla}_{\alpha}:=\iota^*(\stab_{\mathsf{U},\nabla}(\alpha)) \in K_{\bT}(\mathfrak{R})
\end{equation}
\end{Definition}

\begin{Proposition}[\cite{OkBethe} Proposition 1]
The class $\textbf{s}_{\alpha}$ is supported on $\mathfrak{X} \subset \mathfrak{R}$. 
\end{Proposition}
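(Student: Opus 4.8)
The plan is to combine the support property of stable envelopes with the explicit description of the embedding $\iota$. First, note that $\iota$ is an open immersion: by (\ref{embed}) the stack $\mathfrak{R}$ is $[\mathcal{M}(\dv,\dw+\dv)_{\text{iso}}/G'_{\dv}]$, and the iso-locus is open in $\mathcal{M}(\dv,\dw+\dv)$, so pulling back along $\iota$ is simply restriction of a class to an open substack of $[\mathcal{M}(\dv,\dw+\dv)/G'_{\dv}]$. Next, since $\alpha$ is supported on the $\mathsf{U}$-fixed component $\mathcal{M}(\dv,\dw)$, the stable envelope $\stab_{\mathsf{U},\nabla}(\alpha)$ is supported on the closure $\overline{\text{Attr}(\mathcal{M}(\dv,\dw))}$ of the $\mathsf{U}$-attracting set of this component; this is part of the construction of $K$-theoretic stable envelopes, and is consistent with the support property $(2)$ in the proposition defining them. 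Combining the two observations, $\textbf{s}^{\nabla}_{\alpha}=\iota^{*}\stab_{\mathsf{U},\nabla}(\alpha)$ is supported on $\overline{\text{Attr}(\mathcal{M}(\dv,\dw))}\cap\mathfrak{R}$, the intersection taken inside $[\mathcal{M}(\dv,\dw+\dv)/G'_{\dv}]$. It therefore suffices to prove the geometric statement $\overline{\text{Attr}(\mathcal{M}(\dv,\dw))}\cap\mathfrak{R}\subseteq\mathfrak{X}$.

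For this I would use the explicit form of $\iota$. A point of $\mathfrak{R}$ is a $G_{\dv}$-orbit $[(A,B,I,J)]$ in $T^{*}\text{Rep}_{Q}(\dv,\dw)$, and $\iota$ sends it to the point of $\mathcal{M}(\dv,\dw+\dv)$ whose extra framing maps are an isomorphism $\tilde{I}|_{V'}\colon V'\isomto V$ (the identity, say, under a fixed identification) and $\tilde{J}|_{V'}=-\mu(A,B,I,J)\colon V\to V'$, chosen precisely so that the enlarged moment map vanishes. Because $\mathsf{U}$ acts by scaling $V'$, the component $\tilde{J}|_{V'}$ is $\mathsf{U}$-homogeneous of a single nonzero weight. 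The point of the argument is that a point of $\overline{\text{Attr}(\mathcal{M}(\dv,\dw))}$ lying on the iso-locus cannot carry such a nonzero weight-homogeneous piece of framing data: using the $\mathsf{U}$-equivariant proper affinization $\pi_{0}\colon\mathcal{M}(\dv,\dw+\dv)\to\mathcal{M}_{0}(\dv,\dw+\dv)$, all regular functions on $\mathcal{M}_{0}$ whose $\mathsf{U}$-weight has the sign opposite to the chamber $\mathfrak{C}$ vanish on $\text{Attr}(\mathcal{M}(\dv,\dw))$, hence on its closure; pulled back along $\iota$ these are the $G_{\dv}$-invariant chains of quiver maps factoring through $\tilde{J}|_{V'}=-\mu(A,B,I,J)$, and requiring them all to vanish forces $\mu(A,B,I,J)=0$, that is, the point lies in $\mathfrak{X}$.

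The delicate step, which I expect to be the main obstacle, is this last implication. Along the $\mathsf{U}$-flow a point of the iso-locus can leave the iso-locus in the limit, and the limit may exist only after a non-central gauge transformation lowering the rank of $\tilde{I}|_{V'}$; one must therefore rule out that a point with $\mu(A,B,I,J)\neq 0$ lies in the closure of $\text{Attr}(\mathcal{M}(\dv,\dw))$ rather than in the attracting set of a strictly smaller $\mathsf{U}$-fixed component, or escaping to infinity. This is controlled by the affinization argument above together with the decomposition (\ref{tensor}) of $\mathcal{M}(\dv,\dw+\dv)^{\mathsf{U}}$ into products $\mathcal{M}(\dv',\dw)\times\mathcal{M}(\dv'',\dv)$ and the choice of chamber $\mathfrak{C}$ for which $\mathcal{M}(\dv,\dv)$ lies in the full attracting set of $\mathcal{M}(\dv,\dw)$. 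All of this is carried out in the proof of \cite{OkBethe}, Proposition~1, which we follow.
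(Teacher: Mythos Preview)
The paper does not give its own proof of this proposition; it simply records the statement and cites \cite{OkBethe}, Proposition~1. Your proposal likewise ultimately defers to that reference, so there is no discrepancy in approach. Your sketch of the argument---using the support condition for stable envelopes together with the explicit description of $\iota$ on the iso-locus and the affinization to force $\mu=0$---is a reasonable expansion of what the cited proof does, though the ``delicate step'' you flag is indeed handled in \cite{OkBethe} rather than here.
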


We have
$$
\textbf{s}^{\nabla}_{\alpha}\in K_{\bT}(\mathfrak{R}) = K_{\bT}([T^*\text{Rep}(\dv,\dw)/G_{\dv}])= K_{\bT \times G_{\dv}}(T^*\text{Rep}(\dv,\dw))
$$
The origin of $T^*\text{Rep}(\dv,\dw)$ gives a distinguished point $0 \in \mathfrak{R}$, and $\textbf{s}^{\nabla}_{\alpha}$ is determined by its restriction to $0$. We will need a bound on the $G_{\dv}$ weights of $\textbf{s}^{\nabla}_{\alpha}\big|_{0} \in K_{\bT \times G_{\dv}}(pt)$.

The map $\iota$ is induced by a map on the prequotient with respect to $G_{\dv}$. The origin in $T^*\text{Rep}(\dv,\dw)$ maps to the point of $\mathscr{Z}(\dv,\dw+\dv)_{\text{iso}}/G_{\dv}'$ whose representative in $\mathscr{Z}(\dv,\dw+\dv)$ is given by the tuple of linear maps such that $V' \to V$ is an isomorphism and all other maps are zero. This provides an isomorphism $G_{\dv}\cong G_{\dv'}$. This quiver data naturally represents a point of $\mathcal{M}(\dv,\dv+\dw)$ fixed by $G_{\dv}'$. We denote this point by $\star \in \mathcal{M}(\dv,\dv+\dw)^{G_{\dv}'}$. Bounding the $G_{\dv}$ weights of $\textbf{s}^{\nabla}_{\alpha}\big|_{0}$ is equivalent to bounding the $G_{\dv}'$ weights of $\stab_{\mathsf{U},\nabla}(\alpha)|_{\star}$, which leads to the following proposition.

\begin{Proposition}[\cite{OkBethe} Proposition 2]\label{window}
The $G_{\dv}$-weights of $\textbf{s}^{\nabla}_{\alpha}\big|_0$ are contained in 
$$
\mathscr{L}+\text{convex hull}\left(\text{weights of } \Lambda^{\bullet}\left(T^{1/2}_{\mathcal{M}(\dv,\dw+\dv)} \right)_{\star}^{*}\right) \in \text{char}(G_{\dv})\otimes \mathbb{R}
$$
for any $\mathscr{L} \in \nabla$.
\end{Proposition}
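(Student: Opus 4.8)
The plan is to follow the strategy of \cite{OkBethe}, so the first step is the reduction already carried out in the paragraphs preceding the statement: it suffices to bound the $G_{\dv}'$-weights of $\stab_{\mathsf{U},\nabla}(\alpha)|_{\star}$, where $\star \in \mathcal{M}(\dv,\dw+\dv)^{G_{\dv}'}$ is the point represented by the quiver data with $V'\to V$ an isomorphism and all other maps zero. Note that $\star$ lies in the $\mathsf{U}$-fixed component $F_\star \cong \mathcal{M}(\dv,\dv)$ (the piece of $\mathcal{M}(\dv,\dw+\dv)^{\mathsf{U}}$ with all of $V$ supported on the $V'$-framing), while $\alpha$ is supported on the $\mathsf{U}$-fixed component $F_0 = \mathcal{M}(\dv,\dw)$. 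Recall that the chamber $\mathfrak{C}$ of $\mathsf{U}$ was chosen precisely so that $F_\star$ lies in the full attracting set of $F_0$; hence $F_\star \leq F_0$ and property (2) of stable envelopes does not force the relevant restriction to vanish.

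The main input is the degree axiom, property (3). Applied to the ordered pair $(F_0,F_\star)$ with slope $s$ it gives $\deg_{\mathsf{U}}\!\big(\stab_{\mathsf{U},\nabla}(\mathcal{O}_{F_0})|_{F_\star}\otimes s|_{F_0}\big)\subseteq \deg_{\mathsf{U}}\!\big(\stab_{\mathsf{U},\nabla}(\mathcal{O}_{F_\star})|_{F_\star}\otimes s|_{F_\star}\big)$. Since the tautological bundles on $\mathcal{M}(\dv,\dw)$ carry no $\mathsf{U}$-weight, $s|_{F_0}$ is $\mathsf{U}$-trivial, so the left side is just the $\mathsf{U}$-Newton polytope of the class we wish to control; expanding a general $\alpha$ in the classes $\mathcal{O}_{F_0}$ reduces us to this case by linearity. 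On the right side, property (1) identifies $\stab_{\mathsf{U},\nabla}(\mathcal{O}_{F_\star})|_{F_\star}$ with $\big(\det N_{F_\star}^{-}/\det T^{1/2}_{F_\star,\neq 0}\big)^{1/2}\Lambda^{\bullet}(N_{F_\star}^{-})^{*}$. I would then compute $N_{F_\star}$ and the restriction of the polarization at $\star$ using the explicit choice $T^{1/2}_{\mathcal{M}(\dv,\dw+\dv)} = T^{1/2}_{\mathcal{M}(\dv,\dw)} + \hbar\sum_{i} \mathrm{Hom}(V_i,V_i')$. The whole point of this particular polarization is that at $\star$ the repelling normal directions $N_{F_\star}^{-}$, together with the square-root line-bundle prefactor, reorganize so that the Newton polytope of $\stab_{\mathsf{U},\nabla}(\mathcal{O}_{F_\star})|_{F_\star}$ is exactly the convex hull of the weights of $\Lambda^{\bullet}\big(T^{1/2}_{\mathcal{M}(\dv,\dw+\dv)}\big)^{*}_{\star}$, up to an overall translation. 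Since the stable envelope depends on the slope only through its alcove, we are free to take $s=\mathscr{L}$ for any $\mathscr{L}\in\nabla\cap\Pic(\mathcal{M}(\dv,\dw))$; the factor $s|_{F_\star}=\mathscr{L}|_{\star}$ then supplies precisely the translation by $\mathscr{L}$, giving the asserted containment at the level of $\mathsf{U}$-weights.

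The genuine obstacle is that the degree axiom constrains only $\mathsf{U}$-weights, whereas the statement concerns $G_{\dv}'$-weights. To upgrade the bound I would use that $\stab_{\mathsf{U},\nabla}$ is not merely $\mathsf{U}$-equivariant but $\bT\times G_{\dv}'$-equivariant, so the window/Newton-polytope characterization of stable envelopes can be re-run for the enlarged torus $\mathsf{U}\times\mathsf{T}'$ with $\mathsf{T}'\subset G_{\dv}'$ a maximal torus; equivalently, by the factorization of stable envelopes one first restricts $\stab_{\mathsf{U},\nabla}(\mathcal{O}_{F_0})$ to $F_\star$, which is again a stable envelope on $\mathcal{M}(\dv,\dv)$, and then restricts that further to the $G_{\dv}'$-fixed point $\star$ using $\mathsf{T}'$-stable envelopes on $\mathcal{M}(\dv,\dv)$. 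The two window conditions combine into a single window condition in the full torus $\mathsf{U}\times\mathsf{T}'$, and, after projecting to characters of $G_{\dv}'$ (using the identification $\mathscr{V}_i|_\star\cong V_i'$ and tracking the $\mathsf{U}$-twist), the resulting bound is exactly $\mathscr{L}+\mathrm{conv.hull}\big(\text{weights of } \Lambda^{\bullet}(T^{1/2}_{\mathcal{M}(\dv,\dw+\dv)})^{*}_{\star}\big)$. Carrying out this last bookkeeping — matching the product of the two windows with the single polytope attached to $T^{1/2}_{\mathcal{M}(\dv,\dw+\dv)}$, and checking that the translation by $\mathscr{L}$ agrees on both sides — is the step that requires care, and it is precisely the content of \cite{OkBethe}, Proposition 2, whose proof I would follow.
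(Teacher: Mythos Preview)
The paper does not prove this proposition; it is quoted verbatim from \cite{OkBethe} and used as a black box. So there is no ``paper's own proof'' to compare against, and your proposal is really an attempt to reconstruct the argument of \cite{OkBethe}.

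Your sketch identifies the right ingredients: the reduction to $\stab_{\mathsf{U},\nabla}(\alpha)|_\star$ (which the paper spells out), the degree axiom (3) applied between the components $F_0=\mathcal{M}(\dv,\dw)$ and $F_\star\cong\mathcal{M}(\dv,\dv)$, and the normalization axiom (1) to evaluate the diagonal restriction. You also correctly flag the real issue, namely that axiom~(3) as stated bounds only the $\mathsf{U}$-Newton polytope, while the claim is about the full $G_{\dv}'$-weights. Your proposed fix --- promoting the window condition from $\mathsf{U}$ to a maximal torus $\mathsf{T}'\subset G_{\dv}'$ via the $\bT\times G_{\dv}'$-equivariance of the stable envelope, or equivalently via the triangle lemma factoring through $\mathcal{M}(\dv,\dv)$ --- is the right mechanism, and it is how \cite{OkBethe} proceeds.

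One minor correction: you write ``we are free to take $s=\mathscr{L}$ for any $\mathscr{L}\in\nabla\cap\Pic(\mathcal{M}(\dv,\dw))$,'' but $\nabla$ is an open alcove in $\Pic(X)\otimes\mathbb{R}$ and typically contains no integral points; the statement is for \emph{real} $\mathscr{L}\in\nabla$, and the degree axiom already makes sense for fractional line bundles since only $\mathscr{L}|_{F_0}$ and $\mathscr{L}|_{F_\star}$ enter, and these are characters in $\text{char}(G_{\dv}')\otimes\mathbb{R}$. Other than that, your outline matches the intended argument, and your candid acknowledgment that the final bookkeeping is where the work lies is accurate.
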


\begin{Definition}
Let 
\begin{equation}\label{desc}
\textbf{f}^{\nabla}_{\alpha}=\Delta_{\hbar}^{-1} \textbf{s}^{\nabla}_{\alpha} \in K_{\bT}(\mathfrak{R})_{loc}
\end{equation}
where
$$
\Delta_{\hbar}=\Lambda^{\bullet} \left( \hbar \bigoplus_{i \in I}\text{Hom}(V_i,V_i)\right)
$$
Similarly, let
\begin{equation}\label{descopp}
\textbf{f}^{\nabla}_{\text{opp},\alpha}=\Delta_{\hbar}^{-1} \textbf{s}^{\nabla}_{\text{opp},\alpha} \in K_{\bT}(\mathfrak{R})_{loc}
\end{equation}
where the subscript $\text{opp}$ means to use the same construction as for $\textbf{f}^{\nabla}_{\alpha}$, but using $T^{1/2}_{\text{opp},\mathcal{M}(\dv,\dw)}$ and $-\nabla$ instead.
\end{Definition}

\begin{Proposition}[\cite{OkBethe} Definition 2, Proposition 3]\label{intdesc}
The insertion of the class $\textbf{f}^{\nabla}_{\alpha} \in K_{\bT}(\mathfrak{R})_{loc}$ into the capped vertex with descendants lies in integral $K$-theory. More precisely, 
$$
\widehat{\ver}(\textbf{f}^{\nabla}_{\alpha}) \in K_{\bT \times \mathbb{C}^{\times}_{q}}(X)[[z]]
$$
The same holds for insertion of the class $\textbf{f}^{\nabla}_{\text{opp},\alpha}$.
\end{Proposition}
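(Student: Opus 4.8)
The plan is to adapt the rigidity argument of \cite{OkBethe}. It is enough to treat one $z$-degree $d$ at a time. Since $\mathbb{C}^{\times}_{q}$ acts trivially on $X$, while $\textbf{f}^{\nabla}_{\alpha}=\Delta_{\hbar}^{-1}\textbf{s}^{\nabla}_{\alpha}$ carries no $q$-dependence (it is pulled back from $\mathfrak{R}$) and $\vrs^{d}$ differs from an honest class only by square roots of equivariant line bundles, properness of $\widehat{\text{ev}}_{0}$ already shows that the $z^{d}$-coefficient of $\widehat{\ver}(\textbf{f}^{\nabla}_{\alpha})$ is a Laurent polynomial in $q$ with coefficients in $K_{\bT}(X)_{loc}$ --- a priori only \emph{localized}, because of the factor $\Delta_{\hbar}^{-1}$. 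Thus, restricting to a fixed point $p\in X^{\bT}$, it remains to prove that the coefficients of this Laurent polynomial have no poles in the characters of $\bT$; in particular that they are regular at $\hbar=1$, where $\Delta_{\hbar}^{-1}$ has its pole.

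For this I would compute $\widehat{\ver}(\textbf{f}^{\nabla}_{\alpha})|_{p}$ by $\mathbb{C}^{\times}_{q}$-equivariant localization on $\qm^{d}_{\rel \, 0}$, writing it as a finite sum over $\mathbb{C}^{\times}_{q}$-fixed quasimaps $g$ over $p$. At each $g$ the tautological bundles split, $\mathscr{V}_{i}=\bigoplus_{j}\mathcal{O}(d_{i,j})$, with the $\mathbb{C}^{\times}_{q}$-linearization of Proposition \ref{linearization}, so $\text{ev}_{\infty}^{*}(\textbf{f}^{\nabla}_{\alpha})|_{g}$ is obtained from the rational expression $\textbf{f}^{\nabla}_{\alpha}$ by substituting the monomial Chern roots of $\mathscr{V}_{i}|_{\infty}$ for the tautological variables, while by the analogue for $\qm^{d}_{\rel \, 0}$ of Lemma \ref{locterms} (using Proposition \ref{virtan}) the contribution of $\vrs^{d}$ at $g$ has the shape $R_{g}/\Lambda^{\bullet}(\mathscr{T}^{1/2}_{\infty}|_{g})^{*}$ with $R_{g}$ regular in the characters of $\bT$. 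The key point is that $\textbf{f}^{\nabla}_{\alpha}$ was built precisely so that its two candidate pole divisors --- the explicit factor $\Delta_{\hbar}^{-1}$, and after the substitution above the denominator $\Lambda^{\bullet}(\mathscr{T}^{1/2}_{\infty}|_{g})^{*}$ --- are cancelled against $\textbf{s}^{\nabla}_{\alpha}$: the stable-envelope normalization $\stab_{\mathsf{U},\nabla}(\mathcal{O})|_{\mathcal{M}(\dv,\dw)}$ is proportional to $\Lambda^{\bullet}(N^{-})^{*}$, and with the chosen polarization $T^{1/2}_{\mathcal{M}(\dv,\dw+\dv)}=T^{1/2}_{\mathcal{M}(\dv,\dw)}+\hbar\sum_{i}\text{Hom}(V_{i},V_{i}')$ the pullback by $\iota$ followed by the substitution turns $\Lambda^{\bullet}(N^{-})^{*}$ into exactly $\Delta_{\hbar}\cdot\Lambda^{\bullet}(\mathscr{T}^{1/2}_{\infty}|_{g})$. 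What remains after these cancellations is constrained by Proposition \ref{window}: the bound on the $G_{\dv}$-weights of $\textbf{s}^{\nabla}_{\alpha}|_{0}$ forces the Newton polytope of the surviving part of $\text{ev}_{\infty}^{*}(\textbf{s}^{\nabla}_{\alpha})|_{g}$ into the window cut out by $\Lambda^{\bullet}(T^{1/2}_{\mathcal{M}(\dv,\dw+\dv)})_{\star}^{*}$, which is exactly the range in which the remaining localization factors are regular in the $\bT$-characters and in which the sum over $g$ reassembles, by the triangularity and support axioms for stable envelopes, into a class with no poles. This gives the claimed integrality; the case of $\textbf{f}^{\nabla}_{\text{opp},\alpha}$ is identical after replacing $T^{1/2}$ and $\nabla$ by $T^{1/2}_{\text{opp}}$ and $-\nabla$.

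I expect the main obstacle to be exactly this last step: verifying that the window bound of Proposition \ref{window} is sharp enough that, once $\Delta_{\hbar}^{-1}$ has been cancelled against the stable-envelope and polarization factors, no equivariant poles survive in the summed localization contributions --- i.e. that the range of admissible $G_{\dv}$-weights coincides with the range in which the $\widehat{\text{ev}}_{0}$-pushforward of the localized contributions is regular. This matching is the technical heart of \cite{OkBethe}, and carrying it out requires careful bookkeeping of the deformation theory of $\qm^{d}_{\rel \, 0}$ at $\mathbb{C}^{\times}_{q}$-fixed quasimaps together with the explicit form of $\stab_{\mathsf{U},\nabla}$ near the distinguished fixed point $\star\in\mathcal{M}(\dv,\dv+\dw)^{G_{\dv}'}$.
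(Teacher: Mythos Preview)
The paper does not give its own proof of this proposition: it is quoted directly from \cite{OkBethe} (as the bracketed citation in the proposition header indicates), and no argument is supplied in the text. So there is nothing in the paper to compare your proposal against.

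That said, let me flag one concrete issue in your sketch. You propose to compute $\widehat{\ver}(\textbf{f}^{\nabla}_{\alpha})|_{p}$ by $\mathbb{C}^{\times}_{q}$-localization on $\qm^{d}_{\rel \, 0}$ and then describe the fixed locus as a finite set of quasimaps $g$ with $\mathscr{V}_{i}=\bigoplus_{j}\mathcal{O}(d_{i,j})$ and the linearization of Proposition~\ref{linearization}. That description, and Lemma~\ref{locterms} which you invoke, apply to $\qm^{d}_{\ns \, 0}$, not to $\qm^{d}_{\rel \, 0}$: the $\mathbb{C}^{\times}_{q}$-fixed locus of the \emph{relative} moduli space contains accordion configurations with nodal domain curves $C'\to\mathbb{P}^{1}$, and the contribution of those components is not captured by your formula $R_{g}/\Lambda^{\bullet}(\mathscr{T}^{1/2}_{\infty}|_{g})^{*}$. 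So the cancellation you describe between $\Delta_{\hbar}$ and $\Lambda^{\bullet}(N^{-})^{*}$ does not account for all localization terms, and the argument as written has a gap. The actual proof in \cite{OkBethe} proceeds more geometrically, using that $\textbf{s}^{\nabla}_{\alpha}$ is an integral class supported on $\mathfrak{X}\subset\mathfrak{R}$ and interpreting the division by $\Delta_{\hbar}$ via the embedding $\iota$ of (\ref{embed}) into quasimaps to $\mathcal{M}(\dv,\dw+\dv)$, rather than by a term-by-term localization analysis on $\qm^{d}_{\rel \, 0}$.
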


\subsection{Solutions of exotic difference equations}
We will not be changing the dimension and framing dimension for the remainder of this chapter. So let $X=\mathcal{M}(\dv,\dw)$. Fix a polarization $T^{1/2}$ of $X$. Recall that for each choice of alcove $\nabla$ we have defined a map
\begin{align*}
    K_{\bT}(X) &\to K_{\bT}(\mathfrak{X})_{loc} \\
  \alpha  & \mapsto \textbf{f}^{\nabla}_{\alpha}
\end{align*}
using stable envelopes for the alcove $\nabla$ and the polarization $T^{1/2}$. Although $\textbf{f}^{\nabla}_{\alpha}$ lies in localized $K$-theory, when used as a descendant in a quasimap count, it behaves as an integral descendant insertion.

\begin{Definition}
Let $\ver^{\nabla}$ be the map
\begin{align*}
\ver^{\nabla}: K_{\bT}(X) &\to K_{\bT \times \mathbb{C}^{\times}_{q}}(X)_{loc}[[z]] \\
\alpha &\mapsto \ver(\textbf{f}^{\nabla}_{\alpha})
\end{align*}
where $\ver$ is the vertex where the polarization in (\ref{symvrs}) is the opposite of the polarization $T^{1/2}$ used for stable envelopes in $\textbf{f}^{\nabla}_{\alpha}$.
\end{Definition}






The main result of \cite{OkBethe}, which inspired much of this thesis, is as follows.

\begin{Proposition}[\cite{OkBethe}]
Let $\nabla_{0}$ be the alcove containing small ample slopes. Let $\Omega$ be the operator
\begin{align*}
\Omega: K_{\bT}(X) &\to  K_{\bT \times\mathbb{C}^{\times}_{q}}(X)_{loc}[[z]] \\
\alpha &\mapsto \sum_{d} \text{ev}_{\infty,*}\left(\qm_{\ns \, \infty}, \text{ev}_{0}^{*}(\textbf{f}^{\nabla_{0}}_{\text{opp},\alpha}) \otimes \vrs^{d} \right) z^d
\end{align*}
Then
$$
\Omega(\alpha)=\sum_{d}\text{ev}_{\infty,*}\left(\qm_{\substack{\rel \, 0\\ \ns \, \infty}}, \widehat{\text{ev}}_{0}^{*}(\alpha)\otimes \vrs^{d} \right) z^{d}
$$
Equivalently, $\Omega$ is the adjoint of $\Psi$:
$$
\Omega^{\chi}=\Psi 
$$
of Definition \ref{adjoint}.

\end{Proposition}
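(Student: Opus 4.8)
The plan is to establish the equivalent statement $\Omega=\Psi^{\chi}$, which by Definition~\ref{adjoint} and the two-tensor/operator dictionary of (\ref{capoperator}) unwinds to the first displayed identity in the statement. Indeed, reading $\Psi$ as the two-tensor $\sum_{d}(\widehat{\text{ev}}_{0}\times\text{ev}_{\infty})_{*}(\qm^{d}_{\substack{\ns \, \infty \\ \rel \, 0}},\vrs^{d})z^{d}$, its $\chi$-adjoint is the operator
\[
\alpha\longmapsto\sum_{d}\text{ev}_{\infty,*}\Big(\qm^{d}_{\substack{\ns \, \infty \\ \rel \, 0}},\ \widehat{\text{ev}}_{0}^{*}(\alpha)\otimes\vrs^{d}\Big)z^{d}.
\]
So the entire content is the geometric claim that imposing a \emph{relative} condition $\widehat{\text{ev}}_{0}^{*}(\alpha)$ at $0$ has the same effect on $\text{ev}_{\infty}$-pushforwards as inserting the \emph{descendant} $\text{ev}_{0}^{*}(\textbf{f}^{\nabla_{0}}_{\text{opp},\alpha})$ at the (unmarked) point $0$.

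First I would introduce the contraction morphism. By the construction of relative quasimap spaces in \cite{qm}, restricting a relative stable quasimap to its distinguished component $C_{0}\cong\mathbb{P}^{1}$ and forgetting the accordion of rational curves grown at $0$ produces a quasimap that is nonsingular at $\infty$ but otherwise unconstrained at $0$; this is a morphism $\mathfrak{b}\colon\qm_{\substack{\ns \, \infty \\ \rel \, 0}}\to\qm_{\ns \, \infty}$ whose fiber over a quasimap with a prescribed singularity at $0$ is a ``rubber'' moduli space of quasimaps on the accordion, nonsingular at the nodes and at the relative point, modulo the $\mathbb{C}^{\times}$ scaling. Applying the gluing/degeneration formulas of \cite{pcmilect}, and Proposition~\ref{virtan} to split $T_{\text{vir}}$ across the node, the right-hand side above becomes $\sum_{d}\text{ev}_{\infty,*}(\qm^{d}_{\ns \, \infty},\text{ev}_{0}^{*}(\Phi(\alpha))\otimes\vrs^{d})z^{d}$ for a single $z$-independent class $\Phi(\alpha)\in K_{\bT}(\mathfrak{X})_{loc}$, namely the rubber integral with boundary insertion $\alpha$. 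Thus the claim reduces to the $z$-independent identity $\Phi(\alpha)=\textbf{f}^{\nabla_{0}}_{\text{opp},\alpha}$.

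To prove the reduced identity I would invoke uniqueness of $K$-theoretic stable envelopes. Through the large-framing embedding (\ref{embed}), a rubber quasimap to $X=\mathcal{M}(\dv,\dw)$ together with its gluing data is the same as a rubber quasimap to $\mathcal{M}(\dv,\dw+\dv)$ supported near the $\mathsf{U}$-fixed component $\mathcal{M}(\dv,\dw)$, so $\Delta_{\hbar}\,\Phi(\alpha)=\textbf{s}^{\nabla_{0}}_{\text{opp},\alpha}$ is a ``half-capping operator'' of $\mathcal{M}(\dv,\dw+\dv)$ restricted near that component. One then checks it obeys the three axioms of $\stab_{\mathfrak{C},T^{1/2}_{\text{opp},\mathcal{M}(\dv,\dw+\dv)},-\nabla_{0}}$: the support/triangularity axiom from the attracting-set structure of rubber degenerations (rubber limits flow only to no-larger fixed loci, reproducing condition (2)); the diagonal normalization from the empty (degree-zero) rubber, where the symmetrizing twist of (\ref{symvrs}) and the factor $\Delta_{\hbar}^{-1}$ in (\ref{descopp}) are arranged to yield exactly $\big(\det N^{-}/\det T^{1/2}_{\neq 0}\big)^{1/2}\Lambda^{\bullet}(N^{-})^{*}$; and the Newton-polytope axiom from the degree bound on rubber quasimaps, which forces the slope to be the small (anti-)ample one and reproduces the window of Proposition~\ref{window}. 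Uniqueness then gives $\Delta_{\hbar}\,\Phi(\alpha)=\iota^{*}\stab_{\mathsf{U},\nabla_{0}}(\alpha)=\textbf{s}^{\nabla_{0}}_{\text{opp},\alpha}$, hence $\Phi(\alpha)=\textbf{f}^{\nabla_{0}}_{\text{opp},\alpha}$.

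The main obstacle is this last identification, and inside it the Newton-polytope axiom: one must bound the $\mathsf{U}$-weights (equivalently, the $G_{\dv}$-weights after the large-framing trick) of every rubber contribution and show they lie precisely in the window cut out by the small ample slope, matching Proposition~\ref{window}. A second, more clerical obstacle is tracking all the square-root normalizations --- the $\vrs$ twist, the $(\det T^{1/2})^{1/2}$ prefactors, and $\Delta_{\hbar}$ --- through the rubber-gluing, using Proposition~\ref{virtan} on the accordion, so that the diagonal normalization holds on the nose; the definitions (\ref{symvrs}), (\ref{desc}), and (\ref{descopp}) are tuned precisely so these factors cancel, and verifying the cancellation is the fiddly heart of the argument.
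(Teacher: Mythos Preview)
The paper does not give its own proof of this proposition; it is quoted from \cite{OkBethe}. However, the paper does prove the generalization to arbitrary alcoves (Theorem~\ref{mainthm2}), and that proof follows a route entirely different from yours. The paper never attempts to identify the rubber integral with a stable envelope directly. Instead it argues by rigidity: it writes $\Omega^{\nabla}=R\circ G^{-1}\circ S$ via degeneration and gluing, observes $R^{\chi}=\Psi$, and deduces that $(\Omega^{\nabla})^{\chi}\circ\ver^{\nabla}$ lands in integral (non-localized) $K$-theory; then it shifts $z\to zq^{\mathscr{L}_w}$ and shows both $q\to 0$ and $q\to\infty$ limits exist, forcing $q$-independence, with the $q\to\infty$ value equal to $1$. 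Combined with Theorem~\ref{mainthm1} ($\Psi^{\nabla}\circ\ver^{\nabla}=1$), this gives $(\Omega^{\nabla})^{\chi}=\Psi^{\nabla}$. The stable-envelope input enters only through the window estimate of Proposition~\ref{window}, used to control the $q$-degrees of descendant insertions in these limits.

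Your approach has a genuine gap at the reduction step. After gluing across the node, the class you call $\Phi(\alpha)$ is a generating series $\sum_{d'} z^{d'}\,(\text{rubber contribution of degree }d')$, not a single $z$-independent class: the accordion over $0$ can carry arbitrary positive degree, and each such piece contributes its own power of $z$. For your identity $\Phi(\alpha)=\textbf{f}^{\nabla_{0}}_{\text{opp},\alpha}$ to make sense you must first prove that all positive-degree rubber contributions vanish (or cancel), and nothing in your sketch addresses this. The later ``check the three stable-envelope axioms for the rubber class'' program presupposes exactly this $z$-independence, so the argument is circular as written. This is precisely the difficulty the paper's rigidity argument is designed to avoid: rather than proving the rubber series collapses, one shows the full composite is $q$-constant and evaluates it at $q\to\infty$, where only the degree-zero term survives.
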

We will give a similar identification of the solutions to the exotic difference equations.

\begin{Lemma}\label{integral}
Let $\Psi^{\nabla}$ be the solution of the exotic difference equation for an alcove $\nabla$. Let $\nabla'$ be another alcove. For any $\alpha \in K_{\bT}(X)$, we have
$$
(\Psi^{\nabla} \circ \ver^{\nabla'})(\alpha) \in K_{\bT \times \mathbb{C}^{\times}_{q}}(X)[[z]]
$$
i.e. it lives in integral $K$-theory.
\end{Lemma}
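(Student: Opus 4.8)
The plan is to strip the exotic capping operator down to the ordinary one, reduce the claim to the integrality of the capped descendant vertex (Proposition \ref{intdesc}), and then observe that the wall-crossing operators preserve integral $K$-theory. First I would invoke Corollary \ref{prop3}: choosing an oriented sequence of walls $\{(w_i,m_i)\}_{i=1}^{n}$ from $-\nabla_{0}$ to $\nabla$, one has $\Psi^{\nabla}(z)=\textbf{B}_{w_n}(z)^{m_n}\cdots\textbf{B}_{w_1}(z)^{m_1}\,\Psi(z)$, where $\Psi(z)$ is the capping operator for the polarization $T^{1/2}_{\text{opp}}$ that underlies $\ver^{\nabla'}$. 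Since $\ver^{\nabla'}(\alpha)=\ver(\textbf{f}^{\nabla'}_{\alpha})$ with $\ver$ built from that same polarization, the degeneration identity (\ref{cpv}) gives $\Psi\circ\ver^{\nabla'}(\alpha)=\Psi\circ\ver(\textbf{f}^{\nabla'}_{\alpha})=\widehat{\ver}(\textbf{f}^{\nabla'}_{\alpha})$, so that
$$
\left(\Psi^{\nabla}\circ\ver^{\nabla'}\right)(\alpha)=\textbf{B}_{w_n}(z)^{m_n}\cdots\textbf{B}_{w_1}(z)^{m_1}\,\widehat{\ver}\!\left(\textbf{f}^{\nabla'}_{\alpha}\right).
$$
By Proposition \ref{intdesc}, $\widehat{\ver}(\textbf{f}^{\nabla'}_{\alpha})\in K_{\bT\times\mathbb{C}^{\times}_{q}}(X)[[z]]$; thus it is enough to check that this product of wall-crossing operators maps $K_{\bT\times\mathbb{C}^{\times}_{q}}(X)[[z]]$ into itself and defines an honest power series in $z$ when applied to a class supported in the effective cone.

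For the latter, by Proposition \ref{Bdegrees} we have $\textbf{B}_{w}(z)=1+\sum_{\langle d,\theta\rangle>0}\textbf{B}_{w}^{d}z^{d}$ with $\textbf{B}_{w}^{d}\in\hopfwall[q^{\pm 1}]$; since $1-\textbf{B}_{w}(z)$ carries only strictly positive $\theta$-degrees, the inverse $\textbf{B}_{w}(z)^{-1}=\sum_{k\geq 0}(1-\textbf{B}_{w}(z))^{k}$ has the same shape, again with coefficients in $\hopfwall[q^{\pm 1}]$. The quasimap degrees appearing in $\widehat{\ver}(\textbf{f}^{\nabla'}_{\alpha})$ lie in a cone on which $\langle\cdot,\theta\rangle$ is nonnegative and vanishes only at $d=0$, so in any contribution $d=e_{1}+\cdots+e_{k}+d'$ to a fixed monomial $z^{d}$ (with $e_{i}$ coming from the $\textbf{B}$-factors) one has $k\leq\langle d,\theta\rangle$; hence each $z^{d}$-coefficient of the composition is a finite sum and the composition is well defined. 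As $q^{\pm 1}$ is a legitimate scalar in $K_{\bT\times\mathbb{C}^{\times}_{q}}(X)$, the remaining point — and this is the crux of the argument — is that $\hopfwall\subset\hopf$ acts on the integral equivariant $K$-theory $K_{\bT}(X)$, not merely on its localization.

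I would draw this last fact from the construction of \cite{OS} (after \cite{MO}), where $\hopf$ is built to act on $\mathsf{F}_{\dw}=K_{\bT}(\mathcal{M}(\dw))$ itself: stable envelopes are integral maps of $K$-theory, and while inverting a stable envelope requires localizing in the auxiliary parameter $u$, the FRT generators $\text{Res}_{u=\infty}\text{Tr}\big((m(u)\otimes 1)\mathscr{R}^{s}_{V_{0},\cdot}\big)$ of $\hopf$ act by genuine correspondences on $K_{\bT}(\mathcal{M}(\dw))$, and hence so do the elements of the wall subalgebras $\hopfwall$. This is consistent with the geometry: by Theorem \ref{qdeOSthm} the ordinary quantum difference operators $\textbf{M}_{\mathscr{L}}(z)=\const_{X}\,\mathscr{L}\,\textbf{B}^{-\nabla_{0}}_{\mathscr{L}}(z)$ are manifestly integral, being defined by relative quasimap counts, while $\const_{X}^{-1}$ and $\mathscr{L}^{-1}$ are integral and invertible. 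Granting the integral action, the displayed composition lies in $K_{\bT\times\mathbb{C}^{\times}_{q}}(X)[[z]]$, as claimed.
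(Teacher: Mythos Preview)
Your argument follows exactly the same route as the paper's proof: reduce $\Psi^{\nabla}$ to $\Psi$ via Corollary~\ref{prop3}, identify $\Psi\circ\ver^{\nabla'}(\alpha)$ with the capped vertex $\widehat{\ver}(\textbf{f}^{\nabla'}_{\alpha})$ using (\ref{cpv}) and Proposition~\ref{intdesc}, and then observe that the wall-crossing operators act on integral $K$-theory. Your treatment is in fact more careful than the paper's, which simply asserts that ``the wall crossing operators act on the nonlocalized $K$-theory'' without further comment; your explicit handling of the inverses $\textbf{B}_{w}(z)^{-1}$ via Proposition~\ref{Bdegrees} and the finiteness of each $z^{d}$-coefficient fills in details the paper leaves implicit.
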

\begin{proof}
Choose a path from $-\nabla_{0}$ to $\nabla$ that crosses the oriented sequence of walls $\{(w_i,m_i)\}_{i=1}^{n}$. Then 
$$
\Psi^{\nabla}(z)=\textbf{B}_{w_n}(z)^{m_n} \ldots \textbf{B}_{w_1}(z)^{m_1} \Psi(z)
$$
by Corollary \ref{prop3}.

By (\ref{cpv}) and Proposition \ref{intdesc}, $(\Psi \circ \ver^{\nabla'})(\alpha)=\widehat{\ver}(\textbf{f}^{\nabla}_{\alpha}) \in K_{\bT\times\mathbb{C}^{\times}_{q}}(X)[[z]]$, and since the wall crossing operators act on the nonlocalized $K$-theory, the result follows.
\end{proof}

Our first main result is the following, which relates solutions of the exotic $q$-difference equations to quasimap counts.
\begin{Theorem}\label{mainthm1}
Let $\nabla\subset \Pic(X)\otimes \mathbb{R}$ be an alcove. Then $\Psi^{\nabla} \circ \ver^{\nabla}=1$.
\end{Theorem}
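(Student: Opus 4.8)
The plan is to reduce the claim to the case of the small ample alcove, where the statement $\Psi \circ \ver^{\nabla_0} = 1$ is essentially the content of the Aganagic--Okounkov result of \cite{OkBethe} (phrased there as $\Omega^\chi = \Psi$, together with the identification of $\ver^{\nabla}$ as the adjoint of $\Omega^{\nabla}$ in Theorem \ref{mainthmintro}), and then propagate this across walls using Proposition \ref{adjacent}. Concretely, I would first establish the base case: combining $\Omega^\chi = \Psi$ with the defining relation $\widehat{\ver}(\textbf{f}^{\nabla_0}_\alpha) = (\Psi \circ \ver^{\nabla_0})(\alpha)$ from (\ref{cpv}), one wants to see that $\Psi \circ \ver^{\nabla_0}$ is the identity on $K_{\bT}(X)$. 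The key input here is that the descendant map $\textbf{f}^{\nabla_0}$ and the relative boundary condition are dual constructions: inserting $\textbf{f}^{\nabla_0}_\alpha$ as a descendant at $\infty$ on $\qm_{\ns\,\infty}$ produces, after pushforward, the same class as imposing the relative condition $\alpha$ at a point; this is exactly the replacement of relative insertions by descendants that \cite{OkBethe} proves via the window/weight bound of Proposition \ref{window}. From there, $\Psi \circ \ver^{\nabla_0}(\alpha) = \widehat{\ver}(\textbf{f}^{\nabla_0}_\alpha)$ matches the capped relative count with boundary $\alpha$, which by the defining property of stable envelopes and degeneration has constant term $\alpha$ and — crucially — satisfies a $q$-difference equation whose only solution with that initial condition is $\alpha$ itself. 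Making this last clamping precise, via the rigidity/support arguments of \cite{OkBethe}, is the real mathematical content of the base case.

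The inductive step across a wall is then formal given the machinery already assembled. Suppose $\Psi^{\nabla'} \circ \ver^{\nabla'} = 1$ for an alcove $\nabla'$, and let $\nabla$ be obtained from $\nabla'$ by crossing a wall $w$ negatively. By Proposition \ref{adjacent}, $\Psi^{\nabla}(z) = \textbf{B}_w(z) \Psi^{\nabla'}(z)$. On the other side, $\ver^{\nabla}$ differs from $\ver^{\nabla'}$ precisely by the change of slope in the stable envelope used to build $\textbf{f}^{\nabla}_{\text{opp},\alpha}$; the effect of crossing the wall $w$ on the stable envelope is governed by the wall $R$-matrix $\mathsf{R}^{\pm}_w$ of (\ref{wallR}), and through the FRT/coproduct dictionary of Proposition \ref{coprod} and the definition (\ref{wallcross}) of $\textbf{B}_w(z)$, this translates into $\ver^{\nabla} = \textbf{B}_w(z)^{-1} \circ \ver^{\nabla'}$ as operators into $K_{\bT\times\mathbb{C}^\times_q}(X)_{loc}[[z]]$. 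Composing, $\Psi^{\nabla} \circ \ver^{\nabla} = \textbf{B}_w \Psi^{\nabla'} \circ \textbf{B}_w^{-1} \ver^{\nabla'}$; since $\textbf{B}_w(z)$ commutes with itself and $\Psi^{\nabla'}$ does not involve $\textbf{B}_w$ past the leftmost factor in the wrong place — more carefully, one uses that the wall-crossing operator appearing in $\ver^{\nabla}$ is the \emph{same} $\textbf{B}_w(z)$ (up to the opposite-polarization/opposite-slope bookkeeping, which is consistent by construction) — the two copies cancel and we are left with $\Psi^{\nabla'} \circ \ver^{\nabla'} = 1$.

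The main obstacle, and the step I would spend the most care on, is the interaction between the \emph{opposite} polarization and slope used in $\textbf{f}^{\nabla}_{\text{opp},\alpha}$ (as stipulated in the definition of $\ver^{\nabla}$ and in Theorem \ref{mainthmintro}) and the non-opposite conventions in $\Psi^{\nabla}$ and $\textbf{B}_w(z)$. One must verify that the wall-crossing behavior of the opposite-slope stable envelope is precisely the inverse of $\textbf{B}_w(z)$ acting from the correct side, so that the cancellation in the inductive step is exact rather than off by an $R$-matrix factor or a sign. This requires unwinding the relationship between $\stab_{\mathfrak{C},T^{1/2}_{\text{opp}},-\nabla}$ and $\stab_{\mathfrak{C},T^{1/2},\nabla}$ — a duality statement for stable envelopes under simultaneous reversal of polarization and slope — and matching it against the antipode $S_w$ appearing in (\ref{wallcross}). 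A secondary technical point is ensuring all identities, which a priori hold only in localized $K$-theory, are valid as formal power series in $z$ with the claimed integrality; but Lemma \ref{integral} and Proposition \ref{intdesc} already supply exactly this, so once the wall-by-wall algebra is pinned down the integrality is automatic.
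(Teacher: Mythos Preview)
Your inductive step contains a genuine gap that makes the argument circular. You assert that crossing a wall $w$ changes the vertex by $\ver^{\nabla} = \textbf{B}_w(z)^{-1}\circ \ver^{\nabla'}$, justifying this by the wall $R$-matrix (\ref{wallR}) and the coproduct dictionary. But the wall $R$-matrix $R^{\pm}_w = \stab_{\pm,s'}^{-1}\circ\stab_{\pm,s}$ is a $z$-\emph{independent} operator acting on $K_{\bT}(\mathcal{M}(\dv,\dw+\dv))$, whereas $\textbf{B}_w(z)$ is a $z$-dependent operator on $K_{\bT}(X)$ built through the ABRR equation (\ref{abrr}) and the antipode. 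There is no mechanism in the paper that converts the former into the latter at the level of the descendant insertion: changing the slope alters $\textbf{f}^{\nabla}_\alpha$ as a \emph{polynomial in Chern roots} on $\mathfrak{R}$, and after inserting into $\ver$ and summing over degrees the resulting change is not a priori of the form (operator on $K_{\bT}(X)$)$\,\circ\,\ver^{\nabla'}$. Indeed, the identity $\ver^{\nabla}=\ver^{\nabla'}\circ\textbf{B}_w(z)^{-1}$ is \emph{equivalent} to Theorem \ref{mainthm1} once you combine it with Proposition \ref{adjacent}, so invoking it is circular. (Your own cancellation algebra is also muddled: with $\textbf{B}_w^{-1}$ on the left of $\ver^{\nabla'}$ you get $\textbf{B}_w\Psi^{\nabla'}\textbf{B}_w^{-1}\ver^{\nabla'}$, which is not $1$ unless $\textbf{B}_w$ commutes with $\Psi^{\nabla'}$---it does not.)

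The paper's proof takes an entirely different route: it does not induct across walls at all. Instead it shows directly, for each alcove $\nabla$ in the negative ample cone, that $(\Psi^{\nabla}\circ\ver^{\nabla})|_{z\to zq^{\mathscr{L}_w}}$ is a Laurent polynomial in $q$ (Lemma \ref{integral}) with finite limits as $q^{\pm 1}\to 0$, hence is independent of $q$ and equal to its $q\to\infty$ value. The limit analysis is the real content: for $\ver^{\nabla}$ one uses the weight window of Proposition \ref{window} together with Lemma \ref{locterms} to control the $q$-degree of each localization term; for $\Psi^{\nabla}$ one uses Proposition \ref{Blim} and Lemma \ref{psilem}. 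The $q\to\infty$ limit is then computed to be $1$. Alcoves outside the negative ample cone are handled not by wall-crossing but by a single translation $\nabla\mapsto\nabla-\mathscr{L}_0$ using Proposition \ref{conjugateB} and the corresponding line-bundle shift of stable envelopes. Your proposal does correctly identify that the ``opposite polarization/slope'' bookkeeping is delicate, but the argument you would need to make it work is precisely the rigidity argument the paper carries out, not an algebraic cancellation of $\textbf{B}_w$'s.
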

\begin{proof}
We split the proof into several steps. First, we assume that $\nabla$ is contained in the negative ample cone. Let $w$ be a wall such that $\nabla$ is on the positive side of $w$. Let $\mathscr{L}_w$ be a line bundle on the wall $w$ and let $\epsilon$ be a small ample slope such that $\mathscr{L}_w+\epsilon \in \nabla$. We know that $\Psi^{\nabla} \circ \ver^{\nabla}$ takes values in Laurent polynomials in $q$. Hence $(\Psi^{\nabla} \circ \ver^{\nabla})|_{z=zq^{\mathscr{L}_w}}$ is also a Laurent polynomial in $q$. We will show that is does not depend on $q$ by showing that its limits exist as $q^{\pm 1} \to 0$.

We first consider $\ver^{\nabla}(\alpha)$. By localization and Proposition \ref{locterms},
\begin{align*}
\ver^{\nabla}(\alpha)&= \sum_{d} \text{ev}_{0,*}\left(\qm^{d}_{\ns \, 0}, \text{ev}_{\infty}^{*}(\textbf{f}^{\nabla}_{\alpha}) \otimes \vrs^{d} \right) z^{d} \\
&=\sum_{d} z^{d} \sum_{g \in \left(\qm^{d}_{\ns \,  0}\right)^{\mathbb{C}^{\times}_{q}}} \frac{\text{ev}_{\infty}^{*}( \textbf{f}_{\alpha}^{\nabla})|_{g} R_{g} }{\Lambda^{\bullet}\left(\mathscr{T}^{1/2}_{\infty}\big|_{g}\right)^{*}} \\ 
&=\sum_{d} z^{d} \sum_{g \in \left(\qm^{d}_{\ns \,  0}\right)^{\mathbb{C}^{\times}_{q}}} \frac{\text{ev}_{\infty}^{*}( \textbf{s}_{\alpha}^{\nabla})|_{g} R_{g} }{\Lambda^{\bullet}\left(\mathscr{T}^{1/2}_{\mathcal{M}(\dv,\dw+\dv),\infty}\big|_{g}\right)^{*}}
\end{align*}
where $R_{g}$ has finite limits in $q$.

By Proposition \ref{window}, the $\mathbb{C}^{\times}_{q}$-weights of $\text{ev}_{\infty}^{*}(\textbf{s}^{\nabla}_{\alpha})|_{g}$ are contained in $$
-\langle d,s\rangle+\text{Convex Hull of }\mathbb{C}^{\times}_{q}-\text{weights of } \Lambda^{\bullet}\left(\mathscr{T}^{1/2}_{\mathcal{M}(\dv,\dv+\dw),\infty}\big|_{g}\right)
$$
for any $s \in \nabla$. The sign on the first term is negative because of the first part of Proposition \ref{linearization}.


This implies that the limits
$$
\lim_{q^{\pm}\to \infty} \frac{q^{\langle d,s\rangle}\text{ev}_{\infty}^{*}( \textbf{s}_{\alpha}^{\nabla})|_{g} R_{g} }{\Lambda^{\bullet}\left(\mathscr{T}^{1/2}_{\mathcal{M}(\dv,\dw+\dv),\infty}\big|_{g}\right)^{*}}
$$
exist for any $s \in \nabla$. Let $s=\mathscr{L}_{w}+\epsilon \in \nabla$ and rewrite the above limits as 
$$
\lim_{q^{\pm}\to \infty} \frac{q^{\langle d,\mathscr{L}_w+\epsilon\rangle}\text{ev}_{\infty}^{*}( \textbf{s}_{\alpha}^{\nabla})|_{g} R_{g} }{\Lambda^{\bullet}\left(\mathscr{T}^{1/2}_{\mathcal{M}(\dv,\dw+\dv),\infty}\big|_{g}\right)^{*}} =\lim_{q^{\pm}\to \infty} \frac{q^{\langle d,\mathscr{L}_w\rangle} q^{\langle d,\epsilon\rangle}\text{ev}_{\infty}^{*}( \textbf{s}_{\alpha}^{\nabla})|_{g} R_{g} }{\Lambda^{\bullet}\left(\mathscr{T}^{1/2}_{\mathcal{M}(\dv,\dw+\dv),\infty}\big|_{g}\right)^{*}}
$$
Since $\epsilon$ is ample, $\langle d,\epsilon\rangle \geq 0$, and we deduce that $$
\lim_{q \to 0} \frac{q^{\langle d,\mathscr{L}_w\rangle}\text{ev}_{\infty}^{*}( \textbf{s}_{\alpha}^{\nabla})\big|_{g} R_{g} }{\Lambda^{\bullet}\left(\mathscr{T}^{1/2}_{\mathcal{M}(\dv,\dw+\dv),\infty}\big|_{g}\right)^{*}} 
$$
exists and
$$
\lim_{q \to \infty} \frac{q^{\langle d,\mathscr{L}_w\rangle }\text{ev}_{\infty}^{*}( \textbf{s}_{\alpha}^{\nabla})|_{g} R_{g} }{\Lambda^{\bullet}\left(\mathscr{T}^{1/2}_{\mathcal{M}(\dv,\dw+\dv),\infty}\big|_{g}\right)^{*}}=0
$$
Hence $\lim_{q\to 0} \ver^{\nabla}(\alpha)|_{z \to z q^{\mathscr{L}_w}}$ exists and $\lim_{q\to \infty} \ver^{\nabla}(\alpha)|_{z \to z q^{\mathscr{L}_w}}=\alpha$.

Next we consider $\Psi^{\nabla}$. Let $w_1, \ldots, w_n$ be an ordered sequence of walls crossed from $-\nabla_0$ to $\nabla$. We assume they are all crossed negatively, which is possible since $\nabla$ is in the negative ample cone. By Corollary \ref{prop3}, 
$$
\Psi^{\nabla}(z)= \textbf{B}_{w_n}(z ) \ldots \textbf{B}_{w_1}(z ) \Psi^{-\nabla_0}(z)
$$
\begin{sloppypar}
It is known that $\lim_{q\to \infty} \Psi^{-\nabla_{0}}(z)$ exists, see \cite{pcmilect} Section 7. Since $w$ is anti-ample, it follows that $\lim_{q\to \infty} \Psi^{-\nabla_{0}}(z q^{\mathscr{L}_w})$ also exists, and actually equals $1$. Similarly, since all walls $w_i$ are negatively crossed and since $\mathscr{L}_{w_i}-\mathscr{L}_w$ is ample, Proposition \ref{Blim} implies that 
$$\lim_{q \to \infty} \textbf{B}_{w_n}(z q^{\mathscr{L}_w}) \ldots \textbf{B}_{w_1}(z q^{\mathscr{L}_w} )=1$$
\end{sloppypar}

So $\lim_{q\to \infty} \Psi^{\nabla}(z q^{\mathscr{L}_w})=1$. The last thing to do is show that $\lim_{q\to 0} \Psi^{\nabla}(z q^{\mathscr{L}_w})$ exists. But this is exactly the content of Lemma \ref{psilem}.

Hence the limits $\lim_{q^{\pm}\to 0} (\Psi^{\nabla}\circ \ver^{\nabla})|_{z=zq^{\mathscr{L}_w}}$ exist, and so this quantity is independent of $q$ and is equal to its value as $q\to \infty$. In fact, in the above arguments, we have already computed this to be $1$.

Finally, we need to relax the assumption that $\nabla$ is contained in the negative ample cone. By Proposition \ref{conjugateB}, any wall crossing operator is conjugate to one in the negative ample cone, up to a shift of $z$. Let $\nabla$ be an alcove not contained in the negative ample cone, and let $\mathscr{L}_0 \in \Pic(X)$ be such that $\nabla-\mathscr{L}_0$ is in the negative ample cone. Proposition \ref{conjugateB} implies that
$$
\Psi^{\nabla}(z)=\mathscr{L}_0^{-1}\Psi^{\nabla-\mathscr{L}_0}(z q^{-\mathscr{L}_0})
$$
By the defining properties for stable envelopes, we have
$$
\ver^{\nabla}(z)=\ver^{\nabla-\mathscr{L}_0}(z q^{-\mathscr{L}_0}) \mathscr{L}_0
$$
So
\begin{align*}
\Psi^{\nabla} \circ \ver^{\nabla}&= \mathscr{L}_0^{-1} \Psi^{\nabla-\mathscr{L}_0}(z q^{-\mathscr{L}_0}) \circ \ver^{\nabla-\mathscr{L}_0}(z q^{-\mathscr{L}_0}) \mathscr{L}_0 \\
&= \mathscr{L}_0^{-1} \mathscr{L}_0 \\
&=1
\end{align*}
Hence $\Psi^{\nabla} \circ \ver^{\nabla}=1$ holds for any alcove.
\end{proof}

Our second main theorem, Theorem \ref{mainthm2} below, gives an explicit identification of $\Psi^{\nabla}$ via enumerative geometry. Let $\Omega^{\nabla}: K_{\bT}(X) \to K_{\bT \times \mathbb{C}^{\times}_{q}}(X)_{loc}[[z]]$ be the operator
\begin{equation}\label{omega}
\Omega^{\nabla}(\alpha)=\sum_{d} \text{ev}_{\infty,*}\left(\qm_{\ns \, \infty},\text{ev}_{0}^{*}(\textbf{f}_{\text{opp},\alpha}^{\nabla}) \otimes \vrs \right) z^{d}
\end{equation}
where $\textbf{f}^{\nabla}_{\text{opp},\alpha}$ is from (\ref{descopp}). We assume the polarization used to define the symmetrized virtual structure sheaf is $T^{1/2}_{\text{opp}}$. In contrast to $\ver^{\nabla}$, the polarizations used in the descendant insertion and the quasimap count of $\Omega^{\nabla}$ agree. As it involves pushing forward from a nonsingular point, $\Omega^{\nabla}$ is defined by $\mathbb{C}^{\times}_{q}$-localization. As in (\ref{split}), the data of a $\mathbb{C}^{\times}_{q}$-fixed quasimap gives a decomposition of the bundles $\mathscr{V}_i=\bigoplus_{j} \mathcal{O}(d_{i,j})$ provided by the quasimap. Analogous to Lemma \ref{locterms}, we have the following.


\begin{Lemma}\label{locterms2}
Let $g \in (\qm_{\ns \, \infty})^{\mathbb{C}^{\times}_{q}}$. The contributions of $\vrs$ on $\qm_{\ns \, \infty}$ to localization formulas for $\Omega$ at $g$ take the form
$$
\frac{R'_{g}}{\Lambda^{\bullet}\left(\mathscr{T}^{1/2}_{\text{opp},0}\big|_{g}\right)^{*}}
$$
where
$$
\lim_{q^{\pm 1} \to 0} R'_{g} \quad \text{exists}
$$
\end{Lemma}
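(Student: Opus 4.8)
The plan is to reproduce the proof of Lemma~\ref{locterms}, exchanging the roles of $0$ and $\infty$ and replacing $T^{1/2}$ throughout by $T^{1/2}_{\text{opp}}$ --- which for $\Omega$ is the polarization used \emph{both} for the symmetrized virtual structure sheaf and for the descendant insertion. First I would write the $\mathbb{C}^{\times}_{q}$-localization formula for $\text{ev}_{\infty,*}$: exactly as in the display preceding Lemma~\ref{locterms}, and using $\mathscr{K}_{\text{vir}}=(\det T_{\text{vir}})^{-1}$, the contribution of $\vrs^{d}$ at a fixed quasimap $g\in(\qm^{d}_{\ns\,\infty})^{\mathbb{C}^{\times}_{q}}$ is
\[
\ahat(T_{\text{vir},g})\left(\frac{\det\mathscr{T}^{1/2}_{\text{opp},0}\big|_{g}}{\det\mathscr{T}^{1/2}_{\text{opp},\infty}\big|_{g}}\right)^{1/2}.
\]

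Next, apply Proposition~\ref{virtan} to the class induced by $T^{1/2}_{\text{opp}}$, so that $T_{\text{vir},g}=\mathscr{T}^{1/2}_{\text{opp},0}\big|_{g}+\mathscr{T}^{1/2}_{\text{opp},\infty}\big|_{g}+(H-\hbar^{-1}H^{*})\big|_{g}$ with $H=H^{*}(\mathbb{P}^{1},\mathscr{T}^{1/2}_{\text{opp}}\otimes\mathcal{K})$, and use multiplicativity of $\ahat$ to split the contribution into the three pieces $\ahat(\mathscr{T}^{1/2}_{\text{opp},0}\big|_{g})$, $\ahat(\mathscr{T}^{1/2}_{\text{opp},\infty}\big|_{g})$ and $\ahat((H-\hbar^{-1}H^{*})\big|_{g})$, distributing the factor $(\det\mathscr{T}^{1/2}_{\text{opp},0}\big|_{g})^{1/2}(\det\mathscr{T}^{1/2}_{\text{opp},\infty}\big|_{g})^{-1/2}$ accordingly. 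The same $\ahat$/$\Lambda^{\bullet}$ bookkeeping as in Lemma~\ref{locterms} (here in the form $\ahat(V)(\det V)^{1/2}=1/\Lambda^{\bullet}(V^{*})$) turns the $0$-piece $\ahat(\mathscr{T}^{1/2}_{\text{opp},0}\big|_{g})(\det\mathscr{T}^{1/2}_{\text{opp},0}\big|_{g})^{1/2}$ into $1/\Lambda^{\bullet}(\mathscr{T}^{1/2}_{\text{opp},0}\big|_{g})^{*}$, which is the asserted denominator, and leaves
\[
R'_{g}=\ahat(\mathscr{T}^{1/2}_{\text{opp},\infty}\big|_{g})\,(\det\mathscr{T}^{1/2}_{\text{opp},\infty}\big|_{g})^{-1/2}\,\ahat\!\big((H-\hbar^{-1}H^{*})\big|_{g}\big).
\]

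It then remains to show $\lim_{q^{\pm 1}\to 0}R'_{g}$ exists. The first two factors are independent of $q$: by the second half of Proposition~\ref{linearization}, for a $\mathbb{C}^{\times}_{q}$-fixed quasimap in $\qm^{d}_{\ns\,\infty}$ the fibers over $\infty$ of the summands $\mathcal{O}(d_{i,j})$ of the bundles $\mathscr{V}_{i}$ carry $q$-weight $1$, so $\mathscr{T}^{1/2}_{\text{opp},\infty}\big|_{g}$ has no $q$-dependence. The factor $\ahat((H-\hbar^{-1}H^{*})\big|_{g})$ has finite limits as $q^{\pm 1}\to 0$ because, as recorded after Proposition~\ref{virtan} and already used for Lemma~\ref{locterms}, $H-\hbar^{-1}H^{*}$ enters $\ahat$ only through rational functions of the shape $-\tfrac{1-\hbar x}{\sqrt{\hbar}(1-x)}$, whose limits exist as $x^{\pm}\to\infty$.

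The computation is essentially bookkeeping; the one substantive input is the $\qm_{\ns\,\infty}$ (rather than $\qm_{\ns\,0}$) half of Proposition~\ref{linearization}, which is precisely what forces the surviving denominator to involve the restriction of $\mathscr{T}^{1/2}_{\text{opp}}$ at $0$ rather than at $\infty$. I do not expect a genuine obstacle here: in Lemma~\ref{locterms} the symmetrization used $T^{1/2}_{\text{opp}}$ while the descendant used $T^{1/2}$, so a correction factor $\Lambda^{\bullet}(\mathscr{T}^{1/2}_{\infty})/\Lambda^{\bullet}(\mathscr{T}^{1/2}_{\text{opp},\infty})^{*}$ had to be carried along; for $\Omega$ both ingredients use $T^{1/2}_{\text{opp}}$, so no such conversion is needed and the bound falls out directly. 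The routine fact that the $\mathbb{C}^{\times}_{q}$-fixed loci of $\qm^{d}_{\ns\,\infty}$ need not be isolated is handled exactly as before: the fixed part of $T_{\text{vir}}$ only enters through the proper pushforward along $\text{ev}_{\infty}$ and is irrelevant to the $q$-analysis.
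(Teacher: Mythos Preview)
Your proposal is correct and follows exactly the approach the paper intends: the paper's own proof simply reads ``The proof is similar to that of Lemma~\ref{locterms},'' and you have carefully spelled out that parallel argument, correctly invoking the $\qm_{\ns\,\infty}$ half of Proposition~\ref{linearization} to make $\mathscr{T}^{1/2}_{\text{opp},\infty}\big|_{g}$ independent of $q$ and isolating the asserted denominator at $0$. Your observation that no $T^{1/2}/T^{1/2}_{\text{opp}}$ conversion factor is needed here (unlike in Lemma~\ref{locterms}) is correct and explains why the argument is in fact slightly cleaner in this case.
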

\begin{proof}
The proof is similar to that of Lemma \ref{locterms}.
\end{proof}

\begin{Theorem}\label{mainthm2}

As elements of $\text{End}(K_{\bT\times\mathbb{C}^{\times}_{q}}(X)_{loc})[[z]]$, we have
$$
\left(\Omega^{\nabla}\right)^{\chi}= \Psi^{\nabla}
$$
where $\left(\Omega^{\nabla}\right)^{\chi}$ is the adjoint of $\Omega^{\nabla}$ from Definition \ref{adjoint}.
\end{Theorem}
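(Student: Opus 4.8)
The plan is to deduce this from Theorem~\ref{mainthm1} by a rigidity argument of the same flavor as the one proving that theorem. First, a formal reduction: by Theorem~\ref{mainthm1} the operator $\ver^{\nabla}$ has a left inverse, and its degree-zero coefficient is the identity (from the initial condition $\Psi^{\nabla}(0)=1$), so $\ver^{\nabla}$ is invertible in $\mathrm{End}(K_{\bT\times\mathbb{C}^{\times}_{q}}(X)_{loc})[[z]]$ with inverse $\Psi^{\nabla}$. Hence $(\Omega^{\nabla})^{\chi}=\Psi^{\nabla}$ is equivalent to $(\Omega^{\nabla})^{\chi}\circ\ver^{\nabla}=1$, and, by the definition of the adjoint (Definition~\ref{adjoint}) and the nondegeneracy of the pairing, to the scalar identity
\[
\big(\Omega^{\nabla}(\alpha),\,\ver^{\nabla}(\beta)\big)_{X}=(\alpha,\beta)_{X}\qquad\text{for all }\alpha,\beta\in K_{\bT}(X).
\]
It is this that I would prove. (One could instead relate $\Omega^{\nabla}$ to the $\nabla_{0}$ operator $\Omega$ of \cite{OkBethe} via the slope-dependence of stable envelopes and the wall $R$-matrices, and combine $\Omega^{\chi}=\Psi$ with $\Psi^{\nabla}=\textbf{B}_{w_{n}}\cdots\textbf{B}_{w_{1}}\Psi$; but the rigidity route below seems more direct.)

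Expanding $\Omega^{\nabla}(\alpha)$ and $\ver^{\nabla}(\beta)$ by $\mathbb{C}^{\times}_{q}$-localization and applying the projection formula, the left-hand side becomes a $z$-series whose $z^{d}$-coefficient is an explicit sum over pairs of fixed quasimaps --- one in $(\qm_{\ns \, \infty})^{\mathbb{C}^{\times}_{q}}$ carrying $\textbf{f}^{\nabla}_{\text{opp},\alpha}$ at $0$, one in $(\qm_{\ns \, 0})^{\mathbb{C}^{\times}_{q}}$ carrying $\textbf{f}^{\nabla}_{\beta}$ at $\infty$ --- evaluated against each other over $X^{\bT}$. By the interaction formulas between quasimap counts of \cite{pcmilect} (Section~6, and the relation $\widehat{\ver}=\Psi\circ\ver$) this reorganizes into the localization expansion of a single ``bare'' two-descendant count on $\mathbb{P}^{1}$, with $\vrs$ twisted by $T^{1/2}_{\text{opp}}$. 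At $d=0$ the moduli space is $X$, the evaluation maps are identities, and $\textbf{f}^{\nabla}_{\text{opp},\alpha}|_{X}=\alpha$, $\textbf{f}^{\nabla}_{\beta}|_{X}=\beta$ by the normalization of stable envelopes along the diagonal fixed component (cf.\ \cite{OkBethe}), so the $z^{0}$-term equals $(\alpha,\beta)_{X}$; it remains to kill the positive-degree terms.

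For this I would repeat the rigidity argument from the proof of Theorem~\ref{mainthm1}. Assuming first that $\nabla$ lies in the negative ample cone, pick a wall $w$ with $\mathscr{L}_{w}+\epsilon\in\nabla$ for a small ample slope $\epsilon$ and substitute $z\mapsto zq^{\mathscr{L}_{w}}$. By an integrality argument parallel to Proposition~\ref{intdesc} and Lemma~\ref{integral}, each $z^{d}$-coefficient of the pairing is a Laurent polynomial in $q$, so it suffices to show the shifted coefficients have finite limits as $q^{\pm 1}\to 0$. Lemma~\ref{locterms2} controls the $\vrs$-contribution near the marked point carrying $\textbf{f}^{\nabla}_{\text{opp},\alpha}$ and Lemma~\ref{locterms} the one near $\textbf{f}^{\nabla}_{\beta}$, producing poles only along $\Lambda^{\bullet}(\mathscr{T}^{1/2}_{\text{opp},0})^{*}$ and $\Lambda^{\bullet}(\mathscr{T}^{1/2}_{\infty})^{*}$, while the window bound of Proposition~\ref{window}, applied to $\textbf{s}^{\nabla}_{\text{opp},\alpha}$ (slope $-\nabla$) and $\textbf{s}^{\nabla}_{\beta}$ (slope $\nabla$), bounds the $\mathbb{C}^{\times}_{q}$-weights of these insertions by convex hulls of exactly those $\Lambda^{\bullet}$-weights, shifted by $\langle d,s\rangle$ for $s\in\pm\nabla$ as in that proof. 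Since $\langle d,\epsilon\rangle\geq 0$, the required limits exist, the pairing is $q$-independent, and equals its $q\to\infty$ value, in which only the $d=0$ term survives. A general alcove then follows from the negative ample case via Proposition~\ref{conjugateB} and the shift property of stable envelopes, exactly as at the end of the proof of Theorem~\ref{mainthm1}.

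The main obstacle is the rigidity step with \emph{two} descendant insertions present at once: one must verify that the single shift $z\mapsto zq^{\mathscr{L}_{w}}$ tames the $q$-poles from both ends simultaneously, i.e.\ that the $\Lambda^{\bullet}(\mathscr{T}^{1/2}_{\text{opp},0})^{*}$ and $\Lambda^{\bullet}(\mathscr{T}^{1/2}_{\infty})^{*}$ denominators produced by Lemmas~\ref{locterms2} and \ref{locterms} are \emph{exactly} absorbed by the windows of Proposition~\ref{window} --- which is precisely the reason $\Omega^{\nabla}$ is defined with the opposite polarization $T^{1/2}_{\text{opp}}$ and the opposite alcove $-\nabla$ in its descendant. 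Securing this cancellation, together with pinning down the precise gluing/interaction formula that recombines the two one-point counts into a single count, is where the real content lies; the limit computation itself is then routine.
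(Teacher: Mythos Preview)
Your overall strategy is the paper's: reduce to $(\Omega^{\nabla})^{\chi}\circ\ver^{\nabla}=1$ via Theorem~\ref{mainthm1}, establish integrality, then run a rigidity argument after the shift $z\mapsto zq^{\mathscr{L}_{w}}$. Where you diverge from the paper is in how these two steps are executed, and the divergence is exactly where you flag your own ``main obstacle.''

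First, integrality. Your reference to ``an integrality argument parallel to Proposition~\ref{intdesc} and Lemma~\ref{integral}'' is not enough: Lemma~\ref{integral} concerns $\Psi^{\nabla}\circ\ver^{\nabla'}$, not $(\Omega^{\nabla})^{\chi}\circ\ver^{\nabla}$, and Proposition~\ref{intdesc} only covers the capped vertex. The paper instead uses the degeneration and glue formulas to factor $\Omega^{\nabla}=R\circ G^{-1}\circ S$, where $R^{\chi}$ is the capping operator $\Psi$, so that
\[
(\Omega^{\nabla})^{\chi}\circ\ver^{\nabla}=S^{\chi}\circ(G^{-1})^{\chi}\circ(\Psi\circ\ver^{\nabla})=S^{\chi}\circ(G^{-1})^{\chi}\circ\widehat{\ver}^{\nabla},
\]
and each factor on the right is defined by pushforwards from relative moduli, hence lands in integral $K$-theory. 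This is the mechanism you were looking for, and it does \emph{not} require recombining the two one-point counts into a single two-pointed count.

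Second, rigidity. The paper does not analyze the pairing $(\Omega^{\nabla}(\alpha),\ver^{\nabla}(\beta))_{X}$ directly. It treats the two operators separately: the proof of Theorem~\ref{mainthm1} already shows that $\lim_{q\to 0}\ver^{\nabla}|_{z\to zq^{\mathscr{L}_{w}}}$ exists and $\lim_{q\to\infty}=1$, and an identical window argument (Lemma~\ref{locterms2} plus Proposition~\ref{window} for the opposite polarization and slope) gives the same conclusions for $\Omega^{\nabla}$, hence for $(\Omega^{\nabla})^{\chi}$. The composite, already known to be a Laurent polynomial in $q$, is therefore $q$-independent and equals its $q\to\infty$ value, which is $1$. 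This separation completely avoids the ``two descendant insertions at once'' difficulty you worry about; there is no need to match the denominators at $0$ and $\infty$ simultaneously. Incidentally, because the $\Psi^{\nabla}$ analysis is replaced here by the purely window-based $\Omega^{\nabla}$ analysis, the paper's proof of this theorem does not need the reduction to the negative ample cone that you (and Theorem~\ref{mainthm1}) invoke.
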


\begin{proof}

A-priori, $\left(\Omega^{\nabla}\right)^{\chi} \circ \ver^{\nabla}:K_{\bT}(X)\to K_{\bT\times\mathbb{C}^{\times}_{q}}(X)_{loc}[[z]]$, i.e. the composition takes values in localized $K$-theory. We first show that it actually takes values in integral $K$-theory. 

We use the degeneration and glue formulas for quasimap counts, see \cite{pcmilect} Section 6.5, to see that 
$$
\Omega^{\nabla}=R \circ G^{-1} \circ S
$$
where
\begin{itemize}
\item $S$ is the operator defined by
$$
S: \alpha \mapsto \sum_{d} \widehat{\text{ev}}_{\infty,*}\left(\qm_{\rel \, \infty} ,\text{ev}_{0}^{*} (\textbf{f}^{\nabla}_{\text{opp},\alpha}) \otimes \vrs \right) z^{d}
$$
    \item $G$ is the glue matrix, which is defined by
    $$
    G: \alpha \mapsto \sum_{d} \widehat{ev}_{\infty,*}\left(\qm_{\substack{\rel \, 0 \\ \rel \, \infty}},\vrs^{d}\right) z^{d}
    $$
    \item $R$ is the operator defined by 
$$
R: \alpha \mapsto \sum_{d} \text{ev}_{\infty,*}\left( \qm_{\substack{\ns \, \infty \\ \rel \, 0}}, \widehat{\text{ev}}_{0}^{*}(\alpha) \otimes \vrs^{d} \right) z^{d}
$$
\end{itemize} 
From  (\ref{capoperator}), the operator $R^{\chi}$ is exactly the capping operator. Taking adjoints, we find 
\begin{align*}
\left(\Omega^{\nabla}\right)^{\chi} \circ \ver^{\nabla}&= S^{\chi} \circ (G^{-1})^{\chi} \circ R^{\chi} \circ \ver^{\nabla} \\
&= S^{\chi} \circ (G^{-1})^{\chi} \circ \widehat{\ver}^{\nabla}
\end{align*}
where the second line follows from (\ref{cpv}). The capped vertex $\widehat{\ver}^{\nabla}$, the operator $S^{\chi}$, and $(G^{-1})^{\chi}$ are all defined via pushforwards from quasimap spaces with relative conditions, and hence all take values in Laurent polynomials in $q$.

It follows that $\left(\Omega^{\nabla}\right)^{\chi} \circ \ver^{\nabla}$ is really a map $K_{\bT}(X) \to K_{\bT \times \mathbb{C}^{\times}_{q}}(X)[[z]]$, i.e. it takes values in non-localized $K$-theory.

By Theorem \ref{mainthm1}, to prove that $\left(\Omega^{\nabla}\right)^{\chi}=\ver^{\nabla}$, it is sufficient to show that 
\begin{equation}\label{op}
\left(\Omega^{\nabla}\right)^{\chi}\circ \ver^{\nabla}=1
\end{equation}
We apply the same strategy as in the proof of Theorem \ref{mainthm1}: we will compute the $q^{\pm 1} \to 0$ limits.

Fix $\alpha \in K_{\bT}(X)$. Let $w$ be a wall such that $\nabla$ is on the positive side of $w$. Let $\mathscr{L}_w$ be a line bundle on $w$. Then $\mathscr{L}_w+\epsilon\in \nabla$, where $\epsilon$ is a small ample slope. In the proof of Theorem \ref{mainthm1}, we showed that $\lim_{q\to 0} \ver^{\nabla}(\alpha)|_{z\to z q^{\mathscr{L}_w}}$ exists and $\lim_{q\to\infty} \ver^{\nabla}(\alpha)|_{z\to z q^{\mathscr{L}_w}}=\alpha$. We apply similar reasoning to $\Omega^{\nabla}$.

By Lemma \ref{locterms2}, the contribution of $\vrs$ on $\qm_{\ns \, \infty}$ to localization formulas at a fixed point $g \in \left(\qm_{\ns \, \infty}^{d}\right)^{\mathbb{C}^{\times}_{q}}$ looks like
$$
\frac{R'_{g}}{\Lambda^{\bullet}\left(\mathscr{T}^{1/2}_{\text{opp},0}\big|_{g}\right)^{*}}
$$
where
$$
\lim_{q^{\pm 1} \to 0} R'_{g} \quad \text{exists}
$$
By Proposition \ref{window}, we have
$$
\lim_{q^{\pm}\to 0} \frac{q^{\langle d,s\rangle} \textbf{s}^{\nabla}_{\text{opp},\alpha} R'_{g}}{\Lambda^{\bullet}\left(\mathscr{T}^{1/2}_{\text{opp},\mathcal{M}(\dv,\dv+\dw),0}\big|_{g}\right)^{*}} \quad \text{exists}
$$
for any slope $s \in \nabla$. Setting $s=\mathscr{L}_w+\epsilon$ gives
$$
\lim_{q^{\pm}\to 0} \frac{q^{\langle d,w\rangle} q^{\langle d,\epsilon\rangle} \textbf{s}^{\nabla}_{\text{opp},\alpha} R'_{g}}{\Lambda^{\bullet}\left(\mathscr{T}^{1/2}_{\text{opp},\mathcal{M}(\dv,\dv+\dw),0}\big|_{g}\right)^{*}} \quad \text{exists}
$$
which implies that $\lim_{q \to 0} \Omega^{\nabla}(\alpha)|_{z\to z q^{\mathscr{L}_{w}}}$ exists and $\lim_{q \to \infty} \Omega^{\nabla}(\alpha)|_{z\to z q^{\mathscr{L}_{w}}}=\alpha$.

As $\left(\Omega^{\nabla}\right)^{\chi}$ is the adjoint of $\Omega^{\nabla}$, we deduce that 
$$
\lim_{q\to 0} \left(\Omega^{\nabla}\right)^{\chi}(\alpha)|_{z\to z q^{\mathscr{L}_{w}}} \quad \text{exists}
$$
and 
$$
\lim_{q\to \infty} \left(\Omega^{\nabla}\right)^{\chi}(\alpha)|_{z\to z q^{\mathscr{L}_w}}=\alpha
$$
Since $\left(\left(\Omega^{\nabla}\right)^{\chi} \circ \ver^{\nabla}\right)(\alpha)$ is a Laurent polynomial in $q$ whose $q^{\pm 1} \to 0$ limits exist, it follows that it is independent of $q$. Sending $q\to \infty$ shows
$$
\left(\Omega^{\nabla}\right)^{\chi}\circ \ver^{\nabla}=1
$$

\end{proof}

\chapter{Stable Envelopes of Type A Quiver Varieties}\label{CH3}

For a quiver variety of finite or affine type $A$, it is possible to write explicit formulas for solutions of exotic $q$-difference equations using Theorem \ref{mainthm2}. For this, we need to know how to calculate the descendants of Definition \ref{desc}, which are defined in terms of stable envelopes. In this section, we review the known formulas for elliptic stable envelopes of the variety $\mathcal{M}(n,r)$ arising from the quiver with one vertex and one loop. This variety is known as the instanton moduli space, and $\mathcal{M}(n,1)=\text{Hilb}^{n}(\mathbb{C}^{2})$. We use these formulas to write formulas for $K$-theoretic stable envelopes of arbitrary slope. This section follows \cite{dinkinselliptic}, which is based on the earlier work of Smirnov \cite{SmirnovElliptic} for the Hilbert scheme of points in the plane.

\section{Combinatorial background}\label{vwtuples}

To state our formula for stable envelopes, we first need various combinatorial constructions.

\subsection{$r$-tuples of Partitions}
Let $\lambda=(\lambda_1\geq \lambda_2 \geq \ldots \geq \lambda_{l}> 0)$ be a partition. We identify it with its Young diagram, which is the set of points
\begin{equation}\label{Yng}
\{(x,y)\in \mathbb{Z}^{2}_{> 0} \, \mid \, 1\leq x \leq l , 1 \leq y \leq \lambda_i\} 
\end{equation}
As is standard, we refer to the points in the Young diagram as ``boxes." We call the box with coordinates $(1,1)$ the corner box. Let $a\in \lambda$ be a box with coordinates $(x,y)$. The content and height of $a$ are defined to be
\begin{align*}
    c_{a}&=x-y \\
    h_{a}&=x+y-2
\end{align*}
We will often think of our Young diagrams as being rotated to the left by $45^{\circ}$. With this convention, the content and height give the horizontal and vertical coordinates, respectively, normalized so that the corner box of $\lambda$ has content 0 and height 0. We denote $|\lambda|=\sum_{j} \lambda_{j}$, and refer to it as the total number of boxes of $\lambda$. See Figure \ref{Yng0} for an example of our conventions.

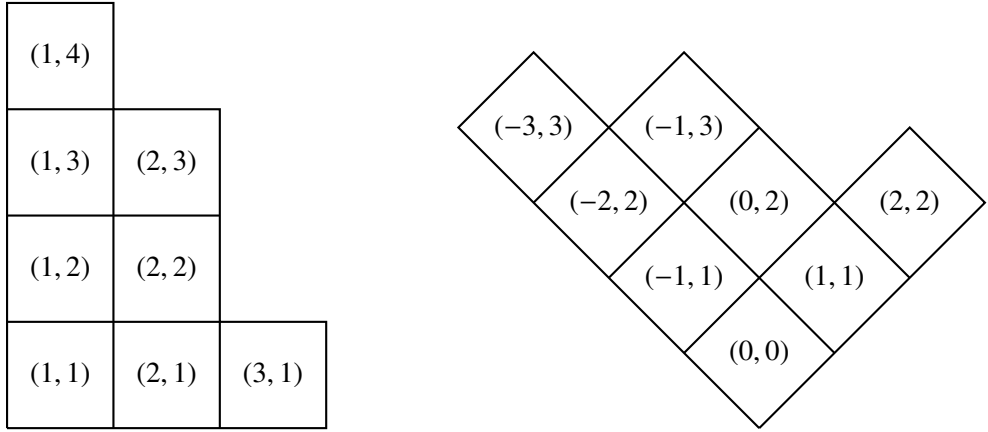
\begin{figure}[htbp]
\centering
   \begin{tikzpicture}
   
   \begin{scope}[shift={(-5,0)},rotate=-45]
\draw[thick,-](0,0)--(-4,4)--(-3,5)--(-2,4)--(-1,5)--(1,3)--(2,4)--(3,3)--(0,0);
\draw[thick,-](1,1)--(-2,4);
\draw[thick,-](2,2)--(1,3);
\draw[thick,-](-1,1)--(1,3);
\draw[thick,-](-2,2)--(0,4);
\draw[thick,-](-3,3)--(-2,4);
\node at (0,1){$(1,1)$};
\node at (1,2){$(2,1)$};
\node at (-1,2){$(1,2)$};
\node at (2,3){$(3,1)$};
\node at (0,3){$(2,2)$};
\node at (-2,3){$(1,3)$};
\node at (-1,4){$(2,3)$};
\node at (-3,4){$(1,4)$};
\end{scope}

   \begin{scope}[shift={(5,0)}]
 \draw[thick,-](0,0)--(-4,4)--(-3,5)--(-2,4)--(-1,5)--(1,3)--(2,4)--(3,3)--(0,0);
\draw[thick,-](1,1)--(-2,4);
\draw[thick,-](2,2)--(1,3);
\draw[thick,-](-1,1)--(1,3);
\draw[thick,-](-2,2)--(0,4);
\draw[thick,-](-3,3)--(-2,4);
\node at (0,1){$(0,0)$};
\node at (1,2){$(1,1)$};
\node at (-1,2){$(-1,1)$};
\node at (2,3){$(2,2)$};
\node at (0,3){$(0,2)$};
\node at (-2,3){$(-2,2)$};
\node at (-1,4){$(-1,3)$};
\node at (-3,4){$(-3,3)$};
  \end{scope}
   \end{tikzpicture}
 
    \caption{Our convention for the coordinates of boxes in a Young diagram. The left figure shows the $(x,y)$-coordinates and the right figure shows the $(c,h)$-coordinates.}
    \label{Yng0}
\end{figure}


The torus $\bT$ that acts on $\mathcal{M}(n,r)$ is the product of the framing torus with $\mathbb{C}^{\times}_{t_1}\times\mathbb{C}^{\times}_{t_2}$, where the latter acts by scaling the linear maps corresponding to the two loops in the doubled Jordan quiver. The $\bT$-fixed points of this variety are indexed by $r$-tuples of partitions $\llambda=(\lambda^{(1)},\ldots,\lambda^{(r)})$ such that $\sum_{i} |\lambda^{i}|=n$, see for instance Section 3.4 of \cite{dinkinselliptic}.



\begin{Remark}
An $r$-tuple of partitions $\llambda=(\lambda^{(1)},\ldots,\lambda^{(r)})$ is sensitive to the order in which the $\lambda^{(i)}$ appear. In other words, changing the order gives a different $\bT$-fixed point.
\end{Remark}

Typically, $r$-tuples of partitions are rotated and depicted above the Jordan quiver in order from bottom to top. See Figure \ref{fig1} for an example of a $\bT$-fixed point.

\begin{figure}[htbp]
    \centering
\begin{tikzpicture}[scale=0.98,roundnode/.style={circle,fill,inner sep=2.5pt},refnode/.style={circle,inner sep=0pt},squarednode/.style={rectangle,fill,inner sep=3pt}] 
\begin{scope}[shift={(0,-1)}]
\node[roundnode,label=above:{15}] (V1) at (0,0) {};
\node[squarednode,label=below:{$3$}] (F1) at (0,-1){};
\draw[thick, ->] (V1) to [out=210,in=120,looseness=8] (V1);
\end{scope}

\begin{scope}[shift={(0,0)}]
\draw[thick,-](0,0)--(-4,4)--(-3,5)--(-2,4)--(-1,5)--(1,3)--(2,4)--(3,3)--(0,0);
\draw[thick,-](1,1)--(-2,4);
\draw[thick,-](2,2)--(1,3);
\draw[thick,-](-1,1)--(1,3);
\draw[thick,-](-2,2)--(0,4);
\draw[thick,-](-3,3)--(-2,4);
\node[] at (0,1){$u_1$};
\node[] at (-1,2){$t_1 u_1$};
\node[] at (-2,3){$t_1^2 u_1$};
\node[] at (-3,4){$t_1^3 u_1$};
\node[] at (1,2){$t_2 u_1$};
\node[] at (0,3){$t_1 t_2 u_1$};
\node[] at (-1,4){$t_1^2 t_2 u_1$};
\node[] at (2,3){$t_2^2 u_1$};
\end{scope}

\begin{scope}[shift={(0,4.5)}]
\draw[thick,-](0,0)--(-2,2)--(0,4)--(1,3)--(2,4)--(3,3)--(0,0);
\draw[thick,-](1,1)--(-1,3);
\draw[thick,-](2,2)--(1,3);
\draw[thick,-](-1,1)--(1,3);
\node[] at (0,1){$u_2$};
\node[] at (-1,2){$t_1 u_2$};
\node[] at (1,2){$t_2 u_2$};
\node[] at (0,3){$t_1 t_2 u_2$};
\node[] at (2,3){$t_2^2 u_2$};
\end{scope}

\begin{scope}[shift={(-1,9)}]
\draw[thick,-](1,0)--(-1,2)--(0,3)--(2,1)--(1,0);
\draw[thick,-](0,1)--(1,2);
\node[] at (1,1){$u_3$};
\node[] at (0,2){$t_1 u_3$};
\end{scope}

\draw[thick, ->] (V1) -- (F1);

\draw[dotted, -](0,0)--(0,12);
\draw[dotted, -](1,0)--(1,12);
\draw[dotted, -](2,0)--(2,12);
\draw[dotted, -](3,0)--(3,12);
\draw[dotted, -](-1,0)--(-1,12);
\draw[dotted, -](-2,0)--(-2,12);
\draw[dotted, -](-3,0)--(-3,12);
\draw[dotted, -](-4,0)--(-4,12);
\end{tikzpicture}

 \caption{The $\bT$-fixed point on $\mathcal{M}(15,3)$ indexed by $\llambda=((4,3,1),(2,2,1),(2))$. The boxes represent basis vectors of $\mathscr{V}|_{\llambda}$ and the monomials in the boxes give the $\bT$-weights.}\label{fig1}
\end{figure}

\subsection{Trees in partitions}
Two boxes in a partition $\lambda$ with coordinates $(x_1,y_1)$ and $(x_2,y_2)$ are adjacent if
$$
x_1=x_2 \text{ and } y_1=y_2 \pm 1
$$
or 
$$
y_1=y_2 \text{ and } x_1=x_2 \pm 1
$$

\begin{Definition}
A tree $\tr$ in a partition $\lambda$ is a tree (a graph with no cycles) whose vertices consist of boxes in $\lambda$ and whose edges connect only adjacent boxes. A rooted tree in $\lambda$ is a tree in $\lambda$ with a distinguished box, called the root box.
\end{Definition}

A tree $\tr$ is totally determined by its set of edges, which, abusing notation, we will also denote by $\tr$.

An orientation on a tree is a choice of direction on each edge, or equivalently, a choice of two functions $h,t$ from edges to vertices, which sends an edge $e$ to one of the two vertices attached to it such that $h(e)\neq t(e)$. We refer to $h$ and $t$ as the head and tail functions and think of edges as proceeding from tail to head.

\begin{Definition}
A canonically oriented rooted tree in a partition $\lambda$ is a rooted tree $\tr$ in $\lambda$ with all edges oriented away from the root box. In other words, the root box does not appear in the image of the head function, and for all edges $e_1$ and $e_2$, if $h(e_1)=h(e_2)$, then $e_1=e_2$.
\end{Definition}

Any subtree of an oriented tree inherits an orientation. Given a canonically oriented rooted tree $\tr$ in $\lambda$ and a box $a \in \lambda$, there is a natural canonically oriented rooted subtree of $\lambda$ rooted at $a$ with orientation induced by that of $\tr$.

\begin{Definition}\label{subtree}
Given a rooted canonically oriented tree $\tr$ inside a partition $\lambda$ and a box $a \in \lambda$, we define $[a,\tr]$ to be the set of boxes in $\lambda$ appearing as vertices in the natural canonically oriented subtree of $\tr$ rooted at $a$. So $a \in [a, \tr]$, as well as any boxes that can be obtained by following edges that start at $a$ from tail to head.
\end{Definition}


\begin{Definition}
The skeleton of $\lambda$, denoted $\Sigma_{\lambda}$, is the graph with vertex set given by all boxes in $\lambda$ and edge set given by all edges that connect adjacent boxes.
\end{Definition}

\begin{Definition}
A $\bL$-shaped subgraph of $\Sigma_{\lambda}$ is a subgraph of $\Sigma_{\lambda}$ with two edges, which are of the form 
$$
e_1=\{(x,y),(x+1,y)\} \quad e_2=\{(x+1,y),(x+1,y+1)\}
$$
\end{Definition}

\begin{Proposition}
There are 
$$
\sum_{i}(m_i-1)
$$
$\bL$-shaped subraphs of $\Sigma_{\lambda}$, where $m_i:=\left|\{a \in \lambda \, \mid \, c_a=i\}\right|$
\end{Proposition}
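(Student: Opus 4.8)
The plan is to set up an explicit bijection between $\bL$-shaped subgraphs of $\Sigma_{\lambda}$ and pairs of consecutive boxes lying on a common diagonal of $\lambda$, and then to count the latter one diagonal at a time. The key geometric observation is that if $\{e_1,e_2\}$ is an $\bL$-shaped subgraph with $e_1=\{(x,y),(x+1,y)\}$ and $e_2=\{(x+1,y),(x+1,y+1)\}$, then its ``bottom'' box $(x,y)$ and its ``top'' box $(x+1,y+1)$ have the same content, $c_{(x,y)}=x-y=(x+1)-(y+1)=c_{(x+1,y+1)}$, and differ by exactly $(1,1)$; since any two boxes of $\lambda$ with equal content differ by an integer multiple of $(1,1)$, these two boxes are consecutive on the diagonal of content $x-y$. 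This defines the forward map.

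For the inverse, I would start from two boxes $a=(x,y)$ and $a'=(x+1,y+1)$ of $\lambda$ that are consecutive on the diagonal of content $i$, and note that the box $(x+1,y)$ necessarily lies in $\lambda$ as well: because $(x+1,y+1)\in\lambda$ we have $y+1\le\lambda_{x+1}$, hence $y\le\lambda_{x+1}$, so $(x+1,y)\in\lambda$ by the staircase property of a partition. Then $\{(x,y),(x+1,y),(x+1,y+1)\}$ is an $\bL$-shaped subgraph of $\Sigma_{\lambda}$. The two assignments are visibly mutually inverse, since an $\bL$-shaped subgraph is determined by its bottom and top boxes (its middle box is the unique common vertex of $e_1$ and $e_2$), and the pair $\{a,a'\}$ is recovered from the $\bL$ as $\{\text{bottom},\text{top}\}$. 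This yields the desired bijection.

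Finally I would count: fixing a content value $i$, the boxes of $\lambda$ of content $i$ form a chain of length $m_i$ along the diagonal $x-y=i$, contributing exactly $m_i-1$ consecutive pairs, so the total number of $\bL$-shaped subgraphs is $\sum_i(m_i-1)$. The only point requiring care is to observe that the contents occurring in $\lambda$ form a contiguous range of integers -- each part $x$ contributes the content interval $[\,x-\lambda_x,\;x-1\,]$, and since every $\lambda_x\ge 1$ consecutive such intervals overlap or abut, so their union is an interval -- which ensures $m_i\ge 1$ on the relevant range and hence that $\sum_i(m_i-1)$ is the honest nonnegative count over the finitely many diagonals that actually meet $\lambda$. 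Beyond this bookkeeping and the staircase property used in the inverse map, no genuine obstacle is expected.
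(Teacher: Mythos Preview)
Your argument is correct. The paper, by contrast, does not spell out a proof at all: it simply remarks that the statement ``can be proven easily by, for example, induction on the number of boxes of $\lambda$.'' So you have taken a genuinely different (and more explicit) route. Your bijection between $\bL$-shaped subgraphs and consecutive pairs of boxes on a common content-diagonal is the natural direct argument; the induction the paper alludes to would presumably add a corner box to $\lambda$ and check that both sides of the identity change by the same amount. Your approach has the advantage of explaining \emph{why} the formula involves the multiplicities $m_i$ (each $\bL$ pairs off two diagonally adjacent boxes of the same content), whereas the induction merely verifies the identity. One small comment: the final paragraph about the contents forming a contiguous interval is correct but not actually needed for the count --- the bijection already shows that the number of $\bL$'s equals $\sum_{i:\,m_i\ge 1}(m_i-1)$, which is how the sum in the statement must be read in any case.
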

\begin{proof}
This can be proven easily by, for example, induction on the number of boxes of $\lambda$.
\end{proof}

We index the $\bL$-shaped subgraphs in $\lambda$ as $\gamma_1,\ldots, \gamma_m$. 
\begin{Proposition}
Let $e_i$, $i \in \{1, \ldots, m\}$ be edges such that $e_i$ is an edge of $\gamma_i$. Then $\Sigma_{\lambda}\setminus \{e_1,e_2,\ldots e_n\}$ is a tree in $\lambda$.
\end{Proposition}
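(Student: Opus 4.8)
The plan is to translate the claim, via planar duality, into a statement about a spanning tree of the dual graph of $\Sigma_\lambda$, and then to exhibit that spanning tree explicitly using a linear potential function on the faces. I would begin with elementary bookkeeping. The graph $\Sigma_\lambda$ has $|\lambda|$ vertices, is connected (every box is joined to the corner box through $\lambda$), and has $\sum_x(\lambda_x-1)=|\lambda|-l$ vertical edges and $|\lambda|-\lambda_1$ horizontal edges, hence $2|\lambda|-l-\lambda_1$ edges in all. By the preceding Proposition, $m=\sum_i(m_i-1)=|\lambda|-(l+\lambda_1-1)$, since the boxes of $\lambda$ realize exactly the $l+\lambda_1-1$ integer contents from $1-\lambda_1$ to $l-1$. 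Next I would note that the $\bL$-shaped subgraphs are pairwise edge-disjoint: a horizontal edge of $\Sigma_\lambda$ can be the horizontal arm of at most one $\bL$-shaped subgraph and is never a vertical arm, and symmetrically for vertical edges. Thus $e_1,\dots,e_m$ are $m$ distinct edges, and $\Sigma_\lambda\setminus\{e_1,\dots,e_m\}$ has $|\lambda|$ vertices and $2|\lambda|-l-\lambda_1-m=|\lambda|-1$ edges; it therefore suffices to prove that it is connected.

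Embed $\Sigma_\lambda$ in the plane by sending the box $(x,y)$ to the lattice point $(x,y)$. A unit square $[x,x+1]\times[y,y+1]$ all of whose corners are boxes contains no vertex or edge of $\Sigma_\lambda$ in its interior, hence is a bounded face; by weak monotonicity of $\lambda$ these unit squares correspond bijectively to the $2\times 2$ blocks of boxes, hence to the $\bL$-shaped subgraphs (if both arms of a $\bL$-shaped subgraph lie in $\lambda$, then so does the missing fourth box of the block), and there are $m$ of them, which by Euler's formula is precisely the number of bounded faces of $\Sigma_\lambda$. So the faces of $\Sigma_\lambda$ are the unit squares $F_1,\dots,F_m$, where $F_i$ is bounded by the two arms of $\gamma_i$ together with two further edges, plus the unbounded face $F_\infty$; these are the vertices of the dual graph $\Sigma_\lambda^\ast$. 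By the classical fact that a set of edges is the complement of a spanning tree of a connected plane graph $G$ if and only if its dual is a spanning tree of $G^\ast$, it is enough to show that $\{e_1^\ast,\dots,e_m^\ast\}$ is a spanning tree of $\Sigma_\lambda^\ast$; as $\Sigma_\lambda^\ast$ has $m+1$ vertices and these are $m$ distinct edges, I only need $\{e_1^\ast,\dots,e_m^\ast\}$ to be connected.

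For that, I would orient each dual edge $e_i^\ast$ away from $F_i$, which is legitimate since $e_i$ is an arm of $\gamma_i$ and so lies on $\partial F_i$. Writing $(x_i,y_i)$ for the parameter of the square $F_i$ and $\phi(F):=y-x$ for a bounded face $F$ with parameter $(x,y)$, a short case check — crossing the horizontal arm of $\gamma_i$ leads to the square with parameter $(x_i,y_i-1)$ or to $F_\infty$, and crossing the vertical arm leads to $(x_i+1,y_i)$ or to $F_\infty$ — shows that $e_i^\ast$ runs from $F_i$ either to $F_\infty$ or to a bounded face whose $\phi$-value is exactly $\phi(F_i)-1$. Hence in this orientation every $F_i$ has out-degree $1$, $F_\infty$ has out-degree $0$, and following out-edges strictly decreases $\phi$; since $\phi$ takes finitely many values, the directed walk from any $F_i$ must terminate, and the only possible terminus is $F_\infty$. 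Therefore every $F_i$ is joined to $F_\infty$ inside $\{e_1^\ast,\dots,e_m^\ast\}$, so this edge set is connected, which completes the proof.

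The step I expect to be the real work is the case analysis in the last paragraph: matching the two arms of each $\bL$-shaped subgraph with the correct boundary edges of the correct unit square under the coordinate conventions, and confirming that $\phi$ drops by exactly one across each $e_i^\ast$; identifying the bounded faces of $\Sigma_\lambda$ with the unit squares is another point that must be made carefully, though it is elementary. An alternative route is a direct induction on $|\lambda|$, deleting an outer corner box and bookkeeping the single $\bL$-shaped subgraph and two boundary edges that disappear, but the duality argument seems cleaner and less prone to case proliferation.
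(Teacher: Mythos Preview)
Your argument is correct. The edge count, the identification of bounded faces with the $\bL$-shaped subgraphs via the unit squares, the spanning-tree duality, and the potential $\phi(F)=y-x$ all check out; the only place to be slightly more careful is verifying that when the neighbouring unit square does not exist one really lands in $F_\infty$, but once you have already pinned down that the bounded faces are \emph{exactly} the $m$ unit squares, this is immediate.

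The paper does not actually give an argument here: its proof is the single sentence ``This follows easily from the construction of $e_i$ and $\bL$-shaped subgraphs.'' So there is no detailed approach to compare against. That said, what the authors most likely had in mind is a direct connectivity walk rather than planar duality: from any box $(x,y)$ with $x\ge 2$, look at the $\bL$-shaped subgraph at $(x-1,y)$. If it exists, exactly one of its two arms is removed, so either the edge to $(x-1,y)$ or the edge to $(x,y+1)$ survives; if it does not exist (i.e.\ $(x,y+1)\notin\lambda$), the edge to $(x-1,y)$ is not an arm of any $\bL$ and is never removed. Following surviving arms thus moves you either to $(x-1,y)$ or to $(x,y+1)$; since $y\le\lambda_x$, repetition within a fixed $x$ terminates in a step to row $x-1$, and inductively every box reaches row $x=1$, where the vertical edges $\{(1,y),(1,y+1)\}$ are never arms and hence always kept. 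Connectivity plus your edge count $|\lambda|-1$ finishes it. Your duality argument is more structural and explains \emph{why} the $\bL$-shaped subgraphs are the right objects (they are the bounded faces); the direct walk is shorter but less illuminating. Either is a legitimate way to fill the gap the paper leaves.
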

\begin{proof}
This follows easily from the construction of $e_i$ and $\bL$-shaped subgraphs.
\end{proof}

We refer to the trees obtained as in the previous proposition as \textit{admissible trees}.
\begin{Definition}\label{adtr}
Let $\Gamma_{\lambda}$ be the set of $2^m$ canonically oriented trees in $\lambda$ rooted at the corner box whose underlying edge set forms an admissible tree.
\end{Definition}


\begin{Remark}
In what follows, we will assume that all trees in partitions are rooted at the corner box and are canonically oriented. We will henceforth refer to such trees simply as trees, and will call the corner box the root box.
\end{Remark}

Later we will need the following function.
\begin{Definition}\label{kappa}
Let $\tr$ be a tree in $\lambda$. With the orientation of the Young diagram of $\lambda$ as in (\ref{Yng}), we define $\kappa(\tr)$ to be the number of vertical arrows in $\tr$ directed down plus the number of horizontal edges in $\tr$ directed left.
\end{Definition}

An example of the admissible trees inside a partition is given in Figure \ref{fig2}.
\begin{figure}[ht]
    \centering
    \begin{tikzpicture}
    
   \begin{scope}[shift={(-4,0)}]
   \node at (0,1)[circle,fill,inner sep=1.5pt]{};
\draw[thick,-](0,0)--(-4,4)--(-3,5)--(-2,4)--(-1,5)--(1,3)--(2,4)--(3,3)--(0,0);
\draw[thick,-](1,1)--(-2,4);
\draw[thick,-](2,2)--(1,3);
\draw[thick,-](-1,1)--(1,3);
\draw[thick,-](-2,2)--(0,4);
\draw[thick,-](-3,3)--(-2,4);
\draw[thick,red,->](0,1)--($(0,1)+(1,1)$);
\draw[thick,red,->]($(0,1)+(1,1)$)--($(0,1)+(2,2)$);
\draw[thick,red,->](0,1)--($(0,1)+(-1,1)$);
\draw[thick,red,->](-1,2)--($(0,1)+(0,2)$);
\draw[thick,red,->](-1,2)--(-2,3);
\draw[thick,red,->](-2,3)--(-3,4);
\draw[thick,red,->](-2,3)--(-1,4);
\end{scope}

   \begin{scope}[shift={(3,0)}]
    \node at (0,1)[circle,fill,inner sep=1.5pt]{};
\draw[thick,-](0,0)--(-4,4)--(-3,5)--(-2,4)--(-1,5)--(1,3)--(2,4)--(3,3)--(0,0);
\draw[thick,-](1,1)--(-2,4);
\draw[thick,-](2,2)--(1,3);
\draw[thick,-](-1,1)--(1,3);
\draw[thick,-](-2,2)--(0,4);
\draw[thick,-](-3,3)--(-2,4);
\draw[thick,red,->](0,1)--(1,2);
\draw[thick,red,->](1,2)--(2,3);
\draw[thick,red,->](0,1)--(-1,2);
\draw[thick,red,->](-1,4)--(0,3);
\draw[thick,red,->](-1,2)--(-2,3);
\draw[thick,red,->](-2,3)--(-3,4);
\draw[thick,red,->](-2,3)--(-1,4);
\end{scope}

 \begin{scope}[shift={(-4,-5)}]
  \node at (0,1)[circle,fill,inner sep=1.5pt]{};
\draw[thick,-](0,0)--(-4,4)--(-3,5)--(-2,4)--(-1,5)--(1,3)--(2,4)--(3,3)--(0,0);
\draw[thick,-](1,1)--(-2,4);
\draw[thick,-](2,2)--(1,3);
\draw[thick,-](-1,1)--(1,3);
\draw[thick,-](-2,2)--(0,4);
\draw[thick,-](-3,3)--(-2,4);
\draw[thick,red,->](0,3)--(1,2);
\draw[thick,red,->](1,2)--(2,3);
\draw[thick,red,->](0,1)--(-1,2);
\draw[thick,red,->](-1,2)--(0,3);
\draw[thick,red,->](-1,2)--(-2,3);
\draw[thick,red,->](-2,3)--(-3,4);
\draw[thick,red,->](-2,3)--(-1,4);
\end{scope}

 \begin{scope}[shift={(3,-5)}]
  \node at (0,1)[circle,fill,inner sep=1.5pt]{};
\draw[thick,-](0,0)--(-4,4)--(-3,5)--(-2,4)--(-1,5)--(1,3)--(2,4)--(3,3)--(0,0);
\draw[thick,-](1,1)--(-2,4);
\draw[thick,-](2,2)--(1,3);
\draw[thick,-](-1,1)--(1,3);
\draw[thick,-](-2,2)--(0,4);
\draw[thick,-](-3,3)--(-2,4);
\draw[thick,red,->](0,3)--(1,2);
\draw[thick,red,->](1,2)--(2,3);
\draw[thick,red,->](0,1)--(-1,2);
\draw[thick,red,->](-1,4)--(0,3);
\draw[thick,red,->](-1,2)--(-2,3);
\draw[thick,red,->](-2,3)--(-3,4);
\draw[thick,red,->](-2,3)--(-1,4);
\end{scope}

\end{tikzpicture}
    \caption{The four admissible trees inside the partition $(4,3,1)$. The root box is indicated by the black dot, and the canonical orientation is shown. From top to bottom and left to right, the values of the function $\kappa$ are $1, -1, -1$, and $1$.}
    \label{fig2}
\end{figure}
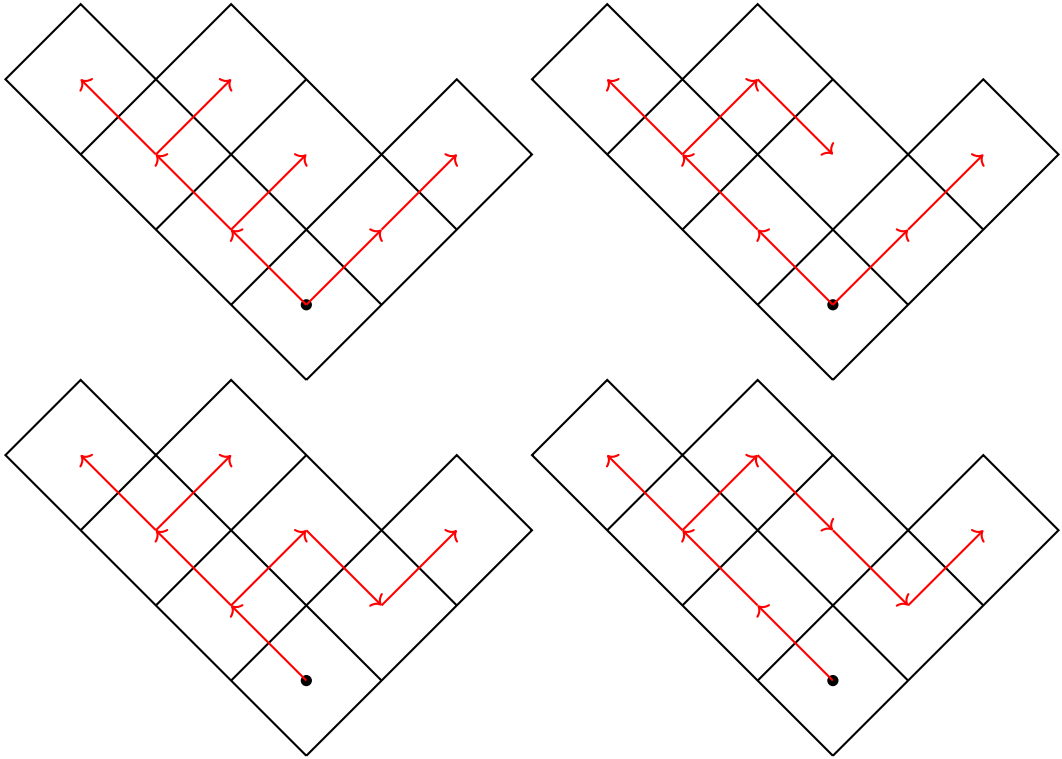

\subsection{Chern roots and torus weights labeled by boxes}
Fix $\llambda=(\lambda^{(1)},\ldots,\lambda^{(r)}) \in \mathcal{M}(n,r)^{\bT}$. If $a$ is a box of some $\lambda^{(i)}$, we write $a \in \llambda$. Choose some bijection
$$
\{\text{Chern roots of } \mathscr{V}\} \longleftrightarrow \{a \in \llambda\}
$$
where $\mathscr{V}$ is the tautological bundle on $\mathcal{M}(n,r)$.

Since our eventual formula for the elliptic stable envelope will involve a symmetrization over all the Chern roots of the tautological bundle, the choice of bijection does not matter. Hence we have a Chern root $x_a$ for each box $a \in \llambda$. Additionally, there is a natural bijection
$$
\{a \in \llambda \} \longleftrightarrow \{\bT-\text{weights of } \mathscr{V}|_{\llambda}\}
$$
which sends the box $a=(x,y)\in \lambda^{(i)}$ to the weight $\varphi^{\llambda}_{a}:=t_1^{y-1} t_2^{x-1} u_i$, where $u_1,\ldots,u_r$ are the framing parameters. We refer to $\varphi^{\llambda}_{a}$ as the restriction of the Chern root $x_{a}$ to the fixed point $\llambda$. See Figure \ref{fig1} for an example.

\subsection{Chamber}
We next specify a choice of chamber $\mathfrak{C} \subset \Lie_{\mathbb{R}}(\bA)$, where $\bA$ is the subtorus of $\bT$ preserving the symplectic form. We allow here for a fairly general choice. A chamber is represented by a real cocharacter 
$$
\sigma \in \text{cochar}(\bA)\otimes_{\mathbb{Z}} \mathbb{R}
$$
Recall that $\bA=\bA_{\mathsf{w}}\times \mathbb{C}^{\times}_{a}$, where $\bA_{\mathsf{w}}$ is the framing torus, and $a=\sqrt{t_1/t_2}$. With respect to this decomposition,
$$
\sigma =(\sigma_{\mathsf{w}},\sigma_a)
$$


We assume that $\sigma_{a}$ is infinitesimally small compared to the rest of the components, by which we mean that the presence of $a$ in a weight never affects its attracting/repelling properties unless it is the only coordinate that appears. In other words, we assume that the $\sigma$-attracting/repelling properties of tangent weights with nonzero $\bA_{\mathsf{w}}$-weight are totally determined by $\sigma_{\mathsf{w}}$. 


We can uniformly positively rescale each component of our cocharacter $\sigma$ without changing the corresponding chamber. So, without loss of generality, we assume that $\sigma_a:u \mapsto u^{\pm 1}$.

\subsection{Ordering on boxes}
We define a real-valued function on $\bT$-characters by
$$
\rho: w \mapsto \langle \sigma, w \rangle - \epsilon \deg_{\hbar}( w)
$$
where $0<\epsilon\ll 1$. Given a box $a \in \llambda$, we abuse notation and write
\begin{equation}\label{rho}
\rho_{a}:= \rho(\varphi^{\llambda}_a)
\end{equation}
Due to the genericity assumption on $\sigma$, the function $\rho$ defines a total order on the set of all boxes.

\section{Stable envelopes for the instanton moduli space}
As remarked above, the fixed points on $\mathcal{M}(n,r)$ are indexed by $r$-tuples of partitions $\llambda=(\lambda^{(1)},\ldots,\lambda^{(r)})$ with total number of boxes equal to $n$. Each of these partitions has a root box, and we denote the root box of $\lambda^{(j)}$ by $r_{j}$. A product $\prod\limits_{a \in \llambda}$ denotes a product taken over all boxes in all of the partitions $\lambda^{(j)}$. We will need the transcendental functions
\begin{align*}
\vartheta(x)&=(x^{1/2}-x^{-1/2})\prod_{i=1}^{\infty}(1-x q^i)(1-q^i/x) \\
\phi(x,y) &= \frac{\vartheta(xy)}{\vartheta(x)\vartheta(y)}
\end{align*}

We define the following three functions:
\begin{align}\nonumber
S_1(\vec{x};q,t_1,t_2)&= \prod_{\substack{a,b, \in \llambda \\ \rho_a+1 < \rho_b}} \vartheta\left(  \frac{x_a}{t_1 x_b}\right) \prod_{\substack{a,b, \in \llambda \\ \rho_b < \rho_a +1}} \vartheta\left( \frac{x_b}{t_2 x_a}\right) \\ \nonumber
S_2(\vec{x},\vec{u};q,t_1,t_2)&= \prod_{j=1}^{r}\left( \prod_{\substack{a \in \llambda \\ \rho_a \leq \rho_{r_{j}} }} \vartheta\left(\frac{x_a}{u_{j}}\right) \prod_{\substack{a \in \llambda \\ \rho_{r_{j}} < \rho_a }} \vartheta\left(\frac{ u_{j}}{t_1 t_2 x_a}\right)\right)\\ \label{nontree}
S_3(\vec{x};q,t_1,t_2)&= \prod_{\substack{a,b\in\llambda \\ \rho_a<\rho_b}} \vartheta\left(\frac{x_a}{x_b}\right) \vartheta\left(\frac{ x_a}{t_1 t_2 x_b}\right)
\end{align}
where $u_1,\ldots,u_r$ are the equivariant parameters of the framing torus. Let
$$
S_{\llambda}(\vec{x},\vec{u};q,t_1,t_2)=\frac{S_1(\vec{x};q,t_1,t_2) S_2(\vec{x},\vec{u};q,t_1,t_2)}{S_3(\vec{x};q, t_1,t_2)}
$$
We will sometimes omit the arguments and just write $S_{\llambda}$.

Let $T^{1/2}_{\rho>0,\llambda}$ be the virtual sub-bundle of $T^{1/2}$ consisting of terms that, when restricted to $\llambda$, give $\bT$-weights on which $\rho$ is positive. In other words, $T^{1/2}_{\rho>0,\llambda}$ consists of terms that restrict to attracting weights or negative powers of $\hbar$ at $\llambda$. We assume, as in Section \ref{algebras}, that the polarization is given by some formula in the tautological bundle, so that
$$
\det T^{1/2}_{\rho>0,\llambda} = \prod_{a \in \llambda} x_a^{d^{\llambda}_a}
$$
for some integers $d^{\llambda}_a$.

\begin{Definition}\label{treepart}
For a tree $\tr^{(j)}$ in a partition $\lambda^{(j)}$, we define
\begin{align*}
\trwt^{\lambda^{(j)}}_{\tr^{(j)}}(\vec{x},\vec{u},z;q,t_1,t_2) = (-1)^{\kappa(\tr_{j})}\phi\left(\frac{x_{r_{j}}}{\varphi^{\llambda}_{r_{j}}}, \prod_{a \in [r_{j}, \tr_j]} z_{c_a} (t_1t_2)^{-d^{\llambda}_a}\right) 
\prod_{e \in \tr_j} \phi\left(\frac{x_{h(e)} \varphi^{\llambda}_{t(e)}}{x_{t(e)}\varphi^{\llambda}_{h(e)}}, \prod_{a \in [h(e),\tr_{j}]} z_{c_a} (t_1t_2)^{-d^{\llambda}_a}\right)
\end{align*}
where $\kappa$ is the function in Definition \ref{kappa} and $[,]$ is as defined in Definition \ref{subtree}. Here, the product $\prod\limits_{e \in \tr_{j}}$ denotes the product over edges in the tree $\tr_{j}$.
\end{Definition}

Let
$$
\lt:= \Gamma_{\lambda^{(1)}} \times \ldots \times \Gamma_{\lambda^{(r)}}
$$
where $\Gamma_{\lambda^{(j)}}$ is from Definition \ref{adtr} so that an element of $\lt$ is a tuple of trees, one inside of each partition in $\llambda$.

\begin{Definition}
For $\llambda=(\lambda^{(1)},\ldots,\lambda^{(r)}) \in \mathcal{M}(n,r)^{\bT}$ and $\trs=(\tr^{(1)},\ldots,\tr^{(r)}) \in \lt$, let 
$$
\trwts^{\llambda}_{\trs}(\vec{x},\vec{u},z;q,t_1,t_2)= \prod_{j=1}^{r}  \trwt^{\lambda^{(j)}}_{\tr^{(j)}}(\vec{x},\vec{u},z;q,t_1,t_2)
$$
\end{Definition}
We will sometimes omit the arguments and just write $\trwts^{\llambda}_{\trs}$. The following is a special case of the formula for stable envelopes of affine type $A$ quiver varieties given in \cite{dinkinselliptic}.

\begin{Theorem}[\cite{dinkinselliptic}]\label{treeformula}
The elliptic stable envelope of $\llambda \in X^{\bT}$ is given by
$$
\stab_{T^{1/2},\mathfrak{C}}(\llambda)=\sym\left( S_{\llambda} \sum_{\trs \in \lt} \trwts^{\llambda}_{\trs} \right)
$$
where $\sym$ is the symmetrization over the Chern roots of $\mathscr{V}$.
\end{Theorem}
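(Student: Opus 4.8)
The plan is to obtain the formula as the specialization to the Jordan quiver of the general result of \cite{dinkinselliptic}, which computes the elliptic stable envelope of every affine type $A$ Nakajima quiver variety. Recall that $\mathcal{M}(n,r)$ is the quiver variety attached to the quiver with one vertex and one loop, and that for $r=1$ one recovers $\mathrm{Hilb}^{n}(\mathbb{C}^2)$ and Smirnov's formula from \cite{SmirnovElliptic}. First I would recall the general statement of \cite{dinkinselliptic}: for the cyclic quiver on $k$ vertices, $\stab(\llambda)$ is written as a symmetrization over the Chern roots of $\mathscr{V}$ of a product $S_{\llambda}\sum_{\trs}\trwts^{\llambda}_{\trs}$, where the $S$-factor is built from $\vartheta$-functions of ratios of Chern roots and framing parameters prescribed by the order $\rho$, and the tree weights are built from $\phi$ with the $z$-arguments recording the K\"ahler parameters attached to the boxes of the relevant rooted subtrees, together with the sign $(-1)^{\kappa(\trs)}$. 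Setting $k=1$ removes the vertex-coloring of boxes and all the cyclic bookkeeping; one then checks that the three functions $S_1,S_2,S_3$, the tree weight $\trwt^{\lambda^{(j)}}_{\tr^{(j)}}$ of Definition \ref{treepart}, and the set $\lt$ of admissible trees of Definition \ref{adtr} are exactly the $k=1$ instances of the general combinatorial ingredients. This reduces the theorem to a bookkeeping check of conventions.

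\noindent As an independent verification one can instead check the defining properties of the elliptic stable envelope of \cite{AOElliptic} directly for the right-hand side. This has three parts. First, quasi-periodicity: that $\sym\!\big(S_{\llambda}\sum_{\trs}\trwts^{\llambda}_{\trs}\big)$ is a section of the correct line bundle on the extended elliptic cohomology scheme; this follows from the quasi-periodicity of $\vartheta$ and $\phi$ together with the choice of exponents $d^{\llambda}_a$ of $\det T^{1/2}_{\rho>0,\llambda}$, which is exactly why $\phi$ and those exponents appear in Definition \ref{treepart}. Second, the diagonal normalization: restricting all $x_a\to \varphi^{\llambda}_a$, one observes that in the symmetrization only the identity assignment of Chern roots to boxes survives, because the denominator $S_3$ and the total order $\rho$ force every other assignment to vanish, and the surviving term matches the elliptic analog of $\big(\det N^{-}_{\llambda}/\det T^{1/2}_{\llambda,\neq 0}\big)^{1/2}\Lambda^{\bullet}(N^{-}_{\llambda})^{*}$. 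Third, the off-diagonal vanishing $\stab(\llambda)|_{\mmu}=0$ for $\mmu \not\le \llambda$: this comes from the $\vartheta(x_a/u_j)$-factors in $S_2$, whose restriction to such $\mmu$ vanishes.

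\noindent In either route the main obstacle is the same combinatorial fact, which is the technical core of \cite{dinkinselliptic} (and of \cite{SmirnovElliptic} in the $r=1$ case): the sum $\sum_{\trs\in\lt}\trwts^{\llambda}_{\trs}$ over admissible trees is precisely what makes $S_{\llambda}\sum_{\trs}\trwts^{\llambda}_{\trs}$ holomorphic, i.e. a genuine section with no spurious poles along the diagonals $x_a=q^{\mathbb{Z}}x_b$, while not disturbing the normalization or the vanishing. Each admissible tree contributes such a pole and the residues cancel in the sum; proving this from scratch would require an induction on the number of boxes organized by the $\bL$-shaped subgraphs of $\Sigma_{\lambda}$ and would occupy most of the argument. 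Here it is imported from \cite{dinkinselliptic}, so the only task left is to confirm that the Jordan-quiver specialization of the general formula is literally the displayed one.
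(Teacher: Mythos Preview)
Your proposal is correct and matches the paper's treatment: the paper does not prove this theorem at all but simply states it as the Jordan-quiver specialization of the general affine type $A$ formula from \cite{dinkinselliptic}, noting that the $r=1$ case recovers Smirnov's result from \cite{SmirnovElliptic}. Your first route---specialize the general result and check that the combinatorial ingredients reduce to $S_1,S_2,S_3$, $\trwt^{\lambda^{(j)}}_{\tr^{(j)}}$, and $\lt$---is exactly what the paper is invoking, and your second route (direct verification of the axioms) goes beyond what the paper does here.
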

In particular, for $r=1$ we have a formula for the stable envelopes of $\text{Hilb}^{n}(\mathbb{C}^2)$.

\subsection{$K$-theory limit}
$K$-theoretic stable envelopes can be reproduced from elliptic stable envelopes via a limiting procedure, see Section 4.5 of \cite{AOElliptic}, Section 4.3 of \cite{dinkinselliptic}, and \cite{indstab1}. Since we only need these formulas for $\mathcal{M}(n,1)$ later, we specialize to that case here. In this case, $\bT$-fixed points are indexed by partitions $\lambda$ of $n$. The Picard group is generated by the tautological line bundle, and so the space of slopes is $\Pic(X)\otimes \mathbb{R}\cong \mathbb{R}$. As discussed in Section 8 of \cite{OS}, the set of walls, written $\Wall_n$, are given by rational numbers with denominator at most $n$.

Let $s\in \mathbb{R}\setminus \Wall_n$. The slope $s$ $K$-theoretic stable envelope of the fixed point $\lambda$ is
\begin{equation}\label{stablimit}
\stab_{\mathfrak{C},T^{1/2},s}(\lambda)= \left(\det T^{1/2}\right)^{1/2}\lim_{q\to 0} \stab^{Ell}_{\mathfrak{C},T^{1/2}}(\lambda)|_{z=zq^{-s}}
\end{equation}
Let $\ahat(x)=\lim_{q\to 0} \vartheta(x)=x^{1/2}-x^{-1/2}$. It is easy to show that
$$
\lim_{q\to 0}\frac{\vartheta(x z q^{s})}{\vartheta(z q^{s}) } =\frac{1}{x^{\lfloor s \rfloor +1/2}},\quad s \in \mathbb{R}\setminus \mathbb{Z}
$$
For an edge $e$ in a tree $\tr$ in $\lambda$, let $l_{e}=|[h(e),\tr]|$ and $m_{e}=\sum_{a \in [h(e),\tr]} -d^{\lambda}_{a}$ so that
$$
 \prod_{a \in [h(e),\tr]} z (t_1t_2)^{-d^{\lambda}_a}= z^{l_e} (t_1t_2)^{m_e}
$$
Then \ref{stablimit} implies the following. 

\begin{Theorem}\label{Ktreeformula}
$$
\stab_{\mathfrak{C},T^{1/2},s}(\lambda)= \left( \det T^{1/2} \right)^{1/2}\sym \left(S_{\lambda}^{K} \sum_{\tr \in \Gamma_{\lambda}} \trwt_{\tr}^{K,s}\right)
$$
where
\begin{align*}
    S_{\lambda}^{K}=\frac{\prod\limits_{\substack{a,b, \in \lambda  \\ \rho_a+1 < \rho_b}} \ahat\left( \frac{ x_a}{t_{1} x_b}\right) \prod\limits_{\substack{a,b, \in \lambda \\ \rho_b < \rho_a +1}} \ahat\left( \frac{ x_b}{t_{2} x_a}\right)  \prod\limits_{\substack{a \in \lambda \\ \rho_a \leq \rho_{r} }} \ahat(x_a) \prod\limits_{\substack{a \in \lambda \\  \rho_{r} < \rho_a }} \ahat\left(\frac{1}{t_1 t_2 x_a}\right) }{ \prod\limits_{\substack{a,b\in\lambda \\ \rho_a<\rho_b}} \ahat\left(\frac{x_a}{x_b}\right) \ahat\left(\frac{x_a}{t_1 t_2 x_b}\right)}
\end{align*}
and
$$
\trwt^{K,s}_{\tr}= (-1)^{\kappa(\tr)}\left(\frac{x_r}{\varphi^{\lambda}_{r}}\right)^{\lfloor |\lambda |s \rfloor +1/2}\ahat\left(\frac{x_r}{\varphi^{\lambda}_{r}}\right)^{-1} \prod_{e \in \tr}\ahat\left(\frac{x_{h(e)}  \varphi^{\lambda}_{t(e)}}{x_{t(e)}\varphi^{\lambda}_{h(e)}}\right)^{-1}  \left( \frac{x_{h(e)} \varphi^{\lambda}_{t(e)}}{x_{t(e)}\varphi^{\lambda}_{h(e)}}\right)^{\lfloor l_{e} s \rfloor +1/2}
$$
\end{Theorem}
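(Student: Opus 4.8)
The plan is to deduce the formula directly from the elliptic stable envelope formula of Theorem \ref{treeformula}, specialized to $r = 1$, together with the limiting procedure (\ref{stablimit}). First I would substitute $\stab^{Ell}_{\mathfrak{C},T^{1/2}}(\lambda) = \sym\left(S_{\lambda}\sum_{\tr\in\Gamma_{\lambda}}\trwt^{\lambda}_{\tr}\right)$ into (\ref{stablimit}). Since $\sym$ is a finite sum over permutations of the Chern roots of $\mathscr{V}$, multiplication by the scalar $\left(\det T^{1/2}\right)^{1/2}$ does not involve $q$, and (as checked below) the $q\to 0$ limit of each summand exists as a rational function, it suffices to compute termwise the $q\to 0$ limit of $S_{\lambda}$ and of each $\trwt^{\lambda}_{\tr}$ after the substitution $z\mapsto zq^{-s}$, and then reassemble.

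For the non-tree factor: $S_{\lambda}$ is $z$-independent, so the substitution does nothing, and since every factor $(1-xq^i)(1-q^i/x)$ in the product defining $\vartheta$ tends to $1$, we have $\lim_{q\to0}\vartheta(x) = \ahat(x) = x^{1/2} - x^{-1/2}$; replacing each $\vartheta$ in $S_1, S_2, S_3$ accordingly (and using the single framing parameter of $\mathcal{M}(n,1)$) produces exactly $S^{K}_{\lambda}$. For a tree $\tr$: every factor $\phi(w, Y)$ in Definition \ref{treepart} has second argument $Y = \prod_{a\in[\,\cdot\,,\tr]} z (t_1 t_2)^{-d^{\lambda}_a}$, which after $z\mapsto zq^{-s}$ equals $z^{L}(t_1 t_2)^{M}q^{-Ls}$, where $L$ is the number of boxes of the relevant subtree ($L = |\lambda|$ for the root factor, $L = l_e$ for the factor of an edge $e$) and $M$ the corresponding sum of the $-d^{\lambda}_a$. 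Writing $\phi(w, z^{L}(t_1 t_2)^{M}q^{-Ls}) = \vartheta(w)^{-1}\,\vartheta(w\, z^{L}(t_1 t_2)^{M}q^{-Ls})\,\vartheta(z^{L}(t_1 t_2)^{M}q^{-Ls})^{-1}$, the first factor tends to $\ahat(w)^{-1}$ and the second is computed by the limit $\lim_{q\to0}\vartheta(xzq^{s'})/\vartheta(zq^{s'}) = x^{-\lfloor s'\rfloor - 1/2}$ applied with $s' = -Ls$; the auxiliary variable (the $z^{L}(t_1 t_2)^{M}$ part) plays no role, which is why $M$, hence the integers $d^{\lambda}_a$, drop out of the $K$-theoretic answer. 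The outcome is $w^{-\lfloor -Ls\rfloor - 1/2}$, and since $s\notin\Wall_n$ forces $Ls\notin\mathbb{Z}$ for every $1\le L\le n$, we get $-\lfloor -Ls\rfloor - \tfrac12 = \lceil Ls\rceil - \tfrac12 = \lfloor Ls\rfloor + \tfrac12$. Matching $w = x_r/\varphi^{\lambda}_r$, $L = |\lambda|$ for the root factor, and $w = x_{h(e)}\varphi^{\lambda}_{t(e)}/(x_{t(e)}\varphi^{\lambda}_{h(e)})$, $L = l_e$ for each edge factor, and observing that $(-1)^{\kappa(\tr)}$ is unaffected by the limit, reproduces $\trwt^{K,s}_{\tr}$; restoring $\sym$ and $\left(\det T^{1/2}\right)^{1/2}$ gives the claim.

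The computation thus reduces to the two limits $\lim_{q\to0}\vartheta = \ahat$ and $\lim_{q\to0}\vartheta(xzq^{s})/\vartheta(zq^{s}) = x^{-\lfloor s\rfloor - 1/2}$, both elementary from the product expansion of $\vartheta$. The one point requiring care is that the relevant $q$-exponents $-Ls$ are typically negative, so $z q^{-Ls}\to\infty$ rather than $0$: the stated limit formula is nonetheless valid for all non-integer exponents (the blowing-up factors in the numerator and denominator of $\vartheta$ simply switch roles), and the floor–ceiling identity above is precisely what converts the result into the form displayed in the theorem. Existence of the $q\to0$ limit of the full symmetrized expression (no pole at $q=0$ after the $\left(\det T^{1/2}\right)^{1/2}$ twist) is in any case guaranteed a priori by the general relation (\ref{stablimit}) between elliptic and $K$-theoretic stable envelopes, so the main obstacle is the exponent bookkeeping just described rather than any analytic difficulty.
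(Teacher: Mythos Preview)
Your proposal is correct and follows essentially the same route as the paper: both deduce the formula by applying the limiting procedure (\ref{stablimit}) to the elliptic formula of Theorem \ref{treeformula} (specialized to $r=1$), using the two elementary limits $\lim_{q\to 0}\vartheta = \ahat$ and $\lim_{q\to 0}\vartheta(xzq^{s})/\vartheta(zq^{s}) = x^{-\lfloor s\rfloor - 1/2}$. You have simply spelled out more of the bookkeeping (the floor--ceiling identity for non-integer $Ls$ and the observation that the $d^{\lambda}_a$ drop out) than the paper does.
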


As usual, we can replace $s$ by an alcove $\nabla$, since stable envelopes depend only on the alcove containing the slope.

\chapter{Hilbert Scheme of Points in the Plane}\label{CH2}

One of the most important Nakajima quiver varieties is the Hilbert scheme of $n$ points on $\mathbb{C}^{2}$, written $\text{Hilb}^{n}(\mathbb{C}^{2})$. The quantum difference equation for this variety was discovered in \cite{OS}. It is described in terms of the quantum toroidal $\mathfrak{gl}_{1}$. In this section, we will work through our results for this important example. 

\section{Solutions of exotic difference equations}

We first write formulas for solutions of the exotic $q$-difference equations.

\subsection{Quiver variety description}\label{hilbqv}
Set theoretically, the variety $\text{Hilb}^{n}(\mathbb{C}^{2})$ coincides with the set of codimension $n$ ideals of $\mathbb{C}[x,y]$:
$$
\text{Hilb}^{n}(\mathbb{C}^{2})= \{I \subset \mathbb{C}[x,y] \, \mid \, \dim_{\mathbb{C}} \mathbb{C}[x,y]/I = n\}
$$

It can be realized as a Nakajima quiver variety as follows. We consider the Jordan quiver, which is the quiver with one vertex and one loop. Let $n \in \mathbb{N}$, and let $V$ be a vector space of dimension $n$. We let the framing dimension be $1$ and we choose the positive stability condition, given by the cocharacter
\begin{align*}
    \theta:G:=GL(V) &\to \mathbb{C}^{\times} \\
     g &\mapsto \det(g)
\end{align*}
By definition, the quiver variety is
$$
\mathcal{M}(n,1)=\mu^{-1}(0)/\!\!/_{\theta} G = \mu^{-1}(0)^{\theta-ss}/G
$$
where $\mu$ is the moment map and $\mu^{-1}(0)^{\theta-ss}$ denotes the $\theta$-semistable points in $\mu^{-1}(0)$.
Points on the quiver variety are represented by tuples $(X,Y,v,w)$ such that $X,Y \in \text{End}(V)$, $v \in V$, and $w \in V^*$. The stability condition precisely states that $w=0$ and $v$ is a cyclic vector under the action of $X$ and $Y$. The moment map condition implies that $[X,Y]=0$. Then the map
\begin{align*}
    \mu^{-1}(0)^{\theta-ss}/GL(V) &\to \text{Hilb}^{n}(\mathbb{C}^{2}) \\
    (X,Y,v) &\mapsto \{f \in \mathbb{C}[x,y] \, \mid \, f(X,Y)v=0 \}
\end{align*}
is an isomorphism.

The natural action of $(\mathbb{C}^{\times})^{2}$ on $\mathbb{C}^2$ induces on action on $\text{Hilb}^n(\mathbb{C}^2)$. The framing torus acts trivially, hence we disregard it. We replace this $(\mathbb{C}^{\times})^{2}$ by its double cover $\bT$ so that if the coordinates on $(\mathbb{C}^{\times})^2$ are $t_1$ and $t_2$, then the coordinates on $\bT$ are $t_1^{1/2}$ and $t_2^{1/2}$. We also denote $\hbar=t_1t_2$. Let $t_1=a \hbar^{1/2}$ and $t_2=\hbar^{1/2}/a$ so that $t_1 t_2=\hbar$ and $t_1/t_2=a^2$. The coordinate $a$ is the coordinate on the subtorus of $\bT$ preserving the symplectic form. We choose the chamber such that the attracting directions are given by negative powers of $a$. Let $\mathfrak{C}$ denote this chamber. 

The $\bT$-fixed points are indexed by partitions $\lambda$ of $n$. The $\bT$-weights of the tautological vector bundle at the fixed point $\lambda$ are given in terms of the boxes of $\lambda$ as in Figure \ref{yng0}. The pushforwards of the structures sheaves of the fixed points under the inclusion $\iota: \text{Hilb}^{n}(\mathbb{C}^{2})^{\bT} \hookrightarrow \text{Hilb}^{n}(\mathbb{C}^{2})$ provides a basis of localized $K$-theory, which we denote by $[\lambda]:=\iota_{*}(\mathcal{O}_{\lambda})$.

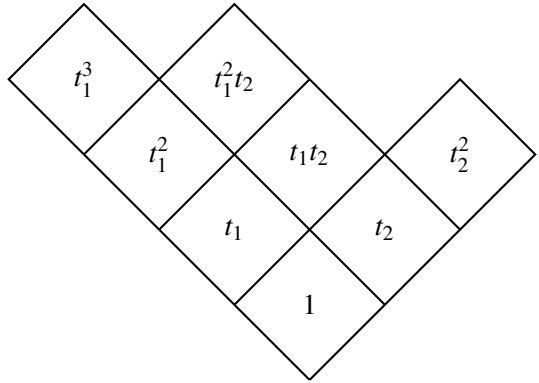
\begin{figure}[ht]
\centering
\begin{tikzpicture}[roundnode/.style={circle, draw=black, very thick, minimum size=5mm},squarednode/.style={rectangle, draw=black, very thick, minimum size=5mm}] 
\draw[thick,-](0,0)--(-4,4)--(-3,5)--(-2,4)--(-1,5)--(1,3)--(2,4)--(3,3)--(0,0);
\draw[thick,-](1,1)--(-2,4);
\draw[thick,-](2,2)--(1,3);
\draw[thick,-](-1,1)--(1,3);
\draw[thick,-](-2,2)--(0,4);
\draw[thick,-](-3,3)--(-2,4);
\node[] at (0,1){$1$};
\node[] at (-1,2){$t_1$};
\node[] at (-2,3){$t_1^2$};
\node[] at (-3,4){$t_1^3$};
\node[] at (1,2){$t_2$};
\node[] at (0,3){$t_1t_2$};
\node[] at (-1,4){$t_1^2 t_2$};
\node[] at (2,3){$t_2^2$};
\end{tikzpicture}
\caption{The $\bT$-weights of $\mathscr{V}$ on $\text{Hilb}^{8}(\mathbb{C}^2)$ at the fixed point $\lambda=(4,3,1)$.} \label{yng0}
\end{figure}

We fix once and for all the polarization of $\text{Hilb}^{n}(\mathbb{C}^2)$ given by 
\begin{equation}\label{hilbpol}
T^{1/2}= \frac{1}{t_2} \mathscr{V}^{*} \otimes \mathscr{V}+ \mathscr{V}-\mathscr{V}^{*}\otimes \mathscr{V}
\end{equation}
where $\mathscr{V}$ is the tautological bundle.

The quiver variety $\mathcal{M}(n,r)$ is known as the instanton moduli space. Although it is not the focus of our study here, it will appear below in the construction of the appropriate descendant insertions. The linear maps representing points in this variety correspond to arrows in Figure \ref{Jordan}

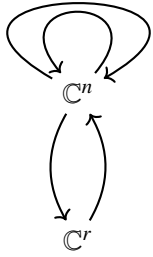
\begin{figure}[htbp]
    \centering
\begin{tikzpicture}[scale=0.98,roundnode/.style={circle,fill,inner sep=2.5pt},refnode/.style={circle,inner sep=0pt},squarednode/.style={rectangle,fill,inner sep=3pt}]

\node[] (V1) at (0,0) {$\mathbb{C}^n$};
\node[] (F1) at (0,-2){$\mathbb{C}^r$};
\draw[thick, ->] (V1) to [out=150,in=30,looseness=10] (V1);
\draw[thick, ->] (V1) to [out=50,in=130,looseness=8] (V1);
\draw[thick, ->] (V1) to [bend right] (F1);
\draw[thick, ->] (F1) to [bend right] (V1);

\end{tikzpicture}

 \caption{The quiver giving rise to $\mathcal{M}(n,r)$.}\label{Jordan}
\end{figure}


\subsection{Descendant insertions}

We briefly revert back to the notations of Section \ref{stabdes}. Denote the Hilbert scheme of $n$ points in $\mathbb{C}^2$ by $\mathcal{M}(n,1)$. Consider also $\mathcal{M}(n,1+n)$. Let $G'$ be the general linear group associated to the last $n$ framing dimensions, and let $\bA'$ be the maximal torus of $G'$. We also have the diagonal torus $\mathsf{U} \subset \bA'$ which acts on the last $n$ framing dimensions with weight $u$. The descendant insertions are defined in (\ref{stabu}) using the stable envelope
$$
\stab_{\mathsf{U},\nabla}: K_{\bT \times G'}(\mathcal{M}(n,1)) \to K_{\bT \times G'}(\mathcal{M}(n,1+n))
$$
where $\nabla$ is a choice of alcove for $\mathcal{M}(n,1)$ and the chamber is the one for which $u$ is an attracting weight. Recall that the descendant defined in Definition \ref{desc} is given by
$$
\textbf{f}^{\nabla}_{\alpha}=\Delta_{\hbar}^{-1} \textbf{s}^{\nabla}_{\alpha} \in K_{\bT}(\mathfrak{R})_{loc}
$$ 
where
$$
\textbf{s}^{\nabla}_{\alpha}=\iota^{*}\stab_{\mathsf{U},\nabla}(\alpha)
$$
and $\iota$ is the map (\ref{embed}). As
$$
K_{\bT}(\mathfrak{R})=K_{\bT\times G}(T^*\text{Rep}_{Q}(n,1))\cong K_{\bT\times G}(pt)
$$
we will describe $\textbf{f}^{\nabla}_{\alpha}\big|_{0}$ as a rational function in the weights of $\bT$ and $G$. 

\begin{Proposition}\label{hilbdesc}
In the fixed point basis, the descendant insertions of Definition \ref{desc} are given by
$$
\textbf{f}^{\nabla}_{[\lambda]}\big|_{0}=\sum_{\mu} \Lambda^{\bullet}\left(T_{\mu}X\right)^{-1} \textbf{S}^{\nabla}_{\lambda,\mu} (\det T^{1/2})^{1/2} \sym\left(S^{K}_{\lambda} \sum_{\tr \in \lt} \trwt_{\tr}^{K,\nabla} \right) \in K_{\bT \times G}(pt)_{loc}
$$
where  
$$
\textbf{S}^{\nabla}_{\lambda,\mu}=\stab_{-\mathfrak{C},T^{1/2}_{\text{opp}},-\nabla}(\lambda)|_{\mu}
$$
and
\begin{align*}
    S_{\lambda}^{K}&=\frac{\prod_{\substack{a,b, \in \lambda  \\ \rho_a+1 < \rho_b}} \ahat\left( \frac{ x_a}{t_{1} x_b}\right) \prod_{\substack{a,b, \in \lambda \\ \rho_b < \rho_a +1}} \ahat\left( \frac{ x_b}{t_{2} x_a}\right)  \prod_{\substack{a \in \lambda \\ \rho_a \leq \rho_{r} }} \ahat(x_a) \prod_{\substack{a \in \lambda \\  \rho_{r} < \rho_a }} \ahat\left(\frac{1}{t_1 t_2 x_a}\right) }{ \prod_{\substack{a,b\in\lambda \\ \rho_a<\rho_b}} \ahat\left(\frac{x_a}{x_b}\right) \ahat\left(\frac{x_a}{t_1 t_2 x_b}\right)}\\
    \trwt^{K,\nabla}_{\tr}&= (-1)^{\kappa(\tr)}\left(\frac{x_r}{\varphi^{\lambda}_{r}}\right)^{\lfloor |\lambda |s \rfloor +1/2}\ahat\left(\frac{x_r}{\varphi^{\lambda}_{r}}\right)^{-1} \prod_{e \in \tr}\ahat\left(\frac{x_{h(e)}  \varphi^{\lambda}_{t(e)}}{x_{t(e)}\varphi^{\lambda}_{h(e)}}\right)^{-1}  \left( \frac{x_{h(e)} \varphi^{\lambda}_{t(e)}}{x_{t(e)}\varphi^{\lambda}_{h(e)}}\right)^{\lfloor l_{e} s \rfloor +1/2}
    \end{align*} 
    for $s \in \nabla$.
\end{Proposition}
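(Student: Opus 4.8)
The plan is to combine three ingredients: the abstract definition of $\textbf{f}^{\nabla}_{\alpha}$ from Section \ref{stabdes}, the explicit formula for $K$-theoretic stable envelopes of $\mathcal{M}(n,r)$ from Theorem \ref{Ktreeformula}, and an understanding of how the embedding $\iota$ of (\ref{embed}) interacts with the $\mathsf{U}$-fixed locus decomposition (\ref{tensor}). Recall that $\textbf{f}^{\nabla}_{[\lambda]} = \Delta_{\hbar}^{-1}\iota^{*}\stab_{\mathsf{U},\nabla}([\lambda])$, where the stable envelope is taken on $\mathcal{M}(n,1+n)$ for the torus $\mathsf{U}$ scaling the last $n$ framing directions. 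First I would write out $\stab_{\mathsf{U},\nabla}([\lambda])$ explicitly. The $\mathsf{U}$-fixed locus containing $\mathcal{M}(n,1)$ as a component is governed by the splitting $\dw = 1 + u\cdot n$; the relevant component is $\mathcal{M}(\dv',1)\times \mathcal{M}(\dv'',n)$ with $\dv'+\dv''=n$. The restriction of the stable envelope to a $\mathsf{U}$-fixed point is a product of (i) the diagonal restriction on the $\mathcal{M}(\dv',1)$ factor, which carries the $\bT$-equivariant $K$-theory class on the Hilbert scheme, and (ii) an off-diagonal contribution measuring how $[\lambda]$ propagates to other fixed components. The diagonal piece is exactly where Theorem \ref{Ktreeformula} enters: the restriction of $\stab_{\mathsf{U},\nabla}$ to the Hilbert-scheme factor is $(\det T^{1/2})^{1/2}\sym\left(S^{K}_{\lambda}\sum_{\tr\in\lt}\trwt^{K,\nabla}_{\tr}\right)$, because the stable envelope of a quiver variety (the $\bT$-equivariant one) appears as the "diagonal" part of the $\mathsf{U}$-stable envelope.

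Next I would identify the off-diagonal factor $\textbf{S}^{\nabla}_{\lambda,\mu}$. After pulling back along $\iota$, a $\mathsf{U}$-fixed point of $\mathcal{M}(n,1+n)$ that lies over the point $\star$ (the isomorphism locus where $V'\to V$ is an iso) gets identified with a $\bT$-fixed point $\mu$ of the original Hilbert scheme. The restriction $\stab_{\mathsf{U},\nabla}([\lambda])|_{\star,\mu}$ then reduces to an ordinary $\bT$-equivariant stable envelope restriction. Here I would carefully track the effect of $\iota$: the construction in (\ref{embed}) swaps the roles of the gauge group $G$ and the auxiliary group $G'$, and reverses the relevant chamber and polarization (this is why the opposite polarization $T^{1/2}_{\text{opp}}$ and the opposite slope $-\nabla$ appear). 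The point is that $\iota^{*}$ converts the "vertical" stable envelope of the $\mathsf{U}$-action into a stable envelope with respect to the chamber $-\mathfrak{C}$, polarization $T^{1/2}_{\text{opp}}$, and slope $-\nabla$, giving $\textbf{S}^{\nabla}_{\lambda,\mu} = \stab_{-\mathfrak{C},T^{1/2}_{\text{opp}},-\nabla}(\lambda)|_{\mu}$. Then I would account for the normalization $\Delta_{\hbar}^{-1} = \Lambda^{\bullet}(\hbar\,\text{Hom}(V,V))^{-1}$ and for the identification $K_{\bT\times G}(\mathfrak{R}) \cong K_{\bT\times G}(pt)$ via restriction to $0$; evaluating at the fixed point $\mu$, the weights $x_i$ become $\varphi^{\mu}_i$, and the contribution of the tangent normal directions gives the $\Lambda^{\bullet}(T_{\mu}X)^{-1}$ prefactor. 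Assembling these over all $\mu$ yields the claimed sum.

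The main obstacle I anticipate is the bookkeeping in the second step: correctly reconciling the chamber/polarization/slope data of the auxiliary $\mathsf{U}$-stable envelope on $\mathcal{M}(n,1+n)$ with those of the honest $\bT$-equivariant stable envelope on $\text{Hilb}^{n}(\mathbb{C}^2)$ after pullback by $\iota$. This requires invoking the triangle/compatibility lemmas for stable envelopes (as in Proposition \ref{stabsplit} and the references to \cite{MO, OkBethe}) to see that the composition of the $\mathsf{U}$-stable envelope with $\iota^{*}$ factors through the $\bT$-stable envelope, and to pin down exactly which signs and polarization twists get picked up. In particular one must check that the polarization choice $T^{1/2}_{\mathcal{M}(\dv,\dw+\dv)} = T^{1/2}_{\mathcal{M}(\dv,\dw)} + \hbar\sum_i \text{Hom}(V_i,V_i')$ from Section \ref{stabdes}, together with the normalization $\Delta_{\hbar}$, produces precisely the $\Lambda^{\bullet}(T_{\mu}X)^{-1}$ and $(\det T^{1/2})^{1/2}$ factors and no spurious ones. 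A secondary technical point is verifying that the symmetrization $\sym$ over Chern roots of $\mathscr{V}$ is compatible with restriction to $0\in\mathfrak{R}$ after the $G$-weights have been diagonalized — but this is essentially formal once the structure is in place. With all of this assembled, the formula follows by substituting the explicit expression of Theorem \ref{Ktreeformula} and collecting terms.
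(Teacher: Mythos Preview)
Your proposal has the right ingredients (triangle lemma, tree formula, duality) but misattributes where the opposite chamber/polarization/slope in $\textbf{S}^{\nabla}_{\lambda,\mu}$ come from, and this would derail the bookkeeping.

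In the paper, the key factorization is obtained by applying the triangle lemma twice: first for $\mathsf{U}\subset\bA'$ (which gives $\stab_{\mathsf{U},\nabla}=\stab_{\bA',\nabla}$ since $\bA'/\mathsf{U}$ acts trivially on $\mathcal{M}(n,1)$), then for $\bA'\subset\bT\times\bA'$, yielding
\[
\stab_{\mathsf{U},\nabla}=\stab_{\bT\times\bA',\nabla}\circ\stab_{\bT,\nabla}^{-1}.
\]
After restriction to $\star$, one checks directly from the tree formulas that $\stab_{\bT\times\bA',\nabla}([\mu])|_{\star}=\Delta_{\hbar}\cdot\stab_{\bT,\nabla}([\mu])$ as elements of $K_{\bT\times G}(pt)$ (the extra $\hbar\,\mathrm{Hom}(V,V')$ in the polarization contributes exactly $\Delta_{\hbar}$ times a determinant factor that cancels). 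The opposite data $(-\mathfrak{C},T^{1/2}_{\text{opp}},-\nabla)$ then appear because the \emph{inverse} matrix $\stab_{\bT,\nabla}^{-1}$ is, by the duality of stable envelopes (\cite{pcmilect}, 9.1.17), the transpose of $\stab_{-\mathfrak{C},T^{1/2}_{\text{opp}},-\nabla}$ times $\mathrm{diag}(\Lambda^{\bullet}(T_{\mu}X)^{-1})$.

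Your account instead claims that $\iota^{*}$ ``reverses the relevant chamber and polarization.'' It does not: $\iota$ identifies $G$ with $G'$ via the isomorphism $V'\to V$, and the restriction to $\star$ amounts to the substitution $u_{i+1}\mapsto x_i$ in the tree formula, with no sign flips. Likewise, your ``diagonal vs.\ off-diagonal'' picture is not the decomposition that actually occurs; the factor carrying the tree formula is $\stab_{\bT,\nabla}([\mu])$ (indexed by $\mu$, coming from the \emph{second} map in the composition), while the factor $\textbf{S}^{\nabla}_{\lambda,\mu}$ comes entirely from inverting $\stab_{\bT,\nabla}$. If you try to carry out your plan as written, you will not be able to produce the opposite slope $-\nabla$ or the prefactor $\Lambda^{\bullet}(T_{\mu}X)^{-1}$ from $\iota^{*}$ alone; both are artifacts of duality, not of the embedding.
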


We remark that Theorem \ref{Ktreeformula} provides an explicit formula for $\textbf{S}^{\nabla}_{\lambda,\mu}$.
\begin{proof}



Recall the distinguished point $\star \in \mathcal{M}(n,1+n)$ described in the paragraph before Proposition \ref{window}. We have $\textbf{f}^{\nabla}_{[\lambda]}\big|_{0}= \Delta_{\hbar}^{-1} \stab_{\mathsf{U},\nabla}([\lambda])|_{\star}$ as elements of $K_{\bT \times G}(pt)_{loc}$. The inclusion $\bT\times \bA' \subset \bT \times G'$ induces a map $K_{\bT \times G'}(\cdot) \to K_{\bT \times \bA'}(\cdot)$ compatible with stable envelopes of $\mathsf{U}$. So, we abuse notation and also write $\stab_{\mathsf{U},\nabla}$ for the map $\stab_{\mathsf{U},\nabla}:K_{\bT\times \bA'}(\mathcal{M}(n,1)) \to K_{\bT \times \bA'}(\mathcal{M}(n,1+n))$.

By the triangle lemma for stable envelopes for the pair $\mathsf{U}\subset \bA'$, see Section 9.2 of \cite{pcmilect}, we have a commutative diagram
\begin{equation*}
\begin{tikzcd}
K_{\bT\times \bA'}(\mathcal{M}(n,1)) \arrow[labels= below left, "\stab_{\bA',\nabla}", rd]  \arrow[labels=above, "\stab_{\bA'/\mathsf{U},\nabla}"]{r} & K_{\bT\times \bA'}(\mathcal{M}(n,1)) \arrow["\stab_{\mathsf{U},\nabla}",d] \\
 & K_{\bT \times \bA'}(\mathcal{M}(n,1+n))
\end{tikzcd}
\end{equation*}
Since the torus $\bA'$ fixes $\mathcal{M}(n,1)$, the top arrow is the identity map. Thus $\stab_{\mathsf{U},\nabla}=\stab_{\bA',\nabla}$. Using the triangle lemma for the pair $\bA' \subset \bT\times \bA'$, we have a commutative diagram
\begin{equation*}
\begin{tikzcd}
K_{\bT\times \bA'}(\mathcal{M}(n,1)^{\bT}) \arrow[labels= below left, "\stab_{\bT\times \bA',\nabla}", rd]  \arrow[labels=above, "\stab_{\bT,\nabla}"]{r} & K_{\bT\times \bA'}(\mathcal{M}(n,1)) \arrow["\stab_{\bA',\nabla}",d] \\
 & K_{\bT \times \bA'}(\mathcal{M}(n,1+n))
\end{tikzcd}
\end{equation*}
Thus
$$
\stab_{\mathsf{U},\nabla}=\stab_{\bA',\nabla}=\stab_{\bT\times\bA',\nabla} \circ \stab_{\bT,\nabla}^{-1}
$$

In the fixed point basis, we have
$$
\stab_{\bA',\nabla}([\lambda])|_{\star}=\stab_{\bT \times \bA',\nabla}(\stab_{\bT}^{-1}([\lambda]))|_{\star}= \sum_{\mu} \stab_{\bT}^{-1}([\lambda])|_{\mu} \stab_{\bT\times\bA'}([\mu])|_{\star}
$$
Now, $\stab_{\bT \times\bA',\nabla}([\mu])$ is given by the $K$-theory limit of the tree formula in Theorem \ref{treeformula}. The restriction to $\star$ is given by substituting $x_i=u_{i+1}$, $i \geq 1$ for the Chern roots of the tautological bundle $\mathscr{V}$. Viewing this as an element of $K_{\bT\times G}(pt)$ is equivalent to substituting $u_{i+1}=x_i$, $i \geq 1$. So, the formulas from (\ref{nontree}) show that
\begin{align*}
    S_1^{\mathcal{M}(n,1+n)}(\vec{x};q,t_1,t_2) &= S_1^{\mathcal{M}(n,1)}(\vec{x};q,t_1,t_2) \\
    S_2^{\mathcal{M}(n,1+n)}(\vec{x}, \vec{u};q,t_1,t_2)\big|_{u_{i+1}=x_i} &= \prod_{a, b \in \lambda} \vartheta\left(\frac{x_b}{x_a t_1 t_2} \right)\cdot S_2^{\mathcal{M}(n,1)}(\vec{x},\vec{u};q,t_1,t_2) \\
   S_3^{\mathcal{M}(n,1+n)}(\vec{x};q,t_1,t_2) &=  S_3^{\mathcal{M}(n,1)}(\vec{x};q,t_1,t_2)
\end{align*}
where the superscript indicates the corresponding quiver variety. In the limit to $K$-theory, the discrepancy in the second line becomes
$$
\lim_{q\to 0}\prod_{a, b \in \lambda} \vartheta\left(\frac{x_b}{x_a t_1 t_2} \right) = \prod_{a, b \in \lambda} \left(1- \frac{x_a t_1 t_2}{x_b}\right) \left(\frac{x_b}{x_a t_1 t_2}\right)^{1/2} = \Delta_{\hbar} \cdot \left(\det \hbar \text{Hom}(V,V)\right)^{1/2}
$$
The fixed point $\lambda \in \mathcal{M}(n,1)^{\bT}$ is embedded in $\mathcal{M}(n,1+n)$ as the fixed point indexed by the tuple of partitions $(\lambda,\emptyset, \ldots,\emptyset)$. So, the formula from Theorem \ref{Ktreeformula} shows that the tree-dependent parts of the formulas for $\stab_{\bT\times \bA',\nabla}(\lambda)$ and $\stab_{\bT\times,\nabla}(\lambda)$ agree exactly. The discrepancy between the contributions from the polarizations is
$$
\left(\frac{\det T^{1/2}_{\mathcal{M}(n,1+n)}}{\det T^{1/2}_{\mathcal{M}(n,1)} }\right)^{1/2}= \left( \det \hbar \text{Hom}(V,V)\right)^{1/2}
$$
So
$$
\stab_{\bT \times \bA',\nabla}([\mu])|_{\star}=\Delta_{\hbar} \cdot \stab_{\bT}([\mu])
$$
So
$$
\textbf{f}^{\nabla}_{[\lambda]}\big|_{0}=\sum_{\mu}\stab_{\bT}^{-1}([\lambda])|_{\mu} \stab_{\bT}([\mu])
$$
By duality of stable envelopes, see 9.1.17 of \cite{pcmilect}, the inverse of a stable envelope is the transpose of another stable envelope with chamber, polarization, and slope switched to their opposites, up to multiplication by the diagonal matrix $\text{diag}(\Lambda^{\bullet}(T_{\mu}X)^{-1})_{\mu}$. This finishes the proof.
\end{proof}

We will denote
\begin{equation}\label{gdesc}
\textbf{g}^{\nabla}_{\lambda}:=\textbf{f}^{\nabla}_{[\lambda]}\big|_{0}
\end{equation}
From now on, we restore the notation $X=\mathcal{M}(n,1)$.

\subsection{Computation of vertex with descendants}
Next we compute the vertex with descendants for $X$. We apply localization to the moduli space $\qm_{\ns \, 0}$. By definition, a stable quasimap to $X$ consists of
\begin{itemize}
    \item A rank $n$ vector bundle $\mathscr{V}$ on $\mathbb{P}^1$.
    \item A section $f=(X_1,X_2,a,b)$ of the bundle $$
    \mathscr{M}=t_1^{-1} \text{Hom}(\mathscr{V},\mathscr{V})\oplus t_2^{-1} \text{Hom}(\mathscr{V},\mathscr{V})\oplus \mathscr{V} \oplus (t_1 t_2)^{-1} \mathscr{V}^*
    $$
    such that 
    \begin{itemize}
    \item The section satisfies the moment map equations:
    $$
    [X_1,X_2]+ab=0
    $$
        \item $f$ evaluates to a GIT-stable point at $0 \in \mathbb{P}^1$ and for all but finitely many $p \in \mathbb{P}^1$.
    \end{itemize}
\end{itemize}
As a consequence of the stability condition, one can prove that $b=0$ and so the moment map condition is $[X_1,X_2]=0$.

Points $p \in \mathbb{P}^1$ such that $f(p)$ is not stable are called singularities. We abuse notation and write $f \in \qm_{\ns \, 0}$, with the understanding that the data of a quasimap also consists of the bundle $\mathscr{V}$. By definition, the degree $d \in \mathbb{Z}$ of the quasimap is the degree of the bundle $\mathscr{V}$.

The torus $\mathbb{C}^{\times}_{q}$ acts on $\mathbb{P}^1$ in the usual way, and induces an action on $\qm_{\ns \, 0}$. The action of the torus $\bT$ also induces an action on $\qm_{\ns \, 0}$. We apply equivariant localization with respect to $\bT\times \mathbb{C}^{\times}_{q}$.

Suppose $f \in (\qm^{d}_{\ns \, 0})^{\bT \times \mathbb{C}^{\times}_{q}}$. Then $f$ provides a vector bundle $\mathscr{V}$. The singularities of $f$ are finite and invariant under $\mathbb{C}^{\times}_{q}$. Hence the only possible singular point is $\infty$, and the restriction of the quasimap to $\mathbb{P}^1\setminus \{\infty\}$ is a constant $\lambda \in X$. Since the quasimap is invariant under $\bT$, we must have $\lambda \in X^{\bT}$. Equivariantly with respect to $\mathbb{C}^{\times}_{q}$, we have
$$
\mathscr{V}\cong\bigoplus_{i=1}^{n} \mathcal{O}(d_i)
$$
and the linearization is such that the fiber of $\mathcal{O}(d)$ over $\infty$ is $q^{-d}$ and the fiber over $0$ has trivial $q$-weight.
So $f$ is a global section of
$$
\mathscr{M}\cong \bigoplus_{i,j=1}^{n} t_2^{-1} \mathcal{O}(d_i-d_j) \oplus  \bigoplus_{i,j=1}^{n} t_1^{-1} \mathcal{O}(d_i-d_j) \oplus \bigoplus_{i=1}^{n} \mathcal{O}(d_i) \oplus \bigoplus_{i=1}^{n} (t_1 t_2)^{-1} \mathcal{O}(-d_i)
$$
In order for the quasimap to be $\bT$-invariant, these bundles must be additionally twisted by certain $\bT$-weights.

Not all combinations of degrees $d_i$ are allowed. In order for the stability condition to be satisfied, at stable points $p$ of the quasimap, the image of $a(p)$ under $X_1(p)$ and $X_2(p)$ must generate the entire fiber $\mathscr{V}_{p}$. Hence the section must be nonvanishing at $0$. So, $a$ gives a global section of $\bigoplus_{i=1}^{n} \mathcal{O}(d_i)$ nonvanishing at 1 invariant under the action of $\mathbb{C}^{\times}_{q}$. This implies that $d_i\geq 0$ for all $i$, and there is only one such section.

Since $f(0)=\lambda$, the degrees must correspond to a reverse plane partition with shape $\lambda$, an example of which is given in Figure \ref{yng1}. We denote by $C_{\lambda}$ the set degrees corresponding to reverse plane partitions of shape $\lambda$. In this interpretation, each box of $\lambda$ corresponds to both a 1-dimensional $\bT$-weight space of $\mathscr{V}|_{0}$ and to a degree. We index the $\bT$-weights by $\varphi_1^{\lambda},\ldots, \varphi_{n}^{\lambda}$ and the degrees by $d_{1},\ldots,d_{n}$ such that $d_i$ is the degree for the box corresponding to $\varphi_i^{\lambda}$. Up to reordering, we have 
\begin{equation}\label{tbweights}
\{\varphi_i^{\lambda}\}= \{t_1^{j-1} t_2^{i-1}\}_{(i,j) \in \lambda}
\end{equation}

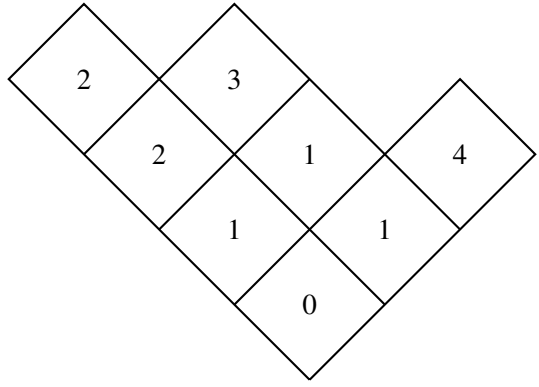
\begin{figure}[ht]
\centering
\begin{tikzpicture}[roundnode/.style={circle, draw=black, very thick, minimum size=5mm},squarednode/.style={rectangle, draw=black, very thick, minimum size=5mm}] 
\draw[thick,-](0,0)--(-4,4)--(-3,5)--(-2,4)--(-1,5)--(1,3)--(2,4)--(3,3)--(0,0);
\draw[thick,-](1,1)--(-2,4);
\draw[thick,-](2,2)--(1,3);
\draw[thick,-](-1,1)--(1,3);
\draw[thick,-](-2,2)--(0,4);
\draw[thick,-](-3,3)--(-2,4);
\node[] at (0,1){$0$};
\node[] at (-1,2){$1$};
\node[] at (-2,3){$2$};
\node[] at (-3,4){$2$};
\node[] at (1,2){$1$};
\node[] at (0,3){$1$};
\node[] at (-1,4){$3$};
\node[] at (2,3){$4$};
\end{tikzpicture}
\caption{A reverse plane partition of shape $\lambda=(4,3,1)$. The integers in the boxes must be non-negative and weakly increasing as one moves up-right or up-left.} \label{yng1}
\end{figure}

Thus equivariantly with respect to $\bT \times \mathbb{C}^{\times}_{q}$, we have
$$
\mathscr{M}\cong \bigoplus_{i,j=1}^{n} \frac{\varphi^{\lambda}_{i}}{\varphi^{\lambda}_{j} t_2} \mathcal{O}(d_i-d_j) \oplus  \bigoplus_{i,j=1}^{n} \frac{\varphi^{\lambda}_{j}}{\varphi^{\lambda}_{i} t_1} \mathcal{O}(d_i-d_j) \oplus \bigoplus_{i=1}^{n} \varphi_{i}^{\lambda} \mathcal{O}(d_i) \oplus \bigoplus_{i=1}^{n} \frac{1}{\varphi^{\lambda}_{i} t_1 t_2} \mathcal{O}(-d_i)
$$
Recall that with the linearization of $\mathcal{O}(d)$ from Proposition \ref{linearization},
$$
H^0\left(\mathbb{P}^1,\mathcal{O}(d) \right)=\sum_{i=1}^{d} q^i
$$
and the dualizing sheaf on $\mathbb{P}^1$ is equivariantly given by $q \mathcal{O}(-2)$.

Term in the reduced virtual tangent\footnote{The reduced virtual tangent space refers to the virtual $\bT\times\mathbb{C}^{\times}_{q}$-module $T_{\text{vir}}\big|_{f}-T_{\lambda}X$ where $f(0)=\lambda$. The second term is simply a convenient way to account for the denominators provided by computing a pushforward by localization.} space show up in pairs, and we use Serre duality to compute its character. For example, if $d_i-d_j\geq 0$, then
\begin{align*}
&H^*\left(\frac{\varphi^{\lambda}_i}{\varphi^{\lambda}_{j} t_2} \mathcal{O}(d_i-d_j)+\frac{\varphi^{\lambda}_j }{\varphi^{\lambda}_{i} t_1} \mathcal{O}(d_j-d_i) \right) -\frac{\varphi_i^{\lambda}}{\varphi^{\lambda}_{j} t_2}- \frac{\varphi_j^{\lambda}}{\varphi^{\lambda}_{i} t_1} \\
&=\frac{\varphi^{\lambda}_{i}}{\varphi^{\lambda}_{j} t_2} H^0(\mathcal{O}(d_i-d_j))-\frac{\varphi^{\lambda}_{i}}{\varphi^{\lambda}_{j} t_2} - \frac{\varphi^{\lambda}_{j}}{\varphi^{\lambda}_{i} t_1} H^1(\mathcal{O}(d_j-d_i)) -\frac{\varphi^{\lambda}_{j}}{\varphi^{\lambda}_{i} t_1} \\
&= \frac{\varphi^{\lambda}_{i}}{\varphi^{\lambda}_{j} t_2}(q+\ldots q^{d_i-d_j}) - \frac{\varphi^{\lambda}_{j}}{\varphi^{\lambda}_{i} t_1}H^0(q\mathcal{O}(d_i-d_j-2))^{\vee} -\frac{\varphi^{\lambda}_{j}}{\varphi^{\lambda}_{i} t_1} \\
&=\frac{\varphi^{\lambda}_{i}}{\varphi^{\lambda}_{j} t_2}(q+\ldots q^{d_i-d_j})- \frac{\varphi^{\lambda}_{j}}{\varphi^{\lambda}_{i} t_1}(1+q^{-1}+\ldots +q^{-(d_i-d_j)+1})
\end{align*}
The other terms contribute similarly. Overall, we obtain
\begin{align*}
    T_{\text{vir}}|_{f}&=\sum_{\substack{i,j=1\\ d_i-d_j\geq 0}}^{n} \frac{\varphi^{\lambda}_i}{\varphi^{\lambda}_j t_2}\left(q+\ldots q^{d_i-d_j}\right)- \frac{\varphi^{\lambda}_j}{\varphi^{\lambda}_i t_1}\left(1+q^{-1}+\ldots q^{-(d_i-d_j)+1}\right) \\
    &+ \sum_{\substack{i,j=1\\ d_i-d_j< 0}}^{n} -\frac{\varphi^{\lambda}_i}{\varphi^{\lambda}_j t_2}\left(1+q^{-1}+\ldots q^{d_i-d_j+1}\right)+ \frac{\varphi^{\lambda}_j}{\varphi^{\lambda}_i t_1}\left(q+\ldots q^{-(d_i-d_j)}\right) \\
    &+ \sum_{\substack{i,j=1\\ d_i-d_j\geq 0}}^{n} -\frac{\varphi^{\lambda}_i}{\varphi^{\lambda}_j}\left(q+\ldots q^{d_i-d_j}\right)+ \frac{\varphi^{\lambda}_j}{\varphi^{\lambda}_i t_1 t_2}\left(1+q^{-1}+\ldots q^{-(d_i-d_j)+1}\right) \\
    & + \sum_{\substack{i,j=1\\ d_i-d_j< 0}}^{n} -\frac{\varphi^{\lambda}_i}{\varphi^{\lambda}_j}\left(1+q^{-1}+\ldots q^{d_i-d_j+1}\right)+ \frac{\varphi^{\lambda}_j}{\varphi^{\lambda}_i t_1 t_2}\left(q+\ldots q^{-(d_i-d_j)}\right) \\
    &+ \sum_{i=1}^{n} \varphi^{\lambda}_{i}\left(q+\ldots +q^{d_i}\right) -\frac{1}{\varphi^{\lambda}_{i} t_1 t_2 }\left(1+ q^{-1}+\ldots q^{-d_i+1} \right)
\end{align*}
We define
$$
\{q\}_{k}=\begin{cases}
\sum\limits_{i=1}^{k} q^i & k \geq 0 \\
\sum_{i=k+1}^{0} q^i & k < 0
\end{cases}
$$
and can more compactly write
\begin{align*}
     T_{\text{vir}}|_{f}&=\sum_{\substack{i,j=1\\ d_i-d_j\geq 0}}^{n} \left(\frac{\varphi^{\lambda}_i}{\varphi^{\lambda}_j t_2}-\frac{\varphi^{\lambda}_{i}}{\varphi^{\lambda}_{j}} \right)\{q\}_{d_i-d_j}- \left(\frac{\varphi^{\lambda}_j}{\varphi^{\lambda}_i t_1} -\frac{\varphi^{\lambda}_j}{\varphi^{\lambda}_i t_1 t_2} \right)\{q\}_{-(d_i-d_j)}  \\ &+ \sum_{\substack{i,j=1\\ d_i-d_j< 0}}^{n} \left(-\frac{\varphi^{\lambda}_i}{\varphi^{\lambda}_j t_2}-\frac{\varphi^{\lambda}_i}{\varphi^{\lambda}_j} \right)\{q\}_{d_i-d_j}+ \left(\frac{\varphi^{\lambda}_j}{\varphi^{\lambda}_i t_1}+\frac{\varphi^{\lambda}_j}{\varphi^{\lambda}_i t_1 t_2} \right)\{q\}_{-(d_i-d_j)} \\
    &+ \sum_{i=1}^{n} \varphi^{\lambda}_{i}\{q\}_{d_i} -\frac{1}{\varphi^{\lambda}_{i} t_1 t_2 }\{q\}_{-d_i}
\end{align*}

A descendant $\alpha\in K_{\bT\times G}(pt)$ is a symmetric function in the Chern roots of the tautological bundle. For a fixed quasimap $f$ with $f(0)=\lambda$, we denote $\alpha(x(f))$ the result of substituting $x_i=\varphi_i^{\lambda} q^{d_i}$, where $d_i$ are the degrees given by $f$ as above. The vertex function with the polarization $T^{1/2}_{\text{opp}}$ in the fixed point basis is thus
\begin{align*}
\ver(\alpha)|_{\lambda}&= \sum_{d =0}^{\infty} z^d \sum_{\substack{f \in (\qm_{\ns \, 0})^{\bT \times \mathbb{C}^{\times}_{q}} \\ f(0)=\lambda}} \hat{a}\left(T_{\text{vir}}|_{f}\right) \left(\frac{\det \mathscr{T}^{1/2}_{\text{opp},0}\big|_{f}}{\det \mathscr{T}^{1/2}_{\text{opp},\infty}\big|_{f}} \right)^{1/2} \alpha(x(f)) \\
&=\sum_{d\in C_{\lambda}} z^{|d|} \hat{a}\left(T_{\text{vir}}|_{f_{d}}\right) \left(\frac{\det \mathscr{T}^{1/2}_{\text{opp},0}\big|_{f_{d}}}{\det \mathscr{T}^{1/2}_{\text{opp},\infty}\big|_{f_{d}}} \right)^{1/2} \alpha(x(f_{d})) 
\end{align*}
where $f_{d}$ is the $\bT\times\mathbb{C}^{\times}_{q}$-fixed quasimap indexed by the reverse plane partition given by $d$, $\hat{a}$ is the function defined by
$$
\hat{a}(t)=\frac{1}{t^{1/2}-t^{-1/2}}\quad \hat{a}(t_1+t_2-t_3)=\frac{\hat{a}(t_1) \hat{a}(t_2)}{\hat{a}(t_3)}
$$
and $\mathscr{T}_{\text{opp}}^{1/2}$ is the bundle on $\qm_{\ns \, 0}$ induced by the opposite of the chosen polarization, see \cite{pcmilect} Section 6.1. 

Furthermore,
$$
\left(\frac{\det \mathscr{T}^{1/2}_{\text{opp},0}}{\det \mathscr{T}^{1/2}_{\text{opp},\infty}} \right)^{1/2}= q^{\deg \mathscr{T}_{\text{opp}}^{1/2}/2}
$$
For our choice of polarization (\ref{hilbpol}), $\deg \mathscr{T}^{1/2}_{\text{opp}}= -|d|$. Denote the $q$-Pochammer symbol by
$$
(x)_{d}:=\begin{cases}
\prod\limits_{i=0}^{d-1} (1-x q^{i}) & d \geq 0 \\
\prod\limits_{i=d}^{-1}(1-x q^{i})^{-1} & d <0
\end{cases}
$$

Putting this all together, we obtain

\begin{Proposition}\label{verformula}
The restriction of the vertex function with descendant $\alpha$ for the polarization $T^{1/2}_{\text{opp}}$ to the fixed point $\lambda$ is given by
$$
\ver(\alpha)|_{\lambda}= \sum_{d \in C_{\lambda}}  \left(\frac{z}{\sqrt{t_1 t_2}}\right)^{|d|} \alpha(x_i)|_{x_i=\varphi_i^{\lambda} q^{d_i}} \prod_{i,j=1}^{n}\frac{\left(t_1 \frac{\varphi^{\lambda}_i}{\varphi^{\lambda}_j }\right)_{d_i-d_j}}{\left(\frac{q}{t_2} \frac{\varphi^{\lambda}_i}{\varphi^{\lambda}_j}\right)_{d_i-d_j}} \frac{\left(q \frac{\varphi^{\lambda}_i}{\varphi^{\lambda}_j}\right)_{d_i-d_j}}{\left(t_1 t_2 \frac{\varphi^{\lambda}_i}{\varphi^{\lambda}_j }\right)_{d_i-d_j}} \prod_{i=1}^{n}\frac{\left( \varphi^{\lambda}_i t_1 t_2\right)_{d_i}}{\left(q \varphi^{\lambda}_i\right)_{d_i}}  
$$
where $C_{\lambda}$ is the cone in $\mathbb{Z}^{n}$ corresponding to reverse plane partitions of shape $\lambda$.
\end{Proposition}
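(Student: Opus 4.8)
The plan is to establish the formula by $\bT\times\mathbb{C}^{\times}_{q}$-equivariant localization on $\qm_{\ns\,0}$, and essentially all of the geometric input has already been assembled in the discussion preceding the statement. I would start from the localization formula $\ver(\alpha)|_{\lambda}=\sum_{d\in C_{\lambda}} z^{|d|}\,\ahat(T_{\text{vir}}|_{f_{d}})\,(\det\mathscr{T}^{1/2}_{\text{opp},0}|_{f_{d}}/\det\mathscr{T}^{1/2}_{\text{opp},\infty}|_{f_{d}})^{1/2}\,\alpha(x(f_{d}))$, which records that the $\bT\times\mathbb{C}^{\times}_{q}$-fixed quasimaps of degree $d$ are isolated and indexed by reverse plane partitions of shape $\lambda$ (so in particular $d_{i}\geq 0$ for all $i$), that their linearizations are as in Proposition \ref{linearization}, and that by the definition (\ref{verdef}) of the vertex with descendants, inserting $\alpha$ simply multiplies the contribution of $f_{d}$ by $\alpha$ evaluated at the Chern roots $x_{i}=\varphi^{\lambda}_{i}q^{d_{i}}$. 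It then remains to make $\ahat(T_{\text{vir}}|_{f_{d}})$ explicit, using the character of $T_{\text{vir}}|_{f_{d}}$ computed above, and to simplify.

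For that, I would use the multiplicativity of $\ahat$ to split $\ahat(T_{\text{vir}}|_{f_{d}})$ into elementary blocks of the form $\ahat(x\{q\}_{\pm k})$ with $k\geq 0$; here the hypothesis $d_{i}\geq 0$ is exactly what makes the off-diagonal part of the character decompose into the two ranges $d_{i}-d_{j}\geq 0$ and $d_{i}-d_{j}<0$ displayed above, and it also dictates the shape of the framing blocks $\ahat(\varphi^{\lambda}_{i}\{q\}_{d_{i}}-\tfrac{1}{\varphi^{\lambda}_{i}t_{1}t_{2}}\{q\}_{-d_{i}})$. Each block is then converted into a monomial times a ratio of $q$-Pochhammer symbols via $\prod_{m=1}^{k}(1-xq^{m})=(xq)_{k}$ and the corresponding reflection identity for $\prod_{m}(1-xq^{-m})$. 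Gathering the $q$-Pochhammer factors from the off-diagonal blocks over both ranges reproduces the double product $\prod_{i,j}\frac{(t_{1}\varphi^{\lambda}_{i}/\varphi^{\lambda}_{j})_{d_{i}-d_{j}}(q\varphi^{\lambda}_{i}/\varphi^{\lambda}_{j})_{d_{i}-d_{j}}}{((q/t_{2})\varphi^{\lambda}_{i}/\varphi^{\lambda}_{j})_{d_{i}-d_{j}}(t_{1}t_{2}\varphi^{\lambda}_{i}/\varphi^{\lambda}_{j})_{d_{i}-d_{j}}}$, while the framing blocks give $\prod_{i}\frac{(\varphi^{\lambda}_{i}t_{1}t_{2})_{d_{i}}}{(q\varphi^{\lambda}_{i})_{d_{i}}}$.

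The step I expect to be the main obstacle is purely bookkeeping: one must check that the monomial prefactors left over from all of these reflection identities, when multiplied by $(\det\mathscr{T}^{1/2}_{\text{opp},0}|_{f_{d}}/\det\mathscr{T}^{1/2}_{\text{opp},\infty}|_{f_{d}})^{1/2}=q^{-|d|/2}$ (which holds because $\deg\mathscr{T}^{1/2}_{\text{opp}}=-|d|$ for the polarization (\ref{hilbpol})), collapse exactly to $(t_{1}t_{2})^{-|d|/2}$, so that together with $z^{|d|}$ they assemble into the prefactor $(z/\sqrt{t_{1}t_{2}})^{|d|}$. Rather than tracking individual monomials, the clean route is to compute the total monomial part of $\ahat(T_{\text{vir}}|_{f_{d}})$ from the virtual rank and the virtual determinant $\det T_{\text{vir}}|_{f_{d}}$ alone, since $\ahat(w)$ and $\ahat(-w)$ each contribute only a half-integer power of $w$, and both the rank and determinant are read off directly from the character; the claimed identity for the prefactor then reduces to a short $(t_{1},t_{2},q)$-weight count. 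Combining the three simplified pieces with this prefactor yields the asserted formula, the sum over $d\in C_{\lambda}$ being by construction the sum over reverse plane partitions of shape $\lambda$.
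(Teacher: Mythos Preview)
Your proposal is correct and follows exactly the paper's approach: the entire subsection preceding the proposition carries out the $\bT\times\mathbb{C}^{\times}_{q}$-localization, identifies the fixed loci with reverse plane partitions, and computes the character of $T_{\text{vir}}|_{f_{d}}$, after which the proposition is stated as the result of ``putting this all together.'' The only content not spelled out verbatim in the paper is the conversion of $\ahat$-blocks to $q$-Pochhammer symbols and the collapse of the monomial prefactors to $(t_1 t_2)^{-|d|/2}$, which is precisely the bookkeeping step you isolate.
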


\subsection{Exotic solution in integral form}

This is the vertex that shows up in the right hand side of Theorem \ref{mainthm1}. Inserting the descendant $\textbf{f}^{\nabla}_{\alpha}$, we obtain $\ver^{\nabla}(\alpha)$. As in the proof of Theorem \ref{mainthm1}, one can check directly that $\lim_{q\to \infty} \ver^{\nabla}(\alpha)|_{\lambda}=\alpha$.

Recall the operator $\Omega^{\nabla}$ from (\ref{omega}). It is very similar to $\ver^{\nabla}$. The difference is that it is computed using the moduli space $\qm_{\ns \, \infty}$ instead of $\qm_{\ns \, 0}$ and the descendant insertion uses the opposite polarization and slope in the stable envelope. 

\begin{Proposition}
The restriction of the operator (\ref{omega}) to the fixed point basis is
$$
\Omega^{\nabla}(\alpha)|_{\lambda}=\sum_{d \in C_{\lambda}} \left(\sqrt{t_1 t_2} z\right)^{|d|} \textbf{f}^{\nabla}_{\text{opp},\alpha}(x)|_{x_i=\varphi_i^{\lambda} q^{-d_i}} \prod_{i,j=1}^{n} \frac{\left(\frac{\varphi^{\lambda}_j}{\varphi^{\lambda}_i t_1}\right)_{d_i-d_j}}{\left(q t_2 \frac{ \varphi^{\lambda}_j}{ \varphi^{\lambda}_i}\right)_{d_i-d_j}} \frac{\left(q \frac{\varphi^{\lambda}_j}{\varphi^{\lambda}_i}\right)_{d_i-d_j}}{\left(\frac{\varphi^{\lambda}_j}{ \varphi^{\lambda}_i  t_1 t_2}\right)_{d_i-d_j}} \prod_{i=1}^{n}\frac{\left( \frac{1}{\varphi^{\lambda}_i t_1 t_2}\right)_{d_i}}{\left(\frac{q}{ \varphi^{\lambda}_i}\right)_{d_i}} 
$$
\end{Proposition}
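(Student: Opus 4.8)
The plan is to compute $\Omega^{\nabla}(\alpha)|_{\lambda}$ by $\bT\times\mathbb{C}^{\times}_{q}$-equivariant localization on the moduli space $\qm_{\ns\,\infty}$, running the same argument that produced Proposition~\ref{verformula} for the vertex $\ver$, but with the roles of $0$ and $\infty$ interchanged (since $\Omega^{\nabla}$ pushes forward from $\infty$ and inserts its descendant at $0$, whereas $\ver$ does the opposite). First I would classify the $\bT\times\mathbb{C}^{\times}_{q}$-fixed quasimaps in $\qm^{d}_{\ns\,\infty}$ whose value at $\infty$ is the fixed point $\lambda$: exactly as in Section~\ref{hilbqv}, such a quasimap has its unique singularity at $0$, its underlying bundle splits $\mathbb{C}^{\times}_{q}$-equivariantly as $\mathscr{V}\cong\bigoplus_{i=1}^{n}\mathcal{O}(d_{i})$, and the stability condition forces $b=0$ and forces the section $a$ to be nonvanishing away from $0$, so that $d_{i}\geq 0$ and the tuples $(d_{i})$ range precisely over the cone $C_{\lambda}$ of reverse plane partitions of shape $\lambda$. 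As usual the $\det N^{*}$ denominators of the localization formula are absorbed into the reduced virtual tangent space $T_{\text{vir}}|_{f}-T_{\lambda}X$, whose character I would compute term by term via Serre duality on $\mathbb{P}^{1}$, using Propositions~\ref{virtan} and~\ref{linearization}; the bundle $\mathscr{M}$ has exactly the same shape as in the $\qm_{\ns\,0}$ computation, so the only change is in the $\mathbb{C}^{\times}_{q}$-linearization of the summands $\mathcal{O}(d_{i})$.

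It then remains to assemble the three standard ingredients, exactly as in the derivation of Proposition~\ref{verformula}: $\Omega^{\nabla}(\alpha)|_{\lambda}$ is a sum over fixed quasimaps based at $\lambda$ of $\ahat(T_{\text{vir}}|_{f})$, times the symmetrizing twist $(\det\mathscr{T}^{1/2}_{\text{opp},0}/\det\mathscr{T}^{1/2}_{\text{opp},\infty})^{1/2}$, times the descendant value, times $z^{|d|}$. The $\ahat$ term produces, after rewriting each reduced tangent pair in $q$-Pochhammer form (the notation $\{q\}_{k}$ of Section~\ref{hilbqv} and the identity relating $H^{0}$, $H^{1}$ and the dualizing sheaf $q\mathcal{O}(-2)$ on $\mathbb{P}^{1}$ are the right tools), the three Pochhammer products appearing in the claimed formula. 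The symmetrizing twist for the polarization~(\ref{hilbpol}) is a monomial in $q$ and $\sqrt{t_{1}t_{2}}$ which, with the roles of $0$ and $\infty$ swapped, contributes the prefactor $(\sqrt{t_{1}t_{2}}\,z)^{|d|}$. Finally the descendant insertion $\text{ev}_{0}^{*}(\textbf{f}^{\nabla}_{\text{opp},\alpha})$ is simply the rational function $\textbf{f}^{\nabla}_{\text{opp},\alpha}(x)$ with its tautological Chern roots $x_{i}$ specialized to the $\bT\times\mathbb{C}^{\times}_{q}$-weights of $\mathscr{V}|_{0}$, namely $x_{i}=\varphi^{\lambda}_{i}q^{-d_{i}}$ by Proposition~\ref{linearization} --- consistent with the $\mathbb{P}^{1}$-flip $q\mapsto q^{-1}$ relating this count to the one of Proposition~\ref{verformula}. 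Summing over $d\in C_{\lambda}$ yields the asserted formula; in fact one can deduce the entire identity from Proposition~\ref{verformula} by applying the automorphism of $\mathbb{P}^{1}$ exchanging $0$ and $\infty$, together with the substitution $\alpha\mapsto\textbf{f}^{\nabla}_{\text{opp},\alpha}$.

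The main obstacle is entirely one of bookkeeping: getting every power of $q$, $t_{1}$, $t_{2}$, and $\sqrt{t_{1}t_{2}}$ correct, so that the final answer matches the displayed formula verbatim rather than merely up to a monomial. Concretely, one must track (i) how the $\bT\times\mathbb{C}^{\times}_{q}$-twist needed to make a fixed quasimap genuinely equivariant interacts with the linearization of Proposition~\ref{linearization} for $\qm_{\ns\,\infty}$, which is what turns the substitution $x_{i}=\varphi^{\lambda}_{i}q^{d_{i}}$ of the $\ver$ computation into $x_{i}=\varphi^{\lambda}_{i}q^{-d_{i}}$ here and replaces $d_{i}-d_{j}$ by $-(d_{i}-d_{j})$ inside several of the Pochhammer symbols; (ii) the sign of $\deg\mathscr{T}^{1/2}_{\text{opp}}$ and of the $0$-versus-$\infty$ restriction of $\det\mathscr{T}^{1/2}_{\text{opp}}$, responsible for the change from $(z/\sqrt{t_{1}t_{2}})^{|d|}$ to $(\sqrt{t_{1}t_{2}}\,z)^{|d|}$; and (iii) the pair-by-pair Serre-duality rearrangement of the reduced tangent space. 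None of these steps is conceptually difficult, but care is required precisely because the statement asserts an exact equality; a useful sanity check along the way is that the result is consistent with Lemma~\ref{locterms2}.
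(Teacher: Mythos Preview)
Your proposal is correct and matches the paper's approach. The paper proceeds exactly via the shortcut you mention at the end: rather than redoing the full localization from scratch, it observes that the $\qm_{\ns\,\infty}$ count is obtained from Proposition~\ref{verformula} by the substitution $q\mapsto q^{-1}$ (coming from the linearization $H^{*}(\mathbb{P}^{1},q^{-d}\mathcal{O}(d))=H^{*}(\mathbb{P}^{1},\mathcal{O}(d))|_{q\to q^{-1}}$) together with the shift $z\to z/q$ from the polarization twist, and then uses the Pochhammer identity $\frac{(\hbar x)_{d}}{(qx)_{d}}\big|_{q\to q^{-1}}=\hbar^{d}q^{d}\frac{(1/(\hbar x))_{d}}{(q/x)_{d}}$ to carry out the bookkeeping you flag in point~(i)--(iii).
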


\begin{proof}
We repeat the analysis of the previous subsection. As above, a $\bT\times \mathbb{C}^{\times}_{q}$-fixed degree $d$ quasimap $f$ provides a vector bundle $\mathscr{V} \cong \bigoplus_{i=1}^{n} \mathcal{O}(d_i)$ where $\sum_{i} d_i =d$. By the stability condition and $\bT\times \mathbb{C}^{\times}_{q}$ invariance, we have $d_i \geq 0$ and the component of the section $f$ in $\mathscr{V}$ is uniquely determined. Applying $\mathbb{C}^{\times}_{q}$-invariance again, the bundles $\mathcal{O}(d_i)$ must be additionally twisted by $q^{-d_i}$. From the fact that
\begin{align*}
H^*(\mathbb{P}^1, q^{-d} \mathcal{O}(d)) = q^{-d}(1+q+\ldots +q^{d}) = 1+q^{-1}+\ldots +q^{-d} = H^*(\mathbb{P}^1,\mathcal{O}(d))|_{q=q^{-1}}
\end{align*}
we see that a formula $\Omega^{\nabla}(\alpha)|_{\lambda}$ can be obtained from $\ver^{\nabla}(\alpha)|_{\lambda}$ by substituting $q$ with $q^{-1}$ and shifting $z$ by a power of $q$. The shift of $z$ comes from the contribution of the polarization $\mathscr{T}^{1/2}_{\text{opp}}$ to the symmetrized virtual structure sheaf (\ref{symvrs}). In this case, the shift is $z \to z/q$.

We can compute the effect of switching $q$ to $q^{-1}$ in Proposition \ref{verformula}. By definition of $q$-Pochammer symbols, we have
$$
\frac{(\hbar x)_{d}}{(q x)_{d}}\bigg|_{q=q^{-1}}= \hbar^d q^d \frac{\left(\frac{1}{\hbar x}\right)_{d}}{\left(\frac{q}{x}\right)_{d}}, \quad d \in \mathbb{Z}
$$
Thus switching $q$ to $q^{-1}$ in the vertex gives
\begin{align*}
&\ver(\alpha)|_{\lambda,q=q^{-1}} \\
&=\sum_{d\in C_{\lambda}} \left(\frac{z}{\sqrt{t_1 t_2}}\right)^{|d|} \alpha(x)|_{x_i=\varphi_i^{\lambda} q^{-d_i}} \prod_{i,j=1}^{n} \left(\hbar q\right)^{d_i-d_j}\frac{\left(\frac{\varphi^{\lambda}_j}{\varphi^{\lambda}_i t_1}\right)_{d_i-d_j}}{\left(q t_2 \frac{ \varphi^{\lambda}_j}{ \varphi^{\lambda}_i}\right)_{d_i-d_j}} \left(\hbar q\right)^{-(d_i-d_j)}\frac{\left(q \frac{\varphi^{\lambda}_j}{\varphi^{\lambda}_i}\right)_{d_i-d_j}}{\left(\frac{\varphi^{\lambda}_j}{t_1 t_2 \varphi^{\lambda}_i }\right)_{d_i-d_j}} \prod_{i=1}^{n} \left(\hbar q\right)^{d_i}\frac{\left( \frac{1}{\varphi^{\lambda}_i t_1 t_2}\right)_{d_i}}{\left(\frac{q}{ \varphi^{\lambda}_i}\right)_{d_i}}  \\
&=\sum_{d\in C_{\lambda}} \left(\frac{z}{\sqrt{t_1 t_2}}\right)^{|d|} \alpha(x)|_{x_i=\varphi_i^{\lambda} q^{-d_i}} \prod_{i,j=1}^{n} \frac{\left(\frac{\varphi^{\lambda}_j}{\varphi^{\lambda}_i t_1}\right)_{d_i-d_j}}{\left(q t_2 \frac{ \varphi^{\lambda}_j}{ x^{\lambda}_i}\right)_{d_i-d_j}} \frac{\left(q \frac{\varphi^{\lambda}_j}{\varphi^{\lambda}_i}\right)_{d_i-d_j}}{\left(\frac{\varphi^{\lambda}_j}{t_1 t_2 \varphi^{\lambda}_i }\right)_{d_i-d_j}} \prod_{i=1}^{n} \left(\hbar q\right)^{d_i}\frac{\left( \frac{1}{\varphi^{\lambda}_i t_1 t_2}\right)_{d_i}}{\left(\frac{q}{ \varphi^{\lambda}_i}\right)_{d_i}}  \\
&=\sum_{d\in C_{\lambda}} \left(\sqrt{\hbar} q z\right)^{|d|} \alpha(x)|_{x_i=\varphi_i^{\lambda} q^{-d_i}}  \prod_{i,j=1}^{n} \frac{\left(\frac{\varphi^{\lambda}_j}{\varphi^{\lambda}_i t_1}\right)_{d_i-d_j}}{\left(q t_2\frac{ \varphi^{\lambda}_j}{ \varphi^{\lambda}_i}\right)_{d_i-d_j}} \frac{\left(q \frac{\varphi^{\lambda}_j}{\varphi^{\lambda}_i}\right)_{d_i-d_j}}{\left(\frac{\varphi^{\lambda}_j}{t_1 t_2 \varphi^{\lambda}_i }\right)_{d_i-d_j}} \prod_{i=1}^{n}\frac{\left( \frac{1}{\varphi^{\lambda}_i t_1 t_2}\right)_{d_i}}{\left(\frac{q}{ \varphi^{\lambda}_i}\right)_{d_i}} 
\end{align*}

Shifting $z \to z/q$ gives the result.
\end{proof}

By Theorem \ref{mainthm2}, we can also obtain an explicit formula for $\Psi^{\nabla}$ in the fixed point basis. Recall that the pushforward under the inclusion $\iota: X^{\bT} \hookrightarrow X$ induces an isomorphism $K_{\bT}(X^{\bT})_{loc}\cong K_{\bT}(X)_{loc}$. Since $X^{\bT}$ is finite, a basis of $K_{\bT}(X^{\bT})_{loc}$ is given by the classes of the structure sheaves of the fixed points. The class of a fixed point $\lambda \in X^{\bT}$ is defined to be $[\lambda]=\iota_{*}(\mathcal{O}_{\lambda})$ so that $[\lambda]|_{\mu}=\delta_{\lambda,\mu} \Lambda^{\bullet}(T_{\lambda} X)^{*}$. The classes of the fixed points provides a basis for $K_{\bT}(X)_{loc}$. 

The matrix of the operator $\Psi^{\nabla}$ in the basis of fixed points is given by
$$
\Psi^{\nabla}_{\lambda,\mu}:=\Psi^{\nabla}(\mu)|_{\lambda}
$$
so that
$$
\Psi^{\nabla}([\mu])=\sum_{\lambda} \Psi^{\nabla}_{\lambda,\mu} \cdot [\lambda] 
$$

\begin{Theorem}\label{hilbsolution}
The matrix $\Psi^{\nabla}_{\lambda,\mu}$ given by
$$
\Psi^{\nabla}_{\lambda,\mu}=\sum_{d \in C_{\mu}} \left(\sqrt{t_1 t_2} z\right)^{|d|} \textbf{g}^{\nabla}_{\text{opp},\lambda}(x)|_{x_i=\varphi_i^{\mu} q^{-d_i}} \prod_{i,j=1}^{n} \frac{\left(\frac{\varphi^{\mu}_j}{\varphi^{\mu}_i t_1}\right)_{d_i-d_j}}{\left(q t_2 \frac{ \varphi^{\mu}_j}{ \varphi^{\mu}_i}\right)_{d_i-d_j}} \frac{\left(q \frac{\varphi^{\mu}_j}{\varphi^{\mu}_i}\right)_{d_i-d_j}}{\left(\frac{\varphi^{\mu}_j}{ \varphi^{\mu}_i  t_1 t_2}\right)_{d_i-d_j}} \prod_{i=1}^{n}\frac{\left( \frac{1}{\varphi^{\mu}_i t_1 t_2}\right)_{d_i}}{\left(\frac{q}{ \varphi^{\mu}_i}\right)_{d_i}} 
$$
is a solution of the exotic quantum difference equation for the alcove $\nabla$. Here $\textbf{g}^{\nabla}_{\text{opp},\lambda}$ is from (\ref{gdesc}) where the stable envelopes use the opposite polarization and slope.
\end{Theorem}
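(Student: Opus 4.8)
The plan is to read the formula off as a direct consequence of Theorem \ref{mainthm2} together with the explicit localization computations already established in this chapter; no new estimates are required. By Theorem \ref{mainthm2} we have $\Psi^{\nabla}=(\Omega^{\nabla})^{\chi}$, so $\Omega^{\nabla}$ is the $\chi$-adjoint of $\Psi^{\nabla}$. I will combine this with the fact that $(\cdot,\cdot)_{X}$ is the localization pairing: since every $\bT$-fixed point of $X=\mathcal{M}(n,1)$ is isolated, the class $[\mu]=\iota_{*}(\mathcal{O}_{\mu})$ satisfies $\chi(\beta\otimes[\mu])=\beta|_{\mu}$ for any $\beta$ by the projection formula for the inclusion of the point $\mu$. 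Writing out the adjoint relation $((\Omega^{\nabla})^{\chi}[\mu],[\lambda])_{X}=([\mu],\Omega^{\nabla}[\lambda])_{X}$ and pairing against fixed point classes on both sides then yields the clean identity
$$
\Psi^{\nabla}([\mu])\big|_{\lambda}=\Omega^{\nabla}([\lambda])\big|_{\mu},
$$
i.e.\ the matrix of $\Psi^{\nabla}$ in the fixed point basis is the transpose of the matrix of $\Omega^{\nabla}$; all Euler-class factors cancel at this step precisely because $[\mu]$ and $[\lambda]$ are classes of isolated points.

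It then remains to substitute the explicit formula for $\Omega^{\nabla}([\lambda])|_{\mu}$, which is exactly the content of the Proposition immediately preceding this theorem, applied with $\alpha=[\lambda]$. The descendant insertion $\textbf{f}^{\nabla}_{\text{opp},[\lambda]}$ restricted to $0$ is, by definition $(\ref{gdesc})$, the function $\textbf{g}^{\nabla}_{\text{opp},\lambda}$, and evaluating at the fixed point $\mu$ is the substitution $x_{i}=\varphi_{i}^{\mu}q^{-d_{i}}$ in the statement. The $q$-Pochhammer prefactors, the overall power $(\sqrt{t_{1}t_{2}}\,z)^{|d|}$, and the summation range $d\in C_{\mu}$ over reverse plane partitions of shape $\mu$ all carry over verbatim from that Proposition. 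This produces precisely the asserted matrix $\Psi^{\nabla}_{\lambda,\mu}$; since $\Psi^{\nabla}$ solves the exotic $q$-difference equation for $\nabla$ with $\Psi^{\nabla}(0)=1$ by definition, so does the matrix in the statement.

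Most of the work in this theorem has already been absorbed into the earlier results: Theorem \ref{mainthm2}, Proposition \ref{hilbdesc} (which turns the stable-envelope definition of the descendants into the explicit tree formula via Theorem \ref{Ktreeformula}), and the localization computation of $\Omega^{\nabla}$. The only genuinely delicate point is getting the adjoint/transpose right: it is $\textbf{g}^{\nabla}_{\text{opp},\lambda}$, the descendant attached to the \emph{row} index, that appears, evaluated at the weights $\varphi^{\mu}$ of the \emph{column} index, with the $q$-Pochhammer factors built from $\varphi^{\mu}$ rather than $\varphi^{\lambda}$. A secondary check is the compatibility of the ``opp'' conventions, namely that $T^{1/2}_{\text{opp}}$ is used both in the symmetrized virtual structure sheaf defining $\Omega^{\nabla}$ and in the stable envelopes defining $\textbf{g}^{\nabla}_{\text{opp},\lambda}$ — exactly the hypothesis under which Theorem \ref{mainthm2} was proved.
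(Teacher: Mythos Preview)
Your proof is correct and follows essentially the same approach as the paper: invoke Theorem \ref{mainthm2} to identify $\Psi^{\nabla}=(\Omega^{\nabla})^{\chi}$, unwind the adjoint relation against fixed-point classes to see that $\Psi^{\nabla}$ is the transpose of $\Omega^{\nabla}$ in the fixed-point basis, and then read off the explicit formula from the preceding Proposition computing $\Omega^{\nabla}(\alpha)|_{\lambda}$. The only cosmetic difference is that the paper pairs with $a=[\lambda]$, $b=[\mu]$ while you pair with $a=[\mu]$, $b=[\lambda]$, which amounts to the same identity with the indices swapped.
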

\begin{proof}
By Theorem \ref{mainthm2}, the exotic solution $\Psi^{\nabla}$ is the adjoint of $\Omega^{\nabla}$ of Definition \ref{adjoint}. By definition of adjoint,
$$
(\Psi^{\nabla}([\lambda]),[\mu])_{X}= ([\lambda],\Omega^{\nabla}([\mu]))_{X}
$$
By definition of the pairing $(\cdot,\cdot)_{X}$, we have
$$
(\Psi^{\nabla}([\lambda]),[\mu])_{X}=\chi(\Psi^{\nabla}([\lambda])\otimes [\mu])= \sum_{\nu} \frac{\Psi^{\nabla}([\lambda])|_{\nu} [\mu]|_{\nu}}{\Lambda^{\bullet}(T_{\nu}X)^{*}} = \Psi^{\nabla}([\lambda])|_{\mu}
$$
Similarly, 
$$
([\lambda],\Omega^{\nabla}([\mu]))= \Omega^{\nabla}([\mu])_{\lambda}
$$
So in the fixed point basis, $\Psi^{\nabla}$ is simply the transpose of $\Omega^{\nabla}$.
\end{proof}

There is a convenient way to write the matrix of $\Psi^{\nabla}$ in the fixed point basis as a contour integral. We view the Chern roots as complex variables. Assume $|q|<1$. Let 
$$
(x)_{\infty}:=(x;q)_{\infty}:=\prod_{i=0}^{\infty}(1-x q^i)
$$
Define
$$
\Phi(\vec{x})=\prod_{i,j=1}^{n}\frac{\left(q \frac{t_2 x_j}{x_i}\right)_{\infty}}{\left(\hbar^{-1}\frac{t_2 x_j}{x_i}\right)_{\infty}} \frac{\left(\hbar^{-1}\frac{x_j}{x_i}\right)_{\infty}}{\left(q\frac{x_j}{x_i}\right)_{\infty}} \prod_{i=1}^{n} \frac{\left(q \frac{1}{x_i}\right)_{\infty}}{\left(\hbar^{-1}\frac{1}{ x_i}\right)_{\infty}}
$$
\begin{Theorem}\label{intform}
We have the equality
$$
\Psi^{\nabla}_{\lambda,\mu}=\frac{1}{2 \pi i \alpha_{\mu}}\int_{C_{\mu}} e^{-\frac{\ln(\sqrt{\hbar} z) \ln(x_1\ldots x_n)}{\ln(q)} } \Phi(\vec{x}) \textbf{g}^{\nabla}_{\text{opp},\lambda}(\vec{x}) \prod_{i=1}^{n} d x_i
$$
where
$$
\alpha_{\mu}=\left(e^{-\frac{\ln(\sqrt{\hbar}z) \ln(x_1 \ldots x_n)}{\ln(q)}} \Phi(\vec{x})\right)\Big|_{x_i=\varphi_i^{\mu}}
$$
and the contour $C_{\mu}$ encloses all poles of the form
$$
x_i=\varphi_i^{\mu} q^{-d_i}, \quad d_i \geq 0
$$
\end{Theorem}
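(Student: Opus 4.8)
\textit{Proof sketch.} The plan is to evaluate the contour integral by residues and to match the outcome term by term against the series of Theorem~\ref{hilbsolution}, which by Theorem~\ref{mainthm2} equals $\Psi^{\nabla}_{\lambda,\mu}$. Write $F(\vec{x})=e^{-\ln(\sqrt{\hbar}z)\ln(x_1\cdots x_n)/\ln(q)}\,\Phi(\vec{x})\,\textbf{g}^{\nabla}_{\text{opp},\lambda}(\vec{x})$ for the integrand. Both sides of the asserted identity are formal power series in $z$, so it is enough to compare coefficients of each $z^{m}$; since $|q|<1$, the value of the exponential prefactor on the locus $x_i=\varphi^\mu_i q^{-d_i}$ is $(\sqrt{\hbar}z)^{|\vec d|}$ times a $z$-part of $\alpha_\mu$, so for fixed $m$ only the finitely many $\vec d$ with $|\vec d|=m$ contribute, and the contour $C_\mu$ may be taken to enclose exactly the corresponding poles. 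This also supplies the analytic justification that the residue expansion converges order by order in $z$ and that the ``ascending'' strings of poles of $\Phi$ lying outside $C_\mu$ contribute nothing.

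First I would pin down the polar divisor of $F$ inside $C_\mu$. The exponential prefactor is entire and nowhere vanishing, so poles come only from $\Phi$ and from $\textbf{g}^{\nabla}_{\text{opp},\lambda}$. The function $\Phi$ is meromorphic with polar locus a union of $q$-shifted hypersurfaces read off from its infinite $q$-Pochhammer denominators $(\hbar^{-1}t_2 x_j/x_i)_\infty$, $(q x_j/x_i)_\infty$, $(\hbar^{-1}/x_i)_\infty$, while $\textbf{g}^{\nabla}_{\text{opp},\lambda}$ is a rational function whose poles are the zeros of the $\ahat$-denominators occurring in the stable-envelope formula of Proposition~\ref{hilbdesc}. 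Proceeding by iterated residues, ordering the variables $x_i$ according to the poset on the boxes of $\mu$, one checks that the poles of $F$ inside $C_\mu$ are simple and that the residue at $\vec{x}=(\varphi^\mu_1 q^{-d_1},\dots,\varphi^\mu_n q^{-d_n})$ vanishes unless $\vec d$ is a reverse plane partition of shape $\mu$, i.e.\ unless $\vec d\in C_\mu$; equivalently, the ``descending string'' of poles truncates precisely along the cone $C_\mu$ because of cancellations between zeros of $\Phi$ and poles of $\textbf{g}^{\nabla}_{\text{opp},\lambda}$ (and vice versa). I expect this bookkeeping --- establishing simplicity of the poles and the emergence of the reverse-plane-partition constraint --- to be the main obstacle.

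Granting this, the residue theorem gives
$\tfrac{1}{2\pi i\,\alpha_\mu}\int_{C_\mu}F(\vec{x})\prod_{i} dx_i=\tfrac{1}{\alpha_\mu}\sum_{\vec d\in C_\mu}\operatorname{Res}_{\vec{x}=\vec{\varphi}^\mu q^{-\vec d}}F$,
and it remains to evaluate each residue. I would use three inputs: (i) the exponential factor evaluates on $x_i=\varphi^\mu_i q^{-d_i}$ to $(\sqrt{\hbar}z)^{|\vec d|}$ times $e^{-\ln(\sqrt{\hbar}z)\ln(\varphi^\mu_1\cdots\varphi^\mu_n)/\ln(q)}$, producing the monomial $(\sqrt{\hbar}z)^{|\vec d|}$ of Theorem~\ref{hilbsolution} together with the exponential part of $\alpha_\mu$; (ii) the identity $(a)_\infty/(aq^{m})_\infty=(a)_m$ collapses $\Phi(\vec{\varphi}^\mu q^{-\vec d})/\Phi(\vec{\varphi}^\mu)$ to exactly the product of finite $q$-Pochhammer symbols $\prod_{i,j}\tfrac{(\varphi^\mu_j/(\varphi^\mu_i t_1))_{d_i-d_j}}{(q t_2\varphi^\mu_j/\varphi^\mu_i)_{d_i-d_j}}\tfrac{(q\varphi^\mu_j/\varphi^\mu_i)_{d_i-d_j}}{(\varphi^\mu_j/(\varphi^\mu_i t_1 t_2))_{d_i-d_j}}\prod_i\tfrac{(1/(\varphi^\mu_i t_1 t_2))_{d_i}}{(q/\varphi^\mu_i)_{d_i}}$ appearing in Theorem~\ref{hilbsolution}; (iii) the residue contributed by $\textbf{g}^{\nabla}_{\text{opp},\lambda}$, together with the $\Phi$-part of $\alpha_\mu$, accounts for the remaining factors and leaves precisely the value $\textbf{g}^{\nabla}_{\text{opp},\lambda}(\vec{\varphi}^\mu q^{-\vec d})$. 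Assembling (i)--(iii), $\tfrac{1}{\alpha_\mu}\operatorname{Res}_{\vec{x}=\vec{\varphi}^\mu q^{-\vec d}}F$ equals the $\vec d$-summand of the series in Theorem~\ref{hilbsolution}.

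Summing over $\vec d\in C_\mu$ then identifies the integral with the series of Theorem~\ref{hilbsolution}, which by Theorem~\ref{mainthm2} is $\Psi^{\nabla}_{\lambda,\mu}$, completing the proof. $\square$
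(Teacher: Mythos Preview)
Your proposal is correct and follows the same approach as the paper. The paper's entire proof reads ``This follows by a straightforward computation of the residues,'' and your sketch simply unpacks this computation: the exponential prefactor supplies $(\sqrt{\hbar}z)^{|d|}$ times the exponential piece of $\alpha_\mu$, and the identity $(a)_\infty/(aq^m)_\infty=(a)_m$ collapses $\Phi(\vec{\varphi}^\mu q^{-\vec d})/\Phi(\vec{\varphi}^\mu)$ to the finite Pochhammer products of Theorem~\ref{hilbsolution}. The paper also notes immediately afterward that one may sidestep all analytic subtleties by interpreting the formula as a Jackson $q$-integral, so your discussion of contour placement and the emergence of the reverse-plane-partition constraint goes beyond what the paper actually argues.
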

\begin{proof}
This follows by a straightforward computation of the residues.
\end{proof}
Alternatively, one can avoid any analytic issues by interpreting the above integral as a Jackson $q$-integral, see Section 2.4 of \cite{dinksmir2}.

\section{Quantum toroidal algebra}\label{qta}

Having written formulas for the solutions of the exotic quantum difference equations, we now proceed to write the equations explicitly. We will do so using the algebra known as the quantum toroidal $\mathfrak{gl}_1$, written $\eha$. It was first discovered in \cite{miki}, see also \cite{FeiginBethe,FJMM, Negqta, SchifVas}. 

\subsection{Generators and relations}
This section follows \cite{SchifVas} closely. Let $\textbf{Z}=\mathbb{Z}^2$, $\textbf{Z}^{*}=\mathbb{Z} \setminus (0,0)$, 
$$
\textbf{Z}^{+}= \{(i,j) \in \textbf{Z} \, \mid \, i >0 \text{ or } i=0, j>0\}
$$
and $\textbf{Z}^{-}=-\textbf{Z}^{+}$.
Let 
$$
n_k=\frac{(t_1^{k/2}-t_1^{-k/2})(t_2^{k/2}-t_2^{-k/2})(\hbar^{-k/2}-\hbar^{k/2})}{k}
$$
where $\hbar=t_1t_2$. For a vector $\textbf{x}=(x_1,x_2) \in \textbf{Z}^*$, let $\deg(\textbf{x}):=\gcd(x_1,x_2)$. Let $\epsilon_{\textbf{x}}=1$ if $x \in \textbf{Z}^{+}$ and $\epsilon_{\textbf{x}}=-1$ if $x \in \textbf{Z}^{-}$. For noncollinear vectors $\textbf{x},\textbf{y} \in \textbf{Z}^*$, let $\epsilon_{\textbf{x},\textbf{y}}=\text{sign}(\det(\textbf{x},\textbf{y}))$.

\begin{Definition}
Let $\mathscr{U}_{\hbar}(\doublehat{\mathfrak{gl}}_1)$ be the associative unital algebra over $\mathbb{C}(t_1^{1/2},t_2^{1/2})$ generated by $e_{\textbf{x}}$ and $K_{\textbf{x}}$ for $\textbf{x} \in \textbf{Z}^*$ with the following relations:
\begin{enumerate}
    \item $K_{\textbf{x}}$ is central for all $\textbf{x} \in \textbf{Z}^*$, $K_{\textbf{x}} K_{\textbf{y}}=K_{\textbf{x}+\textbf{y}}$, and $K_{0}=1$.
    \item If $\textbf{x}$ and $\textbf{y}$ are collinear then $$[e_{\textbf{x}},e_{\textbf{y}}]=\delta_{\textbf{x}+\textbf{y}} \frac{K_{\textbf{x}}^{-1}-K_{\textbf{x}}}{n_{\deg(\textbf{x})}}$$
    \item If $\deg(\textbf{x})=1$ and the triangle with vertices $\{(0,0),\textbf{x},\textbf{y}\}$ has no interior lattice points, then 
    $$[e_{\textbf{x}},e_{\textbf{y}}]=\epsilon_{\textbf{y},\textbf{x}} K_{\alpha(\textbf{x},\textbf{y})} \frac{\Phi_{\textbf{x}+\textbf{y}}}{n_1}$$
    where
    $$
    \alpha(\textbf{x},\textbf{y})=\begin{cases}
    \epsilon_{\textbf{x}}(\epsilon_{\textbf{x}} \textbf{x} + \epsilon_{\textbf{y}} \textbf{y}-\epsilon_{\textbf{x}+\textbf{y}}(\textbf{x}+\textbf{y}))/2 & \epsilon_{\textbf{x},\textbf{y}}=1 \\
    \epsilon_{\textbf{y}}(\epsilon_{\textbf{x}}\textbf{x} +\epsilon_{\textbf{y}}\textbf{y} -\epsilon_{\textbf{x}+\textbf{y}}(\textbf{x}+\textbf{y}))/2 & \epsilon_{\textbf{x},\textbf{y}}=-1
    \end{cases}
    $$
    and the elements $\Phi_{\textbf{z}}$ for $\textbf{z} \in \textbf{Z}$ are defined by
$$
\sum_{k=0}^{\infty} \Phi_{k\textbf{w}} t^k= \exp\left( \sum_{m=1}^{\infty} n_m e_{m \textbf{w}} t^m\right)
$$ 
for any $\textbf{w}\in \textbf{Z}^*$ with $\deg(\textbf{w})=1$.
\end{enumerate}

\end{Definition}

The algebra $\mathscr{U}_{\hbar}(\doublehat{\mathfrak{gl}}_1)$ is called the quantum toroidal algebra. 

\subsection{Heisenberg subalgebras}\label{Heis}
For $w \in \mathbb{Q} \cup \{\infty\}$, we denote by $n(w)$ and $d(w)$ the numerator and denominator of the reduced expression for $w$. We define $n(\infty)=1$ and $d(\infty)=0$. We also assume $d(w)\geq 0$. Let
$$
\alpha^{w}_{k}:=e_{k d(w),k n(w)}
$$
for $k \in \mathbb{Z}\setminus \{0\}$. These generate Heisenberg subalgebras. The subalgebra for the wall $w=0$ is called the horizontal subalgebra, and the subalgebra for $w=\infty$ is called the vertical subalgebra. 

The upper and lower triangular universal $R$-matrices of the Heisenberg subalgebras for a wall $w\in \mathbb{Q}$ are given explicitly by
$$
R_{w}^{+}=\exp\left(-\sum_{k=1}^{\infty}n_k\alpha^{w}_{k} \otimes \alpha^{w}_{-k} \right)
$$
and
$$
R_{w}^{-}=\exp\left(-\sum_{k=1}^{\infty}n_k\alpha^{w}_{-k} \otimes \alpha^{w}_{k} \right)
$$
The universal $R$-matrix of the infinite slope Heisenberg subalgebra is given by
$$
R_{\infty}= \exp\left(-\sum_{k=1}^{\infty} n_k \alpha^{\infty}_{-k} \otimes \alpha^{\infty}_{k}\right)
$$

The universal $R$-matrix of $\eha$ for slope $s$ can be expressed as an infinite product of these wall $R$-matrices
$$
\mathscr{R}^{s}= \left(\rprod_{\substack{w \in \mathbb{Q} \\ w <s}} R_{w}^{-} \right) R_{\infty} \left( \lprod_{\substack{w \in \mathbb{Q} \\w >s}}R_{w}^{+} \right) 
$$

\subsection{Fock space representation}
It is convenient to consider the equivariant $K$-theory of all $\text{Hilb}^{n}(\mathbb{C}^2)$ at once. We consider equivariance with respect to the torus $\bT=\mathbb{C}^{\times}_{t_1^{1/2}}\times \mathbb{C}^{\times}_{t_2^{1/2}} \times \mathbb{C}_{u}$, where $t_1$ and $t_2$ are as in Section \ref{hilbqv} and $u$ is the framing parameter. According to the general theory of Section \ref{algebras}, the algebra $\hopf$ acts on the localized $K$-theory
$$
\bigoplus_{n=0}^{\infty} K_{\bT}(\text{Hilb}^{n}(\mathbb{C}^2))_{loc}
$$
Furthermore, the above direct sum decomposition is a weight space decomposition for a Cartan subalgebra of $\hopf$. It is well-known that there is an isomorphism of graded vector spaces
$$
\bigoplus_{n=0}^{\infty} K_{\bT}(\text{Hilb}^{n}(\mathbb{C}^2))_{loc}=\mathbb{Q}[p_1,p_2,\ldots] \otimes_{\mathbb{Z}} \mathbb{Q}\left(t_1^{1/2},t_2^{1/2},u\right)=:\mathsf{F}
$$
where the grading on $\mathsf{F}$ is defined by $\deg p_k=k$. This space can be identified with the ring of symmetric functions in infinitely many variables, where $p_i$ is viewed as the $i$th power sum function. According to the grading, we write
$$
\mathsf{F}=\bigoplus_{n=0}^{\infty} \mathsf{F}_n
$$

There is an important basis of $\mathsf{F}$ known as the Macdonald polynomials, see \cite{mac}. We state our conventions here. Define an inner product on $\mathsf{F}$ by
$$
\langle p_{\lambda}, p_{\mu}\rangle_{t_1,t_2}= \delta_{\lambda,\mu}\prod_{n \geq 1} n^{m_n} m_n!\prod_{i=1}^{l(\lambda)} \frac{1-t_1^{-\lambda_i}}{1-t_2^{\lambda_i}}  \quad  \text{where} \quad m_n=|\{k \, \mid \, \lambda_k=n \}|
$$
The usual parameters $q$ and $t$ of Macdonald theory are related to $t_1$ and $t_2$ by $q=t_1^{-1}$, $t=t_2^{1}$. The Macdonald polynomials, written $P_{\lambda}$, are the unique basis of $\mathsf{F}$ orthogonal with respect to this inner product with a strictly lower triangular expansion in the basis of monomial symmetric functions with respect to the dominance ordering on partitions. The integral form of the Macdonald polynomials is defined as
$$
\widetilde{H}_{\lambda}=t_2^{|\lambda|} \prod_{\square \in \lambda}\left( t_2^{-\text{leg}(\square)-1}-t_1^{-\text{arm}(\square)}\right) P_{\lambda}
$$
The modified Macdonald polynomials in \cite{Haim} are defined as
\begin{equation}\label{mac}
H_{\lambda}=\widetilde{H}_{\lambda}\left[\frac{p_k}{1-t_2^{k}}\right]
\end{equation}
where the square brackets stand for the substitution of $p_k$ by $\frac{p_k}{1-t_2^{k}}$ for all $k$.

\begin{Proposition}[\cite{NegFlags}]\label{ehaaction}
There is an action of the algebra $\eha$ on $\mathsf{F}$ in which the horizontal subalgebra acts via
$$
e_{m,0}= \begin{cases}
\frac{1}{(t_1^{m/2}-t_1^{-m/2})(t_2^{m/2}-t_2^{-m/2})} p_{-m} & m<0 \\
- m \frac{\partial}{\partial p_{m}} & m>0
\end{cases}
$$
and the vertical subalgebra acts diagonally on $H_{\lambda}$ as
$$
e_{0,m} H_{\lambda} =u^{-m}  \text{sign}(m) \left(\frac{1}{1-t_1^{m}} \sum_{i=1}^{\infty}t_1^{m \lambda_i} t_2^{m(i-1)} \right) H_{\lambda}
$$
\end{Proposition}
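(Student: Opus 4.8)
The strategy is to check that the prescribed operators satisfy the defining relations of $\eha$ from the previous subsection, using the fact that $\eha$ is generated by the four elements $e_{\pm1,0}$ and $e_{0,\pm1}$ together with the central elements $K_{\mathbf{x}}$. Thus, once one exhibits commuting families realizing the horizontal and vertical Heisenberg subalgebras, the whole proposition reduces to (i) identifying the central charge, (ii) the intra-Heisenberg relations along the two axes, and (iii) the mixed relation (3) applied to the pair $\mathbf{x}=(1,0)$, $\mathbf{y}=(0,1)$ and the further commutators it generates. Step (iii) carries all the content.

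First I would treat the horizontal subalgebra. The operators $e_{m,0}=-m\,\partial/\partial p_m$ for $m>0$ and $e_{m,0}=\tfrac{1}{(t_1^{m/2}-t_1^{-m/2})(t_2^{m/2}-t_2^{-m/2})}\,p_{-m}$ for $m<0$ generate a Heisenberg algebra on $\mathsf{F}=\mathbb{Q}[p_1,p_2,\dots]\otimes\mathbb{Q}(t_1^{1/2},t_2^{1/2},u)$; the single computation $[\,\partial_{p_m},p_m\,]=1$ gives $[e_{m,0},e_{-m,0}]=\tfrac{-m}{(t_1^{m/2}-t_1^{-m/2})(t_2^{m/2}-t_2^{-m/2})}$, and matching this against relation (2) forces $K_{(m,0)}$ to act on $\mathsf{F}$ by a fixed power of $\hbar^{1/2}$ while $K_{(0,1)}$ acts trivially. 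For the vertical subalgebra I would take the stated diagonal formula on the modified Macdonald basis $\{H_\lambda\}$ as an ansatz; since these operators are simultaneously diagonal they automatically commute, which is consistent with relation (2) on the vertical axis precisely because $K_{(0,m)}=1$. So neither of these steps has content beyond bookkeeping — the eigenvalue formula for $e_{0,m}$ is so far only a guess whose consistency has to be established.

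That consistency is the mixed relation (3). I would establish it by identifying the vertical generators, in the power-sum variables, with classical operators: under the plethystic substitution $p_k\mapsto p_k/(1-t_2^{k})$ of \eqref{mac}, the element $e_{0,1}$ is conjugated to (a normalization of) Macdonald's first-order difference operator, and the statement that $\{H_\lambda\}$ is its eigenbasis with the indicated eigenvalues is the Garsia--Haiman modified form of Macdonald's diagonalization theorem; its adjoint handles $e_{0,-1}$. With both $e_{0,\pm1}$ now available as explicit operators on $\mathsf{F}$, the commutator $[e_{1,0},e_{0,1}]$ can be computed directly; relation (3) predicts it equals $\epsilon_{(0,1),(1,0)}\,K_{\alpha((1,0),(0,1))}\,\Phi_{(1,1)}/n_1$, which (since $\Phi_{(1,1)}=n_1 e_{1,1}$) is the definition of a new generator $e_{1,1}$, and one iterates to get all $e_{m,n}$ and to verify the remaining instances of relations (2) and (3). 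An alternative, more structural route is to invoke the isomorphism of $\eha$ with the Feigin--Odesskii/Negut shuffle algebra (see \cite{SchifVas,NegFlags,Negqta}): under that isomorphism the operators in the statement are exactly the standard shuffle-algebra action on symmetric functions, so all relations hold by construction, and the only remaining task is to match the present normalizations $n_k$, $\epsilon_{\mathbf{x},\mathbf{y}}$, $\alpha(\mathbf{x},\mathbf{y})$ with those of the cited work.

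The main obstacle is exactly relation (3): it is quadratic, it involves the exponential generating series defining $\Phi_{\mathbf z}$, and the two sides are natural in incompatible bases — $e_{m,0}$ is transparent in the power sums and $e_{0,m}$ is transparent in the $H_\lambda$ basis, with no closed product formula passing between them. Verifying it head-on therefore amounts to computing the Pieri-type expansions of $p_1\cdot H_\lambda$ and $\partial_{p_1}H_\lambda$ in the Macdonald basis, where the required identity becomes a statement about arm and leg statistics of the cells of $\lambda$; the cleaner path is to push the whole computation to the shuffle side where it trivializes, after which only a (somewhat lengthy) reconciliation of conventions remains.
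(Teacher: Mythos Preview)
The paper does not prove this proposition; it is quoted from \cite{NegFlags} with no argument given. Your sketch is a correct outline of how such a result is established, and in particular your structural route via the shuffle algebra is essentially what the cited references \cite{SchifVas,NegFlags,Negqta} do: one realizes $\eha$ as a shuffle algebra acting on symmetric functions, under which the horizontal generators become the tautological Heisenberg operators on $\mathbb{Q}[p_1,p_2,\dots]$ and the vertical generators become (after the plethysm \eqref{mac}) Macdonald's difference operators, diagonalized by $H_\lambda$ with the stated eigenvalues.

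One caveat on the direct route: your claim that $e_{\pm1,0}$, $e_{0,\pm1}$, and the $K_{\mathbf{x}}$ generate $\eha$ is true, but verifying that the operators you build by iterating relation (3) actually satisfy \emph{all} remaining instances of (2) and (3) is not a finite check --- relation (3) must hold for every pair $(\mathbf{x},\mathbf{y})$ with $\deg(\mathbf{x})=1$ and no interior lattice points in the triangle, and these are infinitely many independent constraints. The honest way to close this is either to quote a presentation theorem for $\eha$ in terms of a finite subset of relations (which exists, but is itself a theorem of \cite{SchifVas}), or to pass to the shuffle picture where the relations hold by construction. So your second paragraph is the real proof and your first paragraph is the motivation for it; the ``head-on'' Pieri computation you describe at the end would only handle one instance of (3) and is not by itself sufficient.
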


Since the horizontal and vertical subalgebras generate $\eha$, this completely determines the action. Furthermore, it is known that the generators of the Heisenberg subalgebras shift the grading as
\begin{equation}\label{gradingshift}
\alpha^{w}_{k}:\mathsf{F}_n \to \mathsf{F}_{n-k d(w)}
\end{equation}

\begin{Conjecture}
There exists an isomorphism of algebras $\hopf \cong \eha$, compatible with the actions of these algebras on $\mathsf{F}$. Under this isomorphism, 
\begin{itemize}
    \item the action of the $R$-matrix (\ref{slopeR}) on the tensor product of two Fock spaces coincides with the action of the universal $R$-matrix of $\eha$.
    \item  the wall $R$-matrices (\ref{wallR}) coincide with the $R$-matrices of the Heisenberg subalgebras of $\eha$.
    \item the wall subalgebras of $\hopf$ coincide with the Heisenberg subalgebras of $\eha$ from Section \ref{Heis}.
\end{itemize}
\end{Conjecture}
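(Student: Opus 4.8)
\emph{Proof strategy.} The plan is to realize both $\hopf$ and $\eha$ as the \emph{same} subalgebra of $\mathrm{End}(\mathsf F)$, by exhibiting on each side the same family of Heisenberg generators and then controlling the size of each algebra. On the $\eha$ side this is classical: $\eha$ is generated by any two of its Heisenberg subalgebras at non-collinear slopes (e.g.\ the horizontal and vertical ones), its action on $\mathsf F$ is the one of Proposition~\ref{ehaaction}, and that action is faithful by \cite{SchifVas}. On the $\hopf$ side, the wall subalgebras $\hopfwall$ are genuine subalgebras of $\hopf$ by \cite{OS}, and Smirnov's computation of the wall $R$-matrices via elliptic stable envelopes (\cite{SmirnovElliptic}, in the form used in Section~\ref{vwtuples} and \cite{dinkinselliptic}), together with the explicit formula for $\textbf B_w(z)$ recalled in the introduction, shows that $\hopfwall$ is a Heisenberg algebra with generators $\alpha^w_k$. \emph{Step~1} is then the bookkeeping check that, under the $\hopf$-action on $\mathsf F$, the element $\alpha^w_k$ acts exactly as $e_{k\,d(w),\,k\,n(w)}$ does on the $\eha$ side, with matching constants $n_k$ and $\hbar^\Omega$-normalizations. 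By the $SL_2(\mathbb Z)$-equivariance of the tree formulas of Section~\ref{vwtuples}, this reduces to the two cases $w=0$ and $w=\infty$, i.e.\ to comparing the horizontal (Nakajima-type) and vertical (diagonal on $H_\lambda$) Heisenberg operators with their geometric counterparts, both of which are well documented in the $K$-theory of Hilbert schemes.

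\emph{Step~2}, the substantive one, is to show that $\hopf$ is no larger than the subalgebra generated by the visible Heisenbergs. First I would prove that the single-framing Fock space $\mathsf F$ is a generating object for the FRT construction of the Jordan quiver: every $\mathsf F_{\dw}$ is, via the tensor decomposition (\ref{tensor}) iterated down to framing $1$ together with the coproduct formula of Proposition~\ref{coprod}, a tensor power of $\mathsf F$, and duals are handled by the FRT dual construction; hence an element of $\hopf\subset\prod_{V\in\mathfrak B}\mathrm{End}(V)$ with trivial $\mathsf F$-component has trivial component on every $V$, so $\hopf$ acts faithfully on $\mathsf F$ and, moreover, the FRT generators $E(m(u))_V$ with arbitrary auxiliary space reduce, again by Proposition~\ref{stabsplit}, to matrix coefficients of $\mathscr R^s_{\mathsf F,\mathsf F}(u)$ and their duals. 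Next, telescoping the stable-envelope definition of $\mathscr R^s$ across walls — the $K$-theoretic analogue of the factorization in \cite{MO}, matching the identity $\mathscr R^s=\bigl(\rprod_{w<s}R^-_w\bigr)\,R_\infty\,\bigl(\lprod_{w>s}R^+_w\bigr)$ of Section~\ref{Heis} — expresses those matrix coefficients in terms of matrix coefficients of the Heisenberg wall $R$-matrices. Combined with Step~1 and faithfulness of the $\eha$-action, this yields inclusions of subalgebras of $\mathrm{End}(\mathsf F)$ in both directions, hence an isomorphism $\hopf\cong\eha$ intertwining the two actions on $\mathsf F$.

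Granting the algebra isomorphism, the three bullet points follow almost formally. The wall subalgebras are identified with the Heisenberg subalgebras by construction (Step~1). The wall $R$-matrices (\ref{wallR}) match the $R^\pm_w$ of Section~\ref{Heis} because on each side $R^\pm_w$ is the unique triangular intertwiner between the slope-$s$ and slope-$s'$ coproducts restricted to the relevant Heisenberg, and the slope-$s$ coproduct of $\hopf$ — conjugation by the slope-$s$ stable envelope, Proposition~\ref{coprod} — corresponds under the isomorphism to the Drinfeld coproduct of $\eha$ transported by the $SL_2(\mathbb Z)$-element carrying the vertical slope to $s$; this compatibility is exactly what one obtains by matching Proposition~\ref{coprod} against the slope factorization. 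Finally the slope-$s$ $R$-matrix (\ref{slopeR}) matches the universal $R$-matrix of $\eha$ since both are the ordered infinite products of the now-identified wall $R$-matrices.

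The hard part is Step~2. Because $\hopf$ is defined by the FRT procedure and carries no a priori presentation, one cannot simply invoke a known list of defining relations; proving that $\mathsf F$ is a generating object, and that the $K$-theoretic $R$-matrix $\mathscr R^s_{\mathsf F,\mathsf F}(u)$ admits the slope factorization with the correct normalization, is precisely what pins down the size of $\hopf$. A secondary but genuine obstacle is the normalization bookkeeping — the half-integer shifts in the elliptic stable envelopes of Section~\ref{vwtuples}, the $\hbar^\Omega$ twist in (\ref{wallR}), and the unknown constant $\const_X$ of Theorem~\ref{qdeOSthm} — all of which must be tracked carefully so that the identification of generators holds on the nose rather than merely up to rescaling.
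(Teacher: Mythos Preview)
The statement you are trying to prove is labeled a \emph{Conjecture} in the paper, not a theorem. The paper offers no proof: immediately after stating it, the author writes ``This conjecture is discussed in Chapter~1 of \cite{Neg} and Section~7.1.7 of \cite{OS}'' and then, in a Remark, ``For the remainder of this thesis, we assume that the above conjecture holds.'' There is therefore no proof in the paper to compare your proposal against.

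As for the proposal itself: it is a reasonable outline of how one expects such a result would eventually be established, and you are candid that Step~2 is where the real content lies. But what you have written is a strategy, not a proof, and the gaps you flag are exactly the reasons this remains open. In particular, the telescoping of $\mathscr R^s$ across walls only produces a \emph{finite} product of wall $R$-matrices between two generic slopes; obtaining the full infinite factorization $\bigl(\rprod_{w<s}R^-_w\bigr)R_\infty\bigl(\lprod_{w>s}R^+_w\bigr)$ requires controlling the limits $s\to\pm\infty$ and identifying the diagonal piece with $R_\infty$, which is not addressed. The claim that $\mathsf F$ is a generating object for the FRT construction, so that $\hopf$ embeds in $\mathrm{End}(\mathsf F)$, is plausible but is itself a nontrivial statement about the structure of $\hopf$ that is not established in \cite{OS} or in this paper. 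And the reduction of Step~1 to $w=0,\infty$ via ``$SL_2(\mathbb Z)$-equivariance of the tree formulas'' presupposes that the geometric wall subalgebras themselves carry an $SL_2(\mathbb Z)$-symmetry matching that of $\eha$, which is part of what must be shown. In short, your sketch identifies the right ingredients but does not close the argument; the paper treats the statement as an assumption precisely because these points have not been settled in the literature.
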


This conjecture is discussed in Chapter 1 of \cite{Neg} and Section 7.1.7 of \cite{OS}.

\begin{Remark}
For the remainder of this thesis, we assume that the above conjecture holds.
\end{Remark}

\subsection{Wall-crossing operators}
It follows from (\ref{gradingshift}) that for a fixed $n$ and $w$, there exists some $k>\!\!>0$ such that $\alpha^{w}_{k}$ acts by 0 on $\mathsf{F}_n$. The elements $\alpha^{w}_{k}$ for $k>0$ are called annihilation operators. The elements $\alpha^{w}_{k}$ for $k<0$ are called creation operators. For a monomial $\alpha^{w_1}_{k_1} \ldots \alpha^{w_m}_{k_m}$, let $\{1,\ldots, m\}=I^{+} \sqcup I^{-}$ be the partition into subsets such that $k_i>0$ for $i \in I^{+}$ and $k_i<0$ for $i \in I^{-}$. Define the normal ordering $::$ by 
$$
:\alpha^{w_1}_{k_1} \ldots \alpha^{w_m}_{k_m}: = \prod_{i \in I^{-}} \alpha^{w_i}_{k_i} \prod_{i \in I^{+}} \alpha^{w_i}_{k_i}
$$
In other words, all annihilation operators act first. We extend $::$ by linearity.

\begin{Proposition}[\cite{OS} Section 8]
For a wall $w\in \mathbb{Q} \cup \{\infty\}$, the wall-crossing operator is equal to the element
$$
\textbf{B}_{w}(z)= \, \, :\exp\left(\sum_{k=1}^{\infty} \frac{n_k \hbar^{d(w)k/2}}{1-z^{-d(w)k} q^{n(w)k} \hbar^{d(w)k/2}} \alpha^{w}_{-k} \alpha^{w}_{k}\right):
$$
of a completion of $\eha$.
\end{Proposition}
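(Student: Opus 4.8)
The plan is to reduce everything to an explicit solution of the ABRR-type equation (\ref{abrr}) inside a completion of the wall subalgebra $\hopfwall$, which by the standing conjecture is the Heisenberg subalgebra of $\eha$ generated by the $\alpha^{w}_{k}$, $k\in\mathbb{Z}\setminus\{0\}$, with lower-triangular renormalized $R$-matrix $\mathsf{R}_{w}^{-}=\hbar^{\Omega}R^{-}_{w}$, where $R^{-}_{w}=\exp\!\big(-\sum_{k\ge1}n_{k}\,\alpha^{w}_{-k}\otimes\alpha^{w}_{k}\big)$. Recall from (\ref{wallcross}) that $\textbf{B}_{w}(z)=\textbf{m}\big((1\otimes S_{w})\,J_{w}(zq^{-\mathscr{L}_{w}})^{-1}\big)\big|_{z\to z\hbar^{\kappa}}$, where $J_{w}(z)$ is the unique strictly triangular solution of $\mathsf{R}_{w}^{-}Z_{(1)}^{-1}J_{w}(z)=J_{w}(z)\hbar^{\Omega}Z_{(1)}^{-1}$. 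Thus the whole computation takes place in a completion of $\hopfwall\otimes\hopfwall\otimes\mathbb{C}(z,q)$; since by (\ref{gradingshift}) the $\alpha^{w}_{k}$ act locally finitely on each graded piece $\mathsf{F}_{n}$, all the normal-ordered exponentials that appear are well defined.

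First I would pin down the shape of $J_{w}(z)$. The triangularity and weight-preservation of $J_{w}(z)$, together with the form of $\mathsf{R}_{w}^{-}$ and the fact that for a Heisenberg algebra distinct tensor quadratics $\alpha^{w}_{-k}\otimes\alpha^{w}_{k}$ commute with one another, force
\[
J_{w}(z)={:}\exp\!\Big(\sum_{k\ge1}c_{k}(z)\,\alpha^{w}_{-k}\otimes\alpha^{w}_{k}\Big){:}
\]
for scalars $c_{k}(z)\in\mathbb{C}(z,q)$, consistent with the degree bounds of Lemma \ref{Jdegrees}. Substituting this ansatz into (\ref{abrr}) and passing to the exponent — where no Baker--Campbell--Hausdorff corrections arise precisely because the $\alpha^{w}_{-k}\otimes\alpha^{w}_{k}$ pairwise commute — the equation becomes, term by term in $k$, a linear equation: conjugation by $Z_{(1)}^{-1}$ rescales $\alpha^{w}_{-k}\otimes\alpha^{w}_{k}$ by a monomial in $z$ of degree $-d(w)k$ (which becomes $z^{-d(w)k}q^{n(w)k}$ after the dynamical shift $z\to zq^{-\mathscr{L}_{w}}$ in the argument of $J_w$), while the $\hbar^{\Omega}$-normalization contributes the factor $\hbar^{d(w)k/2}$ through the quarter-codimension function of \cite{OS}. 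Solving the resulting linear equation — equivalently, summing the geometric series produced by iterating (\ref{abrr}) — gives
\[
c_{k}(z)=\frac{n_{k}\hbar^{d(w)k/2}}{1-z^{-d(w)k}q^{n(w)k}\hbar^{d(w)k/2}}.
\]

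Finally I would apply $(1\otimes S_{w})$ and $\textbf{m}$. On the Heisenberg subalgebra the antipode acts by $\alpha^{w}_{k}\mapsto-\alpha^{w}_{k}$ (up to central grouplike factors, which act trivially on the relevant weight spaces), and since the second tensor slots all commute, $(1\otimes S_{w})$ restricted to the relevant subalgebra is an algebra homomorphism, so $(1\otimes S_{w})\big(J_{w}(z)^{-1}\big)={:}\exp\big(\sum_{k}c_{k}(z)\,\alpha^{w}_{-k}\otimes\alpha^{w}_{k}\big){:}$; then $\textbf{m}$ collapses $\alpha^{w}_{-k}\otimes\alpha^{w}_{k}$ to $\alpha^{w}_{-k}\alpha^{w}_{k}$ inside the normal ordering, the normal ordering absorbing exactly the discrepancy between $\textbf{m}$ and the tensor product multiplication. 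Carrying the remaining shift $z\to z\hbar^{\kappa}$ through $c_{k}(z)$ — which, being compatible with Proposition \ref{Bshift}, adjusts the $\hbar$-power in the denominator — yields
\[
\textbf{B}_{w}(z)={:}\exp\!\Big(\sum_{k=1}^{\infty}\frac{n_{k}\hbar^{d(w)k/2}}{1-z^{-d(w)k}q^{n(w)k}\hbar^{d(w)k/2}}\,\alpha^{w}_{-k}\alpha^{w}_{k}\Big){:},
\]
expanded so that the exponent has no constant term in $z$, in agreement with Proposition \ref{Bdegrees}. The infinite-slope wall $w=\infty$ ($d(\infty)=0$, $n(\infty)=1$) is handled identically with $R_{\infty}$ in place of $\mathsf{R}_{w}^{-}$ and with $Z_{(1)}$ no longer shifting the grading, producing the degenerate specialization of the formula. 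This reproduces the computation of Section 8 of \cite{OS}.

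The main obstacle is the precise bookkeeping of the three twists in the definition of $\textbf{B}_{w}(z)$ — the renormalization $\hbar^{\Omega}$ (with exponent the quarter-codimension function, formula (34) of \cite{OS}), the dynamical shift $z\to zq^{-\mathscr{L}_{w}}$, and the grading shift $z\to z\hbar^{\kappa}$ — and verifying that, together with the signs from $R^{-}_{w}$ and the antipode $S_{w}$, they combine to give exactly the prefactor $n_{k}\hbar^{d(w)k/2}$ and the denominator $1-z^{-d(w)k}q^{n(w)k}\hbar^{d(w)k/2}$. A secondary, more routine point is justifying the infinite-series manipulations in the appropriate completion of $\eha$ acting on $\bigoplus_{n}\mathsf{F}_{n}$, which follows from the local finiteness recorded in (\ref{gradingshift}).
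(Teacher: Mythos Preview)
The paper does not give its own proof of this proposition; it is stated with a citation to \cite{OS} Section 8 and no argument is supplied. Your proposal is a correct outline of the computation carried out there: solve the ABRR equation (\ref{abrr}) for $J_{w}(z)$ inside the Heisenberg wall subalgebra using the ansatz $J_{w}(z)=\exp\big(\sum_{k\ge1}c_{k}(z)\,\alpha^{w}_{-k}\otimes\alpha^{w}_{k}\big)$, then apply $(1\otimes S_{w})$ and $\textbf{m}$. Your observation that $\textbf{m}$ applied termwise to this tensor exponential produces exactly the normal-ordered exponential---because the first tensor factor contains only creation operators and the second only annihilation operators, so each monomial lands already normal-ordered---is the key mechanism, and you have it right.

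One small presentational point: the normal ordering on $J_{w}(z)$ itself is unnecessary, since the elements $\alpha^{w}_{-k}\otimes\alpha^{w}_{k}$ for different $k$ already commute in the tensor product; normal ordering only becomes meaningful after $\textbf{m}$ is applied. And as you yourself flag, the precise tracking of the three shifts ($\hbar^{\Omega}$, $z\to zq^{-\mathscr{L}_{w}}$, $z\to z\hbar^{\kappa}$) against the explicit codimension formula of \cite{OS} is where all the content lies; your sketch gestures at this correctly but does not carry it out, which is appropriate given that the paper itself defers the computation to \cite{OS}.
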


From the definition of normal ordering and from (\ref{gradingshift}), it is clear that $\textbf{B}_{w}(z)$ has a well-defined action on $\mathsf{F}$.

\begin{Proposition}\label{locfinite}
Let $\Wall_n=\{w \in \mathbb{Q} \, \mid \, 1\leq d(w) \leq n\}$. Then the operator $\textbf{B}_{w}(z)$ acts nontrivially on $\mathsf{F}_n$ only if $w \in \Wall_n$.
\end{Proposition}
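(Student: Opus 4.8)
The plan is to prove the contrapositive: if $w \notin \Wall_n$, then $\textbf{B}_{w}(z)$ restricts to the identity operator on $\mathsf{F}_n$. The proposition concerns the walls $w \in \mathbb{Q}$ — consistent with $\Wall_n \subset \mathbb{Q}$, and with the fact that for $X = \text{Hilb}^{n}(\mathbb{C}^2)$ one has $\Pic(X)\otimes\mathbb{R} = \mathbb{R}$, so that the walls appearing in the exotic difference equation are rational numbers. Since every such $w$, written in lowest terms, has $d(w) \geq 1$, the hypothesis $w \notin \Wall_n$ is then equivalent to $d(w) \geq n+1$. The single input I would use is the grading shift $\alpha^{w}_{k}\colon \mathsf{F}_m \to \mathsf{F}_{m - k\,d(w)}$ from (\ref{gradingshift}), together with the convention $\mathsf{F}_\ell = 0$ for $\ell < 0$.

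The first step is the observation that under $d(w) > n$, every annihilation operator $\alpha^{w}_{k}$ with $k \geq 1$ kills $\mathsf{F}_n$: indeed $\alpha^{w}_{k}(\mathsf{F}_n) \subseteq \mathsf{F}_{n - k\,d(w)}$ and $n - k\,d(w) \leq n - d(w) < 0$. The second step feeds this into the normal-ordered exponential. Writing $\textbf{B}_{w}(z) = \,:\exp(A_{w}(z)): \,= \sum_{m \geq 0}\tfrac{1}{m!}:\!A_{w}(z)^m\!:$ with $A_{w}(z) = \sum_{k \geq 1} c_{k}(z)\,\alpha^{w}_{-k}\alpha^{w}_{k}$ and $c_{k}(z) = \frac{n_k \hbar^{d(w)k/2}}{1 - z^{-d(w)k} q^{n(w)k}\hbar^{d(w)k/2}}$, one expands each $:\!A_{w}(z)^m\!:$ as a sum of terms $c_{k_1}(z)\cdots c_{k_m}(z)\,:\!\alpha^{w}_{-k_1}\alpha^{w}_{k_1}\cdots\alpha^{w}_{-k_m}\alpha^{w}_{k_m}\!:$. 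By the definition of the normal ordering, each such term equals $\bigl(\prod_i \alpha^{w}_{-k_i}\bigr)\bigl(\prod_i \alpha^{w}_{k_i}\bigr)$, so all annihilation factors stand to the right; hence for $m \geq 1$ its rightmost factor is some $\alpha^{w}_{k_i}$ with $k_i \geq 1$, which annihilates $\mathsf{F}_n$ by the first step. Only the $m = 0$ term therefore survives on $\mathsf{F}_n$, and I conclude $\textbf{B}_{w}(z)|_{\mathsf{F}_n} = \mathrm{id}_{\mathsf{F}_n}$.

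There is essentially no serious obstacle: the argument is pure grading and normal-ordering bookkeeping, and the well-definedness of the a priori infinite normal-ordered exponential on $\mathsf{F}_n$ — already recorded in the text — is immediate in this case since every term past the constant one vanishes. The only points needing a line of care are (i) making explicit that $d(w) \geq 1$ for a rational $w$ in lowest terms, so that "$w \notin \Wall_n$" genuinely forces $d(w) > n$, and (ii) noting that only $w \in \mathbb{Q}$ is at issue, namely the walls occurring in the product over $[s-\mathscr{L},s)\cap \Wall_n$. I would not attempt the converse direction, as it is neither claimed nor needed.
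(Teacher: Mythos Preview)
Your proof is correct and follows essentially the same approach as the paper's own proof: both rely on the grading shift (\ref{gradingshift}) to conclude that $\alpha^{w}_{k}$ annihilates $\mathsf{F}_n$ for $k>0$ when $d(w)>n$, so that only the constant term of the normal-ordered exponential survives. Your version is simply more explicit about the normal-ordering bookkeeping and the interpretation of ``$w \notin \Wall_n$'', but the core idea is identical.
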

\begin{proof}
By (\ref{gradingshift}), the element $\alpha^{w}_{k}$ for $k>0$ sends $\mathsf{F}_n$ to $\mathsf{F}_{n-kd(w)}$. So if $d(w)>n$, then it must act by $0$.
\end{proof}


Since we have explicit formulas for the wall crossing operators, we can verify Propositions \ref{Bshift}, \ref{Blim}, and \ref{Bdegrees} directly.

\begin{Proposition}
For any $w \in \mathbb{Q} \cup \{\infty\}$
$$
\Bw_{w}(z q^{w})
$$
is independent of $q$.
\end{Proposition}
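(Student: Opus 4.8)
The plan is to prove this by direct substitution into the explicit formula for $\textbf{B}_{w}(z)$ recorded in the preceding proposition; there is genuinely nothing to do beyond a change of variables in a generating series, and in fact the statement is just the Hilbert scheme incarnation of Proposition \ref{Bshift} (recall that here $\Pic(X)\otimes\mathbb{R}\cong\mathbb{R}$, with a real line bundle on the wall $w$ given by $w$ times the tautological generator, so $\mathscr{L}_w$ is $z q^{\mathscr{L}_w}=z q^{w}$ in the single-variable notation).

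For $w\in\mathbb{Q}$ with $d(w)\geq 1$ I would write $w=n(w)/d(w)$ and note that under the shift $z\mapsto z q^{w}$ one has $(z q^{w})^{-d(w)k}=z^{-d(w)k}q^{-d(w)wk}=z^{-d(w)k}q^{-n(w)k}$, so the factor $z^{-d(w)k}q^{n(w)k}\hbar^{d(w)k/2}$ appearing in the denominator of the $k$th summand becomes $z^{-d(w)k}\hbar^{d(w)k/2}$. Hence
$$
\textbf{B}_{w}(z q^{w})= \, :\exp\left(\sum_{k=1}^{\infty} \frac{n_k\, \hbar^{d(w)k/2}}{1-z^{-d(w)k}\hbar^{d(w)k/2}}\, \alpha^{w}_{-k}\, \alpha^{w}_{k}\right):,
$$
and the right-hand side involves only $z$, $\hbar=t_1t_2$, and the scalars $n_k$, none of which depend on $q$; the normal ordering prescription carries no $q$-dependence either. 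Since $d(w)\geq 1$, the grading-shift property (\ref{gradingshift}) guarantees that only finitely many summands act nontrivially on each graded piece $\mathsf{F}_n$ (compare Proposition \ref{locfinite}), so the exponential and the normal ordering are well defined on $\mathsf{F}$ and the displayed equality is a genuine identity of operators. The remaining case $w=\infty$ is degenerate: there $d(w)=0$, the operator $\textbf{B}_{\infty}(z)$ carries no $z$-dependence, and, consistently with the absence of a finite line bundle supported on the wall at infinity, no shift is performed, so the claim reduces to Proposition \ref{Bshift}.

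I do not expect any real obstacle. The only point requiring a moment's care is that $q^{w}$ is a priori a fractional power of $q$ when $d(w)>1$; but it enters the formula exclusively through the combination $z^{-d(w)k}$, where the shift $z\mapsto z q^{w}$ produces the honest integer power $q^{-n(w)k}$, so no fractional powers of $q$ ever survive and the manipulation is legitimate inside the relevant completion of $\eha$ acting on $\mathsf{F}$.
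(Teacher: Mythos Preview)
Your proposal is correct and follows exactly the approach the paper intends: its own proof reads in full ``Follows directly from the definition,'' and you have simply written out that direct substitution. One minor remark on the $w=\infty$ case: with $d(\infty)=0$ and $n(\infty)=1$ the formula gives $\textbf{B}_{\infty}(z)=\,:\exp\big(\sum_{k\geq 1}\frac{n_k}{1-q^k}\alpha^{\infty}_{-k}\alpha^{\infty}_{k}\big):$, which is indeed $z$-independent but still carries $q$; the expression $zq^{\infty}$ does not parse, and the inclusion of $\infty$ in the statement should be read as a harmless imprecision rather than a claim to be proved.
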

\begin{proof}
Follows directly from the definition.
\end{proof}

\begin{Proposition}
Let $w \in \mathbb{Q} \cup \{\infty\}$ and $s \in \mathbb{R} \setminus \mathbb{Q}$. Then
$$
\lim_{q \to 0} \Bw_{w}(z q^s) = \begin{cases}
1 & s >w \\
\text{exists} & s<w
\end{cases}
$$
Equivalently, if $s<w$, then $\lim_{q\to \infty} \Bw_{w}(z q^s)=1$
\end{Proposition}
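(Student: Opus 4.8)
The plan is to read everything off the explicit normal-ordered formula for $\textbf{B}_{w}(z)$ recalled in the preceding proposition, where the $q$-dependence is completely transparent. First I would isolate how $q$ enters: in
$$
\textbf{B}_{w}(z)= \, :\exp\left(\sum_{k=1}^{\infty} \frac{n_k \hbar^{d(w)k/2}}{1-z^{-d(w)k} q^{n(w)k} \hbar^{d(w)k/2}} \alpha^{w}_{-k} \alpha^{w}_{k}\right):
$$
the only occurrence of $q$ is the factor $q^{n(w)k}$ in each denominator. For $w\in\mathbb{Q}$ (so $d(w)\geq 1$ and $n(w)=w\,d(w)$) I would expand each geometric series as a power series with nonnegative $z$-exponents, which is the expansion forced by Proposition \ref{Bdegrees}; this gives that the coefficient of $\alpha^{w}_{-k}\alpha^{w}_{k}$ equals $-n_{k}\sum_{j\geq 1}z^{jd(w)k}q^{-jn(w)k}\hbar^{(1-j)d(w)k/2}$. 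Expanding the normal-ordered exponential, a monomial of total $z$-degree $D$ is assembled from finitely many data $(k_{i},j_{i})$ with $\sum_{i}j_{i}d(w)k_{i}=D$, hence carries $q$-degree $-n(w)\sum_{i}j_{i}k_{i}=-wD$. Therefore
$$
\textbf{B}_{w}(z)=\sum_{D\geq 0}q^{-wD}P_{D}\,z^{D},\qquad P_{0}=1,
$$
with each $P_{D}$ a $q$-independent finite sum of products of the operators $\alpha^{w}_{-k}\alpha^{w}_{k}$, hence a well-defined operator on $\mathsf{F}$ by the grading shift (\ref{gradingshift}).

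Substituting $z\mapsto zq^{s}$ gives $\textbf{B}_{w}(zq^{s})=\sum_{D\geq 0}q^{(s-w)D}P_{D}z^{D}$. Since $s\notin\mathbb{Q}$ we have $s\neq w$, so for every $D\geq 1$ the exponent $(s-w)D$ is a nonzero real number with the sign of $s-w$. If $s>w$, all these exponents are positive, so coefficientwise in $z$ every nonconstant coefficient vanishes as $q\to 0$; restricting to each graded piece $\mathsf{F}_{n}$ (on which $\textbf{B}_{w}(zq^{s})$ acts) one concludes $\lim_{q\to 0}\textbf{B}_{w}(zq^{s})=P_{0}=1$. If instead $s<w$, all the exponents are negative, so the identical argument with $q\to\infty$ in place of $q\to 0$ yields $\lim_{q\to\infty}\textbf{B}_{w}(zq^{s})=1$, which is the stated equivalent reformulation; moreover each $z$-coefficient of $\textbf{B}_{w}(zq^{s})$ is the single monomial $q^{(s-w)D}$ times the fixed operator $P_{D}$, so the $q\to 0$ limit exists as claimed. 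The remaining case $w=\infty$ is separate and easier: then $d(w)=0$, the formula is $z$-independent, $\textbf{B}_{\infty}(zq^{s})=\,:\exp\!\big(\sum_{k\geq 1}\tfrac{n_{k}}{1-q^{k}}\alpha^{\infty}_{-k}\alpha^{\infty}_{k}\big):$, and $\lim_{q\to 0}\tfrac{n_{k}}{1-q^{k}}=n_{k}$; since $s<\infty$ always, this matches the ``$s<w$'' clause, and it is consistent with Proposition \ref{Bshift}.

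I expect the main obstacle to be purely bookkeeping in the first step: verifying rigorously that after the geometric expansion the $z$-degree and $q$-degree of every monomial of the normal-ordered exponential are locked as $D\leftrightarrow -wD$ (this is where $n(w)=w\,d(w)$ is used), and checking that the $q\to 0$ (resp. $q\to\infty$) limit may be passed inside the completed, normal-ordered exponential. The latter is exactly where the grading shift $\alpha^{w}_{k}\colon\mathsf{F}_{n}\to\mathsf{F}_{n-kd(w)}$ enters: it makes $P_{D}|_{\mathsf{F}_{n}}$ a finite sum of well-defined operators, so termwise convergence of the $z$-coefficients upgrades to convergence of the operator on each $\mathsf{F}_{n}$. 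Once this setup is in place, the conclusion is the elementary sign comparison of the second paragraph.
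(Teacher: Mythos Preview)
Your approach via the $z$-expansion is essentially what the paper has in mind (it gives no explicit proof, only the instruction to ``verify directly'' from the closed formula), and it correctly establishes the two substantive claims: when $s>w$ every nonconstant $z$-coefficient carries a positive power of $q$, so $\lim_{q\to 0}=1$; and by the same token when $s<w$ every nonconstant coefficient carries a negative power of $q$, so $\lim_{q\to\infty}=1$. Your treatment of $w=\infty$ is also fine.

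There is, however, a genuine error in your justification of the clause ``$\lim_{q\to 0}$ exists'' for $s<w$. You write that each $z$-coefficient equals $q^{(s-w)D}P_{D}$ and conclude the $q\to 0$ limit exists; but for $D\geq 1$ the exponent $(s-w)D$ is \emph{negative}, so $q^{(s-w)D}\to\infty$ and the termwise limit in your $z$-expansion does \emph{not} exist. The point is that the $z$-expansion and the $q\to 0$ limit do not commute here. To see that the limit genuinely exists one must instead stay with the closed rational form: after $z\mapsto zq^{s}$ the $k$th summand in the exponent becomes
\[
\frac{n_{k}\hbar^{d(w)k/2}}{1-z^{-d(w)k}q^{(w-s)d(w)k}\hbar^{d(w)k/2}},
\]
and since $w-s>0$ the factor $q^{(w-s)d(w)k}\to 0$ as $q\to 0$, so the whole summand tends to the constant $n_{k}\hbar^{d(w)k/2}$. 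Thus $\lim_{q\to 0}\textbf{B}_{w}(zq^{s})=\ {:}\exp\bigl(\sum_{k\geq 1}n_{k}\hbar^{d(w)k/2}\alpha^{w}_{-k}\alpha^{w}_{k}\bigr){:}$, a perfectly well-defined operator on each $\mathsf{F}_{n}$. So the ``exists'' clause is correct, but your power-series argument does not prove it; you need the rational-function viewpoint for that one case.
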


\begin{Proposition}
$\textbf{B}_{w}(z)$ can be expanded as a formal power series in positive powers of $z$ with constant term equal to 1.
\end{Proposition}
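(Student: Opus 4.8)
The plan is to substitute the explicit formula for $\textbf{B}_{w}(z)$ from the preceding proposition into the claim and track the $z$-dependence, which enters only through the rational prefactor of $\alpha^{w}_{-k}\alpha^{w}_{k}$ in the exponent. Throughout I take $w \in \mathbb{Q}$ to be a wall, so $d(w)\geq 1$; the case $w=\infty$ is genuinely different, since then $d(\infty)=0$ and $\textbf{B}_{\infty}(z)$ is literally independent of $z$, and $\infty$ is not a wall in $\Wall_n$.

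First I would expand the coefficient
\[
c_k(z) := \frac{n_k \hbar^{d(w)k/2}}{1-z^{-d(w)k} q^{n(w)k} \hbar^{d(w)k/2}}
\]
of $\alpha^{w}_{-k}\alpha^{w}_{k}$. Writing $B_k = z^{-d(w)k} q^{n(w)k} \hbar^{d(w)k/2}$ and using $\tfrac{1}{1-B_k} = -\sum_{m\geq 1}B_k^{-m}$, where $B_k^{-1} = z^{d(w)k} q^{-n(w)k}\hbar^{-d(w)k/2}$ carries a positive power of $z$, one obtains
\[
c_k(z) = -\,n_k \sum_{m\geq 1} z^{d(w)km}\, q^{-n(w)km}\, \hbar^{\,d(w)k(1-m)/2},
\]
a power series in $z$ whose lowest-order term is $z^{d(w)k}$; in particular $c_k(0)=0$. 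Consequently the exponent $X(z):=\sum_{k\geq 1} c_k(z)\,\alpha^{w}_{-k}\alpha^{w}_{k}$ is a formal power series in \emph{positive} powers of $z$ with $X(0)=0$: for any fixed $j\geq 1$ the coefficient of $z^j$ receives contributions only from the finitely many pairs $(k,m)$ with $d(w)km=j$, so it is a finite $\mathbb{C}(t_1^{1/2},t_2^{1/2},q)$-linear combination of the operators $\alpha^{w}_{-k}\alpha^{w}_{k}$.

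Next I would observe that $\textbf{B}_{w}(z) = \; :\exp(X(z)):$, and that normal ordering is applied coefficient by coefficient in $z$; being a linear reshuffling of words in the $\alpha^{w}_{k}$, it does not change which powers of $z$ occur. Hence $\textbf{B}_{w}(z)=\sum_{j\geq 0}\textbf{B}_{w}^{(j)} z^j$ with constant term $\textbf{B}_{w}^{(0)} = \;:\exp(X(0)): \;=\; :1: \;= 1$, which is the assertion. Well-definedness of the action on $\mathsf{F}$ is term by term, as already noted after the previous proposition: on each graded piece $\mathsf{F}_n$ the grading shift (\ref{gradingshift}) forces $\alpha^{w}_{k}$ to vanish for $k\gg 0$, so both the normal-ordered exponential and the $z$-expansion truncate.

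The step requiring the most care — though I would regard it as a bookkeeping subtlety rather than a real obstacle — is the direction in which $\tfrac{1}{1-z^{-d(w)k}q^{n(w)k}\hbar^{d(w)k/2}}$ is expanded: it must be expanded so as to yield positive powers of $z$, which is precisely the convention under which $\textbf{B}_{w}(zq^{w})$ is $q$-independent (cf.\ Proposition \ref{Bshift}), and the computation above is exactly the consistency check confirming Proposition \ref{Bdegrees} (and, along the way, Proposition \ref{Blim}) for the explicit operators.
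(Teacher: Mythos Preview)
Your proposal is correct and is precisely the direct verification from the explicit formula that the paper intends; the paper states this proposition (and the two preceding it) as consequences of the explicit expression for $\textbf{B}_{w}(z)$ without writing out any details, so you have simply filled in the computation the paper omits.
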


\subsection{Quantum difference equations}
Here we discuss the structure of the $q$-difference equation. It was first identified in \cite{OS}. The monodromy of the difference equation and of the differential equation in cohomology was studied using elliptic stable envelopes in \cite{qdehilb}. For $X=\text{Hilb}^{n}(\mathbb{C}^{2})$, the Picard group is isomorphic to $\mathbb{Z}$, with generator given by $\mathscr{L}$ the dual of the tautological line bundle, whose fiber at the fixed point $\lambda$ has $\bT$-weights $\prod_{(i,j) \in \lambda} t_{1}^{1-j} t_2^{1-i}$

The quantum difference equation has the form
$$
\Psi(zq) \mathscr{L} = \mathscr{L} \textbf{M}_{\mathscr{L}}(z) \Psi(z), \quad \Psi(0)=1
$$
for $\Psi(z) \in \text{End}(\mathsf{F})(q)[[z]]$. 


In \cite{OS}, the quantum difference operator is identified as
$$
\textbf{M}_{\mathscr{L}}(z) = \const_{X}  \cdot \mathscr{L} \rprod_{w \in [-1,0)} \textbf{B}_{w}(z) \in \text{End}(\mathsf{F})[q^{\pm 1}][[z]]
$$
where the arrow means that the indices increase moving from left to right and $\const_{X}$ is some constant. It is conjectured in \cite{OS} that $\const_{X}=1$. It follows from the definition that the wall-crossing operators preserve the direct sum decomposition of $\mathsf{F}$. By Proposition \ref{locfinite}, $\textbf{B}_{w}(z)$ acts by $1$ unless $w \in \Wall_n$. Thus on $\mathsf{F}_n$, this operator is
$$
\rprod_{w \in [-1,0)} \textbf{B}_{w}(z) = \rprod_{w \in [-1,0)\cap \Wall_n} \textbf{B}_{w}(z)
$$
 
\begin{Example}
If $n=3$, then 
$$
\rprod_{w \in [-1,0) \cap \Wall_{3}} \textbf{B}_{w}(z)= \textbf{B}_{-1}(z) \textbf{B}_{-\frac{2}{3}}(z) \textbf{B}_{-\frac{1}{2}}(z) \textbf{B}_{-\frac{1}{3}}(z)
$$
\end{Example}

Let $\nabla \subset \mathbb{R} \setminus \Wall_n$ be an alcove and define
$$
\textbf{B}^{\nabla}_{\mathscr{L}}(z)=\rprod_{w \in [s-1,s)} \textbf{B}_{w}(z)
$$
for some $s \in \nabla$. This is independent of the choice of $s \in \nabla$.

Then the exotic $q$-difference equation for alcove $\nabla \subset \mathbb{R} \setminus \Wall_n$ is the equation
\begin{equation}\label{exoticqde3}
\Psi^{\nabla}(qz) \mathscr{L} = \const_{X} \cdot \mathscr{L} \textbf{B}^{\nabla}_{\mathscr{L}}(z) \Psi^{\nabla}(z)
\end{equation}

\begin{Theorem}
The matrix of Theorem \ref{hilbsolution} solves the $q$-difference equation (\ref{exoticqde3}).
\end{Theorem}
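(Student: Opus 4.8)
The plan is to treat this statement as the $\text{Hilb}^n(\mathbb{C}^2)$-specialization of Theorem \ref{mainthm2}, so that almost all of the work is identifying conventions rather than proving anything new. First I would recall that the matrix written in Theorem \ref{hilbsolution} is, by construction, the fixed-point-basis matrix of the operator $\left(\Omega^\nabla\right)^\chi$: its entries are obtained by transposing $\Omega^\nabla$ from (\ref{omega}), whose restriction to the fixed point basis is computed in the proposition preceding Theorem \ref{hilbsolution}, with the descendant $\textbf{g}^\nabla_{\text{opp},\lambda}$ of (\ref{gdesc}) and Proposition \ref{hilbdesc} inserted. By Theorem \ref{mainthm2}, $\left(\Omega^\nabla\right)^\chi=\Psi^\nabla$, the unique solution of the abstract exotic $q$-difference system of Definition \ref{exoticqde1}, namely $\Psi^\nabla(zq^{\mathscr{L}})\mathscr{L}=\const_X\,\mathscr{L}\,\textbf{B}^\nabla_{\mathscr{L}}(z)\,\Psi^\nabla(z)$ with $\Psi^\nabla(0)=1$.

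Second, I would check that equation (\ref{exoticqde3}) is literally this system in the present case. Since $X=\text{Hilb}^n(\mathbb{C}^2)$ has $\Pic(X)\cong\mathbb{Z}$ generated by $\mathscr{L}$, it suffices to verify the equation for $\mathscr{L}$ itself, where the K\"ahler shift $q^{\mathscr{L}}$ is the shift $z\mapsto qz$ appearing in (\ref{exoticqde3}). Taking the straight-line path in $\Pic(X)\otimes\mathbb{R}\cong\mathbb{R}$ from a point $s\in\nabla$ to $s-1$, the walls crossed are exactly those in $[s-1,s)$, each crossed negatively with respect to the stability condition $\theta$; hence every $m_i=1$ and $\textbf{B}^\nabla_{\mathscr{L}}(z)=\rprod_{w\in[s-1,s)}\textbf{B}_w(z)$, which is the Chapter~\ref{CH2} definition. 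By Proposition \ref{locfinite} only the walls in $\Wall_n$ act nontrivially on $\mathsf{F}_n$, so this infinite product makes sense on $\mathsf{F}$, and the constant $\const_X$ is the same object in both places by definition. The initial condition $\Psi^\nabla(0)=1$ then reads off directly from the explicit matrix: the $d=0$ summand has prefactor $(\sqrt{t_1t_2}z)^0=1$, every $q$-Pochhammer factor becomes an empty product, and $\textbf{g}^\nabla_{\text{opp},\lambda}(x)\big|_{x_i=\varphi_i^\mu}$ reproduces the fixed-point basis, giving the identity operator.

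Third, the only nontrivial input is Theorem \ref{mainthm2}, whose proof already carries the analytic burden (the $q^{\pm1}\to 0$ rigidity argument establishing $\left(\Omega^\nabla\right)^\chi\circ\ver^\nabla=1$). The genuine obstacle here, such as it is, is purely bookkeeping: one must thread the polarization/slope conventions consistently --- the pair $(T^{1/2}_{\text{opp}},-\nabla)$ used inside $\textbf{g}^\nabla_{\text{opp},\lambda}$ as in Proposition \ref{hilbdesc}, versus $(T^{1/2},\nabla)$ elsewhere --- through the localization computation of $\Omega^\nabla$ and the transpose taken in Theorem \ref{hilbsolution}, and one must invoke the assumed isomorphism $\hopf\cong\eha$ to know that the $\textbf{B}_w(z)$ appearing in (\ref{exoticqde3}) in normal-ordered exponential form is the same operator as the $\textbf{B}_w(z)$ of Chapter \ref{CH1}. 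Once these identifications are in place the theorem follows formally.

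\textbf{Remark.} Alternatively one could attempt a direct verification, plugging the explicit matrix of Theorem \ref{hilbsolution} into (\ref{exoticqde3}) and comparing coefficients of $z^d$, using the normal-ordered formula for $\textbf{B}_w(z)$ and the $\eha$-action of Proposition \ref{ehaaction}; but this is strictly harder and unnecessary given Theorem \ref{mainthm2}, so I would not pursue it.
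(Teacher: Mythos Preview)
Your proposal is correct and matches the paper's approach: the paper gives no explicit proof of this theorem, treating it as an immediate consequence of Theorem \ref{hilbsolution} (which already establishes, via Theorem \ref{mainthm2}, that the matrix equals $\Psi^{\nabla}$ solving the abstract exotic system of Definition \ref{exoticqde1}) together with the explicit identification of that system with (\ref{exoticqde3}) through the assumed isomorphism $\hopf\cong\eha$. Your unpacking of the bookkeeping --- the straight-line path giving $\textbf{B}^{\nabla}_{\mathscr{L}}(z)=\rprod_{w\in[s-1,s)}\textbf{B}_w(z)$, the reduction to $\Wall_n$ via Proposition \ref{locfinite}, and the role of the conjecture --- is exactly the content the paper leaves implicit.
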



\chapter{Bethe Ansatz}\label{CH5}

This final chapter is a conjectural application of our results. In it, we employ the integral formula for the solutions of the exotic $q$-difference equations to solve the Bethe ansatz problem for $\eha$. Although there are several key results that we will leave as conjectures, we emphasize that the results of this section have been thoroughly tested computationally, and we believe there is strong evidence that they are correct.

\section{Bethe subalgebras}
First, we explain the Bethe ansatz problem. Recall the universal $R$-matrices of the Heisenberg subalgebras of $\eha$:
$$
R_{w}^{+}=\exp\left(-\sum_{k=1}^{\infty}n_k\alpha^{w}_{k} \otimes \alpha^{w}_{-k} \right)
$$
$$
R_{w}^{-}=\exp\left(-\sum_{k=1}^{\infty}n_k\alpha^{w}_{-k} \otimes \alpha^{w}_{k} \right)
$$
$$
R_{\infty}= \exp\left(-\sum_{k=1}^{\infty} n_k \alpha^{\infty}_{-k} \otimes \alpha^{\infty}_{k}\right)
$$

The universal $R$-matrix of $\eha$ for slope $s$ can be expressed as an infinite product of these wall $R$-matrices
$$
\mathscr{R}^{s}= \left(\rprod_{\substack{w \in \mathbb{Q} \\ w <s}} R_{w}^{-} \right) R_{\infty} \left( \lprod_{\substack{w \in \mathbb{Q} \\w >s}}R_{w}^{+} \right) 
$$
It is an element of a completion of $\eha\otimes \eha$.


Recall that $\eha$ acts on the Fock space 
$$
\mathsf{F}=\mathbb{Q}[p_1,p_2,\ldots]\otimes \mathbb{Q}\left(t_1^{1/2},t_2^{1/2},u\right) = \bigoplus_{n=0}^{\infty} \mathsf{F}_n
$$

Consider the operator on $\mathsf{F}$ that scales the degree $d$ summand by $z^{d}$. We will write this as $z^{\deg}$. Evaluating the first tensor factor of $\mathscr{R}^s$ on the representation $\mathsf{F}$, we can consider the transfer matrix, which is defined as
$$
\mathsf{T}^{s}(u)=\text{Tr}_{\mathsf{F}}\left((z^{\deg} \otimes 1) \mathscr{R}^{s} \right) \in \hopf[[u,z]]
$$
where the trace is taken over the first copy of $\mathsf{F}$. The Yang-Baxter equation for $\mathscr{R}^{s}(u)$ implies the following.

\begin{Proposition}
$$
[\mathsf{T}^{s}(u_1),\mathsf{T}^{s}(u_2)]=0
$$
\end{Proposition}

Expanding $\mathsf{T}^{s}(u)$ as a series in $u$, we write
$$
\mathsf{T}^{s}(u)=\sum_{k=0}^{\infty} \mathsf{H}^{s}_{k} u^k
$$
The coefficients $\mathsf{H}^{s}_{k}$ are elements of a completion of $\eha[[z]]$ and are called the Hamiltonians of the transfer matrix. The previous proposition implies that
$$
[\mathsf{H}^{s}_{k_1},\mathsf{H}^{s}_{k_2}]=0
$$

\begin{Definition}
The Bethe subalgebra $\mathscr{B}^{s}$ for the slope $s \in \mathbb{R}\cup \{\infty\}\setminus \mathbb{Q}$ is the commutative subalgebra of a completion of $\eha[[z]]$ generated by all the Hamiltonians of the transfer matrix $\mathsf{T}^{s}(u)$.
\end{Definition}

Since the elements of $\mathscr{B}^{s}$ commute, they can be simultaneously diagonalized on any representation of $\eha$.

\begin{Definition}
The eigenvectors of $\mathscr{B}^{s}$ acting on $\mathsf{F}[[z]]$ are called the Bethe vectors.
\end{Definition}

\begin{Proposition}
The Bethe vectors of $\mathscr{B}^{s}$ are one parameter (in $z$) deformations of the modified Macdonald polynomials of (\ref{mac}).
\end{Proposition}
\begin{proof}
Consider the $z\to 0$ limit of the transfer matrix $\mathsf{T}^{s}(u)$, written $\mathsf{T}^{s}(u)|_{z=0}$. The degree $0$ part of $\mathsf{F}$ is one dimensional, spanned by $1$. So, if $\mathscr{R}^{s}(u):1\otimes x\mapsto 1\otimes y +\ldots$, then $\mathsf{T}^{s}(u)|_{z=0}: x\mapsto y$. By (\ref{gradingshift}), we have $\alpha^{w}_{k}(1)=0$ if $k >0$. So $R^{+}_{w}(1\otimes x)=1\otimes x$ for all $w$. By Proposition \ref{ehaaction}, we have 
$$
\alpha_{-k}^{\infty}(1)=-u^{k}\left(\frac{1}{1-t_1^{k}}\sum_{i=1}^{\infty} t_2^{k(i-1)}\right)=- \frac{u^{k}}{(1-t_1^{k})(1-t_2^{k})}
$$
for $k \geq 0$. So
$$
R_{\infty}(1 \otimes x)= 1 \otimes \exp \left(\sum_{k=0}^{\infty}\frac{n_k u^{k}}{(1-t_1^{k})(1-t_2^{k})} \alpha^{\infty}_{k}\right)(x)
$$
By (\ref{gradingshift}), each $R^{-}_{w}$ maps $1 \otimes y$ to $1\otimes y+\ldots$, where the dots stand for terms with degree greater than $0$ in the first tensor factor. Hence
$$
\mathsf{T}^{s}(u)|_{z=0}= \exp \left(\sum_{k=0}^{\infty}\frac{n_k u^{k}}{(1-t_1^{k})(1-t_2^{k})} \alpha^{\infty}_{k}\right)
$$
All $\alpha^{\infty}_{k}$, and hence $\mathsf{T}^{s}(u)|_{z=0}$, are diagonal in the basis of Macdonald polynomials $H_{\lambda}$. Thus the $z\to 0 $ limit of the Bethe eigenvectors are Macdonald polynomials.
\end{proof}

One of the main results of \cite{MO} is the identification of the quantum cohomology ring of a quiver variety with the Bethe subalgebra of the Yangian associated to the quiver. We conjecture that a similar result holds in $K$-theory.

\begin{Conjecture}\label{conj}
Let $s \in \mathbb{R}\cup\{\infty\}\setminus \mathbb{Q}$. Consider the element $\textbf{B}^{s}(z)$ of $\text{End}(\mathsf{F})[[z]]$ which acts on $\text{End}(\mathsf{F}_{n})[[z]]$ as
$$
\left(\mathscr{L} \textbf{B}^{\nabla}_{\mathscr{L}}(z)\right)\big|_{q=1}
$$
where $\mathscr{L}$ is the dual of the tautological line bundle on $\text{Hilb}^{n}(\mathbb{C}^2)$ and $\nabla$ is the alcove containing $s$. Then $\textbf{B}^{s}(z)$
is the image of an element of $\mathscr{B}^{s}$ acting on $\mathsf{F}[[z]]$.
\end{Conjecture}

In the case of the small ample alcove, this result was conjectured earlier for the cotangent bundles of partial flag varieties in \cite{RTV3}.

\begin{Conjecture}\label{eigenvalues}
The element $\textbf{B}^{s}(z)$ has distinct eigenvalues. Hence, to diagonalize the action of $\mathscr{B}^s$ on $\mathsf{F}[[z]]$, it is sufficient to find the eigenvectors of $\textbf{B}^{s}(z)$.
\end{Conjecture}

\section{Saddle point approximation}
We apply the saddle point approximation to the integral of Theorem \ref{intform} to study the $q\to 1$ limit. Rewriting Theorem \ref{intform} gives
$$
\Psi^{\nabla}_{\lambda,\mu}= \frac{1}{2\pi i \alpha_{\mu}} \int_{C_{\mu}}  e^{S(\vec{x})} \textbf{g}^{\nabla}_{\text{opp},\lambda}(\vec{x}) \prod_{i=1}^{n} d x_i
$$
where 
$$
S(\vec{x})= -\frac{\ln(\sqrt{\hbar} z) \ln(x_1 \ldots x_{n})}{\ln(q)} + \ln\left(\Phi(\vec{x})\right)
$$
The critical points of the function $S(\vec{x})$ are defined by the equations
$$
\frac{\partial }{\partial x_i} S(\vec{x})=0,\quad i=1,\ldots,n,  \quad x_j \in \mathbb{C}(t_1^{1/2},t_2^{1/2},q)[[z]]
$$
This is equivalent to
\begin{equation}\label{saddle}
0=-\frac{\ln(\sqrt{\hbar} z)}{\ln(q)}+ x_i \frac{\partial}{\partial x_i}\ln\left(\Phi(\vec{x})\right), \quad i=1.\ldots,n
\end{equation}

\begin{Lemma}[\cite{Pushk1} Lemma 2]
Let $C$ be constant in $x$ such that $C_{1}:=\lim_{q\to 1} C$ exists and $|Cx|<1$. Suppose that $|q|<1$. Then
$$
x \frac{\partial}{\partial x} \ln((C x;q)_{\infty}) = -\frac{\ln(1-C_1 x)}{\ln(q)} +o\left(\frac{1}{\ln(q)}\right), \quad q\to 1
$$
Here, the little $o$ notation means
$$
\lim_{q\to 1} \ln(q)\left( \frac{\partial}{\partial x} \ln((C x;q)_{\infty}) +\frac{\ln(1-C x)}{\ln(q)}\right)=0
$$
\end{Lemma}
\begin{proof}
We calculate
\begin{align*}
x \frac{\partial}{\partial x} \ln((Cx;q)_{\infty})&=-\sum_{i=0}^{\infty} \frac{Cx q^i}{1-Cxq^i} = -\sum_{i=0}^{\infty} \sum_{j=0}^{\infty} (Cx)^{j+1} q^{i(j+1)} \\
&=-\sum_{j=0}^{\infty} \frac{(Cx)^{j+1}}{1-q^{j+1}} =-\sum_{j=0}^{\infty}\frac{(Cx)^{j+1}}{1-e^{(j+1)\ln(q)}}
\end{align*}
So
$$
\lim_{q\to 1} \ln(q) x \frac{\partial}{\partial x}\ln((Cx;q)_{\infty})=-\ln(1-C_1 x)
$$
\end{proof}

\begin{Lemma}
The $q\to 1$ limit of the saddle points of $S(\vec{x})$ satisfy the equations
$$
\sqrt{\hbar} z \left(1-\frac{1}{x_i t_1 t_2}\right)\prod_{j\neq i} w(x_i/x_j)= \left(1-\frac{1}{x_i}\right)\prod_{j \neq i}w(x_j/x_i), \quad i=1,\ldots, n
$$
where 
$$
w(x)=\frac{(1-x)(t_1t_2-x)}{(t_2-x)(t_1-x)}
$$
These are known as the Bethe equations.
\end{Lemma}
\begin{proof}
Using the previous Lemma the equation (\ref{saddle}) for $x_i$ is equivalent to
\begin{multline*}
0=\ln\left(\sqrt{\hbar} z\right) +\ln\left(1-\frac{1}{t_1 t_2 x_i}\right) - \ln\left(1-\frac{1}{x_i}\right)+ \\ \sum_{j \neq i}\bigg(-\ln\left(1-\frac{x_j}{x_i}t_2\right)+\ln\left(1-\frac{x_j}{t_1 x_i}\right)-\ln\left(1-\frac{x_j}{x_i t_1 t_2}\right) + \ln\left(1-\frac{x_j}{x_i}\right) \\ +\ln\left(1-\frac{x_i}{x_j}t_2\right)-\ln(1-\frac{x_i}{t_1 x_j})+\ln\left(1-\frac{x_i}{x_j t_1 t_2}\right) -\ln\left(1-\frac{x_i}{x_j}\right) \bigg)
\end{multline*}
Exponentiating gives 
$$
\sqrt{\hbar} z \left(1-\frac{1}{x_i t_1 t_2}\right)\prod_{j\neq i} \frac{\left(1-\frac{x_j}{x_i}\right) \left(1-\frac{x_i}{x_j t_1 t_2}\right)}{\left(1-\frac{x_j}{x_i}t_2\right)\left(1-\frac{x_i}{x_j t_1}\right)}= \left(1-\frac{1}{x_i}\right)\prod_{j \neq i}\frac{\left(1-\frac{x_i}{x_j}\right)\left(1-\frac{x_j}{x_i t_1 t_2}\right)}{\left(1-\frac{x_j}{x_i t_1}\right)\left(1-\frac{x_i}{x_j} t_2\right)}
$$
which can be rewritten as
$$
\sqrt{\hbar} z \left(1-\frac{1}{x_i t_1 t_2}\right)\prod_{j\neq i} \frac{\left(1-\frac{x_i}{x_j}\right)\left( t_1 t_2-\frac{x_i}{x_j}\right)}{\left(t_2-\frac{x_i}{x_j}\right)\left( t_1-\frac{x_i}{x_j}\right)}= \left(1-\frac{1}{x_i}\right)\prod_{j \neq i}\frac{\left(1-\frac{x_j}{x_i}\right)\left( t_1 t_2-\frac{x_j}{x_i}\right)}{\left( t_1-\frac{x_j}{x_i}\right)\left(t_2-\frac{x_j}{x_i}\right)}
$$
\end{proof}

\begin{Proposition}
The $q\to 1$ limit of the saddle point approximation of $\Psi^{\nabla}_{\lambda,\mu}$ is 
$$
\frac{1}{2\pi i \alpha_{\mu}} \left(2 \pi \ln(q) \right)^{n/2} \frac{ e^{S(\vec{x})}}{ \sqrt{\det \frac{\partial^2 S(\vec{x})}{\partial x_j \partial x_i} }} \textbf{g}^{\nabla}_{\text{opp},\lambda}(\vec{x})
$$
where $x_i\in\mathbb{C}(t_1^{1/2},t_2^{1/2})[[z]]$ satisfies the Bethe equations with initial condition $x_i|_{z=0}=\varphi^{\mu}_{i}$.
\end{Proposition}

\begin{Conjecture}
The $q\to 1$ asymptotics of the function $\Psi^{\nabla}_{\lambda,\mu}$ are dominated by its saddle point approximation for some saddle point satisfying the Bethe equations with initial condition $x_i|_{z=0}=\varphi^{\mu}_{i}$.
\end{Conjecture}

\begin{Remark}
There are several complications involved in the above conjecture. As the Bethe equations are a system of nonlinear equations, it is first of all not obvious that solutions even exist. Secondly, the solutions may not be uniquely determined by the initial condition. In this case, there may be several saddle points that contribute to the $q\to 1 $ limit of the function $\Psi^{\nabla}_{\lambda,\mu}$. Part of the conjecture is that there is a unique saddle point that dominates.
\end{Remark}

Our final result is an explicit description of the eigenbasis of the action of the Bethe subalgebra $\mathscr{B}^{s}$ on $\mathsf{F}[[z]]$. Fix $n \in \mathbb{N}$ and pick an alcove for $\text{Hilb}^{n}(\mathbb{C}^2)$ containing $s$. Let $\lambda_1,\ldots, \lambda_{m}$ be the partitions of $n$. Define the column vector $\textbf{g}^{\nabla}_{\text{opp}}(\vec{x})=(\textbf{g}^{\nabla}_{\text{opp},\lambda_1},\ldots,\textbf{g}^{\nabla}_{\text{opp},\lambda_{m}})^{T}$ in the fixed point basis of $\mathsf{F}$.
\begin{Proposition}
Let $\vec{x}=(x_1,\ldots,x_n)$ be a solution to the Bethe equations with initial condition $x_i|_{z=0}=\varphi^{\mu}_{i}$. Then the vector $\textbf{g}^{\nabla}_{\text{opp}}(\vec{x})$ is an eigenvector of the action of the Bethe subalgebra $\mathscr{B}^{s}$ on the degree $n$ component of $\mathsf{F}[[z]]$. The collection of these vectors, as $\mu$ runs over all partitions of $n$ and $n$ runs over all natural numbers, gives an eigenbasis of the action of $\mathscr{B}^{s}$ on $\mathsf{F}[[z]]$.
\end{Proposition}
\begin{proof}
Write $\Psi^{\nabla}_{\mu}$ for the $\mu$th column of the fundamental solution matrix $\Psi^{\nabla}_{\lambda,\mu}$. The saddle point approximation of this column vector is
$$
\Psi^{\nabla}_{\mu}=\frac{1}{2\pi i \alpha_{\mu}} \left(2 \pi \ln(q) \right)^{n/2} \frac{ e^{S(\vec{x})}}{ \sqrt{\det \frac{\partial^2 S(\vec{x})}{\partial x_j \partial x_i} }} \left(\textbf{g}^{\nabla}_{\text{opp}}(\vec{x})+\ldots\right)
$$
where the $x_i$ solve the Bethe equations with initial condition $x_i|_{z=0}=\varphi^{\mu}_{i}$. The $q$-difference equation is
$$
\Psi^{\nabla}_{\mu}( z q) \mathscr{L} =\const_{X}  \mathscr{L} \textbf{B}^{\nabla}_{\mathscr{L}}(z) \Psi^{\nabla}_{\mu}(z)
$$
where $\mathscr{L}$ is the operator of multiplication by the dual of the tautological line bundle, whose fiber $\mathscr{L}_{\mu}$ over $\mu$ has weights given by $\prod_{i} (\varphi^{\mu}_{i})^{-1}$. Substituting the saddle point approximation into the $q$-difference equation and writing the $z$ dependence explicitly, we have
$$
\frac{\left(2 \pi \ln(q) \right)^{n/2}}{2\pi i \alpha_{\mu}(z q)}  \frac{ e^{S(\vec{x},zq)}}{ \sqrt{\det \frac{\partial^2 S(\vec{x},z q)}{\partial x_j \partial x_i} }} \left(\textbf{g}^{\nabla}_{\text{opp}}(\vec{x})+\ldots\right) \mathscr{L}_{\mu} = \const_{X}  \mathscr{L} \textbf{B}^{\nabla}_{\mathscr{L}}(z) \frac{\left(2 \pi \ln(q) \right)^{n/2}}{2\pi i \alpha_{\mu}(z)}  \frac{ e^{S(\vec{x},z)}}{ \sqrt{\det \frac{\partial^2 S(\vec{x},z)}{\partial x_j \partial x_i} }} \left(\textbf{g}^{\nabla}_{\text{opp}}(\vec{x})+\ldots\right)
$$
which is equivalent to
$$
\frac{\mathscr{L}_{\mu}^{-1}}{\alpha_{\mu}(z)} \frac{ e^{S(\vec{x},z q)}}{ \sqrt{\det \frac{\partial^2 S(\vec{x},z q)}{\partial x_j \partial x_i} }} \left(\textbf{g}^{\nabla}_{\text{opp}}(\vec{x})+\ldots\right) \mathscr{L}_{\mu} =\const_{X}   \mathscr{L} \textbf{B}^{\nabla}_{\mathscr{L}}(z) \frac{1}{\alpha_{\mu}(z)} \frac{ e^{S(\vec{x},z)}}{ \sqrt{\det \frac{\partial^2 S(\vec{x},z)}{\partial x_j \partial x_i} }} \left(\textbf{g}^{\nabla}_{\text{opp}}(\vec{x})+\ldots\right)
$$
One can show that
$$
\lim_{q\to 1} \frac{\det \frac{\partial^2 S(\vec{x},zq)}{\partial x_j \partial x_i}}{\det \frac{\partial^2 S(\vec{x},z)}{\partial x_j \partial x_i}}=1 \quad \text{and} \quad \frac{e^{S(\vec{x},zq)}}{e^{S(\vec{x},z)}}= (x_1\ldots x_n)^{-1}
$$
from the definition of $S(\vec{x},z)$. So
$$
\left(\const_{X} \mathscr{L} \textbf{B}^{\nabla}_{\mathscr{L}}(z)\right)\big|_{q=1} \textbf{g}^{\nabla}_{\text{opp}}(\vec{x})= (x_1\ldots x_n)^{-1} \textbf{g}^{\nabla}_{\text{opp}}(\vec{x})
$$

So $\textbf{g}^{\nabla}_{\text{opp}}(\vec{x})$ is an eigenvector of $(\mathscr{L} \textbf{B}^{\nabla}_{\mathscr{L}}(z))|_{q=1}$. By Conjecture \ref{eigenvalues}, it is also an eigenvector of the Bethe subalgebra. Since we have one of these eigenvectors for each $\mu$, there are sufficiently many to provide an eigenbasis for the Bethe subalgebra.

\end{proof}


\backmatter


\singlespacing
\setlength\bibitemsep{2\itemsep}
\printbibliography[title=REFERENCES]



\end{document}